\documentclass[a4paper,10pt]{amsart}

%%%%%%%%%%%%%%%%%%%%%%%%%%%%% Packages %%%%%%%%%%%%%%%%%%%%%%%%%
\usepackage[T1]{fontenc}
\usepackage[utf8]{inputenc}
\usepackage[active]{srcltx}
\usepackage{latexsym}
\usepackage{amsfonts}
\usepackage{amstext, amsthm}
\usepackage{amsmath}
\usepackage{graphicx, color}
\usepackage{enumitem}
\usepackage[all,dvips]{xy}
\input xy
\xyoption{all}
\xyoption{web}
\xyoption{poly}
\usepackage{amssymb,euscript,multicol,graphicx}
\usepackage[ec]{aeguill}%Ca c'est pour que les pdf soient beaux.
\usepackage{array, delarray, fancyhdr}
% \usepackage[right]{showlabels}

%%%%%%%%%%%%%%%%%% Raccourcis et sectionnement %%%%%%%%%%%%%%%%%
%********************* commandes ****************************
\newcommand{\be}{\begin{enumerate}}
\newcommand{\ee}{\end{enumerate}}
\newcommand{\beqn}{\begin{eqnarray*}}
\newcommand{\eeqn}{\end{eqnarray*}}

\newcommand{\incl}[1][r]
      {\ar@<-0.2pc>@{^(-}[#1] \ar@<+0.2pc>@{-}[#1]}

%****************** Raccourcis mathematiques****************
%********** Caracteres usuels ***************

\def\N{{\mathbb N}}

\def\Z{{\mathbb Z}}
\def\Ac{{\mathcal A}}

\def\Cc{{\mathcal C}}

\def\Fc{{\mathcal F}}
\def\Gc{{\mathcal G}}
\def\Hc{{\mathcal H}}

\def\Lc{{\mathcal L}}

\def\Oc{{\mathcal O}}
\def\Pc{{\mathcal P}}

\def\Tc{{\mathcal T}}

\def\Xc{{\mathcal{X}}}

\def\mgo{{\mathfrak m}}

\def\gm{{\mathbb G}_{\rm{m}}}

\def\bgm{{\rm B}{\mathbb G}_{\rm{m}}}

\def\eps{\varepsilon}

\def\inj{\hookrightarrow}

\def\ov{\overline}
%***** Fonctions ou operateurs usuels ******
\newcommand{\cartesien}{\ar@{}[dr]|{\square}}
\def\FlecheNE{\rotatebox[origin=c]{45}{\ensuremath \Rightarrow}}

\def\FlecheSO{\rotatebox[origin=c]{225}{\ensuremath \Rightarrow}}

\newcommand{\double}{\ar@<2pt>[r] \ar@<-2pt>[r]}

\def\Alb{\textrm{\rm Alb}}

\def\Aut{\hbox{\rm Aut}}

\def\ch{\hbox{\rm ch}\,}
\def\champic{\mathcal{P}ic}

\def\Coker{\hbox{\rm Coker}\,}

\DeclareMathOperator{\Ext}{Ext}

\def\fExt{{\mathcal{E}xt}\,}

\def\Hom{\hbox{\rm Hom}}

\def\id{\hbox{\rm id}}

\def\Isom{\hbox{\rm Isom}}

\def\Ker{\hbox{\rm Ker}\,}

\def\Pic{\hbox{\rm Pic}\,}
\def\picto{\Pic^{\tau}}

\def\RHom{\hbox{\rm RHom}}

\def\shExt{{\mathcal{E}xt}\,}
\def\shHom{{\mathcal{H}om}\,}
\def\shIsom{{\mathcal I\!som}\,}

\DeclareMathOperator{\Spec}{Spec}

% \def\t{|\!|\!|}

% \DeclareMathOperator{\Tors}{Tors}

%%%*********************Macros plus evoluees********************
%
%

% Faire des fractions qui ont toujours la bonne taille
% Exemple: $\f{1}{2}$, mais aussi $x^{\f{3}{2}}$.

%%% Faire une fleche longue pour une limite.

	% Exemple : $f(x) \tv{x\to 0} 1$

	% Exemple : $u_n \tvn \pi$

%%% Faire des integrales qui ont toujours la bonne taille.
% Ne jamais utiliser \int !

%%% Les elements differentiels dans les integrales.
% Exemples : $\Int{0}{1} f(t) \dt$
% $\f{\dd \Phi}{\dd t}$ % Attention, ne pas mettre {\dt} au d�ominateur.

	% Y'a vraiment des gens qui font des maths bizarres.
 
	% Derivee partielle : D rond f sur D rond t s'ecrit 
% $\Dp{f}{t}$~.

%%% Faire des listes de facon pratique.

	% Exemple : La famille $\nliste{e}$ est libre.

	% Pour les intervalles d'entiers.
	% Exemple : $\forall k \in \intn{0}{n-1} \quad |u_k| < 1$.

%% Pour les matrices

	% up-dots, le symetrique de \ddots sur la diagonale.

	% La transposee d'une matrice: $\trans{A}$.
%
	{\begin{pmatrix}}%
	{\end{pmatrix}}
	% Vu que c'est presque toujours celui-la qu'il 
% nous faut...
%
	{\begin{vmatrix}}%
	{\end{vmatrix}}
%% Des environnements tres pratiques !

%
	{\ensuremath{\left \{ \hskip -1.5 mm \begin{array}{#1@{\ =\ }l}}}%
	{\end{array}\right.}
	% Exemple: $\begin{syst} a & b+2 \\ b & 1 \\ \end{syst}$ 
	% fait ce que l'on pense, ie met un signe egal avec 
% des espaces de part et d'autre.

%
	{\ensuremath{\left \{ \hskip -1.5 mm%
	 \begin{array}{#1@{\ }c@{\ }l}}}%
	{\end{array}\right.}
	% Pour les calculs qui font intervenir autre chose qu'un signe =,
	% par exemple \leq ou \geq ou \sim, quelque part.

%
	{\ensuremath{\left \{ \hskip -1.5 mm \begin{array}{#1@{\quad}l}}}%
	{\end{array}\right.}
	% Exemple: $f(x) = \begin{accolade} \bigskip
	%	\f{\sin x}{x} & \text{si } x\neq 0 \\
	%	1 & \text{sinon} \\
	%	\end{accolade}$

	% $\fonction{f}{\intoo{0}{\pinf}}{\intoo{-1}{\pinf}}{x}{\f{1}{x}-1}$
	% dessine la meme chose que ce que l'on fait 
% naturellement a la main:
	% le symbole f, suivi d'un ':', d'une accolade, puis sur deux
	% lignes l'ensemble de definition, d'arrivee, puis 
% la variable et
	% l'expression de f(x).

\renewcommand{\setminus}{\smallsetminus}
\newcommand{\details}[1]{}
 %
%**** Sectionnement : th�or�mes, corollaires, etc... ********
\newtheorem{cor}[subsection]{Corollary}

\newtheorem{prop}[subsection]{Proposition}

\newtheorem{thm}[subsection]{Theorem}

\newtheorem{defi}[subsection]{Definition}

\newtheorem{lem}[subsection]{Lemma}

\newtheorem{remarque}[subsection]{Remark}

\newtheorem{exemple}[subsection]{Example}

\newtheorem{conjecture}[subsection]{Conjecture}
\newtheorem{sousconjecture}[subsubsection]{Conjecture}

\newtheorem{question}[subsection]{Question}

%**** Environnements : demonstrations, etc... ***************

%\newenvironment{exemple}{{\bf Exemple}}{\vskip .2cm}
%\newenvironment{remarque}
%  {\addtocounter{subsection}{1}\noindent{\bf Remarque \thesubsection\ \ }}
%  {\vskip .2cm}
%\newenvironment{sousremarque}
%  {\addtocounter{subsubsection}{1}\noindent{\bf Remarque \thesubsubsection\ \ }}
%  {\vskip .2cm}
%\renewtheorem{remarque}{\rm}{}
 %

\def\shIsom{{\mathcal I\!som}\,}
\def\shHom{{\mathcal{H}om}\,}
\def\shExt{{\mathcal{E}xt}\,}
\def\ch{\hbox{\rm ch}\,}

\def\Pic{\textrm{\rm Pic}}
\def\Ext{{\rm Ext}\,}
\def\Alb{\textrm{\rm Alb}}

\def\Spec{\textrm{\rm Spec}}
\def\picto{\Pic^{\tau}}
\def\champic{\mathcal{P}ic}
\def\id{\textrm{id}}
\def\cgs{\textrm{(CGS)}}
\DeclareMathOperator{\Triv}{Triv}
\renewcommand{\to}{%
\xymatrix@C=1pc{\ar[r] &}}
\def\lto{%
\xymatrix{\ar[r] &}}
% \def\adem{{\bf À DÉMONTRER !!!!!!}}
% \def\aref{{\bf METTRE LA BONNE RÉFÉRENCE !!!!!!}}

%********************Mise en page*******************************
%\addtolength\textwidth{1.5cm}
\usepackage{a4wide}\setlist{noitemsep,topsep=0pt}
\setitemize{noitemsep,topsep=0pt}
\setenumerate{noitemsep,topsep=0pt}
\SetMathAlphabet\mathcal{normal}{U}{rsfs}{m}{n} % pour faire des X champetres...
\addtolength{\textheight}{5mm}

\title{Duality for commutative group stacks}
% \date{\empty}
\author{Sylvain Brochard}
\address{IMAG, Université de Montpellier, CNRS, Montpellier, France}
\email{sylvain.brochard@umontpellier.fr}

\begin{document}

\begin{abstract}
We study in this article the dual of a (strictly) commutative group stack $G$ and give some applications. Using the Picard functor and the Picard stack of~$G$, we first give some sufficient conditions for~$G$ to be dualizable. Then, for an algebraic stack $X$ with suitable assumptions, we define an Albanese morphism $a_X : X\to A^1(X)$ where $A^1(X)$ is a torsor under the dual commutative group stack $A^0(X)$ of $\Pic_{X/S}$. We prove that $a_X$ satisfies a natural universal property. We give two applications of our Albanese morphism. On the one hand, we give a geometric description of the elementary obstruction and of universal torsors (standard tools in the study of rational varieties over number fields). On the other hand we give some examples of algebraic stacks that satisfy Grothendieck's section conjecture.
\end{abstract}
\maketitle

\tableofcontents

\section{Introduction}

\noindent {\bf Motivation.}
If $X$ is a smooth and projective geometrically connected curve (over a field 
$k$), its Abel-Jacobi morphism $a : X\to \Pic^1_{X/k}\subset \Pic_{X/k}$ is the 
$X$-point corresponding to the effective divisor $\Delta_X\subset X\times_k X$. 
The morphism $a$ maps a point $x$ in $X$ to the class of the line 
bundle~$\Oc_X(x)$. The target $\Pic^1_{X/k}$ is a torsor under the jacobian 
$J=\Pic^0_{X/k}$, which is an abelian variety. If $X$ has a $k$-point we get a 
morphism from $X$ to $J$ itself. This morphism is useful in many contexts: it 
often allows to reduce some questions (\emph{e.g.} the injectivity in 
Grothendieck's section conjecture) to analogous questions about abelian 
varieties. Now let us consider the following orbifold curve $\Xc$ over $X$.
\begin{figure}[h]
\begin{center}
\vskip-0.5cm
\input{courbetordue.pstex_t}
\end{center}
\end{figure}
The morphism $\pi$ is an isomorphism above $X\setminus\{p\}$ and looks like 
$\left[\Spec\left(\frac{A[x]}{x^r-s}\right) / \mu_r \right] \to \Spec A$ in a 
neighboorhood $\Spec A$ of $p$.
It might be desirable to have at our disposal a natural morphism $\Xc \to 
\Pic_{\Xc/k}$ that could play the role of an Abel-Jacobi morphism. Let us look 
for such a morphism. Of course we have the obvious composition
$$\xymatrix{
\Xc \ar[r]^-{\pi} & X\ar[r]^-a &
\Pic_{X/k} \ar[r]^-{\pi^*} & \Pic_{\Xc/k}
}$$
but it completely overlooks the stacky structure of $\Xc$. 
If we try to mimic the construction of $a$ for $X$ in terms of the diagonal divisor, we come across the fact that $\Delta : \Xc \to \Xc \times_k \Xc$ is not a closed immersion (it is not a monomorphism). Actually, it can be seen (see~\cite[5.4]{Brochard_Picard}) that there is an exact sequence
$$\xymatrix{
0 \ar[r] &
\Pic_{X/k}    \ar[r]^-{\pi^*} &
\Pic_{\Xc/k}    \ar[r]^{} &
\Z/r\Z    \ar[r] & 0
}$$
so that $\Pic_{\Xc/k}$ is a disjoint union of $r$ copies  
of $\Pic_{X/k}$ and $\pi^*$ induces an isomorphism $\Pic^0_{X/k} \simeq \Pic^0_{\Xc/k}$. But the stack $\Xc$ itself is connected. So, in some sense, a ``natural'' morphism from $\Xc$ to $\Pic_{\Xc/k}$ \emph{must} overlook the stacky structure of $\Xc$.

There is another morphism from $X$ to (a torsor under) an abelian variety: the Albanese morphism. In the case of our curve~$X$, it turns out that it coincides with the Abel-Jacobi morphism \emph{via} the autoduality of the Jacobian $J^t\simeq J$. However it is better-behaved than Abel-Jacobi in at least two respects: it is functorial, and it exists for varieties of arbitrary dimension. In the classical setting, the Albanese variety associated to $X$ is dual to the Picard variety $\Pic^0_{X/k}$. As the above discussion suggests, to get a meaningful morphism for the stack $\Xc$, we need to replace the neutral component with the whole Picard functor $\Pic_{\Xc/k}$ (or at least its torsion component). The latter is not an abelian variety anymore. So this first raises the following question: what is the dual of such a group scheme?\bigskip

\noindent {\bf Duality.}
A dual of an arbitrary abelian sheaf is already defined in 
the framework of commutative group stacks. Roughly 
speaking, a commutative group stack (sometimes called a 
Picard stack) is a stack $G$ together with an addition 
morphism $\mu : G\times_k G\to G$ that satisfies some 
natural conditions (see~\ref{def_Picard_cat} 
and~\ref{def_cgs} below). Deligne has given in 
SGA~4~\cite[XVIII~1.4]{SGA4} a description of the 2-category 
of commutative group stacks in terms of length 1 complexes 
of sheaves of abelian groups (see~\ref{description_cgs}). 
For example, an abelian sheaf $F$ corresponds to the complex 
$[0\to F]$, the classifying stack $BF$ of $F$ corresponds to 
$[F\to 0]$, while a 1-motive (in the sense of Deligne 
\cite{Deligne_Theorie_Hodge_3}) is a complex $[C\to A]$ 
where $C$ is a twisted lattice and~$A$ is a semi-abelian 
scheme. For such a commutative group stack $G$, its dual is the stack 
of homomorphisms from $G$ to $\bgm$
$$D(G)=\shHom(G, \bgm).$$
The duality of Deligne's 1-motives 
\cite[10.2.10]{Deligne_Theorie_Hodge_3} is a particular case 
of this construction. For example, if $A$ is an abelian 
variety, the dual $D(A)$ is the classical dual abelian 
variety, the dual of $\Z$ is $\bgm$, the dual of $\gm$ 
is $B\Z$. More generally, if $F$ is a finite flat, a 
diagonal, or a constant group scheme, then $D(F)=BF^D$ and 
$D(BF)=F^D$, where $F^D=\Hom(F, \gm)$ denotes the Cartier 
dual of $F$. For any commutative group stack, there is a 
canonical evaluation map
$$e_G : G\lto D(D(G)).$$
If $e_G$ is an isomorphism, we say that $G$ is dualizable. 
It seems natural to look for sufficient conditions for a 
commutative group stack $G$ to be dualizable.
Of course Deligne's 1-motives are dualizable. Some other 
stable classes of dualizable commutative group stacks, inspired from 
1-motives, have been used recently (see for instance
\cite{Braverman_Bezrukavnikov_Geometric_Langlands_%
Correspondance},
\cite{Chen_Zhu_Geometric_Langlands_Prime}
\cite{Donagi_Pantev_Langlands_Duality_Hitchin},
\cite{Donagi_Pantev_Torus_Fibrations_Gerbes},
\cite{Jossen_On_The_Arithmetic}, 
\cite{Laumon_Transformation_de_Fourier} and
\cite{Russell_Albanese_Varieties_With}), \emph{e.g.} the 
class of commutative group stacks that are, locally on~$S$, finite 
products of copies of~$\Z$, $\bgm$ or abelian schemes (such 
commutative group stacks are sometimes called Beilinson 1-motives).

Two kinds of geometric conditions 
can be taken on the commutative group stack $G$, depending on how we 
think about it. One possibility is to require that there 
exists a presentation of $G$ as the stack associated to a 
length one complex $[G^{-1} \to G^0]$, where the sheaves 
$G^{-1}$ and $G^0$ satisfy certain conditions (this is what 
happens for 1-motives and the above-mentioned 
generalizations). Another possibility is to impose directly 
geometric conditions on the stack $G$, regardless of its 
presentations. It more or less amounts to impose conditions 
on the cohomology sheaves $H^i(G^{\bullet})$ for a length 
one complex $G^{\bullet}$ that represents $G$ (see for 
instance~\ref{prop_abelian_stacks}). In this spirit, we 
conjecture the following:
\begin{conjecture}
\label{conj}
Let $S$ be a base scheme with $2\in \Oc_S^{\times}$.
 Let $G$ be a proper, flat, and finitely presented algebraic commutative group stack over $S$, with finite and flat inertia stack. Then:
\begin{enumerate}
\item $D(G)$ is algebraic, proper, flat and finitely presented, with finite diagonal.
\item $G$ is dualizable.
\end{enumerate}

\end{conjecture}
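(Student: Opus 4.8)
The plan is to pass to Deligne's dictionary between commutative group stacks and length one complexes of abelian sheaves, and to work throughout with a complex $G^{\bullet} = [G^{-1} \rightarrow G^0]$ representing $G$ together with its cohomology sheaves $H^{-1}(G^{\bullet})$ and $H^0(G^{\bullet})$. Since $\bgm$ corresponds to the complex $\Gm[1]$, the dual is computed by $D(G) = \RHom(G^{\bullet}, \Gm[1])$, truncated back into degrees $-1,0$. The first step is to translate the geometric hypotheses into conditions on cohomology. The automorphism sheaf of the neutral object is $H^{-1}(G^{\bullet})$, and every inertia group of $G$ is a translate of it; hence the assumption that the inertia stack be finite and flat says exactly that $H^{-1}(G^{\bullet})$ is a finite flat group scheme $F$. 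Properness, flatness and finite presentation of $G$ force $H^0(G^{\bullet})$ to be a proper, flat, finitely presented commutative group scheme; on each fibre it has no affine part, so by the structure theory of proper group schemes over a field each fibre is an extension of a finite group by an abelian variety. I would then promote this fibrewise picture to a global dévissage $0 \rightarrow A \rightarrow H^0(G^{\bullet}) \rightarrow \Phi \rightarrow 0$ with $A$ an abelian scheme and $\Phi$ finite flat over $S$.

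With this in hand, the computation of $D(G)$ reduces, using that $D$ is exact on distinguished triangles, to dualising three building blocks: the classifying stack $BF$ of a finite flat group scheme, an abelian scheme $A$, and a finite flat group scheme $\Phi$ placed in degree $0$. Here one invokes the three classical dualities: $\shHom(F,\Gm) = F^D$ with $\shExt^{>0}(F,\Gm) = 0$ (Cartier duality), $\shHom(A,\Gm) = 0$ with $\shExt^1(A,\Gm) = A^{\vee}$ the dual abelian scheme and higher $\shExt$ vanishing, and the analogous statement for $\Phi$. Feeding these into the truncation triangle
$$\RHom(H^0(G^{\bullet}),\Gm[1]) \longrightarrow D(G) \longrightarrow \RHom(H^{-1}(G^{\bullet}),\Gm) \longrightarrow$$
and chasing the long exact cohomology sequence shows that $H^{-1}(D(G)) = \Phi^D$ is finite flat, while $H^0(D(G))$ is an extension of the finite flat group $F^D$ by the abelian scheme $A^{\vee}$, hence proper. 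Assembling the blocks — and checking that the $\shExt$ sheaves involved are representable so that $D(G)$ is algebraic — gives part (1): $D(G)$ is algebraic, proper, flat and finitely presented, and its diagonal is finite because $H^{-1}(D(G))$ is finite.

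For part (2) I would exploit that (1) places $D(G)$ back inside the same class, so that $D(D(G))$ is defined and computable by the same dévissage. The evaluation $e_G$ is a morphism of functors, hence compatible with the truncation triangles, and by a five lemma argument on the associated long exact cohomology sequences it suffices to check that the natural transformation is an isomorphism on each building block. For abelian schemes this is the classical biduality $A \xrightarrow{\sim} (A^{\vee})^{\vee}$, valid over an arbitrary base; for finite flat group schemes and their classifying stacks it is the biduality of Cartier duality $F \xrightarrow{\sim} (F^D)^D$. Combining these yields that $e_G$ is an isomorphism.

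The hard part will be the global structure theory of the first step, namely promoting the fibrewise decomposition of $H^0(G^{\bullet})$ to an honest extension of an abelian scheme by a finite flat group scheme over all of $S$, together with the integral representability and vanishing statements (that $\shExt^1(A,\Gm)$ is represented by the dual abelian scheme and that the higher $\shExt$ sheaves vanish over the base, not merely fibrewise). The hypothesis $2 \in \Oc_S^{\times}$ is presumably what controls the characteristic-$2$ pathologies of finite flat group schemes and, more subtly, the sign governing the compatibility of $e_G$ with the symmetry of the duality pairing; verifying that it suffices, and that no further correction intervenes when passing to the double dual, is the delicate point on which the whole argument turns.
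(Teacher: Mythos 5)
The statement you are proving is stated in the paper as a \emph{conjecture}, not a theorem: the author only establishes partial results (that (1) implies (2), that $D(G)$ is algebraic and finitely presented with quasi-finite diagonal, and the full statement under the extra hypothesis that $H^0(G)$ is cohomologically flat). Your proposal contains a genuine gap, and it sits exactly where you flag ``the hard part'': the promotion of the fibrewise structure of $H^0(G)$ to a global extension $0\to A\to H^0(G)\to \Phi\to 0$ with $A$ an abelian scheme and $\Phi$ finite \emph{flat}. This is not merely delicate; it is the open point. The paper's Proposition~\ref{prop_duabelian_groups} shows that for a proper, flat, finitely presented commutative group algebraic space, the existence of such a dévissage is \emph{equivalent} to cohomological flatness of the structural morphism: one needs $f_*\Oc_G$ to be flat and its formation to commute with base change in order for $G_{\textrm{af}}=\Spec(f_*\Oc_G)$ to be a finite flat quotient. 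Without cohomological flatness that quotient can fail to be flat, and no global extension of the required form is available. The only unconditional dévissage in the paper is Raynaud's Lemma~\ref{devissage_raynaud}, which works over Artin rings with algebraically closed residue field and, moreover, produces an extension in the opposite order ($0\to F\to G\to A\to 0$); this suffices to check flatness of $D(G)$ on Artin local rings when $G$ is a space, but it does not globalize.

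Everything downstream of that step inherits the gap. Your claim that $H^{-1}(D(G))=\Phi^D$ is finite flat, and hence that $D(G)$ has finite diagonal, uses the global $\Phi$; the paper's Theorem~\ref{thm_representabilite} can only prove that $H^{-1}(D(G))$ is a finite group scheme when $H^{-1}(G)$ is flat, and obtains its \emph{flatness} only under the additional cohomological flatness of $H^0(G)$ (part (5)); in general it gets affine, not finite, diagonal. Likewise flatness of $D(G)$ over $S$ is only proven when $G$ is an algebraic space (part (3)). Your reduction of (2) to (1) by a five-lemma argument on the evaluation map is sound in outline --- it matches Remark~\ref{preuve_conj1_implique_conj2} and Theorem~\ref{thm_dualizability2} --- but it again feeds on the dévissage. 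In short: under the stated hypotheses alone, the first step of your argument is precisely the unsolved part of the conjecture; if you add the hypothesis that $H^0(G)$ is cohomologically flat, your outline becomes essentially the paper's proof of Theorems~\ref{thm_representabilite} and~\ref{thm_dualizability2}.
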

We get partial results in this direction: under the 
assumptions we prove that (2) is a consequence of (1) 
(see~\ref{preuve_conj1_implique_conj2}), that $D(G)$ is 
algebraic and finitely presented with quasi-finite diagonal 
(\ref{thm_representabilite1} and~\ref{thm:inertie_finie}), and we prove the 
conjecture with the additional assumption that $H^0(G)$ is 
cohomologically flat~(\ref{thm_representabilite4} 
and~\ref{thm_dualizability2}). We give other results with 
the same flavour in sections~\ref{section_dual} 
and~\ref{section_dualizability}. The assumption that $2\in 
\Oc_S^{\times}$ is there because of the use of the results of 
Section~\ref{section_Ext}, and might be superfluous.
\bigskip

\noindent {\bf Albanese morphism.}
Now let us come back to our original goal: generalize the 
Albanese morphism to the context of algebraic stacks. This 
is done in section~\ref{section_Albanese_torsor}. Let $f : 
X\to S$ be an algebraic stack such that $\Oc_S\to f_*\Oc_X$ 
is universally an isomorphism (i.e. for any morphism $T\to S$, the morphism 
$\Oc_T\to 
(f_T)_*\Oc_{X_T}$ is an isomorphism, where $f_T : X_T\to T$ is the morphism 
obtained from $f$ after the base change $T\to S$) and such that $f$ locally has 
sections 
in the \emph{fppf} topology on $S$. Then we define a 
commutative group stack $A^0(X)$, a torsor $A^1(X)$ 
under~$A^0(X)$ and a canonical morphism $a_X : X\to A^1(X)$. 
If we assume that the torsion component of the Picard 
functor $\picto_{X/S}$ is proper, flat, cohomologically flat 
and finitely presented over $S$, then our Albanese morphism 
is universal among morphisms from $X$ to torsors under what 
we call ``abelian stacks'' (see~\ref{PU_albanese}).
Hence it generalizes the classical Albanese morphism given 
in~FGA~VI, théorème~3.3~(iii)~\cite[exp. 236]{FGA} in the 
classical setting of schemes. Note by the way that even in 
that classical setting, this yields a nice explicit
construction of the Albanese torsor and morphism, while the 
existence proof of FGA was not constructive. The 
construction of these objects is very simple and natural. 
Let us denote by $\champic(X/S)$ the Picard stack of $X$. 
Then the torsor $A^1(X)$ appears as a substack of the dual 
$D(\champic(X/S))$, and the morphism $a_X$ maps a point 
$x\in X(S)$ to the pullback morphism $x^* : \champic(X/S) 
\to \bgm$. The universal property~\ref{PU_albanese} 
mentioned above is one of the main results of the paper. 
In practice, our assumption that $\picto_{X/S}$ is 
representable more or less requires $X$ to be at least 
proper and flat. Note however that, unlike what happens for 
generalizations of the Albanese mapping to other contexts 
(\emph{e.g.} for an arbitrary variety over a field, see for 
instance~\cite{Barbieri-Viale_Srinivas}, 
\cite{Ramachandran_Duality_Of_Albanese}, 
\cite{Esnault_Srinivas_Viehweg}, 
\cite{Russell_Generalized_Albanese_And}, 
\cite{Russell_Albanese_Varieties_With}; note that some of 
them consider morphisms into non-compact group schemes, 
which causes difficulties very different from ours), we do 
not need to assume that $X$ has an~$S$-point to get our 
Albanese morphism, and the Albanese morphism is defined over 
the whole of~$X$ (compare 
with~\cite[7.3]{Barbieri-Viale_Srinivas}). Also, we do not 
need to assume that the base scheme $S$ is the spectrum of 
a field.

For the example of the orbifold curve from the beginning of this introduction, 
the Albanese stack $A^0(\Xc)$ is a gerbe 
over $A^0(X)$ banded by $\mu_r$, so that it does remember of the stacky 
structure of the orbifold curve $\Xc$. This example is
discussed in more details in~\ref{exemple:twisted_curves} (see 
also~\ref{exemple:root_stack} 
which is very similar).

\bigskip

\noindent {\bf Applications.}
In \cite{Borne_Vistoli_Fundamental_Gerbe}, Borne and Vistoli have extended Nori's theory of the fundamental group scheme to a theory of the fundamental gerbe, which applies to algebraic stacks even in absence of a rational point. When it exists (for a given algebraic stack $X$ over a field $k$), the fundamental gerbe is a morphism $X\to \Pi_{X/k}$ to a profinite gerbe, with a universal property for morphisms to finite stacks (see~\cite[5.1]{Borne_Vistoli_Fundamental_Gerbe}). Their formalism allows to reformulate Grothendieck's section conjecture as follows: the traditional ``section map'' is bijective if and only if the natural morphism $X\to \Pi_{X/k}$ induces a bijection on isomorphism classes of $k$-rational points, in which case we will say that GSC holds for $X$. In this paper we prove that GSC holds for some root stacks over Severi-Brauer varieties. As explained in \cite{Borne_Vistoli_Fundamental_Gerbe}, this result yields a general method for producing examples of smooth projective curves of genus at least 2 that satisfy GSC. We also prove that, if the torsion component $\picto_{X/k}$ is finite, then the Albanese torsor $A^1(X)$ coincides with the abelianization of the fundamental gerbe.

As another application, we give a geometric description of 
some classical arithmetical constructions. Let $X$ be a 
proper and smooth variety over a field $k$ and assume that 
$X$ is $\ov{k}$-rational. Assume that $X$ has ``universal 
torsors'' in the sense of Colliot-Thélène and Sansuc (we 
recall the definition in 
section~\ref{section_rational_varieties}).
If the field $k$ is of finite type over its prime subfield, then by \cite{Colliot-Thelene_Sansuc_Note1} there are up to isomorphism finitely many universal torsors $\pi_i : T^i \to X$ ($i=1\dots n$) and they provide a partition of the set of $k$-rational points of $X$:
$$X(k) = \coprod_i \pi_i(T^i(k)).$$
This description of the $k$-points is useful because, heuristically, the arithmetic of the universal torsors should be much simpler than the arithmetic of the variety~$X$. We explain in section~\ref{section_rational_varieties} how these universal torsors can be described in terms of the Albanese torsor~$A^1(X)$. In this case~$A^0(X)\simeq BG_0$ where $G_0$ is the Cartier dual of $Pic_{X/k}$, which means that $A^1(X)$ can be seen as a~$G_0$-gerbe. We prove moreover that the ``elementary obstruction'' -- an obstruction to the existence of universal torsors -- vanishes if and only if the gerbe $A^1(X)$ is trivial.
\bigskip

\noindent {\bf Contents.}
In section~\ref{section_group_stacks}, we briefly recall the 
definition of commutative group stacks and their description 
in terms of length one complexes. We also give preliminary 
results about some classes of commutative group stacks that will be used 
all along the paper. The dual of a commutative group stack is defined in 
section~\ref{section_dual}. Then we start computing the 
duals of some stacks, compare the dual $D(G)$ with the 
Picard stack of $G$ and use this comparison to get the main 
representability theorem mentioned above 
(\ref{thm_representabilite1}). The reader will also find 
along the way some results that might have independant 
interest (\emph{e.g.} Raynaud's devissage 
\ref{devissage_raynaud} for proper and flat commutative group schemes 
over Artin local rings).

In section~\ref{section_dualizability}, we address the 
question of the dualizability, that is: for a given 
commutative group stack~$G$, is $e_G : G\to DD(G)$ an 
isomorphism? We give a positive answer for some classes of 
stacks, mostly with properness assumptions 
(see~\ref{thm_dualizability}, \ref{thm_dualizability2}). 
This proves that the 2-functor $D(.)$ induces a 
2-antiequivalence for these classes of group stacks.

Sections~\ref{section_torsors} and~\ref{section_thm_square} are devoted
to necessary preliminary discussions before dealing with the Albanese morphism. 
The former recalls basic facts about torsors under a commutative group stack 
and gerbes banded by a sheaf of commutative groups, for the convenience of the 
reader, while the latter provides a variant of the famous ``theorem of the 
square'' for abelian stacks and for classifying stacks.

Section~7 is devoted to the definition and first properties of the Albanese torsor and Albanese morphism. In section~\ref{section_PU} we prove its universal properties and compute some examples.

Sections~\ref{section_rational_varieties} and~\ref{section_CS} are devoted to applications, respectively to rational varieties and in the context of the section conjecture.

The last section~\ref{section_Ext} recollects for the convenience of the reader some known results about $\shExt$ sheaves that are used throughout the paper.

\label{notations}

\bigskip

\noindent {\bf Notations and terminology.} Throughout the 
paper, the topology we use is the \emph{fppf} topology, 
unless we explicitly use another one. Let $S$ be a base 
scheme. If $X$ is a stack over~$S$, we denote by 
$\champic(X/S)$ its Picard stack, that is, the stack whose 
fiber category over an $S$-scheme $U$ is the category of 
invertibles sheaves on $X\times_S U$. The Picard functor is 
the \emph{fppf} sheafification of $U\mapsto \Pic(X\times_S 
U)$ and is denoted by $\Pic_{X/S}$. The coarse moduli sheaf (or coarse moduli 
space) of~$X$ is the \emph{fppf} sheafification of the presheaf of isomorphism 
classes of objects of $X$, e.g. $\Pic_{X/S}$ is the coarse moduli space of 
$\champic(X/S)$. It comes with a universal property, see~\cite[(3.19)]{LMB}. A 
morphism $f : X\to Y$ 
is said to be cohomologically flat in dimension zero (or 
just cohomologically flat, for short) if the formation 
of~$f_*\Oc_X$ commutes with base change. If $G$ is a sheaf 
of abelian groups over $S$, the sheaf 
$\Hom_{S-\textrm{gp}}(G, \gm)$ of group homomorphisms from 
$G$ to $\gm$ is denoted by $G^D$ and is called the Cartier 
dual of $G$. The \emph{fppf} sheaf $\shExt^i(G, \gm)$ will 
often be shortened as $E^i(G)$. We denote by~$G^0$ and 
$G^{\tau}$ respectively the neutral and torsion component 
of~$G$ (see \emph{e.g.}~\cite[3.3.1]{Brochard_finiteness}). 
If $A$ is an abelian scheme -- \emph{i.e.} a smooth and 
proper group scheme with connected fibers -- we denote by 
$A^t$ its dual~$\Pic^0_{A/S}$. It is an abelian scheme, 
isomorphic to $E^1(A)$ (\ref{dual_and_ext1_abelian}) and 
the 
isomorphism will often be used implicitly. We found it 
convenient to give (perhaps uncommon) names to some objects. 
We hope that it will not bother the reader. Here is a list 
of those terms with the place where the definition can be 
found: commutative group stack (\ref{def_cgs}), dual of a 
commutative group stack (\ref{def_dual_cgs}), abelian stack 
(\ref{def_abelian_stacks}), duabelian group 
(\ref{def_duabelian_groups}), Cartier group 
(\ref{Cartier_groups}), evaluation map $e_G$ 
(\ref{def_eval_map}), $H^{-1}(G)$ and $H^0(G)$ 
(\ref{def_hi}), exact sequence of commutative group stacks 
(\ref{def_suites_exactes}).
\bigskip

\noindent {\bf Acknowledgments.}
I warmly thank Michel Raynaud for Proposition~\ref{devissage_raynaud}, and 
Laurent Moret-Bailly who first gave me the idea of introducing the map $x\mapsto 
x^*$. I am also grateful to Jean Gillibert, Bertrand Toën, and Olivier 
Wittenberg for helpful conversations or comments. The content of 
section~\ref{section_CS} started out as joint work with Niels Borne and Angelo 
Vistoli, whom I thank heartily. I thank the anonymous 
referees for 
their very careful reading of the paper and their numerous comments and 
corrections that greatly clarified the exposition. The author was partially 
supported by ANR Arivaf (ANR-10-JCJC 0107).

\section{Commutative group stacks}
\label{section_group_stacks}

In order to fix some notations, and for the convenience of 
the reader, we recall here very briefly the basics about 
commutative group stacks, from SGA~4~\cite[XVIII~1.4]{SGA4}. These are 
the stacks called ``champs de Picard strictement commutatifs'' in 
\cite{SGA4}. For the 
details, the reader is refered to the original source. Then 
we define a notion of exact sequence of commutative group 
stacks, and two classes of commutative group stacks or sheaves 
that will be used in the sequel. Note that the group law is denoted additively 
in this section, but sometimes in the paper it will be more convenient to use a 
multiplicative notation.

\begin{defi}
\label{def_Picard_cat}
 A \emph{Picard category} is a groupoid $G$ (\emph{i.e.} a category in which all morphisms are isomorphisms) together with a functor
$$+ : G\times G \lto G,$$
a functorial associativity isomorphism
$$\xymatrix{\lambda_{x,y,z} : (x+y)+z \ar[r]^-{\sim}& x+(y+z)},$$
and a functorial commutativity isomorphism
$$\xymatrix{\tau_{x,y} : x+y \ar[r]^-{\sim}& y+x}$$
satisfying the following properties:
\begin{enumerate}
 \item the pentagon axiom and the hexagon axiom (see 
\cite[XVIII 1.4.1]{SGA4}),
\item for any objects $x,y$ of $G$, $\tau_{y,x}\circ \tau_{x,y}=\id_{x+y}$, and $\tau_{x,x}=\id_{x+x}$,
\item for any object $x$ of $G$ the functor $y\mapsto x+y$ is an equivalence of categories.
\end{enumerate}
\end{defi}

\begin{defi}
\label{def_cgs}
 Let $S$ be a scheme. A \emph{commutative group stack} over~$S$ is an $S$-stack~$G$ together with a morphism $+ : G\times_S G\to G$, and 2-isomorphisms $\lambda$, $\tau$ as above, such that for any $S$-scheme~$U$, the fiber category $G(U)$ equipped with the restrictions of $+, \lambda$ and $\tau$ is a Picard category.
\end{defi}

\begin{remarque}\rm
A commutative group stack $G$ always has a neutral object, 
\emph{i.e.} a pair~$(e,\eps)$ where $e$ is an object 
of~$G(S)$ and $\eps$ is an isomorphism $e+e\to e$. Moreover, 
this neutral object is unique up to a unique isomorphism, 
and is neutral on the left and right in a natural sense 
(see~\cite[XVIII~1.4.4]{SGA4}). For a fixed neutral 
object~$(e,\eps)$, there is an \emph{inverse} morphism $- : 
G\to G$ which is unique up to a unique 2-isomorphism (see 
\cite[14.4.1]{LMB}).
\end{remarque}

\begin{defi}
\label{def_morphismes_groupes}
Commutative group stacks over $S$ naturally form a 2-category $\cgs$ as follows. 
Let $G, H$ be two commutative group stacks over~$S$. A \emph{homomorphism} from 
$G$ to~$H$ is a morphism of stacks $f : G\to H$ with a 2-isomorphism $\alpha_f 
: f\circ + \Rightarrow +\circ(f\times f)$ such that the following diagrams 
commute:
$$\xymatrix@C=1,2pc@R=1,2pc{
f(x\!+\!y)\ar[r]^-{\alpha_f} \ar[d]_{f(\tau_G)} & f(x)\!+\!f(y)
\ar[d]^{\tau_H} \\
f(y\!+\!x)\ar[r]_-{\alpha_f} & f(y)\!+\!f(x)
} \qquad
\xymatrix@C=1,2pc@R=1,2pc{
f((x\!+\!y)\!+\!z)\ar[r]^-{\alpha_f} \ar[d]_{f(\lambda_G)} & 
f(x\!+\!y)\!+\!f(z) \ar[r]^-{\alpha_f+\id}& 
(f(x)\!+\!f(y))\!+\!f(z)\ar[d]^{\lambda_H} \\
f(x\!+\!(y\!+\!z))\ar[r]^-{\alpha_f} & f(x)\!+\!f(y\!+\!z) 
\ar[r]^-{\id+\alpha_f} & f(x)\!+\!(f(y)\!+\!f(z))\, .
}$$
If $f, g$ are homomorphisms from $G$ to $H$, a 2-isomorphism 
$u : f\Rightarrow g$ is a 2-isomorphism between the 
underlying morphisms of stacks, that is compatible with 
$\alpha_f$ and $\alpha_g$ (see \cite[XVIII~1.4.6]{SGA4}). 
We denote by~$\Hom_{cgs}(G,H)$ the category of 
homomorphisms from~$G$ to~$H$.
\end{defi}

Let us denote by $\ov{\cgs}$ the category whose objects are commutative group 
stacks and whose morphisms are isomorphism classes of homomorphisms. There is a 
very convenient description of $\cgs$ and $\ov{\cgs}$ in terms of length 1 
complexes of sheaves of abelian groups, as follows. Let $G^{\bullet} = [G^{-1} 
\to G^0]$ be a length 1 complex of sheaves of abelian groups. We denote by 
$\ch(G^{\bullet})$ the quotient stack $[G^0/G^{-1}]$. It is naturally a 
commutative group stack over $S$. Let $D^{[-1,0]}(S,\Z)$ denote the derived 
category of length 1 complexes of sheaves of abelian groups over~$S$ 
(equivalently, this is the full subcategory of the derived category of sheaves 
on~$S$ of complexes which are homologically concentrated in degrees -1 and 0).

\begin{thm}[{\cite[XVIII, 1.4.15 and 1.4.17]{SGA4}}]\ 
\label{description_cgs}

 \begin{enumerate}
  \item  The functor $\ch(.)$ induces an equivalence of categories from 
$D^{[-1,0]}(S,\Z)$ to $\ov{\cgs}$. In the sequel we denote by $G\mapsto 
G^{\flat}$ a quasi-inverse to this functor.
\item Let $C(S)$ denote the 2-category of complexes of abelian sheaves~$G^{\bullet}$ over~$S$, such that $G^i=0$ for $i\notin [-1, 0]$ and $G^{-1}$ is injective. Morphisms are morphisms of complexes and 2-morphisms are homotopies. Then $\ch(.)$ induces a 2-equivalence of 2-categories from $C(S)$ to $\cgs$.
 \end{enumerate}
\end{thm}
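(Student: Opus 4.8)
The plan is to prove the finer $2$-categorical assertion (2) first, and then deduce (1) from it by localizing and passing to isomorphism classes of $1$-morphisms. Recall that for a complex $G^{\bullet}=[G^{-1}\xrightarrow{d}G^{0}]$ the stack $\ch(G^{\bullet})=[G^{0}/G^{-1}]$ is the stackification of the prestack whose objects over $U$ are the sections of $G^{0}$ and whose arrows $g\rightarrow g'$ are the sections $h$ of $G^{-1}$ with $d(h)=g'-g$, the addition being induced by that of $G^{0}$ and $G^{-1}$. I would begin with two elementary verifications: the strict commutativity constraint $\tau_{x,x}=\id$ of Definition~\ref{def_Picard_cat} holds automatically, precisely because the addition on objects is the addition of the \emph{honest} abelian sheaf $G^{0}$ (this is the condition that forces a chain complex rather than a more exotic spectrum-like object), so $\ch(G^{\bullet})$ is genuinely a commutative group stack; and a direct computation identifies its invariants, namely $\Auts(e)\simeq\ker d$ and the sheaf of isomorphism classes of objects $\simeq\coker d$, i.e.\ $H^{-1}(G^{\bullet})$ and $H^{0}(G^{\bullet})$. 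Functoriality is then formal: a morphism of complexes yields a homomorphism of group stacks and a homotopy yields a $2$-isomorphism, which produces the $2$-functor $\ch(.)\colon C(S)\rightarrow\cgs$.

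The heart of (2) is \emph{essential surjectivity}, which I expect to be the main obstacle, since one must realize an \emph{arbitrary} group stack $G$ on the nose as a quotient $[G^{0}/G^{-1}]$, i.e.\ rigidify the underlying $\Auts(e)$-gerbe over $H^{0}(G)$. My approach: set $\pi=\Auts(e)$ and choose a monomorphism $\pi\hookrightarrow I$ into an injective abelian sheaf $I$, with cokernel $J=I/\pi$. Enlarging the band of $G$ from $\pi$ to $I$ (pushing out along $B\pi\rightarrow BI$) produces a group stack $\tilde G$ with $\Auts(e)\simeq I$ and the same sheaf of components $Q:=H^{0}(G)$. The point of using an injective is that all the higher obstructions vanish: $H^{i}(U,I)=0$ and $\Ext^{i}(-,I)=0$ for $i\geq 1$, so after choosing local additive liftings of generators of $Q$ the resulting obstruction $2$-cocycle with values in $I$ (the class in $\Ext^{2}(Q,I)=0$) can be corrected, and $\tilde G$ becomes equivalent to $Q\times BI$. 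One then recovers $G$ as the kernel of a canonical homomorphism $\tilde G\rightarrow BJ$, whose non-split part is an extension class $e\in\Ext^{1}(Q,J)$; reading this off yields an extension $0\rightarrow J\rightarrow G^{0}\rightarrow Q\rightarrow 0$ together with the composite $I\twoheadrightarrow J\hookrightarrow G^{0}$, so that $G^{\bullet}=[I\rightarrow G^{0}]$ lies in $C(S)$ and $\ch(G^{\bullet})\simeq G$.

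For \emph{full faithfulness} I would compare, for fixed $G^{\bullet},H^{\bullet}\in C(S)$, the functor
$$\Hom_{C(S)}(G^{\bullet},H^{\bullet})\longrightarrow\Hom_{cgs}(\ch(G^{\bullet}),\ch(H^{\bullet})),$$
where the left-hand Hom-category has chain maps as objects and homotopies as morphisms. The key tool is again the injectivity of the degree $-1$ term $H^{-1}$ of the \emph{target}. Given a homomorphism $u$ of group stacks, restricting to objects gives an additive map from the sheaf $G^{0}$ into $\ch(H^{\bullet})$; the obstruction to lifting it to a genuine sheaf homomorphism $G^{0}\rightarrow H^{0}$ lives in $\Ext^{1}(G^{0},H^{-1})$, which vanishes, and restriction to $G^{-1}$ then supplies the compatible map of complexes. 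Two chain maps inducing $2$-isomorphic homomorphisms are homotopic, and every $2$-isomorphism arises from a homotopy, for the same cohomological reason (obstructions and indeterminacies lie in $H^{1}(-,H^{-1})$ and $\Hom(-,H^{-1})$-type groups controlled by injectivity). This establishes the equivalence of Hom-categories and hence the $2$-equivalence of (2).

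Finally I would deduce (1). First, $\ch(.)$ sends quasi-isomorphisms to equivalences: a quasi-isomorphism induces isomorphisms on $H^{-1}$ and $H^{0}$, hence on $\Auts(e)$ and on the sheaf of components, and a homomorphism of group stacks inducing isomorphisms on both invariants is an equivalence. Second, every object of $D^{[-1,0]}(S,\Z)$ is represented by a complex in $C(S)$: given $[G^{-1}\rightarrow G^{0}]$, embed $G^{-1}\hookrightarrow I$ into an injective and pass to the quasi-isomorphic complex $[I\rightarrow (G^{0}\oplus I)/G^{-1}]$. Thus $\ch(.)$ factors through the localization and is essentially surjective onto $\cgs^{\flat}$. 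For full faithfulness at the level of $\cgs^{\flat}$ one checks, on representatives with injective degree $-1$ term, that isomorphism classes of homomorphisms coincide with homotopy classes of chain maps, and that for such targets homotopy classes compute the morphisms of $D^{[-1,0]}(S,\Z)$ — a Hom-computation that once more rests on the injectivity of the degree $-1$ term. Combining these gives the equivalence of (1).
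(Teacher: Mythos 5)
The paper offers no proof of this statement: it is quoted verbatim from SGA~4, XVIII~1.4.15 and~1.4.17, and the text explicitly refers the reader to the original source for details. So your proposal can only be measured against Deligne's argument. Your overall architecture (prove the $2$-categorical statement on complexes with injective degree $-1$ term, then deduce the derived-category statement by checking that $\ch(.)$ inverts quasi-isomorphisms, that every complex is quasi-isomorphic to one in $C(S)$ via $[I\to (G^0\oplus I)/G^{-1}]$, and that for such targets $\Hom_{D^{[-1,0]}}$ is computed by homotopy classes) is sound, and the full-faithfulness discussion, resting on the vanishing of $H^1$ and $\Ext^{\geq 1}$ with values in the injective sheaf $H^{-1}$ of the target, is essentially correct.

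The genuine gap is in your essential surjectivity argument, which is also where it departs most from Deligne. The crux of your construction is the claim that the pushed-out stack $\tilde G$ (band enlarged from $\pi$ to an injective $I$) is equivalent \emph{as a commutative group stack} to $Q\times BI$. Triviality of the underlying gerbe is fine ($H^2$ of the slice topos over $Q$ with coefficients in $I$ vanishes because restriction to a slice preserves injectives), but upgrading a section $s:Q\to\tilde G$ to an \emph{additive} section is the hard part: the defect $s(x)+s(y)-s(x+y)$ is an $I$-torsor on $Q\times Q$ with associativity and symmetry constraints, and identifying this datum with a class in the derived-functor group $\Ext^2(Q,I)$ is precisely the classification of commutative group stacks by $\Ext^2(H^0,H^{-1})$ --- a statement that is normally \emph{deduced from} the theorem you are proving, so as written the step is circular (and the phrase ``local additive liftings of generators of $Q$'' has no meaning for a general abelian sheaf $Q$). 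The same caveat applies to forming the ``kernel of $\tilde G\to BJ$'', which presupposes a theory of quotients and kernels of group stacks not yet available at this point. Deligne's route avoids all of this: given a Picard stack $P$, take $K^0$ to be the free abelian sheaf on the sheaf of objects of $P$ (so the tautological map $K^0\to P$ is additive and locally essentially surjective), let $K^{-1}$ be the sheaf of pairs consisting of a section of $K^0$ and a trivialization of its image in $P$, and check directly that $\ch([K^{-1}\to K^0])\to P$ is an equivalence; injectivity of the degree $-1$ term is then arranged afterwards by exactly the pushout trick you already use in your deduction of (1). If you want to keep your gerbe-splitting approach, you must supply an independent cocycle-level proof that a commutative extension of $Q$ by $BI$ with $I$ injective splits; otherwise the argument does not close.
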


\begin{exemple}\rm
 Let $G$ be an abelian sheaf over~$S$. We can regard it as a commutative group stack (with trivial automorphism groups). Then $G^{\flat}\simeq [0\to G]$. We can also look at the classifying stack~$BG$ of~$G$ (which classifies~$G$-torsors). Then $(BG)^{\flat}\simeq [G\to 0]$.
\end{exemple}

\begin{exemple}\rm
 Let $X$ be an algebraic stack over~$S$. Then the Picard stack $\champic(X/S)$ is a commutative group stack, and
$\champic(X/S)^{\flat}\simeq \tau_{\leq 0}(Rf_*\gm[1])$
where $f : X\to S$ is the structural morphism of~$X$ and where, for a complex 
$K^{\bullet}$ with differential $\partial^i : K^i \to K^{i+1}$, the truncation 
$\tau_{\leq 0}(K^{\bullet})$ is the complex given by $\tau_{\leq 
0}(K^{\bullet})^i=K^i$ if $i<0$, $\tau_{\leq 0}(K^{\bullet})^i=0$ if $i>0$ and 
$\tau_{\leq 0}(K^{\bullet})^0=\Ker \partial^0$.
\end{exemple}

\begin{exemple}\rm
\label{def_hom_gp_stacks}
 Let $G, H$ be two commutative group stacks over~$S$. Then 
the stack of homomorphisms $U\mapsto \Hom_{cgs}(G\times_S 
U,H\times_S U)$ is naturally a commutative group stack, 
denoted by~$\shHom(G,H)$. Then 
by~\cite[XVIII~1.4.18]{SGA4}, $\shHom(G,H)^{\flat}\simeq 
\tau_{\leq 0}\RHom(G^{\flat},H^{\flat})$.
\end{exemple}

\begin{defi}
\label{def_hi}
 If $G$ is a commutative group stack over~$S$, we denote by $H^0(G)$ its coarse moduli sheaf (which is an abelian $S$-sheaf) and by $H^{-1}(G)$ the automorphism group of a neutral section of~$G$. Note that, for any complex~$G^{\bullet}$ as above, there are canonical isomorphisms $H^i(G^{\bullet})\to H^i(\ch(G^{\bullet}))$.
\end{defi}

The following useful proposition is stated without proof 
in~\cite{Chen_Zhu_Geometric_Langlands_Prime}.

\begin{prop}[{\cite[A.2.2]{Chen_Zhu_Geometric_Langlands_Prime}}]
 \label{prop:suites_exactes}
Let $j : A\to B$ and $\pi : B\to C$ be two morphisms of commutative group 
stacks over a base scheme $S$. The following are equivalent:
\begin{enumerate}
 \item The morphism $\pi$ is an \emph{fppf} epimorphism, and $j$ identifies $A$ 
with the kernel of $\pi$ (i.e. the fiber of $\pi$ above the neutral object 
of $C$).
\item There exists a morphism $C^\flat \to A^\flat[1]$ in $D(S,\Z)$ (the 
derived category of abelian sheaves on $S$)  such that
\[
 A^\flat \to B^\flat \to C^\flat \to A^\flat[1]
\]
is an exact triangle.
\end{enumerate}
\end{prop}
\begin{proof}
 Assume (2). The exact triangle induces a long exact sequence
\[
 0\to H^{-1}(A) \to H^{-1}(B) \to H^{-1}(C) \to
H^0(A)\to H^0(B)\to H^0(C) \to 0
\]
In particular $H^0(B)\to H^0(C)$ is an \emph{fppf} epimorphism, hence $\pi : 
B\to C$ is an epimorphism. To prove that $A$ is the kernel of $\pi$, we 
will use the 2-equivalence~\ref{description_cgs}~(2). Let 
$B^{\bullet}=[\xymatrix{B^{-1}\ar[r]^{d_B} & B^0}]$ and
$C^{\bullet}=[\xymatrix{C^{-1}\ar[r]^{d_C} & C^0}]$
be complexes in $C(S)$ corresponding to $B$ and $C$ (in particular $B^{-1}$ and 
$C^{-1}$ are injective) and let $\pi^{\bullet}$ be a morphism of 
complexes corresponding to $\pi$. Since we have the given exact triangle, we 
may assume that $A^{\flat}[1]$ is the mapping cone of $\pi^{\bullet}$, i.e. 
in degrees -1, 0 and 1, $A^{\flat}$ is the complex
\[
 \xymatrix{
B^{-1}\ar[r]^-{d_A^{-1}}& B^0\oplus C^{-1}\ar[r]^-{d_A^0} &C^0
}
\]
where $d_A^{-1}=(d_B,-\pi^{-1})$ and $d_A^0=(-\pi^0)\oplus(-d_C)$, and the 
other terms are zero. This complex is quasi-isomorphic to
$A^{\bullet}=[\xymatrix{B^{-1}\ar[r]^{d_A^{-1}} & \Ker d_A^0}]$, which belong 
to $C(S)$. 
Now the morphism $j : A\to B$ corresponds to the morphism of complexes 
$j^{\bullet} : A^{\bullet}\to B^{\bullet}$ where $j^{-1}$ is the identity of
$B^{-1}$ and $j^0$ is the composition $\Ker d_A^0\inj B^0\oplus C^{-1} \to 
B^0$.
Let $h : \Ker d_A^0 \to C^{-1}$ be the projection to $C^{-1}$. Then we have the 
relations 
$hd_A^{-1}=-\pi^{-1}j^{-1}$ and $d_Ch=-\pi^0j^0$ hence $h$ is a homotopy from 
$\pi^{\bullet}j^{\bullet}$ to 0. In other words, in $C(S)$ we have a 
2-commutative diagram
\[
 \xymatrix@R=0,2pc@C=0,2pc{
A^{\bullet} \ar[rr] \ar[dd]_{j^{\bullet}} && 0 \ar[dd]\\
&^h\FlecheNE \\
B^{\bullet} \ar[rr]_{\pi^{\bullet}} &&C^{\bullet}.
}
\]
We will prove that this diagram is 2-cartesian. In view of the 
2-equivalence~\ref{description_cgs}~(2) this will prove that $A$ is the kernel 
of $\pi$. We have to prove the following assertions:
\begin{enumerate}
 \item[(i)] For any object $D^{\bullet}$ of $C(S)$ and any pair 
$(f^{\bullet},\alpha)$, where $f^{\bullet} : D^{\bullet} \to B^{\bullet}$ is a 
morphism in $C(S)$ and $\alpha$ is a homotopy from $\pi^{\bullet}f^{\bullet}$ 
to 0, there exists a morphism $g^{\bullet} : D^{\bullet} \to A^{\bullet}$ and a 
homotopy 
$\beta$ from $f^{\bullet}$ to $j^{\bullet}g^{\bullet}$ such that 
$\alpha=(g^{\bullet}\star h)\circ(\pi^{\bullet}\star \beta)$ (viewed as 
2-morphisms in the 2-category $C(S)$), i.e. such that 
$\alpha=hg^0+\pi^{-1}\beta$ in terms of homotopies (i.e. morphisms from $D^0$ 
to $C^{-1}$).
\item[(ii)] For a given $(f^{\bullet},\alpha)$, the pair $(g^{\bullet},\beta)$ 
is unique up to a 
unique isomorphism. More precisely if $(g'^{\bullet},\beta')$ is another such 
pair, then 
there exists a unique homotopy $\gamma$ from $g^{\bullet}$ to $g'^{\bullet}$ 
such that $(j^{\bullet}\star\gamma)\circ \beta=\beta'$, i.e. such that 
$j^{-1}\gamma+\beta=\beta'$.
\end{enumerate}
\[
 \xymatrix@C=5pc{
D^{\bullet} \ar[d]_{g^{\bullet}} \ar@/_3pc/[dd]_{f^{\bullet}}&
D^{-1} \ar[r]^{d_D} \ar[d]_{g^{-1}} \ar@/_3pc/[dd]_{f^{-1}}&
D^0 \ar[d]^{g^0}  \ar@{.>}[dl]_(.6){\gamma} \ar@{.>}[ddl]_(.78){\beta} 
\ar@{.>}[dddl]^(.4){\alpha} \ar@/^3pc/[dd]^{f^{0}}\\
A^{\bullet} \ar[d]_{j^{\bullet}}&
B^{-1} \ar[r]^{d_A^{-1}} \ar[d]_{j^{-1}} & \Ker d^0_A \ar[d]^{j^{0}} 
\ar@{.>}[ddl]^(.69){h}\\
B^{\bullet} \ar[d]_{\pi^{\bullet}}&
B^{-1} \ar[r]^{d_B} \ar[d]_{\pi^{-1}} & B^0 \ar[d]^{\pi^0}\\
C^{\bullet}&
C^{-1}\ar[r]^{d_C}& C^0}
\]
Let us first prove (ii). Since $j^{-1}=\id$ we only have to check that 
$\gamma:=\beta'-\beta$ is indeed a homotopy from $g^{\bullet}$ to 
$g'^{\bullet}$, i.e. that it satisfies $\gamma d_D=g'^{-1}-g^{-1}$ and 
$d_A^{-1}\gamma=g'^0-g^0$. The first of these relations follows from $\beta 
d_D=j^{-1}g^{-1}-f^{-1}$ and $\beta'd_D=j^{-1}g'^{-1}-f^{-1}$. The second one 
follows from the equalities
\begin{align*}
 j^0d_A^{-1}\gamma&=d_Bj^{-1}\gamma=d_B(\beta'-\beta)
=(j^0g'^0-f^0)-(j^0g^0-f^0)=j^0(g'^0-g^0), \textrm{ and }\\
hd_A^{-1}\gamma &=-\pi^{-1}j^{-1}\gamma=\pi^{-1}(\beta-\beta')
=(\alpha-hg^0)-(\alpha-hg'^0)=h(g'^0-g^0)\, .
\end{align*}
Now let us prove (i). Let $(f^{\bullet},\alpha)$ be a pair as in (i). 
The morphism $(f^0,\alpha) : D^0\to B^0\oplus C^{-1}$ factorizes through 
$\Ker d_A^0$ because $d_C\alpha=-\pi^0f^0$ (since $\alpha$ is a homotopy from 
$\pi^{\bullet}f^{\bullet}$ to 0). Then the pair $(g^{\bullet},\beta)$ 
defined by $g^{-1}=f^{-1}, g^0=(f^0,\alpha)$ and $\beta=0$ has the 
required properties.
This finishes the proof of (1).

Conversely, assume (1). Let $C_{\pi}$ denote the mapping cone of $\pi^\flat : 
B^\flat \to C^\flat$. Then $H^0(C_{\pi})$ is the cokernel of $H^0(B)\to 
H^0(C)$, which is zero since $\pi : B\to C$ is an epimorphism. Hence $C_{\pi}$ 
is concentrated in degrees $[-2,-1]$, so that $C_{\pi}[-1]$ belongs to 
$D^{[-1,0]}(S,\Z)$ and we can consider its associated commutative group stack 
$\ch(C_{\pi}[-1])$. Since we have an exact triangle
\[
 C_{\pi}[-1]\to B\to C\to C_{\pi}
\]
by the above we know that $\ch(C_{\pi}[-1])$ is the kernel of $\pi : B\to C$, 
hence it is isomorphic to $A$. The result follows.
\end{proof}

\begin{defi}
\label{def_suites_exactes}
\begin{enumerate}
 \item A sequence of commutative group stacks $0\to A \to B \to C\to 0$ is said 
to be 
\emph{exact} if the equivalent conditions of~\ref{prop:suites_exactes} hold.
\item The sequence is called super-exact if moreover the connecting 
homomorphism 
from $H^{-1}(C)$ to $H^0(A)$ is zero, i.e. if both sequences of group sheaves
$$0\lto H^i(A) \lto H^i(B) \lto H^i(C)\lto 0$$ are exact ($i=-1, 0$).
\end{enumerate}
\end{defi}

\begin{remarque}\rm
If $A, B, C$ are sheaves of groups, the definition coincides with the usual 
notion of an exact sequence. However in the general case, be careful that the 
morphism $A\to B$ does not need to be a 
monomorphism for the sequence to be exact. For instance if $G$ is any (nonzero) 
sheaf then the kernel of the 
trivial morphism $S\to BG$ is $G\to S$ and the sequence $G\to S\to BG$ is 
exact. Similarly the kernel of the identity $BG\to BG$ is the trivial morphism 
$S\to BG$, which is not a monomorphism either (in this latter example, the 
sequence $S\to BG\to BG$ is even super-exact).
Note also that the exactness of both $H^i(\bullet)$ sequences does 
not imply the exactness of the sequence $0\to A \to B\to C\to 0$.
\details{
Par exemple, ....
} It does if you assume moreover that $H^{-1}(C)=0$, in which case the sequence 
is actually super-exact. This will often be the case in that paper.
\end{remarque}

\begin{exemple}\rm
\label{structural_exact_sequence}
 For any commutative group stack $G$ over $S$, there is a canonical short exact sequence:
$$\xymatrix{0\ar[r]& BH^{-1}(G) \ar[r]^-j& G  \ar[r]^-{\pi}& H^0(G)  \ar[r]& 0\, ,}$$
corresponding to the exact triangle $0\to H^{-1}(G)[1]\to G^{\flat} \to H^0(G) 
\to 0.$
\end{exemple}

\begin{prop}[abelian stacks]
\label{prop_abelian_stacks}
 Let $G$ be a commutative group stack over a base scheme~$S$. The following are equivalent:
\begin{enumerate}
 \item $G$ is algebraic, proper, flat, and of finite presentation, with 
connected and reduced (geometric) fibers. Its inertia stack $I_G$ is finite, 
flat and of finite presentation over~$G$.
\item $H^{-1}(G)$ is a finite, flat and finitely presented group scheme. $H^0(G)$ is an abelian scheme.
\end{enumerate}
\end{prop}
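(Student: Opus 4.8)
The plan is to read off both conditions from the structural exact sequence of Example~\ref{structural_exact_sequence},
$$0 \to BH^{-1}(G) \xrightarrow{j} G \xrightarrow{\pi} H^0(G) \to 0,$$
which presents $\pi$ as a gerbe banded by $H^{-1}(G)$, and to combine it with the fact that the inertia of a group stack is ``constant''. Writing $G\simeq \ch([G^{-1}\xrightarrow{d} G^0])$ as in Theorem~\ref{description_cgs}, an object of $G$ over $U$ is a section of $G^0$ and its automorphism group is $\Ker d = H^{-1}(G)$, independently of the object; hence there is a canonical isomorphism of group schemes over $G$, namely $I_G \simeq G\times_S H^{-1}(G)$. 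The neutral section $e:S\to G$ shows that $G\to S$ is surjective, so it is faithfully flat as soon as it is flat; finiteness, flatness and finite presentation being fppf-local on the base, faithfully flat descent then turns ``$I_G$ finite, flat and finitely presented over $G$'' into ``$H^{-1}(G)$ finite, flat and finitely presented over $S$'', and conversely by base change. This identifies the hypotheses bearing on the inertia in both directions and reduces the statement to comparing the conditions on $G$ with those on $H^0(G)$, under the standing assumption that $H^{-1}(G)$ is finite, flat and of finite presentation.

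For $(1)\Rightarrow(2)$ I would first invoke rigidification along $H^{-1}(G)$: since the latter is finite flat of finite presentation, $H^0(G)$ is an algebraic space and $\pi$ is an fppf, surjective, finitely presented and proper gerbe, properness of $\pi$ following from that of $BH^{-1}(G)\to S$. Flatness, finite presentation and properness of $H^0(G)\to S$ then descend from $G\to S$ along the faithfully flat quasi-compact $\pi$; for properness one also uses that $H^0(G)\to S$ is separated, the unit section being closed as the image under the submersion $\pi$ of the closed substack $BH^{-1}(G)=\pi^{-1}(\text{unit})$. Formation of $H^0$ commutes with base change (it is a cokernel of fppf sheaves), so over a geometric point $\bar s$ one has $H^0(G)_{\bar s}=H^0(G_{\bar s})$; this is a proper group algebraic space of finite type over the field $\kappa(\bar s)$, hence a scheme, it is connected (image of the connected $G_{\bar s}$) and reduced (reducedness descends along the faithfully flat $\pi_{\bar s}$ from the reduced $G_{\bar s}$). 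A reduced group scheme of finite type over the perfect field $\kappa(\bar s)$ is smooth, so $H^0(G)_{\bar s}$ is a connected proper smooth group scheme, i.e. an abelian variety. Consequently $H^0(G)\to S$ is flat, finitely presented and proper with smooth connected fibers, hence smooth (smoothness being fibrewise for flat, finitely presented morphisms); it is thus a smooth proper group algebraic space with connected fibers, and by Raynaud's theorem such an algebraic space is a scheme, so $H^0(G)$ is an abelian scheme.

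For $(2)\Rightarrow(1)$, the gerbe $\pi:G\to H^0(G)$ banded by the finite flat finitely presented $H^{-1}(G)$ makes $G$ algebraic and flat, finitely presented and proper over $H^0(G)$; composing with the abelian scheme $H^0(G)\to S$ yields that $G\to S$ is flat, finitely presented and proper, while $I_G\simeq G\times_S H^{-1}(G)$ is finite flat finitely presented over $G$ by base change. For the geometric fibers over $\bar s$, set $A:=H^0(G)_{\bar s}$; connectedness of $G_{\bar s}$ is immediate since the gerbe $G_{\bar s}\to A$ induces a homeomorphism of underlying spaces onto the connected $A$. The delicate point is reducedness when $H^{-1}(G)_{\bar s}$ is non-smooth (e.g. $\mu_p$ or $\alpha_p$), for then $G_{\bar s}$ is not smooth over $\kappa(\bar s)$. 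Here I would embed the finite group scheme $G_0:=H^{-1}(G)_{\bar s}$ into $H:=\GL_n$ and push the gerbe out to an $H$-gerbe $G_{\bar s}'\to A$: the natural morphism $G_{\bar s}\to G_{\bar s}'$ is representable and smooth, with fibers $H/G_0$, while $G_{\bar s}'\to A$ is a gerbe banded by the smooth group $H$, hence smooth; thus $G_{\bar s}'$ is smooth, in particular reduced, over $\kappa(\bar s)$, and since a scheme (or stack) smooth over a reduced one is reduced, $G_{\bar s}$ is reduced.

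I expect the main obstacle to be the geometric-fibre analysis of the reverse implication in the presence of infinitesimal inertia, together with the upgrade of $H^0(G)$ from an abelian algebraic space to an honest abelian scheme: the former is handled by the $\GL_n$-embedding above, and the latter rests on Raynaud's representability theorem for smooth proper group algebraic spaces with connected fibers. The various descent statements---flatness, finite presentation and especially properness and separatedness along the gerbe $\pi$---are standard but should be cited with care.
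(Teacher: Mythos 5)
Your proof is correct and follows the same overall strategy as the paper: identify the inertia condition with the condition on $H^{-1}(G)$ via $I_G\simeq G\times_S H^{-1}(G)$, use the coarse moduli map $\pi : G\to H^0(G)$ (a gerbe banded by $H^{-1}(G)$, faithfully flat and of finite presentation by Artin's theorem) to transfer properness, flatness and finite presentation in both directions, get connectedness from the fact that $\pi$ is a homeomorphism on underlying spaces, and descend reducedness along $\pi$ for $(1)\Rightarrow(2)$.

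The one place where you genuinely diverge is the reducedness of the geometric fibers of $G$ in $(2)\Rightarrow(1)$. The paper reduces, via EGA~IV$_2$~6.6.1, to showing that $BF$ is reduced for $F$ a finite group scheme over a field, and proves this by pulling a smooth presentation $U\to BF$ back along the fppf atlas $\operatorname{Spec}k\to BF$ and descending reducedness. You instead prove the stronger statement that $BF$ is \emph{smooth}, by embedding $F$ into $\GL_n$ and exhibiting the smooth cover $\GL_n/F\to BF$; this buys a cleaner conceptual reason but requires a word of care in the global formulation, since pushing out a gerbe banded by the abelian group $G_0$ along a non-central embedding $G_0\hookrightarrow\GL_n$ takes you outside abelian bands (Giraud's liens). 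Since reducedness is local and the gerbe is fppf-locally trivial, it suffices to run your argument for $BG_0$ itself, so the conclusion stands. You also spell out the upgrade from ``proper, flat, finitely presented group algebraic space with connected reduced geometric fibers'' to ``abelian scheme'' (smoothness of reduced group schemes over perfect fields, the fiberwise smoothness criterion, and Raynaud's representability theorem), a step the paper leaves implicit; making it explicit is a genuine improvement in completeness.
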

\begin{proof}
By definition of $I_G$ and $H^{-1}(G)$, there is a commutative diagram with 
cartesian squares
\[
 \xymatrix{
H^{-1}(G) \cartesien \ar[r] \ar[d] 
& I_G\cartesien \ar[r] \ar[d] & G \ar[d]^{\Delta} \\
S\ar[r] &G \ar[r]^-{\Delta} & G\times_S G
}
\]
where the morphism $S\to G$ corresponds to some neutral element $e$ of $G$. 
This proves that the condition on $I_G$ in (1) implies the 
condition on~$H^{-1}(G)$ in~(2).
Conversely, assume that $H^{-1}(G)$ is finite, flat and finitely presented.  
Then for any object $g$ of $G$, the ``translation by $g$'' isomorphism $\mu_g : 
G\to G$ induces an isomorphism of groups schemes $\Aut_G(e)\to 
\Aut_G(g)$. Since $\Aut_G(e)=H^{-1}(G)$, this proves that $I_G$ is finite, 
flat and of finite presentation over~$G$.
Hence the condition on $I_G$ in (1) is equivalent to the 
condition on~$H^{-1}(G)$ in~(2).

Under this condition, 
by~\cite[10.8]{LMB}, the coarse moduli sheaf~$H^0(G)$ is 
an algebraic space and the morphism $\pi : G\to H^0(G)$ is 
faithfully flat and locally of finite presentation. So $G$ 
is proper, flat, and of finite presentation if and only if 
the same holds for~$H^0(G)$.%
\details{La platitude est locale à la source pour fppf, de même que l'aspect ``localement de présentation finie''. Comme $\pi$ est une gerbe, c'est un homéomorphisme universel. Le diagramme commutatif
$$\xymatrix{
G\ar[r]^{\Delta_G} \ar[d] & G\times_S G
\ar[d] \\
H^0(G)\ar[r]_{\Delta_{H^0(G)}} & H^0(G)\times_S H^0(G)
}$$
montre que $\Delta_{H^0(G)}$ est universellement fermé si et 
seulement si $\Delta_G$ l'est. Donc $f : H^0(G) \to S$ est 
séparé (\cite[7.7]{LMB}) ssi $G$ l'est. Comme $\pi$ est 
universellement un homéomorphisme, $G$ quasi-compact 
équivaut à $H^0(G)$ quasi-compact. De même pour 
universellement fermé.}
It remains to prove that the fibers of $H^0(G)$ are 
connected and reduced if and only if those of~$G$ are. This 
is true for the connectedness since $\pi : G\to H^0(G)$ 
induces a homeomorphism of the underlying topological 
spaces. If the fibers of $G$ are reduced, 
by~\cite{EGA}~EGA~IV$_2$~2.1.13 so are those of~$H^0(G)$. 
Conversely assume~(2) and let us prove that the geometric 
fibers of~$G$ are reduced. By~\cite{EGA}~EGA~IV$_2$~6.6.1, 
it suffices to prove that the geometric fibers of~$\pi$ are 
reduced. This follows from the fact that, if~$F$ is a finite 
commutative group scheme over a field, then the stack~$BF$ is reduced. 
(It is algebraic by~\cite[10.13.1]{LMB}. Then use 
\cite{EGA} EGA IV$_2$ 2.1.13.)
\details{En effet les fibres géométriques de $\pi$ sont 
toutes de cette forme, avec $F=H^{-1}(G)$. Les gerbes sont 
bien neutres car on est sur un corps algébriquement clos 
donc elles sont des sections. Montrons maintenant 
l'assertion sur $BF$, plus précisément on montre que si $S$ 
est réduit, et si $F$ est un groupe séparé, plat et de 
présentation finie sur $S$, alors $BF$ est réduit. On a un 
morphisme canonique $S\to BF$ qui est fidèlement plat et de 
présentation finie. On sait aussi que $BF$ est algébrique 
(\cite[10.13.1]{LMB}). Soit $U\to BF$ une présentation 
lisse de $BF$. Il faut montrer que $U$ est réduit (c'est la 
définition). On regarde le carré cartésien
$$\xymatrix{
U\times_{BF} S\ar[r] \ar[d] &S
\ar[d] \\
U\ar[r] & BF\, .
}$$
Comme $S$ est réduit et $U\times_{BF} S \to S$ lisse, $U\times_{BF} S$ est réduit. Comme $U\times_{BF} S \to U$ est fidèlement plat, par EGA IV 2.1.13 on en déduit que $U$ est réduit.}
\end{proof}

\begin{defi}
\label{def_abelian_stacks}
 An abelian stack is a commutative group stack that satisfies the equivalent conditions of~\ref{prop_abelian_stacks}.
\end{defi}

\begin{prop}[duabelian groups]
\label{prop_duabelian_groups}
 Let $G$ be a sheaf of commutative groups over a base scheme~$S$. The following are equivalent:
\begin{enumerate}
 \item $G$ is a proper, flat, cohomologically flat and finitely presented algebraic space.
\item $G$ is an extension of a finite, flat and finitely presented group scheme~$F$ by an abelian scheme~$A$. In particular it is a scheme.
\end{enumerate}
\end{prop}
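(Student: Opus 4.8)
The plan is to prove the two implications separately; the substance lies in $(1)\Rightarrow(2)$, which I would obtain by a \emph{relative affinization} (Stein factorization) argument rendered uniform by the cohomological flatness hypothesis, while $(2)\Rightarrow(1)$ is a routine verification.

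For $(1)\Rightarrow(2)$, write $f:G\fleche S$ and set $\Ac:=f_*\Oc_G$. Since $f$ is proper, flat, finitely presented and cohomologically flat, $\Ac$ is a finite locally free $\Oc_S$-algebra whose formation commutes with arbitrary base change; flat base change together with the projection formula then identify $f_*\Oc_{G\times_S G}$ with $\Ac\otimes_{\Oc_S}\Ac$. Consequently the multiplication, unit and inverse of $G$ equip $\Ac$ with the structure of a (commutative, cocommutative) Hopf $\Oc_S$-algebra, so that $F:=\Spec_S\Ac$ is a finite, flat and finitely presented group scheme and the canonical morphism $q:G\fleche F$ arising from the adjunction $G\fleche\Spec_S f_*\Oc_G$ is a homomorphism. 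I then set $A:=\Ker q$ and claim that $0\fleche A\fleche G\fleche F\fleche 0$ is the desired extension.

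The heart of the argument is the computation of $A$ on geometric fibres. Over a geometric point $s$ with residue field $\bar k$, the fibre $G_s$ is a proper group scheme over $\bar k$ (a group algebraic space of finite type over a field is a scheme), and by base change $F_s=\Spec H^0(G_s,\Oc_{G_s})$ with $q_s$ the canonical map to the spectrum of global functions. Let $B:=(G_s^0)_{\red}$ be the reduced identity component, which by Cartier's theorem is a smooth connected proper subgroup, hence an abelian variety. The quotient $G_s^0\fleche G_s^0/B=:I_s$ is a $B$-torsor onto a finite infinitesimal group scheme, so $H^0(G_s^0,\Oc)=\Oc(I_s)$ and the restriction of $q_s$ to $G_s^0$ is precisely this quotient; as $q_s$ carries distinct connected components of $G_s$ to distinct components of $F_s$, one gets $\Ker q_s=\Ker(G_s^0\fleche I_s)=B$, so the fibres of $A$ are abelian varieties. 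To globalize I note that $A=G\times_{F,e_F}S$ is the pullback of the (closed) unit section $e_F$, hence a closed subspace of $G$ and thus proper over $S$; that $q$ is flat by the fibrewise criterion of flatness (both $G$ and $F$ are flat over $S$ and each $G_s\fleche F_s$ is faithfully flat), so that its base change $A\fleche S$ is flat; and that a flat, finitely presented morphism with smooth geometrically connected fibres is smooth with connected fibres. Hence $A\fleche S$ is an abelian algebraic space, which is automatically a scheme, that is, an abelian scheme.

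It remains to assemble the extension and treat the converse. Since $q$ is flat and fibrewise surjective it is faithfully flat with kernel $A$, so $G/A\xrightarrow{\sim}F$ and $0\fleche A\fleche G\fleche F\fleche 0$ is exact; because $G$ is then a torsor under the abelian scheme $A$ over the scheme $F$, it is itself a scheme, which yields the parenthetical assertion of~(2). For $(2)\Rightarrow(1)$, given such an extension the map $g:G\fleche F$ is an \emph{fppf} $A$-torsor, hence representable, proper, smooth and flat, so $G\fleche S$ is proper, flat and finitely presented; moreover $f_*\Oc_G=h_*g_*\Oc_G=h_*\Oc_F$ (where $h:F\fleche S$), with $g_*\Oc_G=\Oc_F$ compatibly with base change as $A$ is cohomologically flat and $h$ is finite locally free, so $f_*\Oc_G$ is finite locally free with formation commuting with base change, giving cohomological flatness. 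The main obstacle is the fibrewise identification $\Ker q_s=(G_s^0)_{\red}$ together with the two representability inputs (a finite-type group algebraic space over a field is a scheme, and an abelian algebraic space is a scheme); granting these, cohomological flatness is exactly the hypothesis that propagates the fibrewise picture into the global finite flat Hopf algebra $\Ac$ and the abelian scheme $A$.
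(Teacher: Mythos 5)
Your strategy for $(1)\Rightarrow(2)$ is essentially the paper's: form the affinization $F=\Spec_S(f_*\Oc_G)$, use cohomological flatness to make its formation commute with base change, check that $q:G\to F$ is a faithfully flat homomorphism by the fibrewise criterion, and identify $\Ker q$ fibrewise with the abelian variety $(G_s^0)_{\red}$. (Where you redo the fibre computation by hand, the paper simply cites SGA~3, VI$_B$~11.3.1 and~12.2; your version is fine, modulo attributing to Cartier what is really the structure theory over a perfect field.) The converse direction also matches the paper.

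There is, however, one genuine gap: the assertion that ``because $G$ is a torsor under the abelian scheme $A$ over the scheme $F$, it is itself a scheme.'' Representability of torsors under abelian schemes by schemes is a delicate matter: Raynaud's positive results require the base to be normal, or the torsor class to be torsion together with (local) projectivity of the abelian scheme, and here the base $F$ is a finite $S$-scheme with no normality or reducedness available. You give no argument or reference, and note that the paper itself, in the direction $(2)\Rightarrow(1)$, only deduces from the torsor structure that $G$ is a proper, flat algebraic space --- not a scheme. The paper establishes schemeness by a different device: Zariski-locally on $S$ the group $F$ is free of some rank $n$ and hence killed by $n$, so the $n$-power map $[n]_G$ factors through $A=\Ker q$ and gives an fppf epimorphism $G\to A$ with finite kernel; thus $G$ is finite over the abelian scheme $A$ (a scheme by Raynaud's theorem, which you already invoke), and an algebraic space that is affine over a scheme is a scheme. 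All the ingredients for this are already present in your proof, so you should replace your one-line claim by this argument.
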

\begin{proof}
 Assume~(1) and let us prove~(2). Let $f : G\to S$ be the structural morphism. Then the~$\Oc_S$-module $f_*\Oc_G$ is of finite type and flat (because $f$ is proper and cohomologically flat). Hence $G_{\textrm{af}}:=\Spec(f_*\Oc_G)$ is finite and flat.%
\details{Pour montrer que $f_*\Oc_G$ est plat, voici une ébauche de preuve. (Mais ça semble bien connu de toute manière.) Par passage à la limite, on peut supposer $S$ noethérien. Par un résultat de EGA 4 dont je n'ai pas noté la référence (serait-ce 11.8.1 ?), OPS que $S$ est un trait. Puis on peut citer Raynaud, ``Faisceaux amples...'' VII 3.2.}
 By~SGA~3~\cite[VI$_B$~11.3.1]{SGA3_new} (beware that we use the new edition of 
SGA~3), there is a group 
structure on~$G_{\textrm{af}}$ such that the canonical 
morphism $\rho : G\to G_{\textrm{af}}$ is a homomorphism. 
This latter morphism~$\rho$ is finitely presented because 
$G$ is. It is moreover faithfully flat: this can be checked 
on the fibers (fiberwise flatness criterion) and over a 
field it follows from~SGA~3~\cite[VI$_B$~12.2]{SGA3_new}. We use 
here the fact that, since $f$ is cohomologically flat, the 
formation of $G_{\textrm{af}}$ commutes with any base 
change. Let~$N$ denote the kernel of $\rho$. Since 
$G_{\textrm{af}}$ is separated the inclusion $N\to G$ is a 
closed immersion so $N$ is proper. It is faithfully flat and 
finitely presented because $\rho$ is, and it has smooth and 
connected fibers by~SGA~3~\cite[VI$_B$~12.2]{SGA3_new}. Hence 
$N$ is an abelian algebraic space. It remains to prove 
that~$G$ is a scheme. By a theorem of Raynaud, we already 
know that $N$ is a scheme 
(see~\cite[I, Theorem 1.9]{Chai_Faltings_Degeneration}). 
Since the question is Zariski-local on $S$, we may assume 
that $G_{\textrm{af}}$ is free of rank~$n$. It is then 
killed by~$n$, and the $n$-power map $n : G\to G$ yields an 
\emph{fppf} epimorphism $G\to N$ whose kernel $F$ is finite. 
In particular $G\to N$ is finite, hence~$G$ is a scheme and 
this finishes the proof of~(2).

Conversely assume~(2). Let $\pi : G\to F$ be the given 
\emph{fppf} epimorphism with kernel~$A$. Then~ $G$ is an 
$A$-torsor over~$F$, so by descent it is a proper, flat and 
finitely presented algebraic space (over $F$, hence also 
over~$S$). Since $F$ is cohomologically flat (because it is affine) and 
$\Oc_F\to 
\pi_*\Oc_G$ is universally an isomorphism (by descent, and 
because an abelian scheme has this property) it follows 
that~$G$ is cohomologically flat over~$S$, whence~(1).
\end{proof}

\begin{defi}
\label{def_duabelian_groups}
 A duabelian group is a sheaf of commutative groups that 
satisfies the equivalent conditions 
of~\ref{prop_duabelian_groups}.
\end{defi}

We will see later (\ref{thm_representabilite4}) that 
duabelian groups are precisely the duals of abelian stacks.

\begin{prop}
\label{prop:E0_E1_des_duabeliens}
Let $G$ be a duabelian group scheme over a base scheme $S$. 
Then~$G^D$ is finite, flat and finitely presented and $\shExt^1(G, 
\gm)$ is an abelian scheme.
\end{prop}
\begin{proof}
 There is an exact sequence
$\xymatrix@C=1pc{0 \ar[r] &A \ar[r]^i &G \ar[r] & F \ar[r]& 
0}$
where~$A$ is an abelian scheme and $F$ is a finite and flat 
group scheme. The statement is local so we may assume 
that $F$ is free of constant rank $n$. Since $A^D=0$ this 
sequence induces an isomorphism $F^D\simeq G^D$ hence 
$G^D$ 
is a finite and flat group scheme. Moreover, since 
$E^1(F)=0$ by~%
\ref{vanishing_ext_finite_or_multiplicative}, the morphism
$E^1(i) : E^1(G)\to E^1(A)$ is a monomorphism. On the other 
hand, since $F$ is killed by $n$ by \cite[\S 
1]{Oort_Tate_Group_Schemes}, the $n$-power map 
$[n]_G : G\to G$ induces a morphism
$\pi : G\to A$. The composition $\pi\circ i$ is the
\emph{fppf} epimorphism $[n]_A : A\to A$. Its kernel $_nA$ 
is a finite and flat group scheme over $S$ hence 
$E^1(_nA)=0$ and $E^1([n]_A) : E^1(A) \to E^1(A)$ is an 
epimorphism. Since $E^1([n]_A)=E^1(i)\circ E^1(\pi)$ this 
proves that $E^1(i)$ is an epimorphism. Now $E^1(i)$ is 
both a monomorphism and an epimorphism of \emph{fppf} 
sheaves, hence it is an isomorphism and this proves that 
$E^1(G)$ is an abelian scheme.
\end{proof}
\details{vieille preuve du corollaire :

Now we return to a general base scheme and 
assume that $G$ is cohomologically flat. Then forming 
$G_{\text{af}}$ commutes to \emph{any} base change so $\rho$ 
induces an isomorphism~$G_{\text{af}}^D\simeq G^D$. (Note 
that  in this case~$f_*\Oc_G$ is automatically flat.) This 
yields the flatness of $G^D$. By Artin's theorem 
\cite[10.8]{LMB}, $E^1(G)$, which 
by~\ref{particular_case_of_a_sheaf} is the coarse 
\emph{fppf} sheaf associated with $D(G)$, is an algebraic 
space locally of finite presentation. With the assumption 
that $2\in \Oc_S^{\times}$ we know moreover that $E^1(G)$ is 
flat and quasi-compact (because so is $D(G)$) and that its 
fibers are abelian varieties. It remains to prove 
that~$E^1(G)$ is proper (then it is an abelian scheme). For 
this question we can assume that~$S$ is the spectrum of a 
discrete valuation ring. But then by 
SGA~3~\cite[VI$_B$~12.10]{SGA3_new} the morphism $\rho : G\to 
G_{\text{af}}$ is faithfully flat and of finite 
presentation. Since $G$ is cohomologically flat we see that 
the kernel $N$ of $\rho$ is an abelian scheme. Then the 
exact sequence
$$
0\lto N\lto G\lto G_{\text{af}}\lto 0 
$$
induces an isomorphism $E^1(G)\to E^1(N)$ and this proves 
that $E^1(G)$ is proper.}

\section{Dual of a commutative group stack}
\label{section_dual}

\begin{defi}
\label{def_dual_cgs}
 Let $G$ be a commutative group stack over a base scheme 
$S$. We define its dual to be the stack of homomorphisms of 
commutative group stacks (\ref{def_hom_gp_stacks}) from $G$ to $B\gm$.
$$D(G)=\shHom(G, B\gm).$$
\end{defi}

\begin{remarque}\rm
\label{dual_dun_produit_de_morphismes}
If $\varphi : G\to H$ is a homomorphism of commutative group 
stacks, the composition with $\varphi$ defines a 
homomorphism $D(\varphi) : D(H)\to D(G)$. 
If $\varphi'$ is another such homomorphism, any 
2-isomorphism $\alpha : \varphi \Rightarrow \varphi'$ 
induces a 2-isomorphism $D(\alpha) : D(\varphi) \Rightarrow 
D(\varphi')$. This makes $D(.)$ a strict 2-functor from the 
2-category of commutative group stacks to itself. Note that 
$D(.)$ is additive on maps in the following sense. If 
$\varphi_1, \varphi_2$ are homomorphisms from $G$ to $H$, then 
there is a functorial isomorphism $D(\varphi_1+\varphi_2) 
\simeq D(\varphi_1)+D(\varphi_2)$ of additive homomorphisms 
from $D(H)$ to $D(G)$ in $\Hom_{cgs}(D(H), D(G))$.
\end{remarque}

\begin{remarque}\rm Forming the dual of a commutative group stack commutes with 
base change: for $G$ as above and for any morphism of schemes $S'\to S$, there 
is a canonical isomorphism 
$D(G\times_S S') \simeq D(G)\times_S S'$.
\end{remarque}

The next lemma gives a description of the group sheaves $H^{-1}(D(G))$ and 
$H^0(D(G))$ attached to the dual $D(G)$ in terms of those attached to $G$.

\begin{lem}%
\label{description_dual}
 Let $G$ be a commutative group stack. Then:
\begin{itemize}
 \item[a)] $H^{-1}(D(G))\simeq H^0(G)^D$ ;
\item[b)] There is an exact sequence:
$$0\to E^1(H^0(G)) \to H^0(D(G)) \to H^{-1}(G)^D \to 
E^2(H^0(G))\, .$$
\end{itemize}
(Recall that $G^D$ denotes the sheaf $\Hom(G,\gm)$ of group 
homomorphisms and $E^i(G)$ denotes the \emph{fppf} sheaf 
$\fExt^i(G,\gm)$.)
\end{lem}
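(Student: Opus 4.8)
The plan is to transport the whole statement through the dictionary of~\ref{description_cgs}--\ref{def_hom_gp_stacks} and reduce it to a computation of $\fExt$ sheaves. Since $(B\gm)^{\flat}\simeq\gm[1]$, Example~\ref{def_hom_gp_stacks} gives
$$D(G)^{\flat}=\shHom(G,B\gm)^{\flat}\simeq\tau_{\leq 0}\RHom(G^{\flat},\gm[1])\simeq\tau_{\leq 0}\bigl(\RHom(G^{\flat},\gm)[1]\bigr).$$
As $D(G)^{\flat}$ is concentrated in degrees $-1$ and $0$, and the truncation $\tau_{\leq 0}$ leaves $H^{-1}$ and $H^0$ unchanged, the shift by one identifies
$$H^{-1}(D(G))\simeq H^0\bigl(\RHom(G^{\flat},\gm)\bigr),\qquad H^0(D(G))\simeq H^1\bigl(\RHom(G^{\flat},\gm)\bigr).$$
Thus it suffices to express the hyper-$\fExt$ sheaves $H^0\bigl(\RHom(G^{\flat},\gm)\bigr)$ and $H^1\bigl(\RHom(G^{\flat},\gm)\bigr)$ in terms of the two cohomology sheaves $H^{-1}(G)$ and $H^0(G)$ of $G^{\flat}$.

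First I would apply the exact contravariant functor $\RHom(-,\gm)$ to the structural short exact sequence of complexes from~\ref{structural_exact_sequence},
$$0\to H^{-1}(G)[1]\to G^{\flat}\to H^0(G)\to 0.$$
This turns the corresponding distinguished triangle around and, using $\RHom(H^{-1}(G)[1],\gm)\simeq\RHom(H^{-1}(G),\gm)[-1]$, produces the triangle
$$\RHom(H^0(G),\gm)\to\RHom(G^{\flat},\gm)\to\RHom(H^{-1}(G),\gm)[-1]\to\RHom(H^0(G),\gm)[1].$$
Passing to cohomology sheaves, writing $E^i(-)=\fExt^i(-,\gm)$, and using $H^n\bigl(\RHom(F,\gm)[-1]\bigr)=E^{n-1}(F)$, I obtain a long exact sequence
$$\cdots\to E^n(H^0(G))\to H^n\bigl(\RHom(G^{\flat},\gm)\bigr)\to E^{n-1}(H^{-1}(G))\to E^{n+1}(H^0(G))\to\cdots$$

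It then remains to extract degrees $0$ and $1$, using that $E^i$ vanishes for $i<0$. In degree $n=0$ the two neighbouring terms $E^{-2}(H^{-1}(G))$ and $E^{-1}(H^{-1}(G))$ vanish, so the sequence collapses to an isomorphism $H^0\bigl(\RHom(G^{\flat},\gm)\bigr)\simeq E^0(H^0(G))=H^0(G)^D$, which is~(a). In degree $n=1$ the incoming term $E^{-1}(H^{-1}(G))$ vanishes, so the sequence becomes
$$0\to E^1(H^0(G))\to H^1\bigl(\RHom(G^{\flat},\gm)\bigr)\to E^0(H^{-1}(G))\to E^2(H^0(G)),$$
the final arrow being the connecting homomorphism. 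Since $E^0(H^{-1}(G))=H^{-1}(G)^D$ and $H^1\bigl(\RHom(G^{\flat},\gm)\bigr)\simeq H^0(D(G))$, this is precisely~(b).

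The content is entirely formal; the only points demanding care are bookkeeping ones. One must keep track of the two shifts — the $[1]$ coming from $(B\gm)^{\flat}\simeq\gm[1]$ and the $[-1]$ produced by $\RHom(H^{-1}(G)[1],\gm)$ — and check that $\tau_{\leq 0}$ does not interfere in the relevant degrees, which it does not because $D(G)^{\flat}$ already lives in $[-1,0]$. Equivalently, one could organise the same computation through the hyper-$\fExt$ spectral sequence $E_2^{p,q}=\fExt^p\bigl(H^{-q}(G^{\flat}),\gm\bigr)\Rightarrow H^{p+q}\bigl(\RHom(G^{\flat},\gm)\bigr)$, whose low-degree edge maps and single differential $d_2\colon E_2^{0,1}\to E_2^{2,0}$ reproduce (a) and the four-term sequence in (b); the truncation-triangle argument above simply avoids any discussion of convergence.
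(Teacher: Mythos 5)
Your proof is correct, and it is the natural argument: the paper does not prove this lemma itself but cites it from Bunke--Schick--Spitzweck, and your computation (identify $D(G)^{\flat}$ with $\tau_{\leq 0}\RHom(G^{\flat},\gm)[1]$, dualize the truncation triangle $H^{-1}(G)[1]\to G^{\flat}\to H^0(G)$, and read off the long exact sequence in degrees $0$ and $1$ using the vanishing of $E^i$ for $i<0$) is exactly the intended one. The shift bookkeeping and the observation that $\tau_{\leq 0}$ is harmless in degrees $-1$ and $0$ are both handled correctly.
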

\begin{proof}
By~\ref{def_hom_gp_stacks}, $D(G)^{\flat}\simeq \tau_{\leq 0}\RHom(G^{\flat}, 
\gm[1])$. Hence for $i=-1$ or $i=0$, $H^i(D(G))\simeq 
\shExt^{i+1}(G^{\flat},\gm)$. For $i=-1$ this yields $H^{-1}(D(G))\simeq 
\Hom(H^0(G),\gm)$, whence a). For $i=0$ we get $H^0(D(G))\simeq 
\shExt^{1}(G^{\flat},\gm)$. 
 There is a spectral sequence (see e.g.~\cite[Tag 
07A9]{Stacks_Project}):
\[
 E_2^{p,q}=E^p(H^{-q}(G^\flat))\Rightarrow \shExt^{p+q}(G^{\flat},\gm).
\]
The low degree exact sequence associated to this spectral sequence yields b).
\end{proof}

\begin{cor}
\label{particular_case_of_a_classifying_stack}
 Let $G$ be a sheaf of commutative groups on $S$. Then there is a canonical 
isomorphism of commutative group stacks
$$\xymatrix{D(BG)\ar[r]^-{\sim}& G^D}$$
that maps a point $\varphi : BG \to B\gm$ of $D(BG)$ to the point 
$H^{-1}(\varphi) : G \to \gm$ of $G^D$ (using the canonical isomorphisms 
$H^{-1}(BG)\simeq G$ and $H^{-1}(B\gm)\simeq \gm$).
\end{cor}
\begin{proof}
 We have $H^0(BG)=0$ and $H^{-1}(BG)=G$ hence, by the lemma, the exact sequence~\ref{structural_exact_sequence} for~$D(BG)$ reduces to the desired isomorphism. 
\end{proof}

\begin{cor}
\label{particular_case_of_a_sheaf}
% \label{dual_of_a_sheaf_without_ext1}
 Let $G$ be a sheaf of commutative groups on $S$ and regard it as a 
commutative group stack. Then~$H^{-1}(D(G))\simeq G^D$ and~$H^0(D(G))\simeq 
E^1(G)$. In particular there is a canonical homomorphism
$$BG^D\lto D(G)\, .$$
This is an isomorphism if and only if the sheaf $E^1(G)$ is zero. Note that 
this morphism can be described as follows: if $T$ is a $G^D$-torsor, its image 
in $D(G)$ is the morphism $G\to B\gm$ that maps a point $g$ of $G$ to the 
$\gm$-torsor $c(g)_*T$ where $c(g) : G^D\to \gm$ is the evaluation at $g$.
\end{cor}
\begin{proof}
 This is a consequence of~\ref{description_dual}. The 
canonical morphism is the morphism~$j$ 
of~\ref{structural_exact_sequence}.
\end{proof}

\begin{exemple}\rm
\label{Cartier_groups}
Recall that if $G$ is a (commutative) constant group scheme, we say that it is 
finitely generated if the ordinary group that 
defines the constant group scheme is a finitely generated 
abelian group. This is not equivalent to $G$ being of finite 
type. We say that a sheaf of abelian groups $G$ is a finitely generated twisted 
constant group if, \emph{fppf}-locally on~$S$, it is a finitely generated 
constant group scheme.
Let $G$ be a sheaf of abelian groups that, \emph{fppf}-locally on $S$, is 
built up by successive extensions from diagonalizable groups 
of finite type, finitely generated commutative constant 
groups, and finite locally free commutative 
group schemes. 
Such a group will be called a \emph{Cartier group scheme} 
(or we will say that $G$ is Cartier). Recall that 
groups of multiplicative type and of finite type, and 
finitely generated twisted constant groups, are actually 
étale-locally trivial (SGA~3~\cite[X, 4.5 and 5.9]{SGA3_2}). By induction on 
the 
number 
of extensions, we deduce 
from~\ref{vanishing_ext_finite_or_multiplicative} 
and~\ref{vanishing_ext_constant} that $E^1(G)=0$ and that 
the Cartier dual $G^D$ is still Cartier. (See \cite[X, \S 5]{SGA3_2} for the 
duality theory of groups of multiplicative type and of twisted constant 
groups, and see \cite[\S 4]{Shatz_Group_schemes} for Cartier duality of finite 
locally free group schemes.) In particular, 
by~\ref{particular_case_of_a_sheaf} we see that $D(G)\simeq 
BG^D$. On the other hand, if $A$ is an abelian scheme over 
$S$, then $D(A)\simeq A^t$, the classical dual of $A$ as an 
abelian scheme (use~\ref{dual_and_ext1_abelian}).
\end{exemple}

\begin{exemple}\rm
\label{dual_motif}
If $G$ is a 1-motive, that is, if $G^{\flat}$ is quasi-isomorphic to a complex of the form $[G^{-1}\to G^0]$ where $G^{-1}$ is a twisted lattice and $G^0$ is a semi-abelian variety, then $D(G)$ is the classical dual of $G$ as a 1-motive, as described in~\cite{Deligne_Theorie_Hodge_3}. More generally, if $G^{-1}$ is a sheaf such that $E^1(G^{-1})=0$, and $G^0$ fits into an exact sequence $0\to F\to G^0\to A\to 0$ where $A$ is an abelian scheme and $E^1(F)=0$, then $D(G)^{\flat}$ is quasi-isomorphic to a complex $[F^D\to \widetilde{G^0}]$ where~$\widetilde{G^0}$ fits into an exact sequence $0\to {G^{-1}}^D \to \widetilde{G^0} \to A^t\to 0$. We will not need this fact in the sequel, except in the particular case where $G^{-1}=0$. For the convenience of the reader we include a short proof  in this case below.
\details{
Voici une preuve détaillée pour ma convenance personnelle. On suppose d'abord $G=[Y\to A]$ avec $A$ un schéma abélien et $E^1(Y)=0$. Comme $G$ est le cône du morphisme donné $h : Y\to A$, on a un triangle distingué
$$\xymatrix{
Y\ar[r]^h& A\ar[r]& G\ar[r] & Y[1]
}$$
On applique le foncteur $\RHom(\, .\, , \gm[1])$, puis on tronque (on peut par \ref{lemme_troncation_triangle} car $E^1(Y)=0$) et on obtient :
$$\xymatrix{
D(G) \ar[r]& A^t \ar[r] & Y^D[1] \ar[r]& D(G)[1].
}$$
La suite exacte longue de cohomologie associée à ce triangle donne
$$H^{-1}(Y^D)\to H^{-1}(D(G)) \to H^{-1}(A^t) \to
H^0(Y^D) \to H^0(D(G))\to H^0(A^t) \to 0$$
D'où $H^{-1}(D(G))=0$ donc $D(G)\simeq H^0(D(G))$ s'inscrit dans une suite exacte courte :
$$0\to Y^D\to D(G)\to A^t \to 0.$$
Dans le cas général, on prend $G=[Y\to H]$ où l'on note $h : Y\to H$ le morphisme et $H$ s'inscrit dans une sec $0\to F\to H\to A\to 0$. On suppose $A$ schéma abélien et $Y, F$ des groupes de Cartier. On regarde le morphisme de complexes :
$$\xymatrix{
D^{\bullet}:= \ar[d] & Y\ar[r]^{\pi\circ h} \ar[d]_{h} & A
\ar[d]^{\id} \\
E^{\bullet}:=  & H\ar[r]_{\pi} &A
}$$
On vérifie facilement que le cône de ce morphisme est quasi-isomorphe à $G[1]$. De plus $E^{\bullet}$ est quasi-isomorphe à $F[1]$. On a donc un triangle distingué :
$$D^{\bullet}\to F[1]\to G[1]\to D^{\bullet}[1].$$
On applique $\RHom( . , \gm[1])$ :
$$\RHom(D^{\bullet}, \gm[1])\to \RHom(G, \gm[1])\to\RHom(F, \gm[1])\to\RHom(D^{\bullet}, \gm[2]).$$
On peut tronquer car $E^1(F)=0$ et on obtient 
$$D([Y\to A])\to D(G)\to F^D[1] \to D([Y\to A])[1]$$
ce qui montre, vu que $D([Y\to A])$ est concentré en degré 0 par le premier cas étudié, que $D(G)$ s'identifie au cône du morphisme $F^D\to D([Y\to A])$, cqfd.
}
\end{exemple}

\begin{lem}
\label{dual_comme_quotient}
 Let $G$ be a sheaf of abelian groups over a base scheme $S$. Assume that there is an exact sequence:
$$0\lto F \lto G \lto A\lto 0$$
where $A$ is an abelian scheme over~$S$ and $E^1(F)=0$ . Let $\delta : F^D \to A^t$ be the natural map given by the $\Hom(\, .\,, \gm)$ sequence. Then the stack $D(G)$ is naturally isomorphic to the quotient stack $[A^t/F^D]$ where $F^D$ acts on $A^t$ \emph{via} $\delta$.
\end{lem}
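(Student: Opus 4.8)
The plan is to work on the level of length-one complexes via Deligne's dictionary (\ref{description_cgs}) and to exhibit a quasi-isomorphism $D(G)^{\flat}\simeq[F^D\xrightarrow{\delta}A^t]$ placed in degrees $-1,0$; since the quotient stack $[A^t/F^D]$ is by definition $\ch([F^D\xrightarrow{\delta}A^t])$, applying $\ch(-)$ then yields the desired canonical isomorphism. Regarding $G$ as a sheaf in degree $0$, we have $G^{\flat}=G$ and $(B\gm)^{\flat}=\gm[1]$, so by Example~\ref{def_hom_gp_stacks}
\[
D(G)^{\flat}=\shHom(G,B\gm)^{\flat}\simeq\tau_{\leq 0}\RHom(G,\gm[1]).
\]
The short exact sequence $0\to F\to G\to A\to 0$ furnishes a distinguished triangle $F\to G\to A\to F[1]$ in the derived category of abelian sheaves on $S$, and this is the object I would dualize.

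Applying the contravariant functor $\RHom(-,\gm[1])$ gives
\[
\RHom(A,\gm[1])\to\RHom(G,\gm[1])\to\RHom(F,\gm[1])\to\RHom(A,\gm[1])[1],
\]
and I would compute the truncations of the two outer terms from the hypotheses. Since $A$ is an abelian scheme, $D(A)\simeq A^t$ (\ref{Cartier_groups}), concretely using $A^D=\shHom(A,\gm)=0$ and $E^1(A)\simeq A^t$ (\ref{dual_and_ext1_abelian}); hence $\tau_{\leq 0}\RHom(A,\gm[1])\simeq A^t$, concentrated in degree $0$. Since $E^1(F)=0$, Corollary~\ref{particular_case_of_a_sheaf} gives $D(F)\simeq BF^D$, so $\tau_{\leq 0}\RHom(F,\gm[1])\simeq F^D[1]$, concentrated in degree $-1$. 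The middle term truncates to $D(G)^{\flat}$. The vanishing $E^1(F)=0$ is exactly the condition allowing $\tau_{\leq 0}$ to be applied to the whole triangle (the truncation argument already used in Example~\ref{dual_motif}), so that
\[
A^t\to D(G)^{\flat}\to F^D[1]\xrightarrow{\partial}A^t[1]
\]
is again distinguished.

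Rotating this triangle back yields $F^D\xrightarrow{\delta'}A^t\to D(G)^{\flat}\to F^D[1]$, which identifies $D(G)^{\flat}$ with the cone of $\delta'$, i.e. with the two-term complex $[F^D\xrightarrow{\delta'}A^t]$. It remains to check that $\delta'$ is the map $\delta$ of the statement: the connecting morphism $\partial$ is obtained by applying $\shHom(-,\gm)$ to the boundary of the triangle $F\to G\to A\to F[1]$, hence $\delta'$ is precisely the boundary map $F^D=\shHom(F,\gm)\to\shExt^1(A,\gm)=A^t$ of the long $\shHom(-,\gm)$-sequence, namely $\delta$. Passing through $\ch(-)$ then gives $D(G)\simeq\ch([F^D\xrightarrow{\delta}A^t])=[A^t/F^D]$. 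As a consistency check, the same long exact $\shExt$-sequence reads $0\to G^D\to F^D\xrightarrow{\delta}A^t\to E^1(G)\to 0$, so $H^{-1}(D(G))=G^D=\ker\delta$ and $H^0(D(G))=E^1(G)=\coker\delta$ (compare~\ref{particular_case_of_a_sheaf}), matching the cohomology of $[F^D\to A^t]$. I expect the main obstacle to be the truncation step: one must justify that $\tau_{\leq 0}$ sends the dualized triangle to a distinguished triangle (this is exactly where $E^1(F)=0$ enters) and verify that the induced connecting map is genuinely $\delta$, rather than settling for the weaker statement that the two stacks merely have isomorphic cohomology sheaves.
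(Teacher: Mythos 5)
Your proposal is correct and follows essentially the same route as the paper's proof: dualize the triangle $F\to G\to A\to F[1]$ by $\RHom(-,\gm[1])$, use $E^1(F)=0$ to truncate the resulting triangle in degrees $\leq 0$, and identify $\tau_{\leq 0}\RHom(G,\gm[1])$ with the cone of $\delta : F^D\to A^t$. Your extra care about why the truncated triangle remains distinguished and why the connecting map is the boundary map of the long $\shHom(-,\gm)$-sequence makes explicit exactly the points the paper treats briefly.
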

\begin{proof}
By~\ref{description_cgs} it suffices to prove that 
$D(G)^{\flat}\simeq [F^D \to A^t]$ in the derived category 
$D^{[-1, 0]}(S, \Z)$. By \cite[XVIII~1.4.18]{SGA4}, 
$D(G)^{\flat}\simeq\tau_{\leq 0}\RHom(G, \gm[1])$. Viewing 
the given exact sequence as a triangle in $D^{[-1, 0]}(S, 
\Z)$ and applying the functor $\RHom(\, .\,, \gm[1])$, we 
get a triangle:
$$\RHom(A, \gm[1]) \to \RHom(G, \gm[1])\to \RHom(F, \gm[1])\to \RHom(A, \gm[2])\, .$$
Let $C=\RHom(F, \gm[1])$. Since, by~\ref{particular_case_of_a_sheaf}, 
$H^0(C)=E^1(F)=0$, we can truncate the above triangle in degrees $\leq 0$ 
using the Lemma~\ref{lemme_troncation_triangle} below. Since moreover the 
truncations of $\RHom(A, \gm[1])$ and $\RHom(F, \gm[1])$ are $A^t$ and $F^D[1]$ 
we get a triangle:
$$\xymatrix{
F^D \ar[r]^{\delta} & A^t \ar[r]& \tau_{\leq 0}\RHom(G, \gm[1]) \ar[r] & F^D[1] \, .
}$$
This proves that $\tau_{\leq 0}\RHom(G, \gm[1])$ is isomorphic to the cone of $\delta$, which is precisely the complex $[F^D \to A^t]$.
\details{utiliser les axiomes ($\Delta$ 3)'' et ($\Delta$ 3)* des catégories triangulées, notes de Lipman sur la dualité de Grothendieck, p. 13.}
\end{proof}

\begin{lem}
 \label{lemme_troncation_triangle}
 Let $\Ac$ be an abelian category and let
$\xymatrix{A\ar[r]^u &  B\ar[r]^v & C 
\ar[r]& A[1]}$
be a triangle 
in $D(\Ac)$. Then $H^n(B)\to H^n(C)$ is surjective (\emph{i.e.} its 
cokernel is zero) if and only if there exists a morphism  $w : \tau_{\leq n}C 
\to (\tau_{\leq n}A)[1]$ such that
\[
  \tau_{\leq n}A \lto \tau_{\leq n}B
\lto \tau_{\leq n}C \lto (\tau_{\leq n}A)[1]
\]
is an exact triangle.
\end{lem}
\begin{proof}
If we have an exact triangle as above, the associated long exact sequence 
proves that $H^n(B)\to H^n(C)$ is surjective. Conversely, assume that $H^n(B)\to 
H^n(C)$ is surjective and let us construct the desired $w$. For this we may 
assume that $C$ is the mapping cone of $u$, i.e. $C^i=B^i\oplus A^{i+1}$ for 
all $i$. Let $C_{\tau_{\leq n}u}$ denote the mapping cone of $\tau_{\leq 
n}u$. There is a natural morphism $\alpha : C_{\tau_{\leq n}u} \to \tau_{\leq 
n} C$ given in degree $n-1$ (resp. $n$) by the inclusion $B^{n-1}\oplus 
\Ker(A^n\to A^{n+1}) \inj B^{n-1}\oplus A^n$ (resp. $\Ker(B^n\to B^{n+1})\inj 
\Ker(C^n\to C^{n+1})$). Then $H^i(\alpha)$ is an isomorphism for all $i\neq n$, 
and $H^n(\alpha)$ identifies with the morphism $\Coker(H^n(A)\to H^n(B))\inj 
H^n(C)$. Since $H^n(B)\to H^n(C)$ is surjective, $H^n(\alpha)$ is an 
isomorphism hence $\alpha$ is a quasi-isomorphism and this concludes the proof.
\end{proof}

\begin{prop}[Raynaud]
\label{devissage_raynaud}
 Let $S$ be the spectrum of an Artin local ring with algebraically closed 
residue field $k$, and let $G$ be a proper and flat commutative group scheme 
over $S$. Then there is an exact sequence
$$0\lto F \lto G \lto A\lto 0$$
where $A$ is an abelian scheme and $F$ is a finite flat group scheme.
\end{prop}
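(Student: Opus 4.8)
The statement reduces to producing a finite flat subgroup scheme $F\subseteq G$ with $G/F$ an abelian scheme. The plan is first to build, over the residue field, a surjective homomorphism onto an abelian variety with finite kernel, and then to lift this homomorphism to a homomorphism $q\colon G\to A$ with $A$ an abelian scheme over $S$; the sought sequence will then be $0\to \ker q\to G\to A\to 0$. Let me explain why the last step suffices. If $q\colon G\to A$ is a homomorphism lifting the special-fibre map, with $A$ an abelian scheme, then on the closed point $q_s$ is finite flat, so by the fibrewise criterion of flatness (\cite{EGA}~EGA~IV$_3$~11.3.10), using that $G$ and $A$ are flat over $S$, the morphism $q$ is flat; hence $\ker q=G\times_{A,e}S$ is flat over $S$, and being proper with finite special fibre it is finite flat. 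Moreover $q$ is then an \emph{fppf} epimorphism, so $0\to \ker q\to G\to A\to 0$ is exact, which is what we want.

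Next I analyse the special fibre $G_s$ over the algebraically closed (hence perfect) field $k$. The reduced identity component $A_s:=(G^0_s)_{\mathrm{red}}$ is a smooth connected proper group scheme, hence an abelian variety (Chevalley's theorem, the affine part of a proper connected group being trivial). As $\dim A_s=\dim G_s$, the quotient $G_s/A_s$ is proper of dimension zero, thus a finite group scheme; let $n$ be an integer killing it. Then $[n]G_s\subseteq A_s$, while $[n]A_s=A_s$ since multiplication by $n$ is an isogeny of the abelian variety $A_s$; therefore $[n]$ has image exactly $A_s$ and factors as a surjective homomorphism $q_s\colon G_s\twoheadrightarrow A_s$ with finite kernel $G_s[n]$. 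This is the finite-by-abelian structure on the special fibre that we wish to spread out over $S$.

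Finally comes the deformation step, which is the heart of the matter. Write $R$ as an iterated small extension and lift $(A_s,q_s)$ one step at a time along a square-zero extension $R'\to R$ with kernel $J\cong k$. Recall that abelian schemes have unobstructed deformations, so $A_s$ lifts to an abelian scheme; for a chosen lift, the obstruction to lifting the homomorphism $q$ into it lives in $H^1(G_s,q_s^\ast T_{A_s})\cong H^1(G_s,\Oc_{G_s})\otimes_k\mathrm{Lie}(A_s)\otimes_k J$, and the lifts, when they exist, form a torsor under $H^0(G_s,q_s^\ast T_{A_s})\otimes_kJ$. The genuine difficulty is to make this obstruction vanish: one plays the freedom in the lift of $A$ (a torsor under $H^1(A_s,\Oc_{A_s})\otimes_k\mathrm{Lie}(A_s)\otimes_kJ$) against the obstruction through the pullback $q_s^\ast\colon H^1(A_s,\Oc_{A_s})\to H^1(G_s,\Oc_{G_s})$, so that some lift of $A$ carries a compatible lift of $q_s$. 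Iterating over the filtration of $R$ produces the desired $q\colon G\to A$ over $S$, and by the first paragraph this finishes the proof. I expect this simultaneous lifting of the abelian quotient and of the projection $q$ -- equivalently, the flatness over the non-reduced base $S$ of the kernel, which is exactly what the naive choice $F=\ker([n]\colon G\to G)$ fails to guarantee -- to be the main obstacle.
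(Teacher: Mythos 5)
Your reduction to lifting a homomorphism onto an abelian scheme, and your analysis of the special fibre (writing $G_s$ as an extension of a finite group scheme by an abelian variety and using $[n]$ to produce $q_s\colon G_s\twoheadrightarrow A_s$ with finite kernel), agree with the paper's first two steps. But the deformation step, which you yourself single out as ``the heart of the matter'', is not actually carried out: you assert that one can ``play the freedom in the lift of $A$ against the obstruction through the pullback $q_s^*$'' so that some lift of $A$ carries a lift of $q_s$, and you give no reason why this should succeed. Changing the lift of $A$ along a square-zero extension modifies the obstruction class only by elements in the image of $q_s^*\colon H^1(A_s,\Oc_{A_s})\otimes\mathrm{Lie}(A_s)\otimes J\to H^1(G_s,\Oc_{G_s})\otimes\mathrm{Lie}(A_s)\otimes J$ (and one should really work with the obstruction to lifting $q_s$ as a \emph{homomorphism}, i.e.\ with $\Ext^1$ in the category of group schemes or Hochschild cohomology, not just the $H^1$ governing morphisms of schemes), and there is no a priori reason why the obstruction class should lie in that image. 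So the one genuinely hard point is left as an unproved claim.

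The paper resolves it by a different and essentially orthogonal device. Fix once and for all an abelian scheme $A$ over $S$ lifting $A_0$ (by Grothendieck's theorem that deformations of abelian schemes are unobstructed). The obstruction $\omega$ to lifting $u_0=q_s$ through a small extension lies in a $\Gamma(S_0)$-module, and replacing $u_0$ by $u_0\circ[p^{\ell}]$ (i.e.\ enlarging the integer $n$ by a factor $p^{\ell}$) multiplies $\omega$ by $p^{\ell}$; since the residue characteristic $p$ lies in the maximal ideal of the Artin ring and that ideal is nilpotent, $p^{\ell}=0$ in the ring for $\ell$ large, so the obstruction dies. The same trick kills the further obstruction (SGA~3~III~2.1, or Illusie) to the lifted morphism of schemes being a group homomorphism --- a point your sketch does not address at all. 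The characteristic-zero case is then treated separately (and is easy, since the structure theory already applies over $S$). Without this multiplication-by-$p^{\ell}$ argument, or a genuine substitute for it, your proof is incomplete exactly at the step you identify as the main obstacle.
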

\begin{proof}
We first assume that the field $k$ has characteristic $p>0$. 
Let us denote by $S_0$ the spectrum of $k$ and by~$G_0$ the 
reduction of $G$ to $S_0$. Since $k$ is algebraically 
closed, by SGA~3~\cite[VI$_A$~5.5.1 and~5.6.1]{SGA3_new}, 
$G_0$ is the extension of a finite $k$-group $M_0$ by an 
abelian scheme $A_0$. Let $n$ be an integer that kills 
$M_0$. Then the $n$-power map $[n] : G_0\to G_0$ factorizes 
through $A_0$ and yields a morphism $u_0 : G_0 \to A_0$. 
Moreover the composition $A_0 \to G_0 \to A_0$ is the 
isogeny $[n]$ of $A_0$, hence it is finite, flat and 
surjective. This proves that $u_0$ is an \emph{fppf} 
epimorphism. Let $F_0$ be its kernel. Let $K$ be the kernel 
of $[n]: A_0\to A_0$. Then $K$ is also the kernel of the 
composition $F_0\to G_0\to M_0$, which is an \emph{fppf} 
epimorphism. Hence $F_0$ is an extension of $M_0$ by $K$ and 
is thus finite.
\details{Notons $K$ le noyau de $[n]: A_0\to A_0$. C'est aussi le noyau du composé $F_0\to G_0\to M_0$. Donc on a une suite exacte
$$0\lto K\lto F_0\lto M_0$$
et pour montrer que $F_0$ est fini il suffit de montrer que la dernière flèche est un épimorphisme \emph{fppf}. Soit $m$ une section de $M_0$. Il faut la relever fppf-localement à $F_0$. Quitte à localiser, on peut déjà la relever en une section $g$ de $G_0$. On cherche alors une section $a$ de $A_0$ telle que $n(g-a)=0$. Alors $g-a$ sera une section de $F_0$ qui relève $m$. Mais $n(g-a)=0$ équivaut à $na=u(g)$ dans $A_0$ et comme $[n] : A_0\to A_0$ est un épimorphisme fppf le problème a localement une solution.}
So $G_0$ is extension of the abelian scheme $A_0$ by a 
finite group scheme.
$$0\lto F_0\lto G_0 \lto A_0\lto 0$$
  By a theorem of Grothendieck (see~\cite{FGA} for the 
statement without proof, a detailed proof is given 
in~\cite[8.5.23]{FGA_Explained}), there exists an abelian 
scheme~$A$ over~$S$ lifting~$A_0$. Now, the morphism $u_0$ 
does not necessarily lift to a morphism of schemes $G\to A$, 
but it does if we increase the integer $n$ used above. 
Indeed, let $S_0\to S_1 \to \dots \to S_m=S$ be a factorization of the 
morphism $S_0\to S$, where each morphism $S_{i}\to S_{i+1}$ is an extension of 
spectra of Artin local rings defined by a 
square-zero ideal~$I_{i+1}$. We denote by a subscript $i$ the objects above 
$S_i$ obtained after the base change $S_i\to S$ from those above $S$. 
Assume that there exists a morphism of group schemes $u_i : G_i \to A_i$ 
lifting $u_0$. Then 
by~Illusie~\cite[VII~3.3.1.1]{Illusie_CCD2}, there is a 
class $\omega\in H^1(G_i, u_i^*\Omega^1_{A_i/S_i}\otimes 
I_{i+1})$ that vanishes if and only if $u_i$ lifts to a morphism 
of schemes $u_{i+1}$. The obstruction to lift $u_i\circ [p^\ell]$ 
is~$p^\ell.\omega$
\details{Ici le $\Ext^1$ est le $\Ext^1$ au sens des groupes. En particulier, le morphisme obtenu en appliquant le foncteur $\Ext^1(\, .\,, u^*\Omega^1_{A_0/S_0}\otimes I)$ au morphisme $[p^\ell] : G_0\to G_0$ est bien la multiplication par $p^\ell$.}
 hence vanishes for $\ell$ large enough (because the group 
$H^1$ above is actually a $\Gamma(S_i, \Oc_{S_i})$-module). The 
resulting morphism of schemes $u_{i+1} : G_{i+1}\to A_{i+1}$ is not 
necessarily a group morphism, but once again, for $n$ large 
enough it is so. (We can also 
use~SGA~3~\cite[III~2.1]{SGA3_new} instead of Illusie.)
\details{
Par SGA 3 III 2.1, il existe une classe d'obstruction $c\in H^2(G_0,L_0)$ dont l'annulation est nécessaire et suffisante pour que $u_0$ se relève en un morphisme de groupes $G\to A$. Ici le $H^2$ en question est calculé au sens de la ``cohomologie de Hochschild'', cf. SGA 3 I 5.1 pour la définition. $L_0$ est le foncteur en groupes $W(\shHom_{\Oc_{S_0}}(\omega^1_{A_0/S_0},I))$. En fait c'est même un ``foncteur en ${\mathbf O}_{S_0}$-modules'' au sens de SGA 3, où ${\mathbf O}_{S_0}$ est le foncteur en anneaux auquel on pense. L'opération $W$ est définie dans SGA 3 I 4.6.1. Elle associe à un $\Oc_{S_0}$-module $\Fc$ le foncteur qui à un $S_0$-schéma $T$ associe $\Gamma(T, \Fc\otimes_{\Oc_{S_0}}\Oc_T)$. Enfin $\omega^1_{A_0/S_0}$ est défini en SGA 3 II 4.11 et c'est juste $\eps_0^*\Omega^1_{A_0/S_0}$ où $\eps$ est la section unité de $A_0$. En particulier c'est un module libre de rang fini car $A_0$ est lisse donc $\shHom_{\Oc_{S_0}}(\omega^1_{A_0/S_0},I)$ s'identifie à $Lie(A_0/S_0)\otimes I$ où $Lie(A_0/S_0)$ est le module dual de~$\omega^1_{A_0/S_0}$. La structure de $\mathbf{O}_{S_0}$-module de $L_0$ induit une structure de $\Gamma(\Oc_{S_0})$-module sur le $H^2$ évoqué plus haut. On peut donc tuer la classe d'obstruction $c$ en la multipliant par une puissance assez grande de $p$. En effet, un anneau artinien dont le corps résiduel est de caractéristique $p$ est annulé par une puissance assez grande de $p$. On a déjà utilisé ce fait plus haut pour tuer la classe d'obstruction précédente : c'est évident car $p$ est nul dans le corps résiduel donc il est dans l'idéal maximal $\mgo$. En choisissant $\ell$ tel que $\mgo^\ell=0$ on voit que $p^\ell$, qui est dans~$\mgo^\ell$, est nul dans l'anneau artinien considéré.
}
By induction we get a homomorphism $u : G\to A$ lifting $u_0$. 
By~SGA~1~\cite[IV~5.9]{SGA1}~$u$ is automatically flat. It 
is surjective and finite because so is $u_0$. This 
yields the desired exact sequence, with $F$ the kernel 
of~$u$.

If the field $k$ has characteristic zero, this is much 
easier: let $G^0$ denote the connected component of $0$ in $G$. Since $G$ is 
of finite type, $G^0$ is an open and closed subscheme of $G$ hence it is proper 
and flat over $S$. Since the residue field has characteristic zero, the 
special fiber of $G$ is smooth hence $G$ itself is smooth because it is flat 
with smooth fibers. This proves that $G^0$ is an abelian scheme over $S$. By 
SGA~3~\cite[VI$_A$~5.5.1]{SGA3_new}, the quotient $G/G^0$ is finite and étale, 
so that $G$ is the 
extension of a finite étale group by an abelian scheme. 
Using the $n$-power map $[n] : G\to G$ as in the beginning 
of the proof, we directly get the desired exact sequence 
over $S$.
\end{proof}

\begin{cor}
\label{dual_dun_schema_en_groupes_sur_une_base_artinienne}
 Let $S$ be the spectrum of an Artin ring $R$ and let $G$ be a proper and flat 
commutative group scheme over $S$. Then the dual $D(G)$ is proper and flat.
\end{cor}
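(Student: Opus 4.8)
The plan is to reduce to the situation of Raynaud's dévissage~\ref{devissage_raynaud}, rewrite $D(G)$ as an explicit quotient stack by means of Lemma~\ref{dual_comme_quotient}, and then establish properness and flatness of that quotient by faithfully flat descent along its tautological torsor. First I would reduce to the case of an algebraically closed residue field. Since $S$ is a finite disjoint union of spectra of local Artin rings, and properness and flatness may be checked componentwise, we may assume $R$ is local with residue field $k$. Choose a faithfully flat local homomorphism $R\rightarrow R'$ onto a local Artin ring $R'$ whose residue field is an algebraic closure $\overline{k}$ of $k$ (such an inflation exists by the standard existence of flat local extensions realizing a prescribed residue field extension); then $S'=\Spec R'\rightarrow S$ is an fpqc cover. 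As forming the dual commutes with base change, $D(G)\times_S S'\simeq D(G\times_S S')$, and since properness and flatness are fpqc-local on the base, it suffices to treat $G\times_S S'$ over $S'$. We may thus assume the residue field of $R$ is algebraically closed.

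Now Raynaud's dévissage~\ref{devissage_raynaud} provides an exact sequence $0\rightarrow F\rightarrow G\rightarrow A\rightarrow 0$ with $A$ an abelian scheme and $F$ a finite flat group scheme. A finite locally free commutative group scheme is Cartier, so $E^1(F)=0$ by~\ref{Cartier_groups}, and Lemma~\ref{dual_comme_quotient} then yields a canonical isomorphism $D(G)\simeq[A^t/F^D]$, where $F^D$ acts on the abelian scheme $A^t$ by translation through the homomorphism $\delta:F^D\rightarrow A^t$. Here $A^t$ is again an abelian scheme, and $F^D$, being the Cartier dual of a finite locally free group scheme, is finite locally free.

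It remains to prove that $[A^t/F^D]$ is proper and flat over $S$, and this is the heart of the matter. The tautological projection $u:A^t\rightarrow[A^t/F^D]$ is an $F^D$-torsor, hence finite, faithfully flat and surjective. Flatness of $[A^t/F^D]\rightarrow S$ follows from flatness of $A^t\rightarrow S$, as flatness is fppf-local on the source along $u$; quasi-compactness and local finite presentation over $S$ pass from $A^t$ along the surjection $u$, and universal closedness follows from that of $A^t\rightarrow S$ together with surjectivity of $u$ (the image of a closed set downstairs is the image of its closed preimage in $A^t$). The delicate point is separatedness. Pulling back the diagonal $\Delta:[A^t/F^D]\rightarrow[A^t/F^D]\times_S[A^t/F^D]$ along the fppf cover $u\times u$ identifies it with the graph map
$$j:A^t\times_S F^D\longrightarrow A^t\times_S A^t,\qquad (a,\phi)\longmapsto(a,\,a+\delta(\phi)).$$
Its composite with $\mathrm{pr}_1:A^t\times_S A^t\rightarrow A^t$ is the projection $A^t\times_S F^D\rightarrow A^t$, which is finite since $F^D\rightarrow S$ is; as $\mathrm{pr}_1$ is separated (a base change of $A^t\rightarrow S$), the cancellation criterion shows $j$ is proper, and since properness is fpqc-local on the target, $\Delta$ is proper, i.e. $[A^t/F^D]\rightarrow S$ is separated. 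Collecting these facts, $D(G)\simeq[A^t/F^D]$ is proper and flat over $S$. I expect this last paragraph, and specifically the separatedness verification through the diagonal, to be the main obstacle: the flatness and the quotient presentation are essentially formal once the dévissage is in place, whereas properness genuinely requires the diagonal computation and the cancellation argument above.
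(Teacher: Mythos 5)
Your proof is correct and follows essentially the same route as the paper: apply Raynaud's d\'evissage~\ref{devissage_raynaud} and Lemma~\ref{dual_comme_quotient} to present $D(G)$ as $[A^t/F^D]$, then descend properness and flatness along the finite faithfully flat cover $A^t\to[A^t/F^D]$. You are in fact more careful than the paper on two points it leaves implicit: the preliminary faithfully flat base change to an Artin ring with algebraically closed residue field (needed since~\ref{devissage_raynaud} assumes this), and the explicit verification of separatedness via the graph map $A^t\times_S F^D\to A^t\times_S A^t$.
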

\begin{proof}
Since an Artin ring is a product of Artin local rings, we may assume that $R$ 
is local. Let us prove that there exists a faithfully flat morphism $S'\to S$ 
where $S'$ is the spectrum of an Artin local ring with algebraically closed 
residue field. We can write $R$ as a quotient of $W[X_1, \dots, X_n]$ 
for some integer $n$ and some discrete valuation ring $W$ (use e.g. Cohen's 
theory). By~\cite[I, 5.5.3]{EGA1} there exists a discrete valuation ring $W'$ 
that dominates $W$ with algebraically closed residue field. Then $R\otimes_W 
W'$ is an Artin local ring with algebraically closed residue field and is 
faithfully flat over $R$.
\details{
Pour la théorie de Cohen, on doit pouvoir trouver une référence précise qui dit 
ça dans les EGA. (Remarque : Si $R$ contient son corps résiduel alors c'est 
beaucoup plus simple car on peut prendre $W$ égal à un corps, mais même dans ce 
cas on peut prendre $W=\kappa(R)[[t]]$ pour entrer dans le cadre ci-dessus.) Le 
résultat des EGA ne dit pas exactement ce qui est affirmé. En notant $K$ le 
corps des fractions de $W$, EGA I, 5.5.3 dit que pour n'importe quelle 
extension $K'$ de $K$, il existe un AVD $W'$ qui domine $W$ dont le 
corps des fractions est $K'$. Si l'on prend pour $K'$ une clôture algébrique de 
$K$, alors le corps résiduel de $W'$ est algébriquement clos, d'après le lemme 
ci-dessous. Il reste à montrer les assertions sur $R'=R\otimes_W W'$. Déjà, 
comme $W'$ domine $W$, le morphisme entre spectres envoie le point fermé sur le 
point fermé et le point générique sur le point générique, donc il est surjectif. 
De plus $W\to W'$ est injectif, donc $W'$ est sans torsion sur $W$ donc plat. 
Donc $W'$ est fidèlement plat sur $W$ et il en est de même de $R'$ sur $R$. 
Montrons que $R'$ est local. Pour ça il faut montrer que $\Spec R'$ a un unique 
point fermé. Or $\Spec R'$ est homéomorphe à l'unique fibre du morphisme $\Spec 
R'\to \Spec R$, c'est-à-dire au spectre de $W'\otimes_W R\otimes_R\kappa(R)$. 
Comme $\kappa(R)$ s'identifie à $\kappa(W)$, le dernier anneau s'identifie à 
$W'\otimes_W\kappa(W)$, qui est un quotient de $W'$ donc local, donc son spectre 
a un unique point fermé et ceci prouve que $R'$ est local. Le corps résiduel de 
$R'$ est celui de $W'$ donc il est bien algébriquement clos. Enfin, l'idéal 
maximal de $R'$ est engendré par $X_1, \dots, X_n$ et une uniformisante de $W'$. 
On montre facilement que tout ceci est nilpotent dans $R'$ (pour les $X_i$, 
parce qu'ils le sont dans $R$, pour l'uniformisante, parce que si on l'élève à 
une puissante assez grande on trouve un multiple d'une uniformisante de $W$, qui 
est nilpotente dans $R$). On en déduit que l'idéal maximal de $R'$ est 
nilpotent, donc $R'$ est artinien.
\begin{lem}
 Soit $A$ un anneau de valuation discrète, de corps des 
fractions $K$ et de corps résiduel $k$. Si $K$ est algébriquement clos, alors 
$k$ l'est aussi.
\end{lem}
\begin{proof}
Soit $f\in k[X]$ un polynôme à coefficients dans $k$, de degré $\geq 1$. On 
veut montrer que $f$ a une racine dans $k$. On peut supposer $f$ unitaire. On 
le relève en un polynôme unitaire $F$ de $A[X]$. Comme $K$ est algébriquement 
clos, $F$ se factorise en produit de termes de degré 1 dans $K[X]$ que l'on 
peut supposer unitaires. Autrement dit $F=\prod_{i=1}^{n}(X-\alpha_i)$ avec 
$\alpha_i\in K$. Si l'un des $\alpha_i$ est dans $A$, son image dans $k$ est 
une racine de $f$ est on a gagné. Dans le cas contraire, on a $v(\alpha_i)<0$ 
pour tout $i$, donc le terme constant $(-1)^n\alpha_1\dots\alpha_n$ n'est pas 
dans $A$, contradiction.
\end{proof}
}%
Now by faithfully flat descent, we may assume that 
the residue field of $R$ is algebraically closed.
 By \ref{devissage_raynaud}, \ref{vanishing_ext_finite_or_multiplicative} and 
\ref{dual_comme_quotient}, 
$D(G)$ is isomorphic to a quotient stack $[A/F]$ where $A$ 
is an abelian scheme acted on by a finite and flat commutative group 
scheme $F$. By \cite[10.13.1]{LMB} the stack $[A/F]$ is 
algebraic. Moreover the canonical morphism $A\to [A/F]$ is 
finite and faithfully flat. It follows that $D(G)$ is proper 
and flat.
\details{
La platitude est locale à la source pour fppf. Pour propre, plus généralement si $p : X \to Y$ est un morphisme de champs algébriques avec $X$ propre, alors :
\begin{itemize}
 \item si $p$ est surjectif, $Y\to S$ est quasi-compact et universellement fermé ;
\item si $p$ est fidèlement plat et localement de présentation finie, $Y\to S$ est localement de type fini ;
\item si $p$ est surjectif et universellement fermé, $Y\to S$ est séparé.
\end{itemize}
}\end{proof}

Let $G$ be a commutative group stack over a base scheme $S$. If we 
identify the category of morphisms of stacks from $G$ to 
$B\gm$ with the category of invertible sheaves on $G$, we 
see that there is a natural forgetful morphism
$$\omega : D(G) \lto \champic(G/S)\, .$$
We can translate Definition~\ref{def_morphismes_groupes} 
in terms of invertible sheaves to give 
an alternative and useful description of the stack $D(G)$. 
It will be helpful to study the 
representability of the stack~$D(G)$, 
see~\ref{thm_representabilite1}. Let us first fix some 
notation. Let $Sym : G\times_S G \to G\times_S G$ be the 
isomorphism that exchanges the two factors, and let $\mu_G : 
G\times_S G \to G$ be the group law in~$G$. Let $p_i$ 
(resp. $q_i$) be the projection of $G\times_S G$ (resp. 
$G\times_S G\times_SG$) onto the $i$-th factor. By 
definition of $G$, there is a 2-isomorphism $\tau : \mu_G 
\Rightarrow \mu_G\circ Sym$ (commutativity) and a 
2-isomorphism of associativity
$$\lambda : \mu_G\circ (\mu_G\times \id_G) \Rightarrow  \mu_G\circ ( \id_G\times\mu_G).$$
If $\Lc$ is an invertible sheaf on $G$, then $\tau$ and $\lambda$ induce isomorphisms of invertible sheaves on $G\times_S G$ (resp. on $G\times_S G\times_S G$)
\begin{eqnarray*}
\tau(\Lc) &:& \mu_G^*\Lc \lto Sym^*\mu_G^* \Lc\\
\lambda(\Lc) &:& (\mu_G\times \id_G)^*\mu_G^*\Lc \lto 
( \id_G\times\mu_G)^*\mu_G^*\Lc\, .
\end{eqnarray*}
Now the description of~$D(G)$ is as follows. For any $S$-scheme $U$, an object of the fiber category $D(G)(U)$ is a couple $(\Lc,\alpha)$ where $\Lc$ is an invertible sheaf on $G\times_S U$ and $\alpha$ is an isomorphism
$$\alpha : \mu_G^*\Lc \lto p_1^*\Lc \otimes p_2^*\Lc$$
such that the two following diagrams commute
$$\xymatrix{\mu_G^*\Lc \ar[r]^-{\alpha} \ar[d]_{\tau(\Lc)} \ar@{}[dr]|{(A)} & p_1^*\Lc\otimes p_2^*\Lc \ar[d]^{\textrm{can.}} \\
Sym^*\mu_G^*\Lc \ar[r]_-{Sym^*\alpha} &
Sym^*(p_1^*\Lc\otimes p_2^*\Lc)}$$

$$\xymatrix@!0@R=2.2pc@C=8pc{
(\mu_G\times \id_G)^*\mu_G^*\Lc \ar[rr]^{\lambda(\Lc)}
    \ar[d]_{(\mu_G\times \id_G)^*\alpha}&&
(\id_G\times\mu_G)^*\mu_G^*\Lc
    \ar[d]^{(\id_G\times\mu_G)^*\alpha} \\
(\mu_G\times \id_G)^*(p_1^*\Lc \otimes p_2^*\Lc)
    \ar[d]_{\textrm{can.}}^{\wr} \ar@{}[rrd]|{(B)}&&
(\id_G\times\mu_G)^*(p_1^*\Lc \otimes p_2^*\Lc)
    \ar[d]^{\textrm{can.}}_{\wr} \\
((q_1\times q_2)^*\mu_G^*\Lc)\otimes q_3^*\Lc
    \ar[d]_{((q_1\times q_2)^*\alpha)\otimes \id_{q_3^*\Lc}} &&
q_1^*\Lc\otimes (q_2\times q_3)^*\mu_G^*\Lc
    \ar[d]^{\id_{q_1^*\Lc}\otimes(q_2\times q_3)^*\alpha } \\
(q_1\times q_2)^*(p_1^*\Lc\otimes p_2^*\Lc)\otimes q_3^*\Lc
\ar[rd]_{\textrm{can.}}
&&q_1^*\Lc\otimes (q_2\times q_3)^*(p_1^*\Lc \otimes p_2^*\Lc)\ar[ld]^{\textrm{can.}}
\\
&q_1^*\Lc\otimes q_2^*\Lc \otimes q_3^*\Lc&
}$$
If $x=(\Lc, \alpha)$ and $x'=(\Lc', \alpha')$ are two such objects, a morphism $x\to x'$ is an isomorphism $\beta : \Lc \to \Lc'$ such that the diagram
$$\xymatrix{\mu_G^*\Lc \ar[r]^-{\alpha} \ar[d]_{\mu_G^*\beta} &
p_1^*\Lc\otimes p_2^*\Lc \ar[d]^{(p_1^*\beta)\otimes (p_2^*\beta)}\\
\mu_G^*\Lc' \ar[r]_-{\alpha'} &
p_1^*\Lc'\otimes p_2^*\Lc'
}$$
commutes.

\begin{remarque}\rm
 This description also shows that $D(G)$ is the stack of 
extensions of $G$ by $\gm$. To see this, use \emph{e.g.}~%
\cite[I, 2.3.10]{LMB_Pinceaux_varietes_abeliennes}.
In the language of 
\cite{LMB_Pinceaux_varietes_abeliennes}, an isomorphism 
$\alpha : \mu_G^*\Lc \to p_1^*\Lc\otimes p_2^*\Lc$ 
corresponds to a section $\sigma$ of the $\gm$-torsor 
$\theta_2(\Lc)$ above $G\times_S G$. Then the commutativity 
of the diagram (A) (resp. (B)) is equivalent to the 
condition I, 2.3.8 (resp. I, 2.3.9) of 
\cite{LMB_Pinceaux_varietes_abeliennes}. 
\end{remarque}

\begin{thm}
\label{thm_representabilite1}
Let $S$ be a base scheme. Let $G$ be an algebraic 
commutative group stack which is proper, flat and finitely 
presented over $S$. Then:
\begin{enumerate}
\item The morphism $\omega$ is affine and of finite 
presentation.
\item The stack $D(G)$ is algebraic and of finite 
presentation, with affine diagonal. Its fibers are proper.
\item If $G$ is an algebraic 
space (\emph{i.e.} $H^{-1}(G)=0$), then $D(G)$ is flat.
\end{enumerate}
\details{Comme $D(G)$ est localement de présentation finie, 
la diagonale l'est aussi, donc comme elle est affine elle 
est automatiquement de présentation finie.}
\end{thm}
\begin{proof}
 Let us prove (1). Since $G$ is of finite presentation, 
by standard limit arguments we can assume that $S$ is 
noetherian. Let $U$ be an $S$-scheme and 
$U\to \champic(G/S)$ a morphism, corresponding to an 
invertible sheaf $\Lc$ on $G\times_S U$. 
By~\cite[2.1.1]{Brochard_finiteness} and 
\cite[Thm D]{Hall_Cohomology_and_base} the sheaf 
$I=\shIsom(\mu_G^*\Lc,p_1^*\Lc\otimes p_2^*\Lc)$ is an 
affine scheme of finite presentation over $U$. By the above 
description, we see that the fiber product 
$D(G)\times_{\champic(G/S)} U$ identifies with the closed 
subspace of $I$ defined by the conditions (A) and (B), 
hence 
it is also an 
affine scheme of finite presentation over $U$.

Let us prove (2). By~\cite{Brochard_Picard}, 
\cite[2.1.1]{Brochard_finiteness} and 
\cite[Thm D]{Hall_Cohomology_and_base}, the stack 
$\champic(G/S)$ is algebraic and locally of finite 
presentation, with affine 
diagonal. Hence so is $D(G)$ by (1). Let us now prove that 
$D(G)$ has proper fibers. For this we may assume that $S$ 
is the spectrum of an 
algebraically closed field $k$. If $G$ is a group scheme 
the assertion follows from~%
\ref{dual_dun_schema_en_groupes_sur_une_base_artinienne}.
Now, let $G$ be a proper commutative group 
stack over~$k$. Then $H^{-1}(G)$ and $H^0(G)$ are duabelian 
$k$-group schemes (note that a proper group scheme over the spectrum of a 
field is obviously flat and finitely presented, but also cohomologically flat 
because the formation of direct images commutes with flat base 
change). In particular, 
by~\ref{prop:E0_E1_des_duabeliens},
$H^{-1}(D(G))$ (which is isomorphic to $H^0(G)^D$ by~\ref{description_dual}) 
and $H^{-1}(G)^D$ are 
finite and $E^1(H^0(G))$ is an abelian variety.
By Artin~\cite[10.8]{LMB} the coarse moduli 
sheaf $H^0(D(G))$ is an algebraic space locally of finite 
presentation (hence actually a group scheme) and the 
projection $\pi : D(G) \to H^0(D(G))$ is faithfully flat and 
locally of finite presentation. 
By~\ref{description_dual} there is an exact 
sequence
$$\xymatrix{
0\ar[r] & E^1(H^0(G)) \ar[r]^-{\delta} & H^0(D(G)) \ar[r] & 
H^{-1}(G)^D \ar[r] & E^2(H^0(G)).}$$
By SGA~3~\cite[VI$_A$ 3.2]{SGA3_new} the cokernel $K$ of 
$\delta$ is a group scheme. But it has a monomorphism to 
the finite group scheme $H^{-1}(G)^D$. Thus $K$ is 
finite and this proves that $H^0(D(G))$ is proper. This 
implies that $D(G)$ itself is proper, because it is a gerbe over 
$H^0(D(G))$, banded by $H^{-1}(D(G))$ which is finite.  

To prove (2) it only remains to prove that $D(G)$ 
is quasi-compact for a general base $S$. To this end, we 
use the notion of 
quasicompactness introduced in~\cite{Brochard_finiteness} 
for non necessarily representable stacks (or morphisms of 
stacks). The fibers of~$D(G)$ are proper by the above. In 
particular the morphism $D(G)\to \Pic_{G/S}$ factorizes 
through $\Pic^{\tau}_{G/S}$. 
By~\cite[3.3.3]{Brochard_finiteness}, we know that 
$\Pic^{\tau}_{G/S}$ is quasi-compact over $S$. Moreover the 
morphism $\champic^{\tau}(G/S)\to \Pic^{\tau}_{G/S}$ is an 
\emph{fppf} gerbe hence quasi-compact. To conclude, it 
suffices to prove that the morphism from~$D(G)$ 
to~$\champic^{\tau}(G/S)$ is quasi-compact. 
By~\cite[3.3.3]{Brochard_finiteness} the inclusion 
$\champic^{\tau}(G/S) \to \champic(G/S)$ is an open 
immersion hence its diagonal is quasi-compact. Since 
$\omega$ is quasi-compact, by~\cite[3.1.3 
(vii)]{Brochard_finiteness}  we see that $D(G)$ is 
quasi-compact over~$S$ and this finishes the proof of~(2).

% Let us first assume that $S$ is the spectrum of an 
% algebraically closed field $k$. Let $G$ be a proper 
% commutative $k$-group scheme. Then by SGA 
% 3~\cite[VI$_A$~5.5.1 and~5.6.1]{SGA3_new}~$G$ is the extension 
% of a finite group $F$ by an abelian variety $A$:
% $$\xymatrix{
% 0 \ar[r] & A\ar[r] &G \ar[r]& F\ar[r] & 0.
% }$$
% In this case, $G^D\simeq F^D$ is finite. Note also that 
% since $2\in \Oc_S^{\times}$, then 
% by~\ref{vanishing_ext23_abelian} 
% and~\ref{vanishing_ext23_finite} $E^2(F)=E^2(A)=0$ hence 
% $E^1(G)\simeq E^1(A)$ is an abelian variety, and $E^2(G)=0$. 
% By~\ref{description_dual}~$D(G)$ is an abelian stack and 
% this proves~(7).

Since the 
flatness of a locally noetherian stack can be checked on 
Artin rings (SGA~1~\cite[IV~5.6]{SGA1}), the assertion~(3) 
follows from~%
\ref{dual_dun_schema_en_groupes_sur_une_base_artinienne}.
\end{proof}

\begin{thm}
\label{thm:inertie_finie}
Let $G$ be an algebraic 
commutative group stack which is proper, flat and finitely 
presented over a base scheme $S$. Assume that $H^{-1}(G)$ is flat.
Then $H^{-1}(D(G))$ is a finite group scheme.
\end{thm}
\begin{proof}
 Since $H^{-1}(G)$ is flat, $H^0(G)$ is 
a proper, flat and finitely presented algebraic space.
Since $H^{-1}(D(G))=H^0(G)^D$ we may assume that $G$ is 
an algebraic space and we have to prove that $G^D$ is 
finite. By~\ref{thm_representabilite1}, we already know that 
$G^D$ is affine and finitely presented, with 
finite fibers. We now prove that it is proper. By 
\cite[A.2.1]{LMB}  this will imply that it is finite. For 
this question we can assume that $S$ is the spectrum of a 
discrete valuation ring $R$ with fraction field~$K$. 
Let~$G_{\text{af}}$ be the spectrum of~$f_*\Oc_G$. By 
\cite[VII~3.2]{Raynaud_Faisceaux_amples_sur} 
and~SGA~3~\cite[VI$_B$~11.3.1]{SGA3_new} it is a finite flat 
group scheme, the canonical morphism $\rho : G \to 
G_{\text{af}}$ is a homomorphism, and it is universal for 
homomorphisms to affine $R$-groups. In particular it induces 
a bijection $G_{\text{af}}^D(R)\to G^D(R)$. Since forming 
$G_{\text{af}}$ commutes to any flat base change, the 
natural map $G_{\text{af}}^D(K)\to G^D(K)$ is also 
bijective. Lastly, $G_{\text{af}}^D$ is finite hence 
satisfies the valuative criterion thus 
$G^D$ is finite.
\end{proof}

\begin{cor}
\label{repres_dual_Cartier}
 Let~$G$ be a proper, flat and finitely presented 
commutative group algebraic space over a base scheme~$S$. 
Then $G^D$ is a finite group scheme.
\end{cor}
\begin{proof}
 This is the particular case of~\ref{thm:inertie_finie} where $H^{-1}(G)=0$.
\end{proof}

\begin{thm}
\label{thm_representabilite4}
Let $S$ be a base scheme and let $G$ be a
commutative group stack over $S$.
\begin{enumerate}
\item If $G$ is a duabelian group, then~$D(G)$ is an 
abelian stack.
\item If $G$ is an abelian stack, then $D(G)$ is a 
duabelian group.
\item Assume that 2 is invertible in $S$. If $H^0(G)$ is 
duabelian and $H^{-1}(G)$ is 
finite and flat, then $H^0(D(G))$ is duabelian 
and $H^{-1}(D(G))$ is finite and flat.
\end{enumerate}
\end{thm}
\begin{proof}
(1) is an immediate consequence of~\ref{prop:E0_E1_des_duabeliens} 
and~\ref{description_dual}. Let us prove (2) and (3). There is an exact 
sequence:
$$\xymatrix{
0\ar[r] & E^1(H^0(G)) \ar[r] & H^0(D(G)) \ar[r] & H^{-1}(G)^D \ar[r] & E^2(H^0(G)).
}$$
The group $H^{-1}(G)^D$ is finite and flat. 
By~\ref{prop:E0_E1_des_duabeliens}, $E^1(H^0(G))$ is an abelian scheme.
Moreover 
by~\ref{vanishing_ext23_abelian} in both cases the morphism 
from $H^{-1}(G)^D$ to $E^2(H^0(G))$ is zero. Hence 
$H^0(D(G))$ is duabelian and this proves (3). If $G$ is an 
abelian stack then $H^0(G)$ is an abelian scheme. 
By~\ref{dual_and_ext1_abelian} its Cartier dual vanishes 
hence $D(G)=H^0(D(G))$ and it is a duabelian group.
% Let us prove (4). We know that $D(G)$ is algebraic and of finite presentation. Since its~$H^{-1}$ is finite and flat by (3), it follows that $H^0(D(G))$ is a finitely presented algebraic space. Moreover there is an exact sequence:
% $$\xymatrix{
% 0\ar[r] & E^1(H^0(G)) \ar[r] & H^0(D(G)) \ar[r] & H^{-1}(G)^D \ar[r] & E^2(H^0(G))
% }$$
% in which $A:=E^1(H^0(G))$ is an abelian scheme and $F:=H^{-1}(G)^D$ is finite and flat. Let $Q$ denote the \emph{fppf} quotient sheaf $H^0(D(G))/A$. By Artin~\cite{LMB}~(10.4) it is a finitely presented algebraic space. There is a monomorphism $i : Q\to F$. In the beginning of the proof, we saw that the geometric fibers of $E^2(H^0(G))$ are 0, hence the geometric fibers of $i$ are isomorphisms. To conclude the proof of (4) it suffices to prove that $i$ is an isomorphism. Using~\ref{lemme_isomorphisme_fibre_a_fibre} it suffices to prove that~$Q$ is flat and for this question we can assume that $S$ is the spectrum of a discrete valuation ring (\cite{EGA} EGA~IV$_3$~11.8.1)\footnote{Here we use the assumption that $S$ is reduced. Without this assumption we prove with the valuative criterion that $i$ is proper, hence a surjective closed immersion.}. But here also $E^2(H^0(G))$ vanishes (use~\ref{vanishing_ext23_abelian}, \ref{vanishing_ext23_finite} and the dévissage of $H^0(G)$ as an extension of a finite and flat group scheme by an abelian scheme), hence $Q$ is isomorphic to $F$ and flat.  
% 
\end{proof}

% In particular, we have proved:

It is a natural question to ask whether the above duality operation preserves short exact sequences. In general this is not the case. The following proposition gives a positive answer with suitable assumptions.

\begin{prop}
\label{exactness_of_the_dual_sequence}
 \label{prop:cns_exactness_dual_sequence}
Let $(*) : \xymatrix{
0\ar[r]& A\ar[r]^j &B\ar[r]^{\pi} & C\ar[r] & 0
}$ be a short exact sequence of commutative group stacks.
\begin{enumerate}[label=(\alph*)]
 \item The following are equivalent:
\begin{enumerate}[label=(\roman*)]
 \item The dual sequence $D(*)$ is exact.
\item The morphism of stacks $D(j) : D(B)\to D(A)$ is an epimorphism.
\item The morphism of sheaves $H^{0}(D(B))\to H^0(D(A))$ is an epimorphism.
\end{enumerate}
\item If $E^2(H^0(C))=E^1(H^{-1}(C))=0$, then $D(*)$ is exact.
\item Assume that $(*)$ is super-exact. If $E^2(H^0(C))=0$ and if the morphism 
from $H^{-1}(A)^D$ to $E^1(H^{-1}(C))$ vanishes, then $D(*)$ is exact.
\item Assume that $(*)$ is super-exact and that $D(*)$ is exact. Then $D(*)$ is 
super-exact if and only if the morphism $H^0(A)^D \to E^1(H^0(C))$ is zero.
\end{enumerate}
\end{prop}
\begin{proof}
 (a) The implications $(i)\Rightarrow (ii)\Rightarrow (iii)$ are obvious, and 
$(iii)\Rightarrow (i)$ follows from~\ref{lemme_troncation_triangle}.

(b) The exact triangle
\[\RHom(C^\flat,\gm[1]) \to \RHom(B^\flat,\gm[1]) \to 
\RHom(A^\flat,\gm[1]) \to \RHom(C^\flat,\gm[2])
\]
induces an exact sequence $H^0(D(B))\to H^0(D(A))\to \Ext^2(C^\flat, \gm)$. By
~\cite[Tag 07A9]{Stacks_Project}) there is a spectral sequence:
\[
 E_2^{p,q}=E^p(H^{-q}(C^\flat))\Rightarrow \Ext^{p+q}(C^{\flat},\gm).
\]
Since $C^\flat$ is concentrated in degrees $-1,0$, we have $E_2^{p,q}=0$ for 
$q\geq 2$ and the spectral sequence yields a long exact sequence :
\[
 \dots \to E^2(H^0(C))
\to \Ext^2(C^\flat,\gm) \to E^1(H^{-1}(C))\to \dots
\]
so that $\Ext^2(C^\flat,\gm)=0$.

The assertions (c) and (d) are proven by chasing through 
the commutative diagram
$$\xymatrix@R=0.7pc{
&H^0(A)^D \ar[d] && 0 \ar[d] & E^1(H^0(A))\ar[d] \\
0\ar[r]&E^1(H^0(C)) \ar[d] \ar[r]&
H^0(D(C)) \ar[d] \ar[r]& H^{-1}(C)^D \ar[d] \ar[r]&
E^2(H^0(C))\ar[d]  \\
0\ar[r]&E^1(H^0(B)) \ar[d] \ar[r]&
H^0(D(B)) \ar[d]\ar[r]& H^{-1}(B)^D \ar[d]\ar[r]&
E^2(H^0(B))\ar[d]  \\
0\ar[r]&E^1(H^0(A))\ar[d] \ar[r]&
H^0(D(A)) \ar[r]&H^{-1}(A)^D \ar[d] \ar[r]&
E^2(H^0(A))\ar[d]  \\
& E^2(H^0(C)) && E^1(H^{-1}(C))& E^3(H^0(C)) 
}$$
in which the rows (obtained from Lemma~\ref{description_dual}), and the first, 
third and fourth columns are exact.
\end{proof}

%%%%%%%%%%%%%%%%%%%%%%%%%%%%%%%%%%%%%%%
\details{(Attention : ce qui suit a été écrit avec l'ancienne notion de suite 
exacte, i.e. on suppose que les deux suites $H^i(.)$ sont exactes.)
\begin{prop}
\label{exactness_of_the_dual_sequence}
 Let $\xymatrix@C=1pc{0\ar[r]& A \ar[r]& B\ar[r]& 
C\ar[r]& 0}$ be a short exact sequence of commutative group 
stacks. 
\begin{itemize}
 \item[a)] If the morphism $H^0(A)^D \to E^1(H^0(C))$ 
vanishes, and if $H^{-1}(C)^D=0$ or $E^2(H^0(C))=0$ or 
$E^1(H^0(A))=0$, then the sequences
\begin{eqnarray*}
&&\xymatrix{0\ar[r] &H^0(D(C)) \ar[r]& 
H^0(D(B)) \ar[r]& H^0(D(A)),}\quad\textrm{and}\\
&&\xymatrix@C=1,6pc{0\ar[r]& 
H^{-1}(D(C)) 
\ar[r]& H^{-1}(D(B)) 
\ar[r]& 
H^{-1}(D(A))\ar[r] &0}
\end{eqnarray*}
are exact.
\item[b)] If $E^2(H^0(C))=0$ and both morphisms 
$H^i(A)^D \to E^1(H^i(C))$ ($i=-1, 0$) vanish, then the 
sequence
$0\to D(C)\to D(B)\to D(A)\to 0$ is exact.
\end{itemize}
\end{prop}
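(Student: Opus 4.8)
The plan is to read off everything from Lemma~\ref{description_dual} and the long exact $\shExt(-,\gm)$-sequences, reducing both assertions to a single $3\times 3$ diagram chase. First I would record that, by the definition of exactness~\ref{def_suites_exactes}, the exactness of $0\to A\to B\to C\to 0$ is \emph{equivalent} to the exactness of the two sequences of abelian sheaves
$$(\star_{0})\colon\ 0\to H^{0}(A)\to H^{0}(B)\to H^{0}(C)\to 0,\qquad (\star_{-1})\colon\ 0\to H^{-1}(A)\to H^{-1}(B)\to H^{-1}(C)\to 0.$$
Applying $\shHom(-,\gm)$ and its derived functors to these yields the long exact sequences I will use; note that the connecting map of $(\star_0)$ is precisely the map $c\colon H^0(A)^D\to E^1(H^0(C))$ of the hypotheses, while that of $(\star_{-1})$ is $H^{-1}(A)^D\to E^1(H^{-1}(C))$. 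By~\ref{description_dual} one has $H^{-1}(D(G))\simeq H^0(G)^D$ together with a short exact sequence, functorial in $G$,
$$0\to E^1(H^0(G))\to H^0(D(G))\to K(G)\to 0,\qquad K(G):=\ker\big(d_G\colon H^{-1}(G)^D\to E^2(H^0(G))\big),$$
the functoriality coming from the naturality of the hyper-$\shExt$ spectral sequence of $G^{\flat}$.

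For a), the sequence of the $H^{-1}(D(-))$ is the $(-)^D$ of $(\star_0)$, so the $\shExt$-sequence reads $0\to H^0(C)^D\to H^0(B)^D\to H^0(A)^D\xrightarrow{\,c\,} E^1(H^0(C))$, and the hypothesis $c=0$ is exactly what makes it short exact. For the $H^0(D(-))$-sequence I would stack the three functorial sequences above (for $G=C,B,A$) into a diagram with short exact rows; its columns read $E^1(H^0(C))\to E^1(H^0(B))\to E^1(H^0(A))$ (left), $H^0(D(C))\to H^0(D(B))\to H^0(D(A))$ (middle), and $K(C)\to K(B)\to K(A)$ (right). Since the rows are short exact, the long exact homology sequence of this short exact sequence of (vertical) complexes shows that the middle column is left exact as soon as the two outer columns are. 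The left column is part of the $\shExt$-sequence of $(\star_0)$ and is left exact precisely because $c=0$.

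The crux, and the step I expect to be the main obstacle, is the right column $K(C)\to K(B)\to K(A)$, since the $K(G)$ are kernels of the transgressions $d_G$ rather than plain $\shExt$-groups. It is the restriction of $H^{-1}(C)^D\to H^{-1}(B)^D\to H^{-1}(A)^D$, which is left exact by the $\shExt$-sequence of $(\star_{-1})$, so injectivity at $K(C)$ is immediate. For exactness at $K(B)$ I would take $x\in K(B)$ with trivial image in $K(A)$, write $x=\phi(y)$ with $y\in H^{-1}(C)^D$ (exactness at $H^{-1}(B)^D$), and use the naturality square $d_B\circ\phi=\psi\circ d_C$, where $\psi\colon E^2(H^0(C))\to E^2(H^0(B))$, to obtain $\psi(d_C(y))=0$; if $\psi$ is injective this forces $y\in K(C)$. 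Here the trichotomy enters: if $E^1(H^0(A))=0$ then $\psi$ is injective by the $\shExt$-sequence of $(\star_0)$; if $E^2(H^0(C))=0$ then $K(C)=H^{-1}(C)^D$ and $d_B\circ\phi=0$ already puts the whole image of $\phi$ inside $K(B)$; and if $H^{-1}(C)^D=0$ then $K(C)=0$ and only the injectivity of $H^{-1}(B)^D\to H^{-1}(A)^D$ is needed. In each case the right column is left exact, and the diagram lemma finishes a).

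For b), the condition $E^2(H^0(C))=0$ is one of the three cases and the vanishing of $H^0(A)^D\to E^1(H^0(C))$ is the $i=0$ instance of the hypothesis, so a) already gives the short exactness of the $H^{-1}(D(-))$-sequence and the left exactness of the $H^0(D(-))$-sequence; only the surjectivity of $H^0(D(B))\to H^0(D(A))$ remains. Using the same diagram, this follows from surjectivity of the two outer columns. The left column $E^1(H^0(B))\to E^1(H^0(A))$ is onto because its cokernel embeds into $E^2(H^0(C))=0$. For the right column, the $i=-1$ vanishing makes $H^{-1}(B)^D\to H^{-1}(A)^D$ surjective, and since $E^2(H^0(C))=0$ forces $E^2(H^0(B))\to E^2(H^0(A))$ to be injective, the naturality square shows that any lift to $H^{-1}(B)^D$ of an element of $K(A)$ already lies in $K(B)$; hence $K(B)\to K(A)$ is onto. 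By~\ref{def_suites_exactes} this establishes that $0\to D(C)\to D(B)\to D(A)\to 0$ is exact.
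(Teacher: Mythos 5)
Your proposal is correct and is essentially the paper's own argument: the paper proves a) and b) by chasing through exactly the same diagram built from Lemma~\ref{description_dual} and the long exact $\shExt(-,\gm)$-sequences of the $H^0$- and $H^{-1}$-sequences, with the rows and the outer columns exact. Your introduction of $K(G)=\ker d_G$ and the long exact sequence of the resulting short exact sequence of column-complexes is just a tidy packaging of that chase, and your case analysis of the three alternative hypotheses lands in the same places the paper's chase does.
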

\begin{proof}
 The $H^{-1}(.)$ sequence of a) is due to the exact sequence
$$\xymatrix{
0\ar[r]& H^0(C)^D\ar[r]& H^0(B)^D\ar[r]& H^0(A)^D\ar[r]&
E^1(H^0(C))\, .}$$
The remaining assertions are proven by chasing through 
the commutative diagram
$$\xymatrix@R=0.7pc{
&H^0(A)^D \ar[d] && 0 \ar[d] & E^1(H^0(A))\ar[d] \\
0\ar[r]&E^1(H^0(C)) \ar[d] \ar[r]&
H^0(D(C)) \ar[d] \ar[r]& H^{-1}(C)^D \ar[d] \ar[r]&
E^2(H^0(C))\ar[d]  \\
0\ar[r]&E^1(H^0(B)) \ar[d] \ar[r]&
H^0(D(B)) \ar[d]\ar[r]& H^{-1}(B)^D \ar[d]\ar[r]&
E^2(H^0(B))\ar[d]  \\
0\ar[r]&E^1(H^0(A))\ar[d] \ar[r]&
H^0(D(A)) \ar[r]&H^{-1}(A)^D \ar[d] \ar[r]&
E^2(H^0(A))\ar[d]  \\
& E^2(H^0(C)) && E^1(H^{-1}(C))& E^3(H^0(C)) 
}$$
in which the rows (obtained from Lemma~\ref{description_dual}), and the first, 
third and fourth columns are exact.
\end{proof}

Plus 
précisément :
 Let $\xymatrix@C=1pc{0\ar[r]& A \ar[r]^j& B\ar[r]^{\pi}& 
C\ar[r]& 0}$ be a short exact sequence of commutative group 
stacks. 
\begin{itemize}
 \item[a)] The sequence
$$\xymatrix@C=3pc{0\ar[r]& H^{-1}(D(C)) 
\ar[r]^{H^{-1}(D(\pi))}& H^{-1}(D(B)) 
\ar[r]^{H^{-1}(D(j))}& 
H^{-1}(D(A))}$$
is always exact.
\item[b)] If the morphism $H^0(A)^D \to E^1(H^0(C))$ is 
zero, then $H^{-1}(D(j))$ is an epimorphism and 
$H^0(D(\pi))$ is a monomorphism.
\item[c)] If one of the following morphisms 
\begin{itemize}
 \item $H^{-1}(C)^D\to H^{-1}(B)^D$
\item $H^{-1}(C)^D \to E^2(H^0(C))$
\item $E^1(H^0(A))\to E^2(H^0(C))$
\end{itemize}
vanishes, then the sequence
$$\xymatrix@C=3pc{H^0(D(C)) \ar[r]^{H^0(D(\pi))}& H^0(D(B)) 
\ar[r]^{H^0(D(j))}& H^0(D(A))}$$
is exact.
\item[d)] Assume that the morphisms 
$E^1(H^0(A))\to E^2(H^0(C))$ and $H^{-1}(A)^D\to 
E^1(H^{-1}(C))$ are zero. Assume moreover that 
$H^{-1}(C)^D\to E^2(H^0(C))$ is an epimorphism, or that one 
of the following morphisms vanishes:
\begin{itemize}
 \item  $H^{-1}(B)^D\to H^{-1}(A)^D$ (de manière 
équivalente, $H^{1}(A)^D=0$)
\item  $H^{-1}(B)^D\to E^2(H^0(B))$
\item $E^2(H^0(C))\to E^2(H^0(B))$
\item $H^{-1}(C)^D\to E^2(H^0(C))$.
\end{itemize}
 Then the 
map~$H^0(D(j))$ is an epimorphism.
\item[e)] Si $A, B, C$ sont des \emph{faisceaux}, la suite 
duale est exacte ssi les morphismes $A^D\to E^1(C)$ et 
$E^1(A)\to E^2(C)$ sont nuls.
\end{itemize}
}
%%%%%%%%%%%%%%%%%%%%%%%%%%%%%%%%%%%%%%%%%

\begin{remarque}\rm
\label{dual_sec_can}
Let $G$ be an arbitrary commutative group stack. 
Apply~\ref{exactness_of_the_dual_sequence} to the canonical short exact 
sequence~\ref{structural_exact_sequence}, i.e. with $A, B$ and $C$ respectively 
equal to $BH^{-1}(G), G$ and $H^0(G)$. Then $H^0(A)=0$ and $H^{-1}(C)=0$. 
By~\ref{exactness_of_the_dual_sequence} a) the dual sequence 
$D(\ref{structural_exact_sequence})$ is 
exact if and only if $H^{0}(D(B))\to H^0(D(A))$ is an epimorphism. Using the 
commutative diagram in the proof of~\ref{exactness_of_the_dual_sequence}, we 
can identify this latter morphism with $H^0(D(B))\to H^{-1}(B)^D$, hence the 
sequence
$$\xymatrix{0\ar[r]& D(H^0(G)) \ar[r]& D(G)  \ar[r]& D(BH^{-1}(G))  \ar[r]& 0}$$
is (super-)exact if and only if the natural morphism 
$H^{-1}(G)^D\to E^2(H^0(G))$ is trivial.
\end{remarque}

We now give three lemmas that will be used to compare the dual $D(G)$ of an abelian stack with the torsion component $\picto_{G/S}$ of the Picard functor (see~\ref{comp_dual_picto}).

\begin{lem}
 \label{lemme_isomorphisme_fibre_a_fibre}
Let $u : G\to H$ be a morphism of commutative group algebraic spaces over a  base scheme $S$. Assume that:
\begin{itemize}
\item[(i)] For any geometric point $s : \Spec k \to S$ of $S$, the induced morphism $u_s$ is an isomorphism.
 \item[(ii)] $G$ is flat and of finite presentation over $S$.
\item[(iii)] $H$ is locally of finite type over $S$.
\end{itemize}
Then $u$ is an isomorphism.
\end{lem}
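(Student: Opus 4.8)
The plan is to show that $u$ is an open immersion which is moreover surjective, and hence an isomorphism. Since being flat, locally of finite presentation, an open immersion, or an isomorphism are all properties that can be checked étale-locally on source and target, I would first pass to étale presentations and reduce to the case where $G$ and $H$ are schemes, so that the classical results of EGA apply directly; after this reduction I would also assume $S$ noetherian by a standard limit argument, using that $G$ is finitely presented over $S$. Note that the group structure plays no role here: only the geometric hypotheses (i)--(iii) are used. A first observation is that $u$ is locally of finite presentation. Indeed $G\to S$ is locally of finite presentation by~(ii) while $H\to S$ is locally of finite type by~(iii), so $u$ is locally of finite presentation by the cancellation property (\cite{EGA}~EGA~IV,~1.4.3).

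The heart of the proof is to establish that $u$ is flat. For this I would invoke the fibre-wise criterion for flatness (\cite{EGA}~EGA~IV,~11.3.10): since $G$ is flat over $S$ by~(ii), and since for every point $s\in S$ the fibre $u_s : G_s\to H_s$ is flat (being an isomorphism, a property one reads off the geometric fibre and transports back by faithfully flat descent along $k(s)\to\ov{k(s)}$), the criterion shows that $u$ is flat, and incidentally that $H$ is flat over $S$ along the image of $u$. The delicate point is the bookkeeping of the finiteness hypotheses: the criterion is applied to the morphism $u$, which we have just upgraded to a locally finitely presented morphism, even though $H$ itself is only locally of finite type over $S$; it is precisely for this reason that the reduction to a noetherian base and the preliminary step on $u$ are needed in order to make the cited statement available.

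Once $u$ is known to be flat and locally of finite presentation with geometric fibres that are isomorphisms, the remainder is formal. Each geometric fibre $u_{\ov s}$ is in particular étale, so $u$ is étale, a flat and locally finitely presented morphism being étale precisely when its geometric fibres are. Moreover each $u_{\ov s}$ is radicial (a single point), so $u$ is universally injective, this too being checked on the geometric fibres. An étale and universally injective morphism is an open immersion (\cite{EGA}~EGA~IV,~17.9.1). Finally $u$ is surjective, because each $u_{\ov s}$ is, and a surjective open immersion is an isomorphism, which finishes the proof. I expect the only genuinely delicate step to be the application of the fibre-wise flatness criterion, exactly because of the asymmetry between the hypotheses on $G$ (flat and finitely presented) and those on $H$ (merely locally of finite type): the whole argument hinges on first promoting $u$ to a locally finitely presented morphism so that the criterion can be brought to bear.
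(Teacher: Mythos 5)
Your argument is correct in substance but follows a genuinely different route from the paper. The paper exploits the group structure: it first applies the fibrewise criterion for \emph{monomorphisms} of group schemes (SGA~3, VI$_B$~2.11, which needs exactly the flatness and finite presentation of $G$) to see that $u$ is a monomorphism, then forms the \emph{fppf} quotient sheaf $Q=H/G$, invokes Artin's theorem to see that $Q$ is an algebraic space locally of finite type, and concludes that $Q$ is trivial because its fibres are (SGA~3, VI$_B$~2.10). You instead ignore the group structure entirely and reprove, in effect, the fibrewise criterion for isomorphisms (EGA~IV~17.9.5): cancellation to make $u$ locally of finite presentation, the fibrewise flatness criterion, then étale $+$ universally injective $+$ surjective $\Rightarrow$ isomorphism. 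Your route is more elementary in that it avoids Artin's representability of quotients and yields the statement for arbitrary morphisms of algebraic spaces satisfying (i)--(iii), not just homomorphisms; the paper's route is shorter given that SGA~3 VI$_B$ is already in its toolbox. Two wrinkles in your write-up should be repaired. First, the reduction to schemes cannot be justified by saying that open immersions and isomorphisms are ``checked étale-locally on source and target'': these properties are local on the target but emphatically not on the source (a degree-$2$ étale cover is source-locally an isomorphism). Either work with the algebraic-space versions of all the cited statements directly, or only localize on $H$. Second, the passage to a noetherian base is both problematic and unnecessary: $H$ is only assumed locally of finite type, so it need not descend through a limit argument; but the non-noetherian form of the fibrewise flatness criterion (EGA~IV~11.3.11, or the Stacks Project version) applies verbatim with $G$ flat and locally of finite presentation over $S$ and $H$ merely locally of finite type, which is precisely the asymmetry you were worried about.
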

\begin{proof}
 Note that~SGA~3~\cite{SGA3_new}~VI$_B$~2.10 and~2.11 also hold 
for algebraic spaces. Then by~VI$_B$~2.11 $u$ is a 
monomorphism. Let $Q$ denote the \emph{fppf} quotient sheaf. 
By Artin~\cite[10.4]{LMB} it is an algebraic space locally 
of finite type. Then by~\cite[VI$_B$~2.10]{SGA3_new}, the group 
$Q$ is trivial.
\end{proof}
\details{
Si l'on préfère supposer seulement $u$ quasi-compact, plutôt que $G$, au prix d'une hypothèse noethérienne supplémentaire, c'est possible. On suppose comme ci-dessus que $u$ est un morphisme d'espaces algébriques en groupes. On garde les hypothèses (i) et (iii), et la platitude de $G$. On suppose de plus que $u$ est de présentation finie (c'était automatique avec les hypothèses ci-dessus) et que $H$ est localement noethérien. Montrons alors que $u$ est un isomorphisme. On sait que $u$ est un monomorphisme par le critère fibre à fibre. Pour appliquer le critère fibre à fibre pour les isomorphismes (cf fourre-tout) il suffit de montrer que $u$ est universellement ouvert, ou universellement fermé. On va montrer qu'il est plat (donc universellement ouvert). Pour cette question on peut supposer que la base est un anneau local artinien. Mais dans ce cas vu l'hypothèse sur la fibre il est clair que $u$ est un homéomorphisme universel, donc un isomorphisme d'après le critère fibre à fibre déjà évoqué plus haut, cqfd.
}

\begin{lem}
\label{morphisme_constant}
 Let $S$ be a scheme and let $X$ be an algebraic stack over~$S$ such that $X(S)$ is nonempty and $\Oc_S\to f_*\Oc_X$ is an isomorphism (where $f$ is the structural morphism of $X$). Let~$Y\to S$ be an affine morphism of schemes. Then any morphism~$g~:~X\to Y$ is \emph{constant}, \emph{i.e.} it factorizes through $f$.
\end{lem}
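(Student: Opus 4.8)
The plan is to deduce the statement from the universal property of the relative spectrum. Write $p : Y\to S$ for the (affine) structural morphism and set $\Ac = p_*\Oc_Y$, so that $Y=\mathbf{Spec}_S(\Ac)$ with $\Ac$ a quasi-coherent $\Oc_S$-algebra. The key point I would use is the adjunction
$$\Hom_S(X,Y)\ \simeq\ \Hom_{\Oc_S\text{-alg}}(\Ac,\,f_*\Oc_X),$$
which to an $S$-morphism $g$ associates the $\Oc_S$-algebra map obtained from $g^{\#}$ by the adjunction $f^{*}\dashv f_{*}$. This is the defining property of $\mathbf{Spec}_S$; its validity when the source is an algebraic stack is the only non-formal ingredient, and I discuss it at the end.

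Granting the adjunction, the hypothesis $\iota : \Oc_S\xrightarrow{\sim} f_*\Oc_X$ does the rest almost formally. Post-composing with $\iota$ gives $\Hom_{\Oc_S\text{-alg}}(\Ac,\Oc_S)\xrightarrow{\sim}\Hom_{\Oc_S\text{-alg}}(\Ac,f_*\Oc_X)$, and combining with the adjunction for $S$ itself, $\Hom_S(S,Y)\simeq\Hom_{\Oc_S\text{-alg}}(\Ac,\Oc_S)$, I obtain a bijection
$$\Hom_S(S,Y)\ \xrightarrow{\ \sim\ }\ \Hom_S(X,Y).$$
A short naturality check identifies this bijection with $s\mapsto s\circ f$: the structure map that intervenes in the algebra homomorphism attached to $s\circ f$ is exactly $f^{\#}=\iota$, so this homomorphism is $\iota\circ\phi_s$, matching the composite above. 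Hence every $S$-morphism $g : X\to Y$ is of the form $s\circ f$ for a unique section $s$ of $p$, which is the desired factorization through $f$. Precomposing $g=s\circ f$ with the point $x_0\in X(S)$ and using $f\circ x_0=\id_S$ reads off $s=g\circ x_0$; thus the hypothesis $X(S)\neq\emptyset$ serves only to exhibit $s$ concretely, existence and uniqueness being already guaranteed by the bijection.

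The step I expect to require the most care is justifying the relative-spectrum adjunction with an algebraic stack as source, since it is classically stated for schemes. I would argue as follows. Because $Y$ is a scheme it is $0$-truncated, so any $1$-morphism of stacks $X\to Y$ factors uniquely through the fppf sheaf $\tilde X$ associated to the presheaf of isomorphism classes of $X$; moreover $f_*\Oc_X$ depends only on $\tilde X$, global functions being invariant under automorphisms. This reduces the adjunction to the case of a morphism out of a sheaf, i.e. of ringed topoi, where it is standard. Alternatively, choosing a smooth presentation $R\rightrightarrows U\to X$ with $U$ a scheme, an $S$-morphism $X\to Y$ is the same as an $S$-morphism $U\to Y$ equalizing $R\rightrightarrows U$; applying the scheme-level adjunction to $U$ and $R$ and writing $f_*\Oc_X$ as the equalizer of $\Oc_U\rightrightarrows\Oc_R$ recovers the bijection by descent. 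Either route brings everything back to the classical affine situation, after which the argument of the previous paragraph applies.
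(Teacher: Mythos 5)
Your proof is correct and rests on the same idea as the paper's: the paper reduces to $S$ and $Y$ affine and identifies $\Hom(X,Y)$ with $\Hom(\Gamma(Y,\Oc_Y),\Gamma(X,\Oc_X))$, which is exactly the absolute form of your relative-spectrum adjunction, and then the hypothesis on $f_*\Oc_X$ finishes the argument in both cases. The only (minor) difference is that your relative formulation makes it explicit that $X(S)\neq\emptyset$ is not actually needed for the factorization to exist, whereas the paper uses the point $x$ only to write the factorization as $g=g\circ x\circ f$.
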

\begin{proof}
 Let $x\in X(S)$. It suffices to prove that the maps $g$ and 
$g\circ x\circ f$ are equal. We can assume that $S$ and $Y$ 
are affine. Now the set $\Hom(X,Y)$ can be identified with 
$\Hom(\Gamma(Y,\Oc_Y), \Gamma(X,\Oc_X))$ and the result 
follows.
\end{proof}

\begin{lem}
\label{lemme_monomorphisme}
 Let $G$ be an algebraic commutative group stack. Assume that $\Oc_S 
\to f_*\Oc_G$ is universally an isomorphism. Then $H^{-1}(D(G))=0$, and the 
natural morphism from $D(G)$ to~$\Pic_{G/S}$ is a monomorphism.
\end{lem}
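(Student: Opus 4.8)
The plan is to use the explicit description of $D(G)$ as the stack of pairs $(\Lc,\alpha)$ together with the forgetful homomorphism $\omega : D(G)\longrightarrow\champic(G/S)$ recalled above. The morphism of the statement is the composite
$$\Phi : D(G)\stackrel{\omega}{\longrightarrow}\champic(G/S)\longrightarrow\Pic_{G/S},$$
a homomorphism of commutative group stacks whose target is a sheaf, so that $H^{-1}(\Pic_{G/S})=0$ and $H^0(\Pic_{G/S})=\Pic_{G/S}$. By the monomorphism criterion recalled after~\ref{def_suites_exactes}, $\Phi$ is a monomorphism if and only if $H^{-1}(\Phi)$ is an isomorphism and $H^0(\Phi)$ is injective; since $H^{-1}(\Pic_{G/S})=0$, the first condition is exactly $H^{-1}(D(G))=0$. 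Thus the two assertions of the lemma are proved simultaneously, and it remains to establish that $H^{-1}(D(G))=0$ and that $H^0(D(G))\longrightarrow\Pic_{G/S}$ is injective.

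For the first point I would invoke~\ref{description_dual}~a), which yields $H^{-1}(D(G))\simeq H^0(G)^D=\Hom_{S-\textrm{gp}}(H^0(G),\gm)$, and show this Cartier dual vanishes. Since $\gm$ is a sheaf, a homomorphism $\phi:H^0(G)\longrightarrow\gm$ is the same datum as a character $\phi\circ\pi$ of $G$, where $\pi:G\longrightarrow H^0(G)$ is the canonical (epimorphic) projection of~\ref{structural_exact_sequence}. Now $\phi\circ\pi$ is a morphism from $G$ to the affine $S$-scheme $\gm$, and $G$ has a neutral section, so $G(S)\neq\emptyset$; by~\ref{morphisme_constant} it is constant, i.e.\ it factors through $f$. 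Evaluating on the neutral section forces this constant character to be trivial, and since $\pi$ is an epimorphism $\phi$ is trivial as well. Hence $H^0(G)^D=0$ and $H^{-1}(D(G))=0$.

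For the injectivity of $H^0(D(G))\longrightarrow\Pic_{G/S}$, both sides being fppf sheaves, it suffices (after an fppf localization on the base) to take a section represented by an actual object $(\Lc,\alpha)$ of $D(G)(U)$ with $[\Lc]=0$ in $\Pic_{G/S}(U)$ and to prove that $(\Lc,\alpha)$ is isomorphic to the neutral object. Write $G_U=G\times_SU$. The first step is to promote the vanishing of $[\Lc]$ to a genuine trivialization of $\Lc$: the standing hypothesis that $\Oc_S\longrightarrow f_*\Oc_G$ is universally an isomorphism, together with the neutral section $e$, identifies $\Pic_{G/S}$ with the rigidified Picard functor, so that $[\Lc]=0$ gives $\Lc\simeq f^*N$ for some $N\in\Pic(U)$; pulling $\alpha$ back along $(e,\id_G)$ produces a canonical isomorphism $e^*\Lc\simeq\Oc_U$, and combining the two yields $N\simeq\Oc_U$, hence $\Lc\simeq\Oc_{G_U}$. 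Once $\Lc$ is trivialized, $\alpha$ becomes a global unit $u$ on $G_U\times_UG_U$; here the hypothesis on $f_*\Oc_G$ enters a second time, now in the guise $\Oc^{\times}(G_U\times_UG_U)=\Oc^{\times}(U)$, so that $u$, and any candidate isomorphism $\beta\in\Oc^{\times}(G_U)=\Oc^{\times}(U)$, is pulled back from the base. Chasing the compatibility square of the explicit description of $D(G)$ then turns the required isomorphism $(\Oc,u)\simeq(\Oc,1)$ into the equation $u=\beta^{-1}$ in $\Oc^{\times}(U)$, which is trivially solvable; so $(\Lc,\alpha)$ is isomorphic to the neutral object and the given section vanishes. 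I expect the main obstacle to lie precisely in this last part: one must check that the trivialization of $\Lc$ can be chosen compatibly with the rigidification carried by $\alpha$, and keep track of all the global units, the key input in each case being that the standing assumption $\Oc_S\xrightarrow{\sim}f_*\Oc_G$ makes characters constant and forces global units to come from the base.
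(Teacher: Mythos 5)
Your proof is correct and follows essentially the same route as the paper: the vanishing of $H^{-1}(D(G))\simeq H^0(G)^D$ via Lemma~\ref{morphisme_constant}, and the reduction of the monomorphism claim to showing that an object $(\Lc,\alpha)$ with $[\Lc]=0$ in $\Pic_{G/S}$ (hence $\Lc\in f^*\Pic(U)$ by the comparison with $\Pic(G)/\Pic(S)$) is isomorphic to the neutral object. The only difference is that you spell out the last step — trivializing $\Lc$ via the rigidification coming from $\alpha$ and solving for the unit $\beta$ — which the paper compresses into the single assertion that a homomorphism whose underlying morphism of stacks is constant is trivial.
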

\begin{proof}
 Let $\varphi : H^0(G)\to \gm$ be a morphism of group 
sheaves. By~\ref{morphisme_constant} the induced morphism 
from $G$ to $\gm$ is trivial and this implies that $\varphi$ 
is trivial. Since the assumptions are stable under base 
change, it follows that $H^0(G)^D=0$, hence 
$H^{-1}(D(G))=0$.

To prove that $D(G)\to \Pic_{G/S}$ is a monomorphism, it 
suffices to prove that the induced map from $D(G)(S)$ to 
$\Pic_{G/S}(S)$ is injective (again because the assumptions 
are stable under base change). Let $\sigma : G\to B\gm$ be 
an $S$-point of $D(G)$, such that the corresponding 
invertible sheaf is mapped to 0 in $\Pic_{G/S}(S)$. 
By~\cite[2.2.6]{Brochard_Picard} it is mapped to 0 in 
$\Pic(G)/\Pic(S)$, which means that the morphism of stacks 
underlying $\sigma$ factorizes through $S$. This in turn 
implies that the morphism of commutative group stacks $\sigma$ is 
trivial. 
\end{proof}

\begin{prop}
\label{comp_dual_picto}
 Let $G$ be an abelian stack over a base scheme $S$. Then 
the forgetful morphism from $D(G)$ to $\champic(G/S)$ 
induces a functorial isomorphism
$$\xymatrix{D(G)\ar[r]^-{\sim} & \picto_{G/S}\, .}$$
\end{prop}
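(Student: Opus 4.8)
The plan is to exhibit the forgetful morphism $\omega : D(G) \longrightarrow \champic(G/S)$ as a monomorphism onto $\picto_{G/S}$, and then to verify that it is an isomorphism one geometric fibre at a time via the criterion \ref{lemme_isomorphisme_fibre_a_fibre}. Since $G$ is an abelian stack, its structural morphism factors as $G \longrightarrow H^0(G) \longrightarrow S$ with $H^0(G)$ an abelian scheme and $G \longrightarrow H^0(G)$ a gerbe banded by the finite flat group $H^{-1}(G)$; consequently $\Oc_S \to f_*\Oc_G$ is universally an isomorphism, so Lemma \ref{lemme_monomorphisme} applies and tells us that $H^{-1}(D(G))=0$ and that $\omega$ induces a monomorphism $D(G) \longrightarrow \Pic_{G/S}$. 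Because the fibres of $D(G)$ are proper (Theorem \ref{thm_representabilite}), this monomorphism factors through the torsion component $\picto_{G/S}$, exactly as already observed in the proof of Theorem \ref{thm_representabilite} (2).

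To descend to fibres I would use that $D(G)$ is a duabelian group by Theorem \ref{thm_representabilite} (6), hence flat and of finite presentation over $S$, and that $\picto_{G/S}$ is an algebraic space locally of finite type over $S$ (by \cite{Brochard_Picard} and \cite[3.3.3]{Brochard_finiteness}). The map $D(G) \longrightarrow \picto_{G/S}$ is a homomorphism of commutative group algebraic spaces and its formation commutes with base change, so by Lemma \ref{lemme_isomorphisme_fibre_a_fibre} it is enough to prove that it is an isomorphism over each geometric point. We may therefore assume $S = \Spec k$ with $k$ algebraically closed.

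Over such a field, set $F = H^{-1}(G)$ and $A = H^0(G)$, so that $F$ is finite, $A$ is an abelian variety, and $0 \longrightarrow BF \longrightarrow G \longrightarrow A \longrightarrow 0$ is the structural sequence, the map $\pi : G \to A$ being the gerbe. On the dual side, Lemma \ref{description_dual}, combined with $E^1(A) \simeq A^t$ (\ref{dual_and_ext1_abelian}) and the vanishing of the boundary map into $E^2(A)$ (\ref{vanishing_ext23_abelian}), produces a short exact sequence $0 \longrightarrow A^t \longrightarrow D(G) \longrightarrow F^D \longrightarrow 0$, whose first map is $D(\pi)$ and whose second map records the character of $F$ obtained by restricting to $BF$. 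On the Picard side, the exact sequence attached to the gerbe $\pi$ (of the kind established in \cite{Brochard_Picard}), restricted to torsion components and using $\picto_{A/k} = A^t$ together with the finiteness of $F^D$, yields $0 \longrightarrow A^t \longrightarrow \picto_{G/k} \longrightarrow F^D \longrightarrow 0$, whose first map is $\pi^*$ and whose second map is again restriction of a line bundle to $BF$. The morphism $\omega$ is compatible with these presentations: on subobjects it identifies $D(\pi)$ with $\pi^*$ and hence restricts to the identity of $A^t$ (using $D(A) = A^t$, \ref{Cartier_groups}), while on quotients it induces the identity of $F^D$ (via $D(BF) = F^D$, Corollary \ref{particular_case_of_a_classifying_stack}, a line bundle on $BF$ being nothing but a character). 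The five lemma then gives that $\omega$ is an isomorphism over $k$, and the fibre-by-fibre criterion upgrades this to the global isomorphism $D(G) \simeq \picto_{G/S}$.

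I expect the genuine difficulty to lie entirely in the last paragraph: setting up the Picard-side exact sequence for the gerbe $\pi$ and checking that $\omega$ really does induce the identity on both $A^t$ and $F^D$, so that the two extensions are matched. Everything else -- the monomorphism property and the reduction to geometric fibres -- is formal once the representability, properness, and duabelianity inputs of Theorem \ref{thm_representabilite} and Lemmas \ref{lemme_isomorphisme_fibre_a_fibre} and \ref{lemme_monomorphisme} are in hand.
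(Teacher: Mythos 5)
Your proposal is correct and follows essentially the same route as the paper: \ref{lemme_monomorphisme} plus properness of the fibres of $D(G)$ to get a monomorphism into $\picto_{G/S}$, reduction to geometric fibres via \ref{lemme_isomorphisme_fibre_a_fibre}, and then a five-lemma argument comparing the dual of the structural sequence $0\to BH^{-1}(G)\to G\to H^0(G)\to 0$ with the corresponding Picard sequence. The only (harmless) cosmetic difference is that the paper does not identify the outer vertical maps with ``the identity'' of $A^t$ and $F^D$; it simply proves the two special cases ($G$ an abelian variety, $G$ a classifying stack) separately, using injectivity plus abstract isomorphy of the two sides, which is all the five lemma needs — and note that surjectivity of $\picto_{G/k}\to F^D$ is not required for that argument, so your appeal to it can be dropped.
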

\begin{proof}
 By standard limit arguments we may assume that $S$ is 
noetherian. The structural morphism $f : G \to S$ is proper, 
flat, finitely presented, and with geometrically connected 
and geometrically reduced fibers 
(\ref{prop_abelian_stacks}). In particular, $\Oc_S \to 
f_*\Oc_G$ is universally an isomorphism. Then the Picard 
functor $\Pic_{G/S}$ is representable by a quasiseparated 
algebraic space 
\cite[Theorem~2.1.1~(2)]{Brochard_finiteness} and the 
subfunctor $\picto_{G/S}$ is representable by an open 
subscheme, which is of finite presentation over~$S$ 
\cite[Theorem~3.3.3]{Brochard_finiteness}. On the other 
hand, by~\ref{thm_representabilite4}, $D(G)$ is a proper 
and flat algebraic space over~$S$. The natural morphism 
$\omega : D(G) \to \Pic_{G/S}$ is a monomorphism 
by~\ref{lemme_monomorphisme}. Since $D(G)$ is proper it 
factorizes through~$\picto_{G/S}$. We still denote 
by~$\omega$ the resulting monomorphism $D(G)\to 
\picto_{G/S}$. To prove that it is an isomorphism, 
by~\ref{lemme_isomorphisme_fibre_a_fibre} we may assume 
that~$S$ is the spectrum of an algebraically closed field. 

Let us first consider the case where $G$ is an abelian variety.
Then, by~\ref{particular_case_of_a_sheaf}, we know that $D(G)$ is isomorphic to the abelian variety $\picto_{G/S}$ hence $\omega$ is necessarily an isomorphism since it is injective.

Now let us consider the case where $H^0(G)=0$. Then 
$\Pic_{G/S}\simeq H^{-1}(G)^D$ (see for 
instance~\cite[5.3.7]{Brochard_Picard} 
or~\ref{Picard_classifiant} below) and the whole Picard 
functor is torsion. On the other hand, the commutative group stack 
$D(G)$ is also isomorphic to~$H^{-1}(G)^D$ 
by~\ref{particular_case_of_a_classifying_stack}. The 
morphism $\omega$ is a proper monomorphism, hence a closed 
immersion, and since both sides are finite and isomorphic it 
must be an isomorphism.

In the general case, the canonical exact sequence~\ref{structural_exact_sequence} induces a commutative diagram
$$\xymatrix{0\ar[r] & D(H^0(G)) \ar[r] \ar[d]&
D(G) \ar[r] \ar[d]^{\omega}& D(BH^{-1}(G))\ar[r] \ar[d]& 0 \\
0\ar[r]& \picto_{H^0(G)/S} \ar[r]& \picto_{G/S}  \ar[r] & \Pic_{BH^{-1}(G)/S}\, .}$$
The morphism $H^{-1}(G)^D \to E^2(H^0(G))$ is trivial 
(\ref{vanishing_ext23_abelian}), hence the first row is exact 
by~\ref{dual_sec_can}. The second row is exact too (\ref{Picard_classifiant} 
and~\cite[3.3.2]{Brochard_finiteness}). The left and right vertical maps are 
isomorphisms by the above particular cases, hence by the 5-lemma (which holds 
in any abelian category and in particular in the category of sheaves, see e.g. 
\cite[VIII, \S 4, Lemma4]{MacLane_Categories_For_The}; note by the way that in 
loc. cit. the assumptions on $f_1$ and $f_5$ are too strong: it suffices 
to have $f_1$ epic and $f_5$ monic) the middle one is also an isomorphism.
\end{proof}

% Let $X$ be a scheme over a base scheme $S$ and let $\pi : \Gc \to X$ be a 
% morphism of stacks. We say that $\pi$ is a gerbe (or that $\Gc$ is a gerbe over 
% $X$) if $\pi$ and its diagonal are both epimorphisms. Let $F$ be a fixed sheaf 
% of commutative groups. We say that a gerbe $\pi : \Gc \to X$ is \emph{banded 
% by} $F$ (or that $\Gc$ is an $F$-gerbe) if for any $S$-scheme $U$ and any 
% object $g\in \Gc(U)$ there is a functorial isomorphism $\Aut_{\Gc}(g)\simeq 
% F(U)$.

\begin{prop}
 \label{Picard_classifiant}\ 

\begin{enumerate}
 \item Let $F$ be a sheaf of commutative groups over a base scheme~$S$. There is a canonical isomorphism
$\xymatrix@C=1pc{
\Pic_{BF/S} \ar[r]^-{\sim} & F^D\, .
}$
\item Let $F$ be a separated, flat and finitely presented commutative group 
algebraic space over~$S$, and let~$\Gc$ be an~$F$-gerbe (see~\ref{def:gerbe}) 
over an~$S$-scheme $X$. There is an exact sequence:
$$0\lto \Pic_{X/S} \lto \Pic_{\Gc/S} \lto F^D\, .$$
\end{enumerate}

\end{prop}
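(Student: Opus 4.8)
The plan is to prove (1) directly and then obtain (2) by descending to the neutral case, where (1) applies.

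For (1), I would first recall that the structural morphism $f : BF\to S$ carries the canonical section $e : S\to BF$ classifying the trivial torsor (which is at the same time the standard atlas), and that $\Oc_S\to f_*\Oc_{BF}$ is universally an isomorphism, since $H^0(BF_U,\Oc)=\Gamma(U,\Oc_U)$. Consequently, by \cite[2.2.6]{Brochard_Picard}, the presheaf $U\mapsto \Pic(BF_U)/\Pic(U)$ is already an \emph{fppf} sheaf and computes $\Pic_{BF/S}$. I would then set up the isomorphism with $F^D$ by sending a character $\chi:F\to\gm$ to the invertible sheaf $L_\chi$ attached to the corresponding one-dimensional representation; as $e^*L_\chi$ is canonically trivial, this yields a morphism $F^D\to\Pic_{BF/S}$. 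For the inverse, an invertible sheaf $L$ on $BF_U$ with $e^*L$ trivial is the same datum as a descent datum for $\Oc_U$ along the atlas $U\to BF_U$, i.e. (using $U\times_{BF_U}U\simeq F_U$ and the cocycle condition) exactly a character of $F_U$. These constructions are mutually inverse and functorial, whence $\Pic_{BF/S}\simeq F^D$, recovering \cite[5.3.7]{Brochard_Picard}. One may also read off the injectivity at once from \ref{particular_case_of_a_classifying_stack} and \ref{lemme_monomorphisme}, since $D(BF)\simeq F^D$ and the forgetful $\omega$ is a monomorphism into $\Pic_{BF/S}$; only the surjectivity then needs the character construction, which uses that the gerbe $BF$ is neutral.

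For (2), write $\pi:\Gc\to X$ for the given $F$-gerbe. I would take the first map to be the pullback $\pi^*:\Pic_{X/S}\to\Pic_{\Gc/S}$ and the third to be the \emph{weight} map $w:\Pic_{\Gc/S}\to F^D$, sending an invertible sheaf $\Lc$ on $\Gc_U$ to the character by which the band $F$ --- lying centrally in the automorphism group of every object of $\Gc$ --- acts on $\Lc$; as $\Aut(\Lc)=\gm$, this is a homomorphism $F\to\gm$, i.e. a section of $F^D$. Injectivity of $\pi^*$ would follow once one knows that $\pi$ is an \emph{fppf} epimorphism with $\Oc_X\to\pi_*\Oc_\Gc$ universally an isomorphism; both properties are \emph{fppf}-local on $X$ and reduce, after trivializing the gerbe, to the computation $H^0(BF,\Oc)=\Oc$ from (1), after which \cite[2.2.6]{Brochard_Picard} applies. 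The inclusion $\mathrm{Im}(\pi^*)\subseteq\Ker(w)$ is immediate, since the band acts trivially on a pulled-back sheaf.

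The heart of (2) is the reverse inclusion $\Ker(w)\subseteq \mathrm{Im}(\pi^*)$, which I would establish by descent along the gerbe: an invertible sheaf on which $F$ acts trivially lies in the weight-zero summand of $\mathrm{QCoh}(\Gc)$, which is precisely $\pi^*\mathrm{QCoh}(X)$. I would verify this \emph{fppf}-locally on $X$, where $\Gc\simeq B(F_V)$ over a cover $V$ and part (1) exhibits the weight-zero invertible sheaves as pullbacks from $V$, and then glue the resulting descent data to produce the line bundle on $X_U$. I expect this gluing to be the main obstacle: one must check that $w$ genuinely takes values in $F^D$ (reconciling the a priori $X_U$-relative character with a section of $F^D$ over $U$), and that the local statement ``weight zero implies descended to $X$'' passes to the possibly non-neutral gerbe $\Gc$ at the level of the \emph{fppf}-sheafified, $S$-relative Picard functors. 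Note finally that no surjectivity of $w$ is asserted in (2): the obstruction to lifting a character of $F$ to an invertible sheaf on $\Gc$ is the class of the gerbe, which vanishes exactly in the neutral case $X=S$, $\Gc=BF$, thereby recovering the isomorphism of (1).
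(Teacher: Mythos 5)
Your proposal builds exactly the same two maps as the paper (the character of the $F$-action for the map to $F^D$, and $\pi^*$ for the inclusion), so the overall route is the same; the difference is that the paper disposes of (1) and of the exactness of $\Pic(X)\to\Pic(\Gc)\to F^D$ in the middle by citing \cite[5.3.7]{Brochard_Picard} and \cite[5.3.6]{Brochard_Picard}, whereas you re-prove these by descent: along the atlas $S\to BF$ for (1) (identifying a rigidified line bundle with a unit on $U\times_{BF_U}U\simeq F_U$ satisfying the cocycle condition, i.e.\ a character), and along the gerbe for the weight-zero part of (2). Both descent arguments are sound; for the second one, the ``gluing'' you worry about is in fact a non-issue if you simply take $M=\pi_*L$ and observe that the adjunction $\pi^*\pi_*L\to L$ is an isomorphism because it is so fppf-locally on $X$, where $\Gc$ is a neutral gerbe and (1) applies. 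Your other caveat --- that the weight is a priori only a section of $F^D$ over $X_U$ rather than over $U$ --- is a genuine subtlety of the statement that the paper leaves to the cited reference; it is harmless in the paper's application, where $X$ has proper geometrically connected fibers.

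The one step where your justification does not go through as written is the injectivity of $\pi^*:\Pic_{X/S}\to\Pic_{\Gc/S}$. You invoke \cite[2.2.6]{Brochard_Picard} after ``trivializing the gerbe fppf-locally'', but that result requires a section of the morphism in question, and $\pi:\Gc\to X$ has sections only fppf-locally \emph{on $X$}; localizing on $X$ changes $X$ and is not permitted when proving injectivity of a morphism of sheaves on $S$ (localizing on $S$ does not neutralize the gerbe). The correct argument --- and the one the paper uses --- is the Leray spectral sequence for $\pi$, giving $H^1(X,\pi_*\gm)\hookrightarrow H^1(\Gc,\gm)$ together with $\pi_*\gm=\gm$; equivalently, a trivializing section of $\pi^*L$ descends to one of $L$ via $\pi_*$ and the projection formula, again using $\Oc_X\to\pi_*\Oc_\Gc$ being an isomorphism. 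With that substitution your proof is complete.
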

\begin{proof}
 (1) is proved in~\cite[5.3.7]{Brochard_Picard} (the sheaf 
$F$ was supposed to be a scheme in \emph{loc. cit.} but this 
assumption was useless). The isomorphism maps an invertible 
sheaf~$L$ on~$BF$ to the unique character~$\chi_{L} : F\to 
\gm$ such that the natural action of $F$ on $L$ is induced 
through $\chi_{L}$ by that of~$\gm$. The inverse of this 
isomorphism maps a character $\chi$ to the class of the 
invertible sheaf~$\Lc(\chi)$ corresponding to the induced 
map $BF\to B\gm$.
\details{
Ceci est encore vrai pour une gerbe non neutre. On peut le montrer facilement en utilisant la description du foncteur de Picard suggérée par Niels. Avec cette description, pour un champ $\Gc$ sur une base $S$, l'ensemble $\Pic_{\Gc/S}(S)$ est en bijection avec les classes d'isomorphie de couples $(\Hc, h : \Gc \to \Hc)$ où $\Hc$ est une $\gm$-gerbe et $h$ un $S$-morphisme de champs. Si $\Gc$ est une $F$-gerbe, le caractère associé au point $l\in \Pic_{\Gc/S}(S)$ qui correspond à $(\Hc, h : \Gc \to \Hc)$ est induit par $h$ (regarder les automorphismes d'un objet, la construction descend). Il faut alors montrer que, si $\Gc$ est une $F$-gerbe sur $S$ et $\chi : F\to \gm$ est un caractère de $F$, il existe une $\gm$-gerbe $\Hc$ et un morphisme $\Gc\to \Hc$ qui induit $\chi$. Il suffit pour cela de voir $\Gc$ comme un $BF$-torseur et de prendre pour $\Hc$ le $B\gm$-torseur obtenu à partir de $\Gc$ par extension des scalaires le long du morphisme $BF\to B\gm$ induit par $\chi$ (voir paragraphe sur les torseurs dans ce papier). Ceci montre que pour $\Gc$ une $F$-gerbe sur $S$, le morphisme $\Pic_{\Gc/S}\to F^D$ est surjectif. Pour l'injectivité, voir (2) ci-dessous.
}

(2) By~\cite[5.3.6]{Brochard_Picard} the sequence $\Pic(X) 
\to \Pic(\Gc)\to F^D(S)$ is exact. Let $\pi : \Gc \to X$ 
denote the structural morphism of~$\Gc$. The Leray spectral 
sequence of $\pi$ yields an injection $H^1(X, \pi_*\gm) \inj 
H^1(\Gc,\gm)$. But the canonical map $\Oc_X\to 
\pi_*\Oc_{\Gc}$ is universally an isomorphism, hence 
$\pi_*\gm=\gm$ and we see that $\pi^* : \Pic(X)\to 
\Pic(\Gc)$ is injective. Sheafifying, we get the exactness 
of the sequence $0\to \Pic_{X/S}\to \Pic_{\Gc/S}\to F^D$.
\end{proof}

\begin{remarque}\rm
\label{rem_dual_classifiant_via_Picard}
 We let the reader check that the isomorphisms~\ref{particular_case_of_a_classifying_stack} and~\ref{Picard_classifiant}~(1) are compatible with the forgetful morphism~$\omega : D(BG)\to \Pic_{BG/S}$ in the sense that the following diagram commutes.
$$\xymatrix@R=1pc@C=1pc{
D(BG) \ar[rr]^{\omega} \ar[rd]_{(\ref{particular_case_of_a_classifying_stack})}&& \Pic_{BG/S}\ar[ld]^{(\ref{Picard_classifiant})}\\
&G^D &
}$$
\end{remarque}

\begin{cor}
 \label{comp_dual_pic}
Let $G$ be a sheaf of commutative groups over a base scheme $S$. Then the forgetful morphism from $D(BG)$ to $\champic(BG/S)$ induces an isomorphism
$$\xymatrix{\omega : D(BG)\ar[r]^-{\sim} & \Pic_{BG/S}\, .}$$
\end{cor}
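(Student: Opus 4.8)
The plan is to read off the result from the two canonical computations of the source and target that are already at our disposal, rather than analysing $\omega$ directly. By~\ref{particular_case_of_a_classifying_stack} there is a canonical isomorphism of group stacks $D(BG)\simeq G^D$, and by~\ref{Picard_classifiant}~(1) there is a canonical isomorphism $\Pic_{BG/S}\simeq G^D$. Neither of these references the forgetful morphism $\omega$ itself, so the real point is to relate $\omega$ to these two identifications. This is exactly what Remark~\ref{rem_dual_classifiant_via_Picard} records: the triangle with vertices $D(BG)$, $\Pic_{BG/S}$ and $G^D$, whose two slanted edges are the isomorphisms above and whose horizontal edge is $\omega$, commutes.

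Granting that commutativity, the conclusion is immediate: $\omega$ is the composite of the isomorphism $D(BG)\simeq G^D$ with the inverse of the isomorphism $\Pic_{BG/S}\simeq G^D$, hence it is itself an isomorphism. There is essentially no obstacle beyond the compatibility statement, which is the content of~\ref{rem_dual_classifiant_via_Picard}: it amounts to unwinding how a character $\chi : G\rightarrow \gm$ produces, on the one hand, the homomorphism $BG\rightarrow B\gm$ defining a point of $D(BG)$ and, on the other hand, the invertible sheaf $\Lc(\chi)$ on $BG$ representing the corresponding point of $\Pic_{BG/S}$; since both arise from the same induced map $BG\rightarrow B\gm$, they match under $\omega$.

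As a cross-check, when $G$ is finite, flat and finitely presented the stack $BG$ is an abelian stack (one has $H^{-1}(BG)=G$ and $H^0(BG)=0$), the Cartier dual $G^D$ is finite so that $\picto_{BG/S}=\Pic_{BG/S}$, and the statement then becomes a special case of~\ref{comp_dual_picto}. The advantage of the triangle argument is that it requires no finiteness or flatness hypothesis on $G$ and delivers the isomorphism for an arbitrary sheaf of commutative groups.
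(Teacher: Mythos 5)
Your proof is correct and follows exactly the route the paper intends: the corollary is stated without a separate proof precisely because it is the immediate consequence of the isomorphisms \ref{particular_case_of_a_classifying_stack} and \ref{Picard_classifiant}~(1) together with the commuting triangle of Remark~\ref{rem_dual_classifiant_via_Picard}, which is what you invoke. The unwinding of how a character yields both a point of $D(BG)$ and the sheaf $\Lc(\chi)$ is the same verification the paper leaves to the reader in that remark.
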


\section{Dualizability}
\label{section_dualizability}

\begin{defi}
\label{def_eval_map}
 Let $G$ be a commutative group stack over a base scheme $S$. There is a natural evaluation homomorphism
$$e_G : G \lto DD(G)$$
that maps a point $g$ of $G$ to the homomorphism $e_G(g) : D(G)\to B\gm$ 
defined by $e_G(g)(\varphi)=\varphi(g)$ for any object $\varphi : G\to B\gm$ of 
$D(G)$.
We say that $G$ is dualizable if $e_G$ is an isomorphism.
\end{defi}

\begin{prop}
 \label{fonctorialite_evaluation_map}
 \label{dual_evaluation_map}\ 
\begin{itemize}
 \item[(a)] The evaluation map $e_G$ is functorial in the following sense. If $f : G \to H$ is a morphism of commutative group stacks, then the square
$$\xymatrix@R=1pc{
G\ar[r]^-{e_G} \ar[d]_{f} &
DD(G)\ar[d]^{DD(f)} \\
H\ar[r]_-{e_H} &DD(H)
}$$
 is \emph{strictly} commutative.
\item[(b)] The composition $D(e_G)\circ e_{D(G)}$
$$\xymatrix{
D(G) \ar[r]^-{e_{D(G)}}& DDD(G) \ar[r]^-{D(e_G)} &D(G)
}$$
is equal to the identity of~$D(G)$.
\item[(c)] Let~$G$ be a dualizable commutative group stack. Then~$D(G)$ is 
dualizable.
\item[(d)] Forming the evaluation morphism commutes with base change.
\item[(e)] For commutative group stacks, the property of being dualizable is stable under base change.
\end{itemize}
\end{prop}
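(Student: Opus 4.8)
The assertions (c), (d), and (e) follow formally once (a) and (b) are in place, so the heart of the argument is to verify the functoriality square (a) and the triangle identity (b). The plan is to run everything through the explicit description of the evaluation map. Recall that $e_G$ is induced, via the adjunction defining the internal $\shHom$, by the canonical evaluation pairing $G\times_S D(G)\rightarrow B\gm$ sending a pair $(g,\varphi)$ to $\varphi(g)$; concretely, $e_G$ carries a section $g$ of $G$ to the homomorphism ``evaluation at $g$'', that is $\varphi\mapsto\varphi(g)$ for $\varphi$ a section of $D(G)$. Since $D(.)$ acts on a homomorphism by precomposition (see~\ref{dual_dun_produit_de_morphismes}), for $f:G\rightarrow H$ the map $D(f):D(H)\rightarrow D(G)$ sends $\chi$ to $\chi\circ f$, and $DD(f)=D(D(f))$ sends a section $\Phi$ of $DD(G)$ to $\Phi\circ D(f)$.

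For (a), I would unwind both legs of the square on a section $g$ of $G$. The composite $DD(f)\circ e_G$ sends $g$ to the homomorphism $\chi\mapsto e_G(g)(\chi\circ f)=(\chi\circ f)(g)=\chi(f(g))$ on $D(H)$, whereas $e_H\circ f$ sends $g$ to the homomorphism $\chi\mapsto\chi(f(g))$. These two homomorphisms are given by the \emph{same} formula on every section $\chi$ of $D(H)$; a routine check shows that the accompanying coherence $2$-isomorphisms (the data $\alpha$ attached to a homomorphism of group stacks) also coincide, so the square commutes strictly and not merely up to a $2$-isomorphism. Alternatively, one may transport the computation to the $2$-category of length one complexes via~\ref{description_cgs}, where $e_G$ becomes the canonical biduality morphism of the complex $G^{\flat}$ and its naturality in $G$ is a standard fact of homological algebra.

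For (b), I would evaluate the composite $D(e_G)\circ e_{D(G)}$ on a section $\varphi$ of $D(G)$. By the description above, $e_{D(G)}(\varphi)$ is the homomorphism $DD(G)\rightarrow B\gm$ sending $\Phi\mapsto\Phi(\varphi)$, and $D(e_G)$ is precomposition with $e_G$; hence $D(e_G)(e_{D(G)}(\varphi))=e_{D(G)}(\varphi)\circ e_G$ carries a section $g$ of $G$ to $e_{D(G)}(\varphi)(e_G(g))=e_G(g)(\varphi)=\varphi(g)$. Thus the composite sends $\varphi$ back to $\varphi$, that is, it is the identity of $D(G)$; this is precisely the zigzag identity for the biduality pairing.

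The remaining points are formal. For (c), if $G$ is dualizable then $e_G$ is an isomorphism, hence so is $D(e_G)$ since $D(.)$ is a $2$-functor and preserves equivalences; the identity of (b) then exhibits $e_{D(G)}$ as a quasi-inverse of $D(e_G)$, so $e_{D(G)}$ is an isomorphism and $D(G)$ is dualizable. For (d), the formation of the dual commutes with base change (the remark following~\ref{def_dual_cgs}), and under the induced canonical identifications $D(G\times_S S')\simeq D(G)\times_S S'$ and $DD(G\times_S S')\simeq DD(G)\times_S S'$ the morphism $e_{G\times_S S'}$ corresponds to the base change of $e_G$, as one sees by replaying the pointwise description of $e$ over $S'$; then (e) is immediate, since the base change of the isomorphism $e_G$ is an isomorphism. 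The main obstacle is thus the verification of (a) and (b) exactly as stated, namely obtaining \emph{strict} commutativity in (a) and an honest identity morphism in (b) rather than equalities up to $2$-isomorphism; this forces one to track all the coherence $2$-morphisms, and the cleanest way to discharge the bookkeeping is to reduce to the complex-level biduality map through~\ref{description_cgs}.
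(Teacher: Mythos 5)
Your proposal is correct and follows essentially the same route as the paper: the paper proves (a) by the identical observation that both composites send an object $x$ of $G$ to the homomorphism $\varphi\mapsto\varphi(f(x))$ on $D(H)$, and dismisses (b) and (d) as straightforward verifications with (c) and (e) as immediate consequences. Your additional unwinding of (b) and the remark about reducing the coherence bookkeeping to the complex-level biduality map are consistent elaborations of the same argument.
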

\begin{proof}
(b) and (d) are straightforward verifications and (c), (e) are immediate 
consequences. To prove (a), we just observe that both morphisms $e_H\circ f$ and 
$DD(f)\circ e_G$ map an object $x$ of $G$ to the morphism of commutative group 
stacks
$$\begin{array}{rcl}
\shHom(H, B\gm)& \lto & B\gm \\
\varphi &\xymatrix{\ar@{|->}[r]&} & \varphi(f(x))
\end{array}\, .$$
\end{proof}

\begin{prop}
 \label{antiequivalence}
The 2-functor $D(.)$ induces a 2-antiequivalence from the 2-category of 
dualizable commutative group stacks to itself.
\end{prop}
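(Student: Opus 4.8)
The plan is to show that $D(\cdot)$, restricted to the $2$-category $\Dc$ of dualizable group stacks, is essentially surjective on objects and induces an equivalence on each Hom-category; together with the fact that $D$ reverses $1$-cells, these two properties exhibit it as a $2$-antiequivalence. First I would record that $D$ really restricts to an endo-$2$-functor of $\Dc$: by \ref{fonctorialite_evaluation_map}(c) the dual of a dualizable group stack is again dualizable. Essential surjectivity is then immediate: for any dualizable $H$, the object $D(H)$ lies in $\Dc$ by (c), and the evaluation morphism $e_H : H\to DD(H)=D(D(H))$ is an isomorphism by the very definition of dualizability, so $H$ is isomorphic to $D$ of an object of $\Dc$.

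The heart of the argument is to prove that for dualizable $G,H$ the functor
$$\Phi : \Hom_{cgs}(G,H)\lto \Hom_{cgs}(D(H),D(G)),\qquad f\mapsto D(f),\ u\mapsto D(u),$$
is an equivalence of categories. I would exhibit an explicit quasi-inverse built from the evaluation isomorphisms, namely
$$\Psi : \Hom_{cgs}(D(H),D(G))\lto \Hom_{cgs}(G,H),\qquad \psi\mapsto e_H^{-1}\circ D(\psi)\circ e_G,$$
defined on $2$-cells by the same conjugation. To check $\Psi\circ\Phi\simeq\id$, take $f:G\to H$; then $\Psi(\Phi(f))=e_H^{-1}\circ DD(f)\circ e_G$, and the strict naturality square of \ref{fonctorialite_evaluation_map}(a), which reads $DD(f)\circ e_G=e_H\circ f$, gives $\Psi(\Phi(f))=f$.

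For $\Phi\circ\Psi\simeq\id$, take $\psi:D(H)\to D(G)$ and expand, using that $D$ reverses composition,
$$\Phi(\Psi(\psi))=D\bigl(e_H^{-1}\circ D(\psi)\circ e_G\bigr)=D(e_G)\circ DD(\psi)\circ D(e_H)^{-1}.$$
Here I would invoke the triangle identity \ref{fonctorialite_evaluation_map}(b): since dualizability of $G,H$ forces $D(G),D(H)$ dualizable (again by (c)), the morphisms $e_{D(G)},e_{D(H)}$ are invertible, and (b) yields $D(e_G)=e_{D(G)}^{-1}$ together with $D(e_H)^{-1}=e_{D(H)}$. Combining this with the naturality square (a) applied to $\psi$, that is $DD(\psi)\circ e_{D(H)}=e_{D(G)}\circ\psi$, the three isomorphisms telescope to give $\Phi(\Psi(\psi))=\psi$. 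Thus $\Phi$ is an equivalence of categories (indeed a bijection on objects), and, combined with essential surjectivity, $D$ is a $2$-antiequivalence.

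The step I expect to require the most care is the verification that these identities persist at the level of $2$-cells, so that $\Phi$ and $\Psi$ are genuinely quasi-inverse functors and not merely mutually inverse on isomorphism classes. This amounts to checking that the evaluation transformation $e$ is $2$-natural and not just $1$-natural; concretely, that the square of (a) commutes compatibly with $2$-isomorphisms $u:f\Rightarrow g$. This can be settled exactly as in the proof of (a), by evaluating both whiskered $2$-cells on an object $x$ of $G$ and observing that each sends a homomorphism $\varphi$ to the $2$-cell that $u$ induces at $x$; once this $2$-naturality is in hand, the two computations above go through verbatim for $2$-cells and finish the proof.
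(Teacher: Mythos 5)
Your proof is correct, but it takes a genuinely different route from the paper's. You construct an explicit quasi-inverse $\psi\mapsto e_H^{-1}\circ D(\psi)\circ e_G$ on each Hom-category and verify both composites; this forces you to invoke the triangle identity \ref{fonctorialite_evaluation_map}~(b) (to identify $D(e_G)$ with $e_{D(G)}^{-1}$, using that $e_{D(G)}$ is invertible by~(c)) and to address the $2$-naturality of $e$ on $2$-cells, which you rightly flag as the delicate point. The paper argues indirectly: by the strict naturality square~(a), the composite functor $DD(.)$ on $\Hom(G,H)$ coincides with $f\mapsto e_H\circ f$ after post-composing with the equivalence $(.\circ e_G)$, hence $DD(.)$ is an equivalence of categories; a purely formal lemma about functors $F,G$ with $G\circ F$ an equivalence, applied to the factorization $DD(.)=D(.)\circ D(.)$ and exploiting that the intermediate Hom-category $\Hom(D(H),D(G))$ is again a Hom-category between dualizable stacks, then forces each factor $D(.)$ to be an equivalence. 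The paper's route never uses~(b) and sidesteps the $2$-cell verifications, at the cost of the extra categorical lemma; yours has the virtue of exhibiting the quasi-inverse explicitly. One small caveat: your parenthetical ``indeed a bijection on objects'' overstates the conclusion, since $\Phi(\Psi(\psi))=e_{D(G)}^{-1}\circ DD(\psi)\circ e_{D(H)}$ is only canonically $2$-isomorphic to $\psi$, not equal to it; this does not affect the argument, as an equivalence of Hom-categories is all that is required.
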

\begin{proof}
 If $G$ is dualizable, then so is $D(G)$, and $G$ is by definition isomorphic to $D(D(G))$, hence $D(.)$ is 2-essentially surjective. It remains to prove that
for $G$ and $H$ dualizable, the functor
$$D(.) : \Hom(G,H) \lto \Hom(D(H), D(G))$$
is an equivalence of categories. Using~\ref{fonctorialite_evaluation_map} (a), we observe that the functors $DD(.)\circ e_G$ and $(e_H\circ .)$ from $\Hom(G,H)$ to $\Hom(G, DD(H))$ are equal. Since $(.\circ e_G)$ and $(e_H\circ .)$ are equivalences, we deduce that $DD(.)$ is an equivalence. The result then follows from the lemma below, whose proof is straightforward.
\end{proof}

\begin{lem}
 Let $F : A \to B$ and $G : B \to C$ be functors. Assume that $G\circ F$ is an equivalence. Then:
\begin{itemize}
 \item[(i)] $F$ is faithful and $G$ is essentially surjective.
\item[(ii)] If $F$ is essentially surjective (resp. if $G$ is faithful) then $G$ (resp. $F$) is full.
\item[(iii)] $F$ is an equivalence if and only if $G$ is an equivalence. 
\end{itemize}
\end{lem}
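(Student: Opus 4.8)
The plan is to argue entirely by elementary diagram chasing, exploiting the fact that $G\circ F$, being an equivalence, is simultaneously full, faithful, and essentially surjective, and then to transport each of these three properties across the factorization. First I would establish~(i) and~(ii) by direct manipulation of morphisms, and finally deduce~(iii) from the standard facts that equivalences are closed under composition and admit quasi-inverses.

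For~(i), faithfulness of~$F$ is immediate: if $f,g\colon a\to a'$ satisfy $F(f)=F(g)$, then $G(F(f))=G(F(g))$, and since $G\circ F$ is faithful we get $f=g$. For essential surjectivity of~$G$, I would take any object $c$ of~$C$ and use that $G\circ F$ is essentially surjective to find an object~$a$ of~$A$ with $(G\circ F)(a)\cong c$; then $c$ is isomorphic to $G(F(a))$, exhibiting $c$ as an object in the essential image of~$G$.

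Part~(ii) is where the only real bookkeeping occurs. Suppose first that $F$ is essentially surjective, and let $h\colon G(b)\to G(b')$ be a morphism of~$C$. I would choose objects $a,a'$ of~$A$ together with isomorphisms $\phi\colon F(a)\to b$ and $\phi'\colon F(a')\to b'$. Conjugating $h$ by these produces a morphism $G(\phi')^{-1}\circ h\circ G(\phi)\colon (G\circ F)(a)\to (G\circ F)(a')$, which by fullness of $G\circ F$ is of the form $(G\circ F)(f)$ for some $f\colon a\to a'$. Setting $\tilde h:=\phi'\circ F(f)\circ \phi^{-1}\colon b\to b'$ and applying~$G$, the occurrences of $G(\phi)$ and $G(\phi')$ cancel and one checks $G(\tilde h)=h$, so $G$ is full. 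For the second assertion, assume $G$ is faithful and let $g\colon F(a)\to F(a')$ be a morphism of~$B$; applying~$G$ and using fullness of $G\circ F$ yields $f\colon a\to a'$ with $(G\circ F)(f)=G(g)$, that is $G(F(f))=G(g)$, whence $F(f)=g$ because $G$ is faithful. Thus $F$ is full.

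Finally, for~(iii) I would avoid re-deriving the three properties and simply use quasi-inverses. If $F$ is an equivalence with quasi-inverse $F'$, then $G$ is naturally isomorphic to $(G\circ F)\circ F'$, a composite of two equivalences, hence an equivalence; conversely, if $G$ is an equivalence with quasi-inverse $G'$, then $F$ is naturally isomorphic to $G'\circ(G\circ F)$, again a composite of equivalences. I do not anticipate any genuine obstacle here; the single point demanding a moment's care is the cancellation of the isomorphisms $\phi,\phi'$ in the fullness argument of~(ii), which is a routine verification using that $G$ respects composition and inverses.
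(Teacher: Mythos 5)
Your proof is correct and complete; the paper itself omits the argument, stating only that the proof is straightforward, and what you have written is exactly the standard diagram chase it has in mind. The one point worth being explicit about in (iii) — that a functor naturally isomorphic to an equivalence is itself an equivalence — is standard and poses no difficulty.
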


\begin{prop}
\label{prop_dualisabilite}
 Let $G$ be a commutative group stack over a base scheme $S$. Let $S'\to S$ be an \emph{fppf} cover. The following are equivalent:
\begin{itemize}
 \item[(i)] $G$ is dualizable.
\item[(i')] The morphisms $H^i(e_G) : H^i(G) \to H^i(DD(G))$ are isomorphisms ($i=-1, 0$). 
\item[(ii)] $G\times_S S'$ is dualizable.
\item[(ii')] The morphisms $H^i(e_{G\times_SS'})$ are isomorphisms ($i=-1, 0$). 
\end{itemize}
\end{prop}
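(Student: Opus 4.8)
The plan is to prove the two ``horizontal'' equivalences (i)$\Leftrightarrow$(i') and (ii)$\Leftrightarrow$(ii') by a single general lemma, and then to bridge them by the ``vertical'' equivalence (i')$\Leftrightarrow$(ii'), which is where the \emph{fppf} descent takes place.

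First I would record the following general fact, valid for any morphism $f : G \to H$ of commutative group stacks: $f$ is an isomorphism if and only if the induced maps $H^{-1}(f)$ and $H^0(f)$ on cohomology sheaves are isomorphisms. This follows directly from the description~\ref{description_cgs}: under the equivalence $\cgs^{\flat} \simeq D^{[-1,0]}(S,\Z)$, the isomorphism class of $f$ corresponds to a morphism in the derived category, and such a morphism is an isomorphism precisely when it is a quasi-isomorphism, i.e. when it induces isomorphisms on the cohomology objects $H^i(G^{\flat}) = H^i(G)$ (see~\ref{def_hi}). Since a $1$-morphism in the $2$-category of group stacks is an equivalence exactly when its class in $\cgs^{\flat}$ is an isomorphism, the claim follows. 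Applying this to $f = e_G$ gives (i)$\Leftrightarrow$(i'), and applying it to $f = e_{G\times_S S'}$ gives (ii)$\Leftrightarrow$(ii').

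It then remains to prove (i')$\Leftrightarrow$(ii'). The point is that $H^i(e_G)$ and $H^i(e_{G\times_S S'})$ are morphisms of \emph{fppf} sheaves of abelian groups, and I claim the latter is canonically identified with the restriction of the former to $S'$. Indeed, forming the dual commutes with base change (the base-change remark following~\ref{dual_dun_produit_de_morphismes}), so that $DD(G\times_S S') \simeq DD(G)\times_S S'$, and the evaluation map commutes with base change by~\ref{fonctorialite_evaluation_map}~(d); hence $e_{G\times_S S'}$ is canonically identified with $e_G \times_S S'$ as a morphism $G\times_S S' \to DD(G)\times_S S'$. Moreover the restriction functor for \emph{fppf} sheaves along $S'\to S$ is exact, hence commutes with the formation of $H^i$, so that $H^i(e_{G\times_S S'})$ is the restriction of $H^i(e_G)$ to $S'$. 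Finally, since $S'\to S$ is an \emph{fppf} cover, the restriction functor is also conservative: a sheaf on $S$ that vanishes after restriction to $S'$ vanishes, by the sheaf axiom applied to the covers $T\times_S S' \to T$. Thus a morphism of \emph{fppf} sheaves over $S$ is an isomorphism if and only if its restriction to $S'$ is; applying this to the kernel and cokernel of $H^i(e_G)$ yields (i')$\Leftrightarrow$(ii').

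The argument is essentially formal; the one step that deserves care, and which I regard as the crux, is the compatibility of the cohomology functors $H^i$ with base change along $S'\to S$. This rests on the exactness of restriction of \emph{fppf} sheaves to the localized topos over $S'$: without exactness, the cohomology of the pulled-back complex need not agree with the pullback of the cohomology. Granting this (together with the conservativity of restriction along a cover), everything else reduces to the base-change compatibilities already recorded in~\ref{fonctorialite_evaluation_map} and to the derived-category criterion of~\ref{description_cgs}. Note finally that the implication (i)$\Rightarrow$(ii) is already contained in~\ref{fonctorialite_evaluation_map}~(e), so the genuinely new content of the proposition is the descent direction (ii)$\Rightarrow$(i), which the above supplies through (ii')$\Rightarrow$(i').
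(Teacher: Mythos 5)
Your proposal is correct and follows essentially the same route as the paper: the horizontal equivalences come from Deligne's equivalence $\cgs^{\flat}\simeq D^{[-1,0]}(S,\Z)$ (an equivalence of group stacks is detected on $H^{-1}$ and $H^{0}$), and the vertical one from the fact that being an isomorphism of \emph{fppf} sheaves is local for the \emph{fppf} topology. You merely make explicit the identification of $e_{G\times_S S'}$ with the base change of $e_G$ and the exactness/conservativity of restriction along the cover, which the paper leaves implicit.
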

\begin{proof}
 The equivalences $(i)\Leftrightarrow (i')$ and 
$(ii)\Leftrightarrow (ii')$ follow from Deligne's 
equivalence between the category of commutative group stacks 
and the derived category $D^{[-1, 0]}(S, \Z)$ of length~1 
complexes of \emph{fppf} sheaves of commutative groups. The 
equivalence $(i')\Leftrightarrow (ii')$ follows from the 
fact that, for a morphism of \emph{fppf} sheaves, being an 
isomorphism is local in the \emph{fppf} topology.
\end{proof}
\details{Si les $H^i(G)$ sont des faisceaux fpqc, et si les $H^i(DD(G))$ sont des préfaisceaux séparés pour fpqc, alors la question de la dualisabilite est locale sur $S$ pour fpqc. De même si $G$ et $DD(G)$ sont des champs fpqc, alors la question de la dualisabilite est locale sur $S$ pour fpqc. Question subsidiaire : y a-t-il un lien entre ``(1) : $G$ est un champ fpqc'' et ``(2) : les $H^i(G)$ sont des faisceaux fpqc'' ? Je crois que (1) implique (2) n'est pas clair du tout. Inversement supposons (2). Au brouillon j'ai montré (à vérifier...) que pour tous $x,y$ $\Isom(x,y)$ est un faisceau fpqc. Il reste à montrer la condition de recollement... (pas fait).}

\begin{lem}
 \label{compatibility_of_can_isoms_with_ev_maps}
Let $G$ be a sheaf of abelian groups. Then the diagrams
$$\xymatrix{
G \ar[r]^-{e_G} \ar[d]_{c} 
   &
DD(G) \ar[d]^{D(\ref{particular_case_of_a_sheaf})} \\
G^{DD} & D(BG^D) \ar[l]_-{(\ref{particular_case_of_a_classifying_stack})}
}
\qquad
\textrm{ and }
\qquad
\xymatrix{
BG \ar[r]^-{e_{BG}} \ar[d]_{B(c)} 
   &
DD(BG) \\
B(G^{DD})\ar[r]^-{(\ref{particular_case_of_a_sheaf})} & D(G^D) \ar[u]_{D(\ref{particular_case_of_a_classifying_stack})}
}
$$
where $c$ stands for the evaluation map of Cartier duality, commute.
\end{lem}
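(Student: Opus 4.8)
The plan is to verify both squares by evaluating the two composites on objects --- a section $x$ of $G$ for the first square, a $G$-torsor $T$ for the second --- since the compatibility with the additive (monoidal) structures, which makes the squares $2$-commute, is a routine coherence check that I would only sketch. First I would record concrete descriptions of the four morphisms. By the computation in the proof of \ref{fonctorialite_evaluation_map}~(a), the object $e_G(x)$ of $DD(G)$ is the homomorphism $\varphi\mapsto\varphi(x)$, and $e_{BG}(T)$ is the homomorphism $\varphi\mapsto\varphi(T)$. The Cartier evaluation $c$ sends $x$ to the character $\chi\mapsto\chi(x)$. By \ref{particular_case_of_a_classifying_stack}, \ref{Picard_classifiant}~(1) and the compatibility \ref{rem_dual_classifiant_via_Picard}, the isomorphism $(\ref{particular_case_of_a_classifying_stack})\colon D(BF)\to F^D$ sends a homomorphism $\psi\colon BF\to B\gm$ to the character it induces on automorphism groups; equivalently it sends $B\eta$ to $\eta$. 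The key preliminary, however, is to make the canonical morphism $(\ref{particular_case_of_a_sheaf})\colon BF^D\to D(F)$ explicit: I would show it sends an $F^D$-torsor $P$ to the homomorphism $F\to B\gm$, $f\mapsto\mathrm{ev}(f,-)_*P$, where $\mathrm{ev}\colon F\times F^D\to\gm$ is the tautological pairing and $\mathrm{ev}(f,-)_*P$ is the $\gm$-torsor obtained by pushing $P$ along the character $\mathrm{ev}(f,-)\colon F^D\to\gm$.

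For the first square I would trace a section $x$ of $G$. Applying $D(\ref{particular_case_of_a_sheaf})$ to $e_G(x)$ precomposes with $(\ref{particular_case_of_a_sheaf})$, so the resulting homomorphism $BG^D\to B\gm$ sends a $G^D$-torsor $P$ to $e_G(x)\big((\ref{particular_case_of_a_sheaf})(P)\big)=(\ref{particular_case_of_a_sheaf})(P)(x)=\mathrm{ev}(x,-)_*P$. Thus it is $B(\mathrm{ev}(x,-))$, the classifying map of the character $\mathrm{ev}(x,-)=[\chi\mapsto\chi(x)]$ of $G^D$. Applying $(\ref{particular_case_of_a_classifying_stack})$ then returns this very character, which is exactly $c(x)$; so the square commutes.

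For the second square I would trace a $G$-torsor $T$. The lower composite first forms $B(c)(T)=c_*T$, a $G^{DD}$-torsor, and then applies $(\ref{particular_case_of_a_sheaf})$ with $F=G^D$, producing the homomorphism $G^D\to B\gm$, $\chi\mapsto\mathrm{ev}(\chi,-)_*(c_*T)$. The point is that the character $\mathrm{ev}(\chi,-)\colon G^{DD}\to\gm$, $\xi\mapsto\xi(\chi)$, satisfies $\mathrm{ev}(\chi,-)\circ c=[\,g\mapsto c(g)(\chi)=\chi(g)\,]=\chi$, so $\mathrm{ev}(\chi,-)_*(c_*T)=\chi_*T$ and the homomorphism is $\Phi_T\colon\chi\mapsto\chi_*T$. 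Postcomposing with $D(\ref{particular_case_of_a_classifying_stack})$ gives $\Phi_T\circ(\ref{particular_case_of_a_classifying_stack})\colon D(BG)\to B\gm$. On the other hand, any $\varphi\colon BG\to B\gm$ equals $B\eta$ for $\eta=(\ref{particular_case_of_a_classifying_stack})(\varphi)$, whence $\varphi(T)=\eta_*T=\Phi_T(\eta)$. This shows $e_{BG}(T)=\Phi_T\circ(\ref{particular_case_of_a_classifying_stack})$, i.e. the square commutes.

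The hard part will be the explicit push-out description of $(\ref{particular_case_of_a_sheaf})$, since that morphism is defined abstractly as the map $j$ of the structural sequence \ref{structural_exact_sequence}. To pin it down I would check that the candidate $P\mapsto[f\mapsto\mathrm{ev}(f,-)_*P]$ induces the identity on $H^{-1}=F^D$ (an automorphism $\chi$ of $P$ pushes forward to multiplication by $\mathrm{ev}(f,\chi)=\chi(f)$, i.e. to the $2$-automorphism labelled by $\chi$) and that, being a homomorphism out of the gerbe $BF^D$, it necessarily lands in the neutral component of $D(F)$; these two properties characterize $j$. Once this identification is granted, both squares reduce to the tautologies recorded above, and the remaining verification that the displayed $2$-isomorphisms are compatible with associativity and commutativity is purely formal because all four maps are additive by construction.
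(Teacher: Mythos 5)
Your proof is correct and follows essentially the same route as the paper, which simply declares the verification ``straightforward from the definitions'' and (in the author's private notes) chases sections through the diagrams using the description of the canonical maps on automorphism groups. Your explicit identification of the map $(\ref{particular_case_of_a_sheaf})\colon BF^D\to D(F)$ as $P\mapsto[f\mapsto\mathrm{ev}(f,-)_*P]$, justified by the fact that a homomorphism out of $BF^D$ is determined up to isomorphism by its effect on $H^{-1}$, is a clean way to make precise the step the paper leaves implicit.
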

\begin{proof}
Let us prove that the first diagram commutes. It suffices to do it on 
$S$-points, so let $g\in G(S)$. On the one hand, $c(g) : G^D \to \gm$ is the 
morphism that maps an element $\varphi\in G^D(S)$ to $\varphi(g)\in \gm(S)$. The 
other side is a little bit more complicated to describe: it is the morphism that 
we get by applying $H^{-1}$ to the morphism $e_G(g)\circ 
(\ref{particular_case_of_a_sheaf})$. The morphism 
$H^{-1}(\ref{particular_case_of_a_sheaf})$ identifies with the canonical 
isomorphism $G^D\simeq H^{-1}(D(G))$, in other words it maps an element 
$\varphi\in G^D(S)$ to the automorphism of the neutral element $e : G\to B\gm$ 
of $D(G)$ that maps a point $h$ of $G$ to the automorphism of the trivial 
$\gm$-torsor that corresponds to $\varphi(h)$. It remains to describe 
$e_G(g) : D(G) \to B\gm$ on automorphisms. Let $\psi : G\to B\gm$ be an object 
of $D(G)$ (actually we are only interested in the neutral object). Then 
$e_G(g)$ maps an automorphism $u$ of $\psi$ (that is, the data, for each element 
$h$ of $G$ of an automorphism $u(h)$ of $\psi(h)\in B\gm$) to the automorphism 
$u(g)$ of $\psi(g)$. In the end, the morphism $G^D\to \gm$ that we get is indeed 
equal to the morphism that maps an element $\varphi \in G^D$ to $\varphi(g)$. 
The commutativity of the second diagram is a similar definition-chasing.
\details{Faisons le premier par exemple. Il suffit de le faire sur les $S$-points. Soit $g\in G(S)$. D'un côté, $c(g)$ est le morphisme qui envoie un $\varphi\in G^D(S)$ sur $\varphi(g)\in \gm(S)$. De l'autre côté, c'est un peu plus compliqué mais on y arrive quand même. $e_G(g)$ est le morphisme $D(G)\to B\gm$ qui envoie un $\psi\in D(G)$ sur $\psi(g)$. En fait plus bas on aura surtout besoin de décrire $e_G(g)$ sur les automorphismes, donc autant le faire tout de suite. $e_G(g)$ envoie un automorphisme $u$ de $\psi : G\to B\gm$ (on rappelle que se donner un automorphisme $u$ de $\psi$ revient à se donner pour chaque élément $h$ de $G$ un automorphisme $u(h)$ de $\psi(h)\in B\gm$) sur l'automorphisme $u(g)$ de $\psi(g)$. $D(\ref{particular_case_of_a_sheaf})$ envoie $e_G(g)$ sur le morphisme composé $BG^D \to D(G) \to B\gm$ où la première flèche est \ref{particular_case_of_a_sheaf} et la seconde est $e_G(g)$. Enfin appliquer \ref{particular_case_of_a_classifying_stack} revient à prendre le morphisme induit sur les groupes d'automorphismes de la section neutre. On obtient bien le morphisme qui à $u\in G^D$ associe $u(g)\in \gm(S)$. Pour l'autre diagramme c'est du même acabit.
}
\end{proof}

\begin{prop}
\label{prop_dualisabilite_Cartier_et_abeliens}
 Let $G$ be a sheaf of commutative groups over $S$.
\begin{enumerate}
 \item The stack $BG$ is dualizable if and only if the natural map $G\to G^{DD}$ is an isomorphism (in other words, $G$ is dualizable in the sense of Cartier duality) and $E^1(G^D)=0$.
\item If $E^i(E^1(G))=0$ for $i=0, 1, 2$ and $G\to G^{DD}$ 
is an isomorphism, or if $G$ is an abelian scheme, then $G$ 
is dualizable as a commutative group stack.
\end{enumerate}
\end{prop}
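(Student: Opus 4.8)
The overall strategy is to use the cohomological criterion~\ref{prop_dualisabilite}, which reduces dualizability of a group stack to the bijectivity of the two maps $H^i(e)$ ($i=-1,0$), and to compute these by iterating the descriptions~\ref{particular_case_of_a_classifying_stack} and~\ref{particular_case_of_a_sheaf} of the duals of a classifying stack and of a sheaf, pinning down the relevant evaluation maps with the compatibility diagrams~\ref{compatibility_of_can_isoms_with_ev_maps}. For~(1), I would compute $DD(BG)$: by~\ref{particular_case_of_a_classifying_stack} we have $D(BG)\simeq G^D$, so $DD(BG)\simeq D(G^D)$, and since $G^D$ is a sheaf, \ref{particular_case_of_a_sheaf} gives $H^{-1}(DD(BG))\simeq G^{DD}$ and $H^0(DD(BG))\simeq E^1(G^D)$. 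As $H^{-1}(BG)=G$ and $H^0(BG)=0$, the criterion~\ref{prop_dualisabilite} says $BG$ is dualizable if and only if both $H^0(e_{BG}):0\to E^1(G^D)$ and $H^{-1}(e_{BG}):G\to G^{DD}$ are isomorphisms. The former holds exactly when $E^1(G^D)=0$, while the second diagram of~\ref{compatibility_of_can_isoms_with_ev_maps} identifies $H^{-1}(e_{BG})$ with the Cartier evaluation $c:G\to G^{DD}$ (up to the two canonical isomorphisms, both of which are isomorphisms on $H^{-1}$), so the remaining condition is precisely that $c$ be an isomorphism. This gives~(1).

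For the first case of~(2), where $G$ is a sheaf with $E^i(E^1(G))=0$ for $i=0,1,2$ and $G\to G^{DD}$ an isomorphism, I would dualize the structural exact sequence of $D(G)$. Since $G$ is a sheaf, \ref{particular_case_of_a_sheaf} gives $H^{-1}(D(G))=G^D$ and $H^0(D(G))=E^1(G)$, so~\ref{structural_exact_sequence} reads $0\to BG^D\xrightarrow{\,j\,} D(G)\to E^1(G)\to 0$, with $j$ the canonical morphism of~\ref{particular_case_of_a_sheaf}. Because $E^2(H^0(D(G)))=E^2(E^1(G))=0$, the criterion of~\ref{dual_sec_can} makes the dual sequence $0\to D(E^1(G))\to DD(G)\xrightarrow{D(j)} D(BG^D)\to 0$ exact. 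Now $E^0(E^1(G))=E^1(E^1(G))=0$ forces $D(E^1(G))=0$ by~\ref{particular_case_of_a_sheaf}, and $D(BG^D)\simeq G^{DD}$ by~\ref{particular_case_of_a_classifying_stack}; hence $D(j)$ followed by this last isomorphism is an isomorphism $\Phi:DD(G)\xrightarrow{\sim}G^{DD}$. Since $D(j)$ is $D$ applied to the morphism of~\ref{particular_case_of_a_sheaf}, the first diagram of~\ref{compatibility_of_can_isoms_with_ev_maps} reads $\Phi\circ e_G=c$, so $e_G=\Phi^{-1}\circ c$ is an isomorphism because $c$ is, by hypothesis. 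By~\ref{prop_dualisabilite} this proves dualizability.

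The abelian scheme case cannot be treated this way, and this is the main obstacle: for $G=A$ one has $A^D=0$, so the Cartier evaluation $c:A\to A^{DD}=0$ and the comparison map $\Phi$ both vanish, and the diagrams~\ref{compatibility_of_can_isoms_with_ev_maps} become vacuous. I would instead argue directly. Since $A^D=0$, the computation of~\ref{particular_case_of_a_sheaf} together with the identification $E^1(A)\simeq A^t$ recorded in~\ref{Cartier_groups} gives $D(A)\simeq A^t$; applying the same to the abelian scheme $A^t$ yields $DD(A)\simeq (A^t)^t$, again an abelian scheme. By~\ref{prop_dualisabilite} it then suffices to show that $H^0(e_A):A\to (A^t)^t$ is an isomorphism, and under these identifications $e_A$ is precisely the classical biduality homomorphism of abelian schemes (the evaluation map coming from viewing $D(A)$ as the stack of extensions of $A$ by $\gm$, i.e.\ from the Poincar\'e bundle on $A\times_S A^t$). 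Since biduality of abelian schemes is an isomorphism, $A$ is dualizable; if desired, one may reduce to this classical statement over an algebraically closed field by~\ref{lemme_isomorphisme_fibre_a_fibre}.
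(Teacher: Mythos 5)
Your proposal is correct and follows essentially the same route as the paper: part (1) from \ref{particular_case_of_a_classifying_stack}, \ref{particular_case_of_a_sheaf} and \ref{compatibility_of_can_isoms_with_ev_maps}; the first case of (2) by dualizing $0\to BG^D\to D(G)\to E^1(G)\to 0$, observing $D(E^1(G))=0$, and invoking the first compatibility diagram; and the abelian case by identifying $e_A$ with classical biduality. The only cosmetic difference is that in the abelian case you reach $D(A)\simeq A^t$ via $E^1(A)$ rather than via \ref{comp_dual_picto}, which the paper identifies with $E^1(A)$ anyway, and both arguments leave the same verification (that $e_A$ matches the canonical map $A\to A^{tt}$) to the reader.
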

\begin{proof}
 (1) is a consequence 
of~\ref{particular_case_of_a_classifying_stack}, 
\ref{particular_case_of_a_sheaf}, 
and~\ref{compatibility_of_can_isoms_with_ev_maps}.

Let us prove (2). Assume first that $E^i(E^1(G))=0$ for 
$i=0, 1, 2$ and $G\to G^{DD}$ is an isomorphism. 
By~\ref{exactness_of_the_dual_sequence} the dual of the 
sequence $0\to BG^D\to D(G)\to E^1(G)\to 0$ is exact. Since 
$E^i(E^1(G))=0$ for $i=0,1$ we see that $D(E^1(G))=0$. Hence 
the canonical morphism $D(\ref{particular_case_of_a_sheaf}) 
: DD(G) \to D(BG^D)$ is an isomorphism and 
by~\ref{compatibility_of_can_isoms_with_ev_maps} this proves 
that $e_G$ is an isomorphism.

Now let $A$ be an abelian scheme. By~\ref{comp_dual_picto} we know that $D(A)$ and $DD(A)$ are abelian schemes. We let the reader check that the following diagram commutes, which proves that $e_A$ is an isomorphism.
$$\xymatrix{
A\ar[r]^-{e_A} \ar[d]_{\textrm{can.}} & DD(A)
\ar[d]^{(\ref{comp_dual_picto})} \\
A^{tt}\ar[r]_-{(\ref{comp_dual_picto})^t} & D(A)^t
}$$
\end{proof}

\begin{exemple}\rm
\label{les_Cartier_sont_dualisables}
   If $G$ is a Cartier group (see~\ref{Cartier_groups}), then $E^1(G)=E^1(G^D)=0$ and $G\to G^{DD}$ is an isomorphism. Hence $G$ and $BG$ are dualizable.
\end{exemple}

 Let $G$ be a commutative group stack over a base scheme $S$. Let us denote 
$A:=BH^{-1}(G)$ and $C:=H^0(G)$. By~\ref{structural_exact_sequence} there is a 
canonical exact sequence 
$0\to A\to G\to C\to 0$. Assume that the 
morphism $H^{-1}(G)^D \to E^2(H^0(G))$ is trivial. Then by~\ref{dual_sec_can}, 
the sequence $\xymatrix@C=1pc{
0\ar[r] & D(C)\ar[r] &D(G)\ar[r] & D(A)\ar[r] &0
}$ is super-exact. Applying $\Hom(\, .\, , \gm)$ to the $H^0$ sequence we get a 
morphism $H^0(D(C))^D\to E^1(D(A))$. But
by~\ref{particular_case_of_a_classifying_stack}, 
$D(A)\simeq H^{-1}(G)^D$ and by~\ref{description_dual}, $H^0(D(C))\simeq 
E^1(H^0(G))$, so that we get a morphism 
$E^1(H^{0}(G))^D \to 
E^1(H^{-1}(G)^D)$. 

\begin{prop}
\label{devissage_dualisabilite}
 Let $G$ be a commutative group stack over a base scheme $S$. Assume that the  
morphism $H^{-1}(G)^D \to E^2(H^0(G))$ is trivial and that the induced morphism 
(described above) 
$E^1(H^{0}(G))^D \to E^1(H^{-1}(G)^D)$ is trivial as well. If 
both~$BH^{-1}(G)$ and $H^0(G)$ are dualizable, then so is $G$.
\end{prop}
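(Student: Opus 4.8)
The plan is to dualize the canonical structural sequence of $G$ twice and then to compare the outcome with $G$ itself through the evaluation map. Set $N:=H^{-1}(G)$ and $M:=H^0(G)$, and recall the structural exact sequence~\ref{structural_exact_sequence}
$$0\lto BN \lto G \lto M\lto 0.$$
Since the first hypothesis asserts that the natural map $N^D\to E^2(M)$ is trivial, Remark~\ref{dual_sec_can} guarantees that the once-dualized sequence
$$0\lto D(M) \lto D(G) \lto D(BN)\lto 0 \qquad (\star)$$
is exact. Note that $D(BN)\simeq N^D$ is a sheaf by~\ref{particular_case_of_a_classifying_stack}, while by~\ref{particular_case_of_a_sheaf} the dual $D(M)$ satisfies $H^{-1}(D(M))\simeq M^D$ and $H^0(D(M))\simeq E^1(M)$.

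Next I would dualize $(\star)$ once more. Applying~\ref{exactness_of_the_dual_sequence}~a) to $(\star)$, with $A=D(M)$ and $C=D(BN)=N^D$, the vanishing required of the map ``$H^0(A)^D\to E^1(H^0(C))$'' is exactly the vanishing of the natural map $E^1(M)^D\to E^1(N^D)$, which is the second hypothesis; and the auxiliary condition is met because $H^{-1}(C)^D=0$, as $C=N^D$ is a sheaf. Hence the proposition yields that
$$0\lto H^{-1}(DD(BN)) \lto H^{-1}(DD(G)) \lto H^{-1}(DD(M))\lto 0$$
is exact and that
$$0\lto H^0(DD(BN)) \lto H^0(DD(G)) \lto H^0(DD(M))$$
is left exact.

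Finally I would invoke the naturality of the evaluation map (\ref{fonctorialite_evaluation_map}~a)) to produce a strictly commutative ladder from the structural sequence $BN\rightarrow G\rightarrow M$ to the double-dual sequence $DD(BN)\rightarrow DD(G)\rightarrow DD(M)$, the vertical arrows being $e_{BN}$, $e_G$, $e_M$. Passing to $H^{-1}$ and to $H^0$ gives two commutative ladders of abelian sheaves whose outer vertical maps are isomorphisms, because $BN$ and $M$ are dualizable. In degree $-1$ both rows are short exact, so the (short) five lemma gives that $H^{-1}(e_G)$ is an isomorphism. In degree $0$ the top row reduces to the exact sequence $0\rightarrow 0\rightarrow M\rightarrow M\rightarrow 0$ (the middle arrow being $H^0(\pi)$, an isomorphism), while the bottom row is only left exact; but the dualizability of $BN$ forces $H^0(DD(BN))=0$, so the bottom map $H^0(DD(G))\to H^0(DD(M))$ is a monomorphism, and a one-line chase then shows that $H^0(e_G)$ is an isomorphism as well. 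By~\ref{prop_dualisabilite} it follows that $G$ is dualizable.

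The genuinely delicate points here are bookkeeping: tracking the contravariance of $D$ so that the arrows of the ladder line up correctly, and checking that the natural maps named in the two hypotheses really coincide with the maps whose vanishing is demanded by~\ref{dual_sec_can} and by~\ref{exactness_of_the_dual_sequence}~a). The one structural subtlety worth flagging is that the $H^0$ double-dual sequence is only left exact; what rescues the argument is the identity $H^0(DD(BN))=0$, forced by the dualizability of $BN$, which reduces the remaining comparison to an injectivity statement and thereby suffices to conclude.
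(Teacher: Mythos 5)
Your proof is correct and follows essentially the same route as the paper: dualize the structural sequence using Remark~\ref{dual_sec_can} and the first hypothesis, apply Proposition~\ref{exactness_of_the_dual_sequence}~a) to the resulting sequence (with the second hypothesis supplying the required vanishing and $H^{-1}(N^D)^D=0$ supplying the auxiliary condition), and conclude by the naturality of $e_G$ and a five-lemma chase. Your explicit remark that $H^0(DD(BN))=0$ is what makes the degree-zero chase go through, despite the bottom row being only left exact, is a point the paper glosses over but is exactly the right justification.
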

\begin{proof}
With the above notations, using the 
diagram in the proof of~\ref{exactness_of_the_dual_sequence} we see that 
the sequences ($i=-1, 0$)
$$\xymatrix{
0\ar[r] &  H^i(DD(A))\ar[r] &H^i(DD(G)) \ar[r] &
H^i(DD(C))
}$$
are exact. Hence, for $i=-1,0$ we have a commutative diagram with exact lines:
$$\xymatrix{
0 \ar[r]&H^i(A)\ar[d]_{H^i(e_A)} \ar[r]&H^i(G)\ar[d]^{H^i(e_G)} \ar[r]&H^i(C)\ar[d]^{H^i(e_C)} \ar[r]& 0\\
  0\ar[r]&H^i(DD(A)) \ar[r]&H^i(DD(G)) \ar[r]&H^i(DD(C))
}$$
Since the left and right vertical arrows are isomorphisms, so is the middle one.
\end{proof}

\begin{exemple}\rm
 Let $G$ be an abelian stack over $S$. 
By~\ref{prop_dualisabilite_Cartier_et_abeliens}, 
$BH^{-1}(G)$ and $H^0(G)$ are dualizable. The morphism 
$H^{-1}(G)^D\to E^2(H^0(G))$ is trivial 
by~\ref{vanishing_ext23_abelian}, and $E^1(H^0(G))^D=0$ 
by~\ref{dual_and_ext1_abelian}. Hence~$G$ is dualizable.
\end{exemple}

\begin{thm}
\label{thm_dualizability}
 Let $S$ be a regular base scheme in which 2 is invertible. Let $G$ be a commutative group stack over $S$. Assume that \'etale-locally on~$S$:
\begin{itemize}
 \item[(i)] $H^0(G)$ fits in an exact sequence
$$0\lto A\lto H^0(G) \lto F\lto 0$$
where $A$ is an abelian scheme over $S$, and $F$ is built up, by successive extensions, from finite locally free group schemes and constant free group schemes of finite rank, and
\item[(ii)] $H^{-1}(G)$ is built up by successive extensions 
from finite locally free group schemes and split tori.
\end{itemize}
Then $G$ is dualizable, and $D(G)$ satisfies the same assumptions as $G$. More precisely, as soon as~(i) and~(ii) hold for~$G$, then $H^{-1}(D(G))\simeq F^D$, and $H^0(D(G))$ fits in an exact sequence
$$0\lto A^t \lto H^0(D(G)) \lto H^{-1}(G)^D\lto 0\, .$$  
\end{thm}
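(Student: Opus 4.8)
The plan is to read off both the dualizability of $G$ and the structure of $D(G)$ from Deligne's dictionary, combining the description~\ref{description_dual} of $H^{-1}(D(\cdot))$ and $H^0(D(\cdot))$ with the dévissage~\ref{devissage_dualisabilite} along the canonical sequence~\ref{structural_exact_sequence}. By~\ref{prop_dualisabilite} both dualizability and the stated description may be checked étale-locally on $S$, so I would first pass to an étale cover realizing the extensions in (i) and (ii) and argue there. Throughout I would use that $H^{-1}(G)$, being built from finite locally free groups and split tori, is a Cartier group in the sense of~\ref{Cartier_groups}; hence $H^{-1}(G)^D$ is again Cartier, $E^1(H^{-1}(G)^D)=0$, the map $H^{-1}(G)\to H^{-1}(G)^{DD}$ is an isomorphism, and $BH^{-1}(G)$ is dualizable by~\ref{les_Cartier_sont_dualisables}; dually $F$ is Cartier with $E^1(F)=0$.

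I would then compute $D(G)$ from~\ref{description_dual}. Applying $E^i=\fExt^i(-,\gm)$ to $0\to A\to H^0(G)\to F\to 0$ and using $A^D=0$, $E^1(F)=0$ and $E^1(A)=A^t$ (\ref{dual_and_ext1_abelian}) gives $H^{-1}(D(G))=H^0(G)^D=F^D$ and an injection $E^1(H^0(G))\hookrightarrow A^t$ with cokernel inside $E^2(F)$. Since $A^t$ is divisible while $E^2(F)$ is annihilated by an integer (the constant-free part of $F$ contributes nothing, the finite locally free part being killed by its order), the boundary $A^t\to E^2(F)$ is zero and $E^1(H^0(G))=A^t$. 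The four-term sequence of~\ref{description_dual} then reads $0\to A^t\to H^0(D(G))\to H^{-1}(G)^D\to E^2(H^0(G))$, and once its last boundary vanishes I obtain the asserted $0\to A^t\to H^0(D(G))\to H^{-1}(G)^D\to 0$. At this point the stability of the hypotheses is formal: $F^D$ is built from finite locally free groups and split tori (the lattices in $F$ dualize to tori), $A^t$ is an abelian scheme, and $H^{-1}(G)^D$ is Cartier, so $D(G)$ satisfies (i) and (ii) with $A,F$ replaced by $A^t,H^{-1}(G)^D$.

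For dualizability I would invoke~\ref{devissage_dualisabilite}, which yields that $G$ is dualizable once $BH^{-1}(G)$ and $H^0(G)$ are dualizable and the maps $H^{-1}(G)^D\to E^2(H^0(G))$ and $E^1(H^0(G))^D\to E^1(H^{-1}(G)^D)$ are trivial. The stack $BH^{-1}(G)$ is dualizable as noted; the second map is automatic since $E^1(H^0(G))=A^t$ forces $E^1(H^0(G))^D=(A^t)^D=0$; and the first is trivial as soon as $E^2(H^0(G))=0$. It remains to see that $H^0(G)$ is dualizable, which I would obtain by a direct computation: the group stack $D(H^0(G))$ has $H^{-1}=F^D$ and $H^0=A^t$ by the above, and feeding this back into~\ref{description_dual} gives $H^{-1}(DD(H^0(G)))=(A^t)^D=0$ together with a sequence $0\to A\to H^0(DD(H^0(G)))\to F\to E^2(A^t)$; using $E^2(A^t)=0$, the naturality~\ref{fonctorialite_evaluation_map}~(a) of $e$, and the fact that $A$ and $F$ are dualizable (\ref{prop_dualisabilite_Cartier_et_abeliens}, \ref{les_Cartier_sont_dualisables}), the five lemma identifies $e_{H^0(G)}$ with an isomorphism.

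The crux, and the only non-formal input, is the vanishing of the $E^2$-terms: I need $E^2(A)=E^2(A^t)=0$ for the abelian summands and $E^2(H^0(G))=0$, the latter following from $E^2(A)=0$ together with vanishing of $E^2$ of the finite locally free part of $F$. This is exactly where the hypotheses that $S$ be regular and that $2$ be invertible become indispensable: they feed into~\ref{vanishing_ext23_abelian}, which supplies the regularity-driven vanishing of $\fExt^{\geq 2}(A,\gm)$ and cancels the $2$-torsion obstruction arising in Breen's computation of the higher $\fExt$ sheaves of $\gm$ and of finite group schemes. I expect the verification of these $\fExt^2$-vanishings to be the main obstacle; once they are granted, every boundary map above dies and the argument collapses to dévissage plus the five lemma.
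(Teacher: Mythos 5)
Your proposal is correct and follows essentially the same route as the paper: étale localization via~\ref{prop_dualisabilite}, the long exact $\fExt(-,\gm)$ sequence of (i) to get $H^0(G)^D\simeq F^D$, $E^1(H^0(G))\simeq A^t$ and $E^2(H^0(G))=0$, the dévissage~\ref{devissage_dualisabilite}, and a double-dual/five-lemma argument for the dualizability of $H^0(G)$. The only (harmless) deviations are cosmetic — e.g.\ you kill the boundary $A^t\to E^2(F)$ by a divisibility argument where the paper simply uses $E^2(F)=0$, and you check the second dévissage hypothesis via $(A^t)^D=0$ rather than $E^1(H^{-1}(G)^D)=0$.
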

\begin{proof}
We can assume that~(i) and~(ii) hold. By~\ref{les_Cartier_sont_dualisables}, 
since $H^{-1}(G)$ is Cartier, we know that $E^1(H^{-1}(G)^D)=0$ and that 
$BH^{-1}(G)$ is dualizable. By~\ref{devissage_dualisabilite}, to prove that $G$ 
is dualizable it suffices to prove that $E^2(H^0(G))=0$ and $H^0(G)$ is 
dualizable. The exact sequence given in~(i) induces a long exact sequence:
$$0\to F^D\to H^0(G)^D\to A^D\to E^1(F) \to E^1(H^0(G)) \to E^1(A)\to E^2(F)\dots$$
Since the sheaves $A^D$, $E^1(F)$, $E^2(F)$ and $E^2(A)$ all vanish (section~\ref{section_Ext}), we see that $E^2(H^0(G))$ is zero and we get isomorphisms $F^D\simeq H^0(G)^D$ and $E^1(H^0(G))\simeq A^t$. Using the description of~$D(H^0(G))$ from~\ref{particular_case_of_a_sheaf} and applying~\ref{exactness_of_the_dual_sequence} twice, we see that the sequence
$$0\lto DD(A) \lto DD(H^0(G)) \lto DD(F)\lto 0$$
is super-exact. Hence the dualizability of $H^0(G)$ follows from that of $A$ 
and $F$. 
This proves that $G$ is dualizable. Moreover, by~\ref{description_dual} we have 
an isomorphism $H^{-1}(D(G))\simeq H^{0}(G)^D$ and an exact sequence
\[
 0\to E^1(H^0(G)) \to H^0(D(G)) \to H^{-1}(G)^D \to 
E^2(H^0(G))=0\, ,
\]
whence the assertions about $D(G)$.
\end{proof}

\begin{exemple}\rm Assume that the base scheme is the 
spectrum of an algebraically closed field~$k$ of 
characteristic different from 2. Let~$G$ be an 
algebraic commutative $k$-group stack locally of finite 
type. Assume that~$(H^{-1}(G))_{\text{red}}^0$ is a torus 
and that $(H^0(G))_{\text{red}}^0$ is an abelian variety. 
Assume moreover that the groups of connected components of 
$H^0(G)$ and $H^{-1}(G)$ are of finite type as ordinary 
abelian groups. Then by~SGA~3~\cite[VI$_A$~5.5.1 
and~5.6.1]{SGA3_new}, $G$ satisfies the 
assumptions of~\ref{thm_dualizability}, hence it is 
dualizable.
\end{exemple}

The assumptions on $S$ in~\ref{thm_dualizability} might be 
superfluous. We can drop them if we restrict the class of 
commutative group stacks, using the results of 
Section~\ref{section_dual}.

\begin{thm}
 \label{thm_dualizability2}
Let $G$ be a commutative group stack over a base scheme~$S$. 
Assume that one of the following holds:
\begin{itemize}
 \item[(i)] $G$ is a duabelian group.
\item[(ii)] $H^{-1}(G)$ is a 
finite flat group scheme, $H^0(G)$ is a duabelian 
group, and ${2\in\Oc_S^{\times}}$.
\end{itemize}
 Then $G$ is dualizable.
\end{thm}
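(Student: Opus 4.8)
The plan is to prove (i) first, with no hypothesis on the residue characteristics, and then to deduce (ii) from (i) by a dévissage along the structural sequence \ref{structural_exact_sequence}, using the criterion \ref{devissage_dualisabilite}.

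\textbf{Case (i).} Here $G$ is a duabelian group, hence a sheaf (so $H^{-1}(G)=0$), and by \ref{prop_duabelian_groups} it sits in a short exact sequence $0\longrightarrow A\longrightarrow G\longrightarrow F\longrightarrow 0$ with $A$ an abelian scheme and $F$ a finite flat group scheme. Both ends are \emph{dualizable}: the abelian scheme $A$ by \ref{prop_dualisabilite_Cartier_et_abeliens}~(2), and the Cartier group $F$ by \ref{les_Cartier_sont_dualisables}. My strategy is to compute $DD(G)$ by dualizing this sequence twice and then to compare it with $G$ through the evaluation map. First I would apply $D(\,\cdot\,)$ to the sequence: the hypotheses of \ref{exactness_of_the_dual_sequence}~b) hold, since $E^2(F)=0$ (Section~\ref{section_Ext}) and the maps $H^i(A)^D\to E^1(H^i(F))$ vanish because $A^D=0$ and $E^1(F)=0$. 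Using $D(A)\simeq A^t$ and $D(F)\simeq BF^D$ (\ref{Cartier_groups}) this gives a short exact sequence $0\longrightarrow BF^D\longrightarrow D(G)\longrightarrow A^t\longrightarrow 0$. Applying $D(\,\cdot\,)$ once more, the hypotheses of \ref{exactness_of_the_dual_sequence}~b) hold again --- the only substantive point is $E^2(A^t)=0$, which I would take from \ref{vanishing_ext23_abelian}, the two $\,{}^D$-maps having zero source or zero target --- and biduality $D(A^t)\simeq A$, $D(BF^D)\simeq F$ (\ref{particular_case_of_a_classifying_stack}) yields $0\longrightarrow A\longrightarrow DD(G)\longrightarrow F\longrightarrow 0$. (That $DD(G)$ is again duabelian is consistent with \ref{thm_representabilite}~(4) and~(6).)

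I would then compare the two sequences through the evaluation morphism. By functoriality of $e$ (\ref{fonctorialite_evaluation_map}~(a)) applied to the inclusion $A\to G$ and the projection $G\to F$, the morphism $e_G$ fits into a commutative ladder between $0\longrightarrow A\longrightarrow G\longrightarrow F\longrightarrow 0$ and $0\longrightarrow A\longrightarrow DD(G)\longrightarrow F\longrightarrow 0$, whose outer vertical arrows are $e_A$ and $e_F$; here one uses \ref{compatibility_of_can_isoms_with_ev_maps} (and the abelian-scheme computation inside the proof of \ref{prop_dualisabilite_Cartier_et_abeliens}~(2)) to identify the canonical duality isomorphisms with the evaluation maps, so that the ladder really commutes. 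Since $e_A$ and $e_F$ are isomorphisms, the five lemma forces $e_G$ to be an isomorphism, which proves (i).

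\textbf{Case (ii).} Now $H^{-1}(G)$ is finite and flat, $H^0(G)$ is duabelian, and $2\in\Oc_S^{\times}$. I would apply \ref{devissage_dualisabilite} to $BH^{-1}(G)$ and $H^0(G)$. Its two dualizability inputs hold: $H^{-1}(G)$ is Cartier, so $BH^{-1}(G)$ is dualizable by \ref{les_Cartier_sont_dualisables}, while $H^0(G)$, being duabelian, is dualizable by case (i) just established. The two vanishing hypotheses also hold: the map $H^{-1}(G)^D\to E^2(H^0(G))$ is trivial by \ref{vanishing_ext23_abelian} --- this is exactly where $2\in\Oc_S^{\times}$ is used, the finite part of the duabelian group $H^0(G)$ being the source of the difficulty --- and the map $E^1(H^0(G))^D\to E^1(H^{-1}(G)^D)$ is trivial because its source $E^1(H^0(G))^D$ vanishes: indeed $E^1(H^0(G))$ is an abelian scheme by \ref{repres_dual_Cartier_et_ext}, and abelian schemes have trivial Cartier dual (\ref{dual_and_ext1_abelian}). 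Hence \ref{devissage_dualisabilite} applies and $G$ is dualizable.

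The main obstacle I anticipate is the bookkeeping of the $E^i$-vanishings needed to invoke \ref{exactness_of_the_dual_sequence} at each of the two dualization steps in case (i) --- in particular the vanishing $E^2(A^t)=0$ for the abelian scheme $A^t$ over a general base --- together with the verification that the canonical identifications $D(A^t)\simeq A$ and $D(BF^D)\simeq F$ are compatible with $e_A$ and $e_F$, so that the five-lemma ladder genuinely commutes rather than merely commuting up to an automorphism. Everything else is formal, and case (ii) is then a direct application of the dévissage, once case (i) supplies the dualizability of arbitrary duabelian groups.
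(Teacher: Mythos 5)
Your case (ii) is fine as an argument modulo case (i): applying \ref{devissage_dualisabilite} directly over $S$ (with the two vanishings supplied by \ref{vanishing_ext23_abelian}~(ii) and by $E^1(H^0(G))^D=0$) is a legitimate alternative to the paper's route, which instead reduces to an algebraically closed field via \ref{thm_representabilite} and \ref{lemme_isomorphisme_fibre_a_fibre} and then quotes \ref{thm_dualizability}. The problem is case (i), where you have a genuine gap: statement (i) carries \emph{no} hypothesis on residue characteristics, but both of the $E^2$-vanishings you feed into \ref{exactness_of_the_dual_sequence}~b) are only available under extra hypotheses. Breen's theorem \ref{vanishing_ext23_finite} gives $E^2(F)=0$ for $F$ finite flat only when $2\in\Oc_S^{\times}$ (and the remark following it shows $E^2(\alpha_2)\neq 0$ in characteristic $2$, so this is not a removable hypothesis), while $E^2(A^t)=0$ in \ref{vanishing_ext23_abelian}~(iii) requires $S$ regular \emph{and} $2\in\Oc_S^{\times}$. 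So your first dualization step does not establish the surjectivity of $H^0(D(G))\to E^1(A)=A^t$ over a general base, and your second dualization step is likewise unjustified.

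The paper circumvents exactly this point. For the first step it uses the $n$-power trick from the proof of \ref{thm_representabilite}/\ref{repres_dual_Cartier_et_ext}: since $F$ is killed by its rank $n$, the map $[n]_G$ factors through a homomorphism $\pi:G\to A$ with $\pi\circ i=[n]_A$, and $E^1([n]_A)$ is an epimorphism because $E^1({}_nA)=0$; hence $E^1(i):E^1(G)\to E^1(A)$ is an epimorphism, and combined with the injectivity coming from $E^1(F)=0$ it is an isomorphism. This gives the short exactness of $0\to BF^D\to D(G)\to A^t\to 0$ without any appeal to $E^2(F)$. For the second step the paper uses only part a) of \ref{exactness_of_the_dual_sequence} (whose hypotheses hold here because $H^0(BF^D)^D=0$ and $H^{-1}(A^t)^D=0$), which yields the \emph{left}-exact row $0\to DD(A)\to DD(G)\to DD(F)$; that is all the four/five lemma needs, so $E^2(A^t)$ never enters. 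If you replace your two uses of part b) by the $n$-power argument and part a) respectively, your proof of (i) closes up and the rest of your write-up goes through.
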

\begin{proof}
Assume (i). By~\ref{prop_duabelian_groups}, the group $G$ 
fits in an extension $0\to A\stackrel{i}{\to} G\to F\to 0$ 
where~$A$ is an abelian scheme and $F$ is a finite flat 
group scheme. We have seen in the proof 
of~\ref{prop:E0_E1_des_duabeliens} that $E^1(i) : 
E^1(G)\to E^1(A)$ is an isomorphism. 
This implies that $H^0(D(G))\to 
H^0(D(A))$ is an isomorphism (\ref{description_dual}). 
By~\ref{exactness_of_the_dual_sequence} a) this 
shows that the sequence $$0\lto D(F)\lto D(G)\lto D(A)\lto 
0$$ is exact. Recall that $D(F)\simeq BF^D$ and $D(A)\simeq 
A^t$. By~\ref{vanishing_ext23_abelian} any morphism $F\to E^2(A^t)$ is trivial. 
Hence by~\ref{dual_sec_can} the sequence 
$$\xymatrix{
0\ar[r]&DD(A)\ar[r]&DD(G)\ar[r]&DD(F)\ar[r] &0
}$$
is super-exact.
Since $A$ and $F$ are dualizable 
(\ref{prop_dualisabilite_Cartier_et_abeliens}), it follows 
that $G$ is dualizable as well.

Now assume (ii). By \ref{prop_dualisabilite}, to prove that 
$e_G$ is an isomorphism it suffices to prove that $H^i(e_G)$ 
is an isomorphism for $i=-1, 0$. Applying 
\ref{thm_representabilite4} twice, we see that the stacks 
$D(G)$ and $DD(G)$ are algebraic and satisfy the same 
assumptions as $G$. Hence we can 
apply~\ref{lemme_isomorphisme_fibre_a_fibre} and we may 
assume that~$S$ is the spectrum of an algebraically closed 
field. But in this case, \ref{thm_dualizability} applies 
hence $e_G$ is an isomorphism.
\details{On peut aussi procéder ainsi. 
By~\ref{vanishing_ext23_abelian} the morphism 
$H^{-1}(G)^D\to E^2(H^0(G))$ is trivial. Moreover 
by~\ref{vanishing_ext_finite_or_multiplicative}, 
$E^1(H^{-1}(G)^D)=0$. Hence~\ref{devissage_dualisabilite} 
applies and~$G$ is dualizable.
}
\end{proof}

We summarize in the following table some classes of stacks 
which are known to be dualizable so far. For each line of 
this table, the 2-functor $D(.)$ induces a 2-antiequivalence 
between the class on the left and the class on the right. 
In the last line we assume that $2\in \Oc_S^{\times}$.

\smallskip
\renewcommand{\arraystretch}{1.5}
\newcolumntype{M}{>{\raggedright}m{0.47\textwidth}}
\begin{center}
\begin{tabular}{|M|M|}
\hline
\multicolumn{2}{|c|}{
$$D(.)$$
} \tabularnewline
\multicolumn{2}{|c|}{
$$\xymatrix@C=15pc{
\ar@{<->}@/^15pt/[r]&
}$$
} \tabularnewline
\hline

Cartier group schemes (see~\ref{Cartier_groups}) & classifying stacks of Cartier group schemes \tabularnewline
\hline
abelian schemes & abelian schemes \tabularnewline
\hline
1-motives & 1-motives \tabularnewline
\hline
abelian stacks (see~\ref{def_abelian_stacks}) & duabelian group schemes (see~\ref{def_duabelian_groups}) \tabularnewline
\hline
commutative group stacks $G$ with $H^{-1}(G)$ finite flat and $H^0(G)$ 
duabelian &
commutative group stacks $G$ with $H^{-1}(G)$ finite flat and $H^0(G)$ 
duabelian \tabularnewline
\hline
\end{tabular}
\end{center}
\smallskip

\begin{remarque}\rm
\label{preuve_conj1_implique_conj2}
 Assume that \ref{conj}~(1) is true. Then \ref{conj}~(2) also holds. (Note 
however that $H^{-1}(D(G))$ is not flat in general.) Indeed, let $G$ be a 
proper, flat and finitely presented commutative group stack, with $H^{-1}(G)$ 
finite and flat. By~\ref{thm_representabilite1}~$D(G)$ is algebraic and of 
finite 
presentation. By our assumption it is even proper and flat. Hence 
applying~\ref{thm_representabilite1} again $DD(G)$ is algebraic and of finite 
presentation. Since~$H^{-1}(G)$ is flat, and since the result is known over an 
algebraically closed field (\ref{thm_dualizability}), 
by~\ref{lemme_isomorphisme_fibre_a_fibre} the morphism~$H^{-1}(e_G)$ is an 
isomorphism. In particular $H^{-1}(DD(G))$ is flat. Then by Artin's 
theorem~\cite[10.8]{LMB} the coarse moduli sheaves~$H^0(G)$ and~$H^0(DD(G))$ are 
algebraic spaces. Moreover~$H^0(G)$ is flat (because~$G$ is flat, flatness is 
local at the source~\cite[IV, 2.2.11]{EGA}, and $G\to H^0(G)$ is an \emph{fppf} 
epimorphism by~\cite[10.8]{LMB}) and 
by~\ref{lemme_isomorphisme_fibre_a_fibre} again $H^0(e_G)$ is an isomorphism. 
Hence $e_G$ is an isomorphism by~\ref{prop_dualisabilite}.
\end{remarque}

\section{Torsors under a commutative group stack}
\label{section_torsors}

In this whole section, $G$ is a commutative group stack over a base scheme $S$. 
We denote by~$e : S \to G$ the neutral section. The definition of a torsor 
under~$G$ was given by Breen in~\cite{Breen_Bitorseurs}. Note that we switched 
to a multiplicative notation for the ``addition'' of the group 
stack.

\begin{defi}
\label{def_action}
 (i) An action of $G$ on an $S$-stack $T$ is a pair $(\mu, \alpha)$ where $\mu : G\times_S T\to T$
is a morphism of $S$-stacks, and $\alpha$ is a 2-isomorphism 
making the following diagram 2-commutative.%\vskip-1cm
$$\xymatrix@R=0.5pc@C=0.9pc{G\times_S G\times_S T 
\ar[rr]^-{m_G\times \id_T} \ar[d]_{\id_G\times \mu} & 
\raisebox{-3ex}{$^{\alpha} \FlecheNE$} &
G\times_S T \ar[d]^-{\mu} \\
G\times_S T \ar[rr]_-{\mu} && T}$$
In other words, there is a functorial collection of isomorphisms
$$\alpha_{g,h}^x : g.(h.x) \lto (gh).x$$
for all objects $x$ of $T$ and $g, h$ of $G$, where a notation like $h.x$ 
stands for $\mu(h,x)$. Moreover, we require the following two conditions:
\begin{itemize}
 \item[a)] For all objects $x$ of $T$ and $g, h, k$ of $G$, we have a commutative diagram of 2-isomorphisms:
$$\def\MyNode{\ifcase\xypolynode\or
      g.((hk).x)
    \or
      g.(h.(k.x))
    \or
      (gh).(k.x)
    \or
      ((gh)k).x
    \or
      (g(hk)).x
    \fi
  }%
  \xy/r7pc/: (0,.3)::
    \xypolygon5{~>{}\txt{\ \ \strut\ensuremath{\MyNode}}} % ???
    \ar "2";"3" _{\alpha_{g,h}^{k.x}}
    \ar "3";"4" _<{\alpha_{gh,k}^{x}}
    \ar "4";"5" ^{(\lambda_{g,h,k}).x}
    \ar "2";"1" ^{g.(\alpha_{h,k}^{x})}
    \ar "1";"5" ^<{\alpha_{g,hk}^{x}}
  \endxy$$
\item[b)] For any $g\in G(S)$, the translation $\mu_g : T\to T$ defined by $\mu_g(t)=g.t$ is an equivalence of categories.
\end{itemize}
(ii) Let $f_0 : G \to G'$ be a homomorphism from~$G$ to another 
commutative group stack $G'$ and let~$(T',\mu', \alpha')$ be an $S$-stack with 
an action of $G'$. An $f_0$-equivariant morphism\footnote{If $G=G'$ and 
$f_0=\id_G$ we will talk about a $G$-equivariant morphism.} from $T$ to $T'$ is 
a pair $(f_1, \sigma)$ where $f_1 : T\to T'$ is a morphism of $S$-stacks and 
$\sigma$ is a 2-isomorphism making the following diagram commutative.
$$
\xymatrix@R=0.3pc@C=0.3pc{
G\times_S T \ar[rr]^-{\mu} \ar[dd]_{f_0\times f_1} &&
T\ar[dd]^{f_1} \\
&^\sigma\FlecheNE & \\
G'\times_S T' \ar[rr]_-{\mu'} && T'
}
$$
In other words, $\sigma$ is a functorial collection of isomorphisms:$$\xymatrix{\sigma_g^x : f_0(g).f_1(x) \ar[r]& f_1(g.x).}$$
% isomorphisms:$$a$$
% $$\xymatrix{\sigma_g^x : f_0(g).f_1(x) \ar[r]& f_1(g.x).}$$
We moreover require that these isomorphisms satisfy a compatibility condition with the other data, \emph{i.e.}\! for all objects $g, h$ in $G$ and $x$ in $T$, the following diagram of 2-isomorphisms is commutative.
$$
\def\MyNode{\ifcase\xypolynode\or
      f_0(gh).f_1(x)
    \or
      (f_0(g)f_0(h)).f_1(x)
    \or
      f_0(g).(f_0(h).f_1(x))
    \or
      f_0(g).f_1(h.x)
    \or
      f_1(g.(h.x))
    \or
      f_1((gh).x)
    \fi
  }%
  \xy/r10.5pc/: (0,.22)::
    \xypolygon6{~>{}\txt{\ \ \strut\ensuremath{\MyNode}}} % ???
    \ar "3";"4" _>{f_0(g).\sigma_h^x}
    \ar "4";"5" _{\sigma_g^{h.x}}
    \ar "5";"6" _{f_1(\alpha_{g,h}^x)}
    \ar "3";"2" ^{{\alpha'}_{f_0(g),f_0(h)}^{f_1(x)}}
    \ar "1";"2" 
    \ar "1";"6" ^{\sigma_{gh}^x}
  \endxy
$$
(iii) If $(f_1,\sigma)$ and $(f_1', \sigma')$ are two $f_0$-equivariant 
morphisms as in~(ii), a 2-isomorphism from~$(f_1, \sigma)$ to $(f_1', \sigma')$ 
is a 2-isomorphism $\tau : f_1 \Rightarrow f_1'$ that is compatible with the 
$\sigma$'s, \emph{i.e.} such that for any objects $x$ of $T$ and $g$ of $G$, 
$\tau^{g.x}\circ \sigma_{g}^x = {\sigma'_g}^{x}\circ (f_0(g).\tau^x)$.
\end{defi}

\begin{remarque}\rm
Given an $S$-stack~$T$ and a pair~$(\mu, \alpha)$ satisfying the pentagon 
condition~(i)~a) of~\ref{def_action}, the following requirements are equivalent 
(but not automatic, e.g. if $\mu$ is a constant morphism then it satisfies (i) 
a) but not (i) b)):
\begin{itemize}
 \item[(i)]  b) For any $g\in G(S)$, the translation $\mu_g$ is an equivalence of categories.
\item[(i)]  b') For \emph{some} $g\in G(S)$, the translation $\mu_g$ is an equivalence of categories.
\item[(i)]  c) For some neutral object $(e,\eps)$, there is 
a (automatically unique) 2-isomorphism\vskip-0.3cm
$$\xymatrix@R=0.9pc@C=0.9pc{T \ar[rr]^-{e\times \id_T} \ar[rd]_{\id_T}  &
\raisebox{-3ex}{$^{\mathfrak{a}_e} \FlecheSO$} &
G\times_S T \ar[ld]^{\mu}\\
& T &}$$
in other words a functorial collection of isomorphisms $\mathfrak{a}_e^x : e.x \to x,$
and for all objects $g$ of $G$ and $x$ of $T$ the following diagrams of 2-isomorphisms commute:
$$\def\MyNode{\ifcase\xypolynode\or
      g.(e.x)
    \or
      (ge).x
    \or
      g.x
    \fi
  }%
 \xy/r2.4pc/: (0,1)::
    \xypolygon3{~>{}\txt{\ \ \strut\ensuremath{\MyNode}}} % ???
    \ar "1";"3" ^{g.\mathfrak{a}_e^x}
    \ar "2";"3" 
    \ar "1";"2" _{\alpha_{g,e}^{x}}
  \endxy
\qquad \quad
\def\MyNode{\ifcase\xypolynode\or
      e.(g.x)
    \or
      (eg).x
    \or
      g.x
    \fi
  }%
 { \xy/r2.4pc/: (0,1)::
    \xypolygon3{~>{}\txt{\ \ \strut\ensuremath{\MyNode}}} % ???
    \ar "1";"3" ^{\mathfrak{a}_e^{g.x}}
    \ar "2";"3" 
    \ar "1";"2" _{\alpha_{e,g}^{x}}
  \endxy}
$$
where the bottom maps are uniquely determined by $\eps$.
\item[(i)] c') For \emph{any} neutral object $(e,\eps)$ there is a (unique) 2-isomorphism $\mathfrak{a}_e$ as in (i) c).
\end{itemize}
\end{remarque}

\begin{remarque}\rm
\label{rem:action_du_neutre_et_morph_equivariant}
 Let $(f_1, \sigma)$ be an $f_0$-equivariant morphism as in~(ii). Let $(e, 
\eps)$ be a neutral object of~$G$. Its image $e'=f_0(e)$ is a neutral object 
of~$G'$. Let $\mathfrak{a}_e$ and $\mathfrak{b}_{e'}$ be the associated 
2-isomorphisms as in (i) c) above. Then for any~$x$ in~$T$, the following 
diagram commutes:
$$\xymatrix@R=1.5pc@C=0.3pc{f_0(e).f_1(x) \ar[rr]^-{\mathfrak{b}_{e'}^{f_1(x)}}
\ar[rd]_{\sigma_e^x} &&
f_1(x) \\
&f_1(e.x) \ar[ru]_{f_1(\mathfrak{a}_e^x)}&
.}$$
\end{remarque}

The following lemma is straightforward.

\begin{lem}
 \label{conditions_torseur}
Let $T$ be an $S$-stack, with an action $(\mu, \alpha)$ of~$G$. The following are equivalent:
\begin{itemize}
 \item[(i)] \emph{Fppf}-locally on~$S$, there is a 
$G$-equivariant isomorphism $f_1 : G \to T$. 
\item[(ii)] The natural morphism
$(\mu, p_2) : G\times_S T\to T\times_S T,\ (g,t)\mapsto (g.t, t)$ is an equivalence, and the morphism $T\to S$ is an \emph{fppf} epimorphism, \emph{i.e.} there is an \emph{fppf} covering $S'\to S$ such that $T(S')$ is nonempty.
\end{itemize}
\end{lem}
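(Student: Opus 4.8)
The plan is to prove the two implications separately, noting first that every condition appearing in (i) and (ii) is local for the \emph{fppf} topology on $S$: being an equivalence of stacks and being an \emph{fppf} epimorphism are stable under base change and may be checked after an \emph{fppf} cover, and the existence of a $G$-equivariant isomorphism $G\rightarrow T$ in (i) is \emph{fppf}-local by definition. So I am free to replace $S$ by an \emph{fppf} cover whenever convenient.

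For $(i)\Rightarrow(ii)$ I would work locally: after an \emph{fppf} base change we may assume given a $G$-equivariant isomorphism $f_1:G\rightarrow T$, where $G$ acts on itself by translation. The neutral section $e:S\rightarrow G$ then produces, through $f_1$, a section of $T$ over the cover, so $T\rightarrow S$ is an \emph{fppf} epimorphism. Transporting $(\mu,p_2)$ along $\id_G\times f_1$ and $f_1\times f_1$ identifies it with the shearing morphism $G\times_S G\rightarrow G\times_S G$, $(g,h)\mapsto(g+h,h)$. This is an equivalence because the inverse morphism $-:G\rightarrow G$ of the commutative group stack (which exists by the remark following~\ref{def_cgs}) furnishes an explicit quasi-inverse $(a,b)\mapsto(a-b,b)$; that the two are mutually quasi-inverse is a formal check using $\lambda$ and $\tau$.

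For $(ii)\Rightarrow(i)$, since $T\rightarrow S$ is an \emph{fppf} epimorphism I would pass to a cover $S'\rightarrow S$ carrying a section $t_0\in T(S')$ and set $f_1:G\rightarrow T_{S'}$, $g\mapsto g.t_0$. The associativity isomorphisms $\alpha_{g,h}^{t_0}$ equip $f_1$ with an equivariance datum $\sigma_g^h:=\alpha_{g,h}^{t_0}$, and the compatibility required in~\ref{def_action}~(ii) is exactly the coherence condition~\ref{def_action}~(i)~a) evaluated at $t_0$. To see that $f_1$ is an equivalence, I would form the $2$-cartesian square obtained by pulling back the equivalence $(\mu,p_2):G\times_S T\rightarrow T\times_S T$ along $(\id_{T_{S'}},t_0):T_{S'}\rightarrow T_{S'}\times_{S'}T_{S'}$. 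Using that the condition $p_2(g,t)\simeq t_0$ forces $t\simeq t_0$, the fiber product is identified with $G$ over $S'$, and under this identification the induced projection to $T_{S'}$ is $g\mapsto g.t_0=f_1(g)$. Since a pullback of an equivalence is an equivalence, $f_1$ is an equivalence, giving (i).

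The hard part, modest as it is, will be this last pullback step: one must identify the $2$-fiber product $(G\times_S T)\times_{T\times_S T}T_{S'}$ with $G_{S'}$ and check that the resulting morphism agrees with $f_1$ only up to the canonical $2$-isomorphisms of the action, which is precisely where the coherence data intervene. Everything else is formal $2$-categorical bookkeeping, which is why the lemma is ``straightforward''.
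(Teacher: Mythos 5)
Your proof is correct, and it is the standard argument the author presumably had in mind: the paper states Lemma~\ref{conditions_torseur} without proof (``The following lemma is straightforward''), so there is nothing to compare against. Both directions are handled properly — in particular you correctly reduce (i)$\Rightarrow$(ii) to the shearing equivalence on $G\times_S G$ via the inverse morphism, and in (ii)$\Rightarrow$(i) the identification of the $2$-fibre product of $(\mu,p_2)$ along $t\mapsto(t,t_0)$ with $G_{S'}$, together with $\sigma_g^h:=\alpha_{g,h}^{t_0}$ and the pentagon axiom supplying the equivariance datum, is exactly the right bookkeeping.
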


\begin{defi}
  A $G$-torsor is an $S$-stack~$T$ with an action of $G$ satisfying the conditions of~\ref{conditions_torseur}.
\end{defi}

For any $G$-torsor $T$, and any stack~$P$ with an action of $G$, Breen defines in~\cite{Breen_Bitorseurs} a contracted product $T\wedge^G P$ that inherits a natural action of $G$. Let us recall some properties of this construction.

\begin{prop}
 \label{prop_produit_contracte}
\begin{itemize}
 \item[a)] If $T_1$ and $T_2$ are two $G$-torsors, then $T_1\wedge^G T_2$ is again a $G$-torsor. This defines a group law on the set $H^1(S, G)$ of isomorphism classes of $G$-torsors, where the neutral element is the class of the trivial torsor $G$.
\item[b)] Let $T$ be a $G$-torsor and $\varphi : G\to H$ a morphism of 
commutative group stacks. This induces a natural $G$-action on $H$. Then $H$ 
naturally acts on the stack $T\wedge^G H$ and makes it an $H$-torsor (which we 
denote by $T\wedge^{G,\varphi} H$ if there is an ambiguity on the morphism 
$\varphi$). This defines a group morphism
$H^1(\varphi) : H^1(S, G) \to H^1(S, H).$
\end{itemize}
\end{prop}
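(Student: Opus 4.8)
The plan is to reduce both parts to fppf-local triviality, using the criterion \ref{conditions_torseur} together with the standard properties of Breen's contracted product \cite{Breen_Bitorseurs}: it is compatible with base change and with isomorphisms of torsors, it is coherently associative, and the trivial torsor is a two-sided unit, so there are functorial equivalences $G\wedge^G P\simeq P$ for every $S$-stack $P$ equipped with a $G$-action. For a), I would first pin down the residual $G$-action on $T_1\wedge^G T_2$. Writing the contracted product as a quotient of $T_1\times_S T_2$ by the anti-diagonal $G$-action, the action of $G$ on the second factor commutes with this anti-diagonal action \emph{precisely because $G$ is commutative}, and therefore descends to an action on $T_1\wedge^G T_2$; the commutativity constraint $\tau$ of \ref{def_Picard_cat} identifies it with the action coming from the first factor. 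That $T_1\wedge^G T_2$ is a $G$-torsor is then an fppf-local statement by \ref{conditions_torseur}, and fppf-locally a $G$-equivariant trivialization $T_1\simeq G$ gives $T_1\wedge^G T_2\simeq G\wedge^G T_2\simeq T_2$; since $T_2$ is a torsor, so is $T_1\wedge^G T_2$.

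Since $\wedge^G$ is compatible with isomorphisms it descends to a binary operation on $H^1(S,G)$. Associativity and the neutrality of the class of the trivial torsor $G$ follow from the coherent associativity and unit equivalences of the contracted product, while the commutativity constraint $\tau$ of $G$ yields natural equivalences $T_1\wedge^G T_2\simeq T_2\wedge^G T_1$, so that the law is in fact abelian. The only substantial point is the existence of inverses. I would take $T^{\vee}$ to be the push-forward of $T$ along the inverse homomorphism $-\colon G\to G$ (a $G$-torsor, by the same local-triviality argument as in b) below) and construct an isomorphism $T\wedge^G T^{\vee}\simeq G$. The tool is the equivalence $(\mu,p_2)\colon G\times_S T\to T\times_S T$ of \ref{conditions_torseur}(ii): its inverse followed by the first projection is a ``division'' morphism $T\times_S T\to G$ sending $(t_1,t_2)$ to the unique $g$ with $g\cdot t_2\simeq t_1$; this morphism is equivariant for the $G$-action defining $T\wedge^G T^{\vee}$ and hence descends to the desired isomorphism onto $G$.

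For b), the homomorphism $\varphi$ endows $H$ with the $G$-action $g\cdot h=\varphi(g)+h$, so $T\wedge^G H$ is defined, and the right translation of $H$ on itself commutes with the $G$-action used to form the quotient, whence it descends to an $H$-action on $T\wedge^G H$. This is an $H$-torsor by the same local argument: fppf-locally $T\simeq G$, so $T\wedge^G H\simeq G\wedge^G H\simeq H$ as $H$-stacks, and torsor-ness is fppf-local by \ref{conditions_torseur}. That $H^1(\varphi)$ is a group homomorphism follows, $\varphi$ being a homomorphism of group stacks, from a natural isomorphism $(T_1\wedge^G T_2)\wedge^G H\simeq (T_1\wedge^G H)\wedge^H(T_2\wedge^G H)$ assembled from the associativity and unit equivalences of the contracted products.

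The hard part is not geometric but 2-categorical bookkeeping: since every object here is defined only up to (2-)isomorphism, one must verify that the associativity, commutativity and unit 2-isomorphisms for $\wedge^G$ satisfy the coherence (pentagon- and hexagon-type) conditions built from the Picard-category axioms of \ref{def_Picard_cat} and the equivariance conditions of \ref{def_action}, so that $H^1(S,G)$ really is an abelian group and $H^1(\varphi)$ really is a homomorphism. Most of this coherence is already packaged in Breen's formalism \cite{Breen_Bitorseurs}; the single step carrying genuine mathematical content is the construction of inverses through the division morphism above.
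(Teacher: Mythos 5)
The paper gives no proof of this proposition: it is stated as a recollection of properties of Breen's contracted product and simply refers to \cite{Breen_Bitorseurs}, so there is nothing to compare line by line. Your argument is the standard one and is correct as a proof sketch: descent of the residual action using commutativity of $G$, reduction of torsor-ness to the fppf-local case via \ref{conditions_torseur} together with the unit equivalence $G\wedge^G P\simeq P$ and compatibility with base change, and the construction of inverses by pushing $T$ forward along the inversion of $G$ and descending the division morphism $T\times_S T\to G$ (which, to be precise, is \emph{invariant} under the diagonal $G$-action being quotiented out — this is where commutativity enters again — and equivariant for the residual action, hence an isomorphism of torsors). The identity $(T_1\wedge^G T_2)\wedge^{G,\varphi}H\simeq(T_1\wedge^{G,\varphi}H)\wedge^H(T_2\wedge^{G,\varphi}H)$ you use for $H^1(\varphi)$ is the right one, and you correctly flag that the remaining work is the 2-categorical coherence already packaged in Breen's formalism. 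No gaps.
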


\begin{prop}
 \label{prop_extension_scalaires_multiplicative_en_phi}
Let $G$ and $H$ be two commutative group stacks over a base scheme $S$ and let 
$\varphi_1$ and $\varphi_2$ be two homomorphisms of commutative group stacks 
from $G$ to $H$. Let $\psi$ denote their product, defined functorially by 
$\psi(g)=\varphi_1(g)\varphi_2(g)$. Then for a $G$-torsor $T$, there is a 
functorial isomorphism
$$T\wedge^{G, \psi} H\simeq (T\wedge^{G, \varphi_1} H)\wedge^H (T\wedge^{G, \varphi_2} H).$$
\end{prop}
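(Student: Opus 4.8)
The plan is to exhibit the asserted isomorphism by hand on representing data and then to observe that it is automatically an isomorphism because it is a morphism between two $H$-torsors. Recall from~\ref{prop_produit_contracte}~b) that each $T\wedge^{G,\varphi_i}H$ is an $H$-torsor, so the right-hand side makes sense as a contracted product of $H$-torsors, and by the same token $T\wedge^{G,\psi}H$ is an $H$-torsor; hence any $H$-equivariant morphism between the two sides will be an equivalence (one checks the criterion~\ref{conditions_torseur}~(ii), or simply that an $H$-equivariant map of $H$-torsors is invertible locally, hence globally).

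First I would fix the explicit models. Writing the group laws multiplicatively and choosing a neutral object $e$ of $H$, the torsor $T\wedge^{G,\varphi_i}H$ is the stack quotient of $T\times_S H$ by the $G$-action $g\cdot(t,h)=(g\cdot t,\varphi_i(g)^{-1}h)$, so that $(g\cdot t,h)$ and $(t,\varphi_i(g)h)$ represent the same object, with residual action $\eta\cdot[(t,h)]=[(t,\eta h)]$ for $\eta\in H$; similarly $T\wedge^{G,\psi}H$ uses $\psi(g)^{-1}$ in place of $\varphi_i(g)^{-1}$. The contracted product $X\wedge^H Y$ of two $H$-torsors is the quotient of $X\times_S Y$ by the antidiagonal $H$-action, in which $(\eta\cdot x,y)$ and $(x,\eta\cdot y)$ agree. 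With these conventions I would define
$$\Phi:T\wedge^{G,\psi}H\longrightarrow (T\wedge^{G,\varphi_1}H)\wedge^H(T\wedge^{G,\varphi_2}H),\qquad [(t,h)]\longmapsto \bigl[([(t,h)]_{\varphi_1},[(t,e)]_{\varphi_2})\bigr].$$
The crucial point, and the only place where the hypothesis $\psi=\varphi_1\varphi_2$ enters, is the well-definedness of $\Phi$ with respect to the $G$-action defining the source. Replacing $(t,h)$ by $g\cdot(t,h)=(g\cdot t,\psi(g)^{-1}h)$ and using the relations above, the first component becomes $[(t,\varphi_1(g)\psi(g)^{-1}h)]_{\varphi_1}=[(t,\varphi_2(g)^{-1}h)]_{\varphi_1}=\varphi_2(g)^{-1}\cdot[(t,h)]_{\varphi_1}$, while the second becomes $[(t,\varphi_2(g))]_{\varphi_2}=\varphi_2(g)\cdot[(t,e)]_{\varphi_2}$; pushing the factor $\varphi_2(g)^{-1}$ across the antidiagonal relation of $\wedge^H$ then cancels it against $\varphi_2(g)$ and returns the original class. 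Thus $\Phi$ descends to the quotient.

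It remains to check that $\Phi$ is $H$-equivariant, which is immediate since the residual $H$-action $\eta\cdot[(t,h)]=[(t,\eta h)]$ on the source is carried to the $H$-action on the first factor of the target; functoriality in $T$ is clear from the formula. By the remark of the first paragraph, $\Phi$ is then an isomorphism. The main obstacle is not this sheaf-level computation but its upgrade to the $2$-categorical setting: since $G$ and $H$ are group \emph{stacks}, each ``equality of representatives'' above is in reality a chosen $2$-isomorphism built from the constraints $\lambda,\tau$ and from the structural $2$-isomorphisms $\alpha$ of the actions, and one must verify that the resulting collection of $2$-isomorphisms defining $\Phi$ is compatible with the coherence data in the sense of~\ref{def_action}~(ii). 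This amounts to a diagram chase through the pentagon- and hexagon-type constraints; I expect it to be lengthy but formal, with no further use of the hypothesis beyond the cancellation displayed above.
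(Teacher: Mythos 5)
The paper states this proposition without proof: it appears in the list of recalled properties of Breen's contracted product (alongside~\ref{prop_produit_contracte}), so there is nothing in the text to compare your argument against. Your argument is correct and is the standard one. The models you fix for $T\wedge^{G,\varphi_i}H$ and for the contracted product of two $H$-torsors are the right ones, the map $[(t,h)]\mapsto\bigl[([(t,h)]_{\varphi_1},[(t,e)]_{\varphi_2})\bigr]$ is well defined precisely because $\varphi_1(g)\psi(g)^{-1}=\varphi_2(g)^{-1}$ (which uses the commutativity of $H$), and the cancellation across the antidiagonal relation is exactly where the hypothesis $\psi=\varphi_1\varphi_2$ is consumed; $H$-equivariance then forces the map to be an equivalence, since a morphism of torsors is locally a translation. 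You are also right to flag that the only substantive remaining work is $2$-categorical: a morphism out of the stack quotient is really a $G$-invariant morphism out of $T\times_S H$ together with an invariance $2$-isomorphism satisfying a cocycle condition, so each ``equality of representatives'' in your computation must be promoted to a chosen $2$-isomorphism built from $\lambda$, $\tau$ and the action data, and the coherence of the whole collection checked against~\ref{def_action}. That verification is formal and introduces no new use of the hypothesis, so your proof is complete in all essentials.
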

 
We can also describe $G$-torsors in terms of extensions of $\Z$ by $G$. We define a 2-category $Ext^1(\Z, G)$ as follows.
\begin{enumerate}
 \item An object is an exact sequence of commutative group stacks 
$\xymatrix@C=1pc{0\ar[r]& G \ar[r]^j& E \ar[r]^{\pi}& \Z 
\ar[r]& 0.}$
\item A morphism between two such objects $(E_1, j_1, \pi_1)$ and $(E_2, j_2, \pi_2)$ is a pair $(\varphi, \beta)$ where $\varphi : E_1 \to E_2$ is a homomorphism of commutative group stacks such that $\pi_2\circ \varphi =\pi_1$ and $\beta : \varphi\circ j_1 \Rightarrow j_2$ is a 2-isomorphism of additive morphisms (see \ref{def_morphismes_groupes}).
\item A $2-$isomorphism from $(\varphi, \beta)$ to 
$(\varphi', \beta')$ is a 2-isomorphism $\delta : \varphi 
\Rightarrow \varphi'$ of additive morphisms (see 
\ref{def_morphismes_groupes}), that is compatible with 
$\beta$ and $\beta'$.
\end{enumerate}
If $\xymatrix@C=1pc{0\ar[r]&G \ar[r]^j&E \ar[r]^{\pi}&\Z \ar[r]&0}$ is an object of $Ext^1(\Z, G)$, then $\pi^{-1}(1)$ is naturally a $G$-torsor. This construction extends to a 2-functor $t$ from $Ext^1(\Z, G)$ to $(G-Tors)$. We leave to the reader the proof of the following fact.

\begin{prop}
\label{equiv_extensions_torseurs}
 The 2-functor $t$ from $Ext^1(\Z, G)$ to $(G-Tors)$ is a 2-equivalence of 2-categories.
\end{prop}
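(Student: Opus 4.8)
The plan is to show that $t$ is a 2-equivalence by verifying the two standard conditions: that $t$ is essentially surjective on objects, and that for any two objects of $Ext^1(\Z,G)$ the induced functor on morphism categories is an equivalence. Concretely I will build an explicit quasi-inverse $s : (G-Tors) \to Ext^1(\Z,G)$ out of Breen's contracted product, and recover the unit and counit from its coherence.

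For essential surjectivity, start from a $G$-torsor $T$. Using the group structure on $H^1(S,G)$ from~\ref{prop_produit_contracte}~a), let $T^{\wedge -1}$ represent the inverse class of $[T]$, and set $T^{\wedge 0} := G$, $T^{\wedge n} := T\wedge^G\cdots\wedge^G T$ ($n$ factors) for $n>0$, and $T^{\wedge n} := (T^{\wedge -1})^{\wedge(-n)}$ for $n<0$. Define
$$E(T) := \coprod_{n\in\Z} T^{\wedge n},$$
with $\pi : E(T)\to\Z$ sending the component $T^{\wedge n}$ to $n$. The associativity and commutativity isomorphisms of the contracted product provide canonical isomorphisms $T^{\wedge m}\wedge^G T^{\wedge n}\simeq T^{\wedge(m+n)}$, and these assemble into an addition $\mu : E(T)\times_S E(T)\to E(T)$ making $E(T)$ a commutative group stack with neutral component $T^{\wedge 0}=G$ and with $\pi$ a homomorphism onto $\Z$. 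Since $H^{-1}(\Z)=0$, the Remark following~\ref{def_suites_exactes} shows that $0\to G\to E(T)\to\Z\to 0$ is exact ($j$ is a monomorphism, $\pi$ an epimorphism, and an object maps to $0$ exactly when it lies in the degree-$0$ component $G$). Thus $(E(T),j,\pi)$ is an object of $Ext^1(\Z,G)$ and $t(E(T))=\pi^{-1}(1)=T^{\wedge 1}\simeq T$ canonically.

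For the morphism categories, fix $(E_1,j_1,\pi_1)$ and $(E_2,j_2,\pi_2)$ and put $T_i:=\pi_i^{-1}(1)$. A morphism $(\varphi,\beta)$ of extensions restricts to a $G$-equivariant morphism $T_1\to T_2$, which is $t$ on morphisms. Conversely, a torsor morphism $f_1:T_1\to T_2$ extends: since $\pi_i^{-1}(n)\simeq T_i^{\wedge n}$ via the addition of $E_i$, forming the contracted powers $f_1^{\wedge n}:\pi_1^{-1}(n)\to\pi_2^{-1}(n)$ (with $f_1^{\wedge 0}=\id_G$) and assembling them yields a homomorphism $\varphi:E_1\to E_2$, the functoriality of the contracted product supplying the 2-isomorphism $\alpha_\varphi$ of~\ref{def_morphismes_groupes} and the datum $\beta=\id$ on $E_0=G$; one checks $t(\varphi,\beta)=f_1$. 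Finally a 2-morphism $\delta$ of extensions is determined by its restriction $\tau=\delta|_{T_1}$, because its components on each $\pi_1^{-1}(n)$ are forced by the contracted-power description, giving a bijection between such $\delta$ and the 2-morphisms $\tau$ of~\ref{def_action}~(iii). Hence $t$ is an equivalence on each morphism category.

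The main obstacle is not conceptual but coherence bookkeeping: one must check that the associativity and commutativity isomorphisms assembling $\mu$ on $E(T)$ satisfy the pentagon and hexagon axioms of~\ref{def_Picard_cat}, and that $s:T\mapsto E(T)$ is 2-functorial with unit and counit obeying the coherence conditions of~\ref{def_action}. All of this reduces to the coherence already built into Breen's contracted product, but the diagrams are lengthy. Alternatively the coherence can be bypassed via Deligne's dictionary~\ref{description_cgs}: as $H^{-1}(\Z)=0$, an object of $Ext^1(\Z,G)$ corresponds to a distinguished triangle $G^{\flat}\to E^{\flat}\to\underline{\Z}\to G^{\flat}[1]$, so isomorphism classes are $\Hom_{D^{[-1,0]}(S,\Z)}(\underline{\Z},G^{\flat}[1])\simeq\mathbb{H}^1(S,G^{\flat})$ (using that $\underline{\Z}$ is flat), which is exactly the group classifying $G$-torsors; upgrading this numerical match to the full 2-categorical statement then proceeds along the same lines as above.
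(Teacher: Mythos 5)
The paper gives no proof of this proposition --- it is explicitly left to the reader --- so there is nothing to compare your argument against; I can only assess it on its own terms. Your argument is the standard one and is essentially correct: building $E(T)=\coprod_{n\in\Z}T^{\wedge n}$ from Breen's contracted product, checking exactness of $0\to G\to E(T)\to \Z\to 0$ via the remark following~\ref{def_suites_exactes} (applicable since $H^{-1}(\Z)=0$), identifying $\pi^{-1}(n)$ with $T^{\wedge n}$ inside an arbitrary extension, and extending torsor morphisms and 2-morphisms degree by degree is exactly how one proves this. Two points deserve more care than you give them. First, ``let $T^{\wedge -1}$ represent the inverse class of $[T]$'' is not enough: you need a \emph{canonical} inverse torsor (for instance $T$ with the $G$-action twisted by inversion, or the stack of $G$-equivariant maps $T\to G$) together with a canonical isomorphism $T\wedge^G T^{\wedge -1}\simeq G$; an arbitrary representative of the inverse class would leave both the addition on $E(T)$ and the 2-functoriality of $T\mapsto E(T)$ ill defined. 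Second, the paper's commutative group stacks are \emph{strictly} commutative ($\tau_{x,x}=\id$, Definition~\ref{def_Picard_cat}), so you must check that the symmetry you endow $E(T)$ with is strict and not merely braided; this does follow from the strict commutativity of $G$, but it is a genuine condition and belongs on the list of coherence checks you defer. Your closing alternative via Deligne's dictionary~\ref{description_cgs} correctly matches isomorphism classes on both sides, but, as you acknowledge, it does not by itself yield the full 2-categorical equivalence, so the contracted-product construction remains the substance of the proof.
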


To conclude this section, we recall a few facts about $F$-gerbes for a fixed 
sheaf of commutative groups $F$, in particular the 
equivalence between the notions of an 
$F$-gerbe and a $BF$-torsor. 

\begin{defi}
 \label{def:gerbe}
\begin{enumerate}
 \item[(i)] Let $f : X\to S$ be a stack over an algebraic space $S$. We say 
that $f$ is a gerbe (or that $X$ is a gerbe over $S$) if $f$ and its diagonal 
are \emph{fppf} epimorphisms.
\item[(ii)] Let $F$ be a sheaf of commutative groups over $S$. An $F$-gerbe, or 
a gerbe banded by $F$, is a gerbe $f : X\to S$ together with an isomorphism of 
sheaves of groups
\[
 c_x : F_U \lto \Aut_{X}(x)
\]
for every $S$-scheme $U$ and every object $x\in X(U)$, such that the following 
conditions hold:
\begin{itemize}
 \item[(G1)] For any $S$-scheme $U$ and any isomorphism $\varphi : x\to x'$ in 
$X(U)$, we have $c_{\varphi}\circ c_x=c_{x'}$ where $c_{\varphi} : \Aut_X(x)\to 
\Aut_X(x')$ is the isomorphism induced by $\varphi$.
\item[(G2)] For any $S$-morphism $V\to U$ and any object $x\in X(U)$, the 
pullback of $c_x$ along $V\to U$ is equal to $c_{(x_{|_V})}$.
\end{itemize}
\item[(iii)] A morphism of $F$-gerbes from $(X', \{c_{x'}'\})$ to $(X,\{c_x\})$ 
is a morphism of stacks $g : X'\to X$ such that for every object $x'$ of $X'$ 
the diagram
\[
 \xymatrix{& F \ar[ld]_{c'_{x'}} \ar[rd]^{c_{g(x')}}
\\
\Aut_{X'}(x') \ar[rr]_{g_*} && \Aut_X(g(x'))
}
\]
commutes.
\item[(iv)] If $g$ and $h$ are two morphisms of $F$-gerbes, a 2-isomorphism 
from $g$ to $h$ is a 2-isomorphism $g\Rightarrow h$ of morphisms of stacks.
\end{enumerate}
\end{defi}

\begin{remarque}\rm
 Equivalently, an $F$-gerbe is a gerbe $X\to S$ together with an isomorphism of 
sheaves of groups over $X$ :
\[
 c : F_X \lto I_{X/S}
\]
where $F_X=F\times_S X$ and $I_{X/S}$ is the inertia stack of $X$ over 
$S$.
\end{remarque}

We refer 
to~\cite{Giraud} 
or~\cite{Olsson_Algebraic_spaces_and} for some elementary facts about gerbes 
banded by a sheaf of commutative groups $F$. In particular there is a natural 
bijection (\cite[IV, 3.4.2]{Giraud} 
or~\cite[12.2.4]{Olsson_Algebraic_spaces_and})
\[
 \gamma_F : H^2(S, F) \lto \{\text{isomorphism classes of } F-\text{gerbes}\} 
\]
where the group on the left is the \emph{fppf} derived functor cohomology group. 
The result below is folklore, but we were unable to find a suitable reference.

\begin{prop}
\label{prop:equivalence_BG_torseurs_G_gerbes}
 \begin{enumerate}
  \item There is a natural 2-equivalence of 2-categories between the 2-category 
of $BF$-torsors and the 2-category of $F$-gerbes.
\item If the set of isomorphism classes of $F$-gerbes is equipped via this 
equivalence with the group law induced by the contracted product of 
$BF$-torsors from~\ref{prop_produit_contracte}, then the above 
bijection $\gamma_F$ is a group isomorphism.
 \end{enumerate}
\end{prop}
\begin{proof}
 Since we did not even describe $\gamma_F$, we only give 
the proof of (1). Let us construct a 2-equivalence $\Phi$ from $BF$-torsors to 
$F$-gerbes. Let $T$ be a $BF$-torsor. By definition $T\to S$ is an \emph{fppf} 
epimorphism. Moreover since $T$ is locally isomorphic to $BF$, and since two 
$F$-torsors are locally isomorphic (because they are both locally trivial), the 
diagonal of $T\to S$ is also an \emph{fppf} epimorphism. Hence 
$T\to S$ is a gerbe. Let $t$ be an object of $T(U)$ for some $S$-scheme~$U$. 
Then the morphism $\mu_t : g\mapsto g.t$ is an isomorphism of stacks $(BF)_U 
\to 
T_U$, which induces an isomorphism 
of group sheaves $\Aut_{(BF)_U}(e) \simeq \Aut_{T_U}(t)$ where $e$ denotes 
the neutral object of~$BF$, i.e. the trivial $F$-torsor. Composing with the 
canonical isomorphism $\Aut_{BF}(e)\simeq F$ we get an isomorphism of group 
sheaves over $U$
\[
 c_t : F_U \lto \Aut_T(t)\, .
\]
Now the stack $T$ together with the collection of all the $c_t$'s is an 
$F$-gerbe. This defines $\Phi$ on objects. The definition of $\Phi$ on 
1-morphisms and on 2-morphisms is straightforward: if $(f : T_1\to T_2, 
\sigma)$ is a morphism of $BF$-torsors then $\Phi(f,\sigma)$ is just $f$. 
Using the functoriality of $\sigma$ and 
Remark~\ref{rem:action_du_neutre_et_morph_equivariant} we check that the 
diagrams of~\ref{def:gerbe}~(iii) commute. If $\tau : f \Rightarrow f'$ is a 
2-isomorphism from $(f,\sigma)$ to $(f', \sigma')$ we define $\Phi(\tau)=\tau$.

It remains to prove that $\Phi$ is indeed a 2-equivalence of categories, i.e. 
it is 2-essentially surjective, and for any two $BF$-torsors $T_1$ and $T_2$, 
the functor $\Phi_{T_1, T_2} : \Hom(T_1,T_2) \to \Hom(\Phi(T_1), \Phi(T_2))$ is 
an equivalence.
Let us first prove that $\Phi_{T_1, T_2}$ is fully faithfull. This amounts to 
say that if $(f,\sigma)$ and $(f', \sigma')$ are two morphisms of 
$BF$-torsors, then any 2-isomorphism $\tau : f \Rightarrow f'$ is 
compatible with $\sigma$ and $\sigma'$ in the sense of~\ref{def_action}~(iii), 
i.e. for any objects $g$ of $BF$ and $x$ of $T_1$, we have
$\tau^{g.x}\circ \sigma_{g}^x = {\sigma'_g}^{x}\circ (g.\tau^x)$. Using the 
fact that we have a functorial identification of the automorphism groups of 
the objects of $T_2$ with $F$, we can define a morphism of stacks $BF\times_S 
T_1 \to F$ that maps a pair $(g,x)$ to the section of $F$ that corresponds to
$\tau^{g.x}\circ \sigma_{g}^x \circ ({\sigma'_g}^{x}\circ (g.\tau^x))^{-1}$.
Now the key fact is that the coarse moduli space of $BF\times_S T_1$ is $S$, so 
that this morphism factorizes through~$S$, by the universal property of the 
coarse moduli space. It then suffices to check the 
required equality when $g=e$ (the neutral object of $BF$), in which case this 
boils down to the functoriality of $\tau$ and 
Remark~\ref{rem:action_du_neutre_et_morph_equivariant}.

Let us prove that $\Phi_{T_1, T_2}$ is essentially surjective. Let $f : T_1 \to 
T_2$ be a morphism of stacks that satisfies the 
condition~\ref{def:gerbe}~(iii). We have to prove that there exists a 
2-isomorphism $\sigma$ such that $(f, \sigma)$ is a morphism of $BF$-torsors.
For this, we recall the fact that if $T$ is an $F$-gerbe, then for any object 
$x$ of $T$, we have an isomorphism of $F$-gerbes:
\[
 \Triv_x : T\lto BF
\]
that maps an object $y$ of $T$ to the sheaf $\Isom(y,x)$. Moreover, if $T$ is a 
$BF$-torsor and $x$ is an object of $T$, then $\Triv_x : T \to BF$ is a 
quasi-inverse to the equivalence $\mu_x : BF \to T$. [This amounts to say that 
any $F$-torsor $g$ is isomorphic to $\Triv_x(\mu_x(g))$, i.e. to 
$\Isom(g.x,x)$. To see this, notice that a section of $g$ corresponds to a 
trivialization $g\simeq e$ where $e$ is the trivial $F$-torsor, hence 
to an isomorphism $g.x\simeq x$, \emph{via} $\mu_x$ and the canonical 
isomorphism $e.x\simeq x$.]
For a fixed object $x$ of $T_1$, the following diagram is 2-commutative
\[
 \xymatrix{
T_1 \ar[rr]^f \ar[rd]_{\Triv_x}
&& T_2\ar[ld]^{\Triv_{f(x)}} \\
&BF
}
\]
where the 2-isomorphism is given by the collection of the isomorphisms of 
$F$-torsors $$\Isom(y,x)\to \Isom(f(y),f(x))$$ induced by $f$.
It follows that the diagram
\[
 \xymatrix{
T_1 \ar[rr]^f 
&& T_2 \\
&BF \ar[ul]^{\mu_x} \ar[ur]_{\mu_{f(x)}}
}
\]
is 2-commutative as well. Now the 2-isomorphism $\mu_{f(x)}\Rightarrow f\circ 
\mu_x$ yields the desired functorial collection of isomorphisms $\sigma_g^x : 
g.f(x)\to f(g.x)$.

Finally let us prove that $\Phi$ is 2-essentially surjective. Let $(T, 
\{c_t\})$ be an $F$-gerbe. We have to construct a morphism $\mu : BF\times_S 
T\to T$ together with a 2-isomorphism $\alpha$ that satisfies the conditions 
of~\ref{def_action} such that $\Phi(T,\mu, \alpha)\simeq (T,\{c_t\})$. Let 
$(g,t)$ be a point of $BF\times_S T$. Recall that $\Triv_t : T\to BF$ is an 
equivalence and choose a quasi-inverse $\mu_t : BF\to T$. We define $\mu(g,t)$ 
to be $\mu_t(g)$. The remaining details (constructing $\alpha$ and checking 
that $\Phi(T,\mu, \alpha)\simeq (T,\{c_t\})$) are left to the reader.
\details{Voir la construction de $\alpha$ en C8, p.5v.}%
\end{proof}

\section{A theorem of the square}
\label{section_thm_square}

For an abelian variety~$A$ over a field~$k$, the classical ``theorem of the 
square'' asserts that for all $x,y\in A(k)$, and for any line bundle~$L$ on~$A$, 
there is an isomorphism $(\mu_{x+y}^*L)\otimes L \simeq 
(\mu_x^*L)\otimes(\mu_y^*L)$, where for a point $x$ of $A$, the morphism $\mu_x 
: A\to A$ is the translation $a\mapsto a+x$. We will need similar facts for 
some 
commutative group stacks: abelian stacks on the one hand, and classifying 
stacks on the other hand. This section is a short interlude devoted to the proof 
of these facts.

\begin{defi}
 Let~$G$ be a commutative group stack over a base scheme~$S$. Let~$L$ be an 
invertible sheaf on~$G$. We denote by $\Lambda(L)$ the so-called ``Mumford 
bundle'' on $G\times_S G$
$$\Lambda(L)=(\mu^*L)\otimes(p_1^*L)^{-1}\otimes(p_2^*L)^{-1}$$
where $\mu$ is the product map and $p_1, p_2$ are the projections from $G\times_S G$ to~$G$. We denote by~$\varphi_L$ the induced morphism of stacks:
$$\varphi_L : G \lto \Pic_{G/S}.$$
Functorially, $\varphi_L$ maps a point $g\in G(S)$ to the class~$[\mu_g^*L\otimes L^{-1}]$.
\end{defi}

\begin{thm}
\label{thm_phi_L_nul}
 Let~$G$ be a commutative group stack over a base scheme~$S$ and let~$L$ be an 
invertible sheaf on~$G$. Assume that one of the following holds:
\begin{itemize}
 \item[(a)] $G$ is an abelian stack and~$[L]\in \Pic_{G/S}^{\tau}(S)$.
\item[(b)] $G$ is the classifying stack~$BF$ of a commutative group scheme~$F$. 
\end{itemize}
Then $\varphi_L=0$.
\end{thm}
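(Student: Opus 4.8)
The plan is to treat the two cases by different methods: the classifying-stack case (b) by a direct computation with characters, and the abelian-stack case (a) by descending $\varphi_L$ along the gerbe $\pi\colon G\to H^0(G)$ to the underlying abelian scheme and invoking the classical statement there. Throughout I would record the basic additivity $\Lambda(L\otimes L')\simeq\Lambda(L)\otimes\Lambda(L')$, which gives $\varphi_{L^{\otimes n}}=[n]_{\Pic_{G/S}}\circ\varphi_L$ pointwise, and the fact that $\varphi_L(e_G)=0$ since $\mu_{e_G}\simeq\id_G$.

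\textbf{Case (b).} Write $G=BF$ with $F$ a commutative group sheaf. By \ref{Picard_classifiant}~(1), $L$ corresponds to a character $\chi_L\colon F\to\gm$, and the addition $\mu\colon BF\times_S BF=B(F\times_S F)\to BF$ is $Bm$, where $m$ is the group law of $F$; likewise $p_i=B(\pr_i)$. Since pullback of invertible sheaves on classifying stacks corresponds to precomposition of characters, $\mu^*L$ corresponds to $\chi_L\circ m$, that is to $(f_1,f_2)\mapsto\chi_L(f_1)\chi_L(f_2)$ because $\chi_L$ is a homomorphism, and this is exactly the character attached to $p_1^*L\otimes p_2^*L$. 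Hence $\Lambda(L)=\mu^*L\otimes p_1^*L^{-1}\otimes p_2^*L^{-1}$ is trivial and $\varphi_L=0$, with no hypothesis on $L$.

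\textbf{Case (a).} Set $A=H^0(G)$, an abelian scheme by \ref{prop_abelian_stacks}, and let $\pi\colon G\to A$ be the canonical homomorphism of \ref{structural_exact_sequence}, which is an $H^{-1}(G)$-gerbe. Since the target $\Pic_{G/S}$ is an \emph{fppf} sheaf and $\pi$ is a gerbe, $\varphi_L$ factors uniquely as $\varphi_L=\psi_L\circ\pi$ for some $\psi_L\colon A\to\Pic_{G/S}$. Because $\psi_L(e_A)=\varphi_L(e_G)=0$ and $A$ has connected fibres, $\psi_L$ lands in the open subscheme $\Pic^0_{G/S}=:B$, which is an abelian scheme: indeed $\picto_{G/S}\simeq D(G)$ is duabelian by \ref{comp_dual_picto}, and $\pi^*$ identifies $\Pic^0_{A/S}$ with $B$ (the cokernel of $\pi^*$ is finite by \ref{Picard_classifiant}~(2)). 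Thus $\psi_L\colon A\to B$ is a pointed morphism of abelian schemes.

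It then remains to establish the statement for $\Pic^0$ and to run a torsion argument. For $[M]\in\Pic^0_{G/S}(S)$, using the isomorphism $\pi^*\colon\Pic^0_{A/S}\xrightarrow{\sim}B$ we may write, \emph{fppf}-locally, $M=\pi^*M_A$ with $[M_A]\in\Pic^0_{A/S}$; then $\pi\circ\mu_g=\mu_{\pi(g)}\circ\pi$ gives $\varphi_{\pi^*M_A}=\pi^*\circ\varphi_{M_A}\circ\pi$, and since an element of $\Pic^0$ on the abelian scheme $A$ is translation-invariant (the classical theorem of the square, i.e.\ $\Pic^0=\ker\varphi$ for abelian schemes) we get $\varphi_{M_A}=0$, whence $\varphi_M=0$. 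Finally, reducing to $S$ noetherian by standard limits, the hypothesis $[L]\in\picto_{G/S}(S)$ provides, \emph{fppf}-locally on $S$, an integer $n>0$ with $L^{\otimes n}\in\Pic^0_{G/S}$; by the previous step $\varphi_{L^{\otimes n}}=0$, so $[n]_B\circ\psi_L=\psi_{L^{\otimes n}}=0$ and $\psi_L$ has image in the finite subscheme $B[n]$. As $A$ is fibrewise connected with $\psi_L(e_A)=0$, this forces $\psi_L=0$, hence $\varphi_L=0$. The main obstacle is the bookkeeping in the factorization $\varphi_L=\psi_L\circ\pi$ together with the identification of $B=\Pic^0_{G/S}$ as an abelian scheme via \ref{comp_dual_picto} and \ref{Picard_classifiant}~(2); once these are in place, the additivity and the connectedness of $A$ close the argument.
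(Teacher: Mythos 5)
Your case (b) is correct and is a pleasant alternative to the paper's argument (the paper simply observes that $\varphi_L$ factors through the coarse moduli space of $BF$, which is trivial); your character computation even shows the stronger statement that the class of the Mumford bundle $\Lambda(L)$ itself vanishes in $\Pic_{BF\times_S BF/S}$. The overall strategy in case (a) -- reduce to sheaves pulled back from $A=H^0(G)$, invoke Mumford there, and kill the remaining discrepancy by an $n$-torsion plus rigidity argument -- is also the paper's strategy. However, there is a genuine gap in the middle of your case (a): the claim that $\pi^*$ identifies $\Pic^0_{A/S}$ with $\Pic^0_{G/S}$ (and hence that every $[M]\in\Pic^0_{G/S}$ is fppf-locally of the form $\pi^*M_A$) is false in general. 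The cokernel of $\pi^*$ embeds in $F^D$ with $F=H^{-1}(G)$ finite \emph{flat}, not finite \'etale, so $F^D$ may have infinitesimal fibers in positive or mixed characteristic, and then the neutral component of $\Pic_{G/S}$ is strictly larger than $\pi^*\Pic^0_{A/S}$ (in particular it is not an abelian scheme). Concretely, for $G=A\times_k B(\Z/p\Z)$ over a field of characteristic $p$ one gets $\Pic_{G/k}\simeq \Pic_{A/k}\times\mu_p$ with $\mu_p$ connected, so $\Pic^0_{G/k}=A^t\times\mu_p\supsetneq \pi^*\Pic^0_{A/k}$, and over $S=\Spec k[\epsilon]$ there are points of $\Pic^0_{G_S/S}$ that do not come from $A$, even fppf-locally. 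Since your final step feeds $M=L^{\otimes n}$ into this claim, the chain ``$L^{\otimes n}\in\Pic^0_{G/S}\Rightarrow\varphi_{L^{\otimes n}}=0$'' is not justified as written.

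The repair is exactly what the paper does: use the exact sequence $0\to\Pic_{A/S}\xrightarrow{\pi^*}\Pic_{G/S}\xrightarrow{\chi}F^D$ of \ref{Picard_classifiant}~(2) and choose $n$ to be the rank of $F$, so that $F^D$ is killed by $n$ and $\chi(L^{\otimes n})=0$; then $L^{\otimes n}$ descends (as a class, fppf-locally) to some $M_A$ on $A$ with $[M_A]\in\picto_{A/S}$ by injectivity of $\pi^*$, Mumford gives $\varphi_{M_A}=0$, and your additivity, factorization through $A$, and rigidity argument (which is sound -- a morphism from the abelian scheme $A$ to the finite, hence affine, $n$-torsion subscheme is constant by \ref{morphisme_constant}) then close the proof. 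With that one substitution your argument coincides with the paper's, up to the cosmetic difference that you factor $\varphi_L$ through the coarse space $A$ at the outset, whereas the paper first kills the composite $\chi\circ\varphi_L$ and works inside $\Pic_{A/S}\subset\Pic_{G/S}$; the identification of $\Pic^0_{G/S}$ with $A^t$ via \ref{comp_dual_picto} is then never needed.
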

\begin{proof}
  By the universal property of the coarse moduli space 
\cite[(3.19)]{LMB}, $\varphi_L$ 
factorizes through the coarse moduli space of~$G$. In the 
case~(b), this moduli space is trivial, hence $\varphi_L$ is 
constant, and equal to 0 since~$\varphi_L(e)=0$.

In the case~(a), let us denote by $A=H^0(G)$ and $F=H^{-1}(G)$. Then there is an 
exact sequence of commutative group stacks:
$$\xymatrix{
0\ar[r]& BF \ar[r]^{i} &G \ar[r]^{\pi} & A\ar[r] & 0\, .
}$$
By assumption, the group~$F$ is finite and flat over~$S$ and~$A$ is an abelian 
scheme over~$S$. We may assume that~$F$ is locally free of rank $n$, so that 
$nF^D=0$. 
By~\ref{Picard_classifiant} there is an exact sequence of group schemes:
$$\xymatrix{
0 \ar[r] &
 \Pic_{A/S}   \ar[r]^{\pi^*} &
 \Pic_{G/S}   \ar[r]^{\chi} &
 F^D\, .
}$$

Let us first assume that there is an invertible sheaf~$M$ 
on~$A$ such that $L\simeq \pi^*M$. Since~$[L]\in\picto_{G/S}$ there exists an 
integer $m>0$ such that $[L]^m\in \Pic^0_{G/S}$. 
The morphism $x\mapsto x^n$ from $\Pic^0_{G/S}$ to itself factorizes 
through $\Pic_{A/S}$ because of the above exact sequence, hence through 
$\Pic^0_{A/S}$ because $\Pic^0_{G/S}$ has connected fibers. This proves 
that $[M]^{nm}\in \Pic^0_{A/S}$ hence $[M]\in \picto_{A/S}$. Using the 
theorem for the abelian scheme~$A$ (see 
\cite[chap.~6~\S 2]{Mumford_GIT}) we see that for any 
object $g$ of $G$, the class of~$\mu_{\pi(g)}^*M\otimes 
M^{-1}$ is trivial in~$\Pic_{A/S}$, hence its pullback 
$[\mu_g^*L\otimes L^{-1}]$ is trivial in $\Pic_{G/S}$ and 
this proves the theorem in this case.

In the general case, by~\ref{morphisme_constant}, the composition~$\chi\circ 
\varphi_L$ from $G$ to $F^D$ must be constant, hence trivial since 
$\varphi_L(e)=0$. This proves that $\varphi_L$ factorizes through $\Pic_{A/S}$, 
and even through~$A^t=\Pic^0_{A/S}$ since $G$ has geometrically 
connected fibers. Let us still denote by~$\varphi_L$ the induced morphism~$G\to 
A^t$. Then 
$\chi(L^n)=0$ in $F^D(S)$ and it follows that $L^n$ comes from~$A$. By the 
previous case we deduce that $\varphi_{L^n}=0$. But $\varphi_{L^n}$ is equal to 
$(\varphi_L)^n$ so $\varphi_L$ factorizes through the kernel $A^t_n$ of the 
isogeny $[n] : A^t\to A^t$. Since~$A^t_n$ is finite,%
\details{il est propre car c'est un sous-schéma fermé de $A^t$ et il est quasi-fini car ses fibres sont finies d'après le thm sur un corps}
using~\ref{morphisme_constant} again we deduce that $\varphi_L$ is constant equal to 0. 
\end{proof}

\begin{remarque}\rm
 In the case~(b), we can give a more precise statement. Let $L$ be an invertible sheaf on~$G$ and let~$x\in G(S)$.
Let us denote by~$\chi_L : F \to \gm$ the character of~$L$ and by~$T_x$ the~$F$-torsor corresponding to the point~$x : S \to BF$.
Then we can prove that the line bundle $\mu_x^*L\otimes L^{-1}$ is isomorphic to~$\pi^*\Lc(\chi_L, T_x)$ where $\pi : G \to S$ is the structural morphism of~$G$ and~$\Lc(\chi_L, T_x)$ is the line bundle on $S$ corresponding to the~$\gm$-torsor~$T_x\wedge^{F,\chi_L}\gm$. 
\end{remarque}

\begin{cor}
\label{lemme_translation_triviale}
 \label{thm_du_carre}
Let $G$ be an abelian stack (resp. the classifying stack~$BF$ of a 
commutative group scheme~$F$).
\begin{itemize}
 \item[(1)] For any $x\in G(S)$, the translation $\mu_x : G\to G$ induces the identity on~$\picto_{G/S}$ (resp. on~$\Pic_{G/S}$).
\item[(2)] Let $x,y\in G(S)$. There is a functorial collection of isomorphisms
$$\xymatrix{\delta_L : (\mu_{x+y}^*L)\otimes L \ar[r]^{\sim} & (\mu_x^*L)\otimes(\mu_y^*L)}$$
for all line bundles $L$ on $G$ such that~$[L]\in\picto_{G/S}(S)$ (resp. for all line bundles~$L$ on~$G$).
\end{itemize}
\end{cor}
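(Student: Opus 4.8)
The plan is to deduce both assertions directly from the vanishing $\varphi_L=0$ established in~\ref{thm_phi_L_nul}, the only genuine subtlety being that part~(2) asks for honest isomorphisms of line bundles rather than for an equality of classes in the Picard functor.

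First I would treat~(1). By definition the endomorphism of $\Pic_{G/S}$ (resp.\ $\Pic_{BF/S}$) induced by the translation $\mu_x$ sends a class $[L]$ to $[\mu_x^*L]$, so that $\mu_x^*[L]-[L]=[\mu_x^*L\otimes L^{-1}]=\varphi_L(x)$. Since the hypotheses of~\ref{thm_phi_L_nul} are stable under base change, I may apply that theorem to every local representative $L$ of a section of $\picto_{G/S}$ (resp.\ of $\Pic_{BF/S}$) and obtain $\varphi_L(x)=0$. As this holds for all such $L$, and as the coincidence of two morphisms of \emph{fppf} sheaves may be checked locally on representatives, $\mu_x$ induces the identity on $\picto_{G/S}$ (resp.\ on $\Pic_{BF/S}$).

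For~(2) the key step is to upgrade $\varphi_L=0$ into a trivialization of the Mumford bundle. By Yoneda, the vanishing of $\varphi_L:G\to\Pic_{G/S}$ means exactly that $\Lambda(L)$ represents the zero section of $\Pic_{G/S}(G)$, where I regard $G\times_S G$ as a $G$-object through $p_1$. Because $\Oc_S\to f_*\Oc_G$ is universally an isomorphism (this holds for abelian stacks by~\ref{prop_abelian_stacks}, and for classifying stacks as in~\ref{lemme_monomorphisme}), its base change $\Oc_G\to p_{1*}\Oc_{G\times_S G}$ is universally an isomorphism too, so by~\cite[2.2.6]{Brochard_Picard} the kernel of $\Pic(G\times_S G)\to\Pic_{G/S}(G)$ is precisely $p_1^*\Pic(G)$. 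Hence $\Lambda(L)\simeq p_1^*P$ for some invertible sheaf $P$ on $G$; restricting along the section $g\mapsto(g,e)$ identifies $P\simeq f^*e^*L^{-1}$, so that $\Lambda(L)$ is in fact pulled back from the base:
$$\mu^*L\otimes p_1^*L^{-1}\otimes p_2^*L^{-1}\;\simeq\; F^*e^*L^{-1},$$
where $F:G\times_S G\to S$ denotes the structural morphism and $e$ the neutral section.

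Finally I would produce $\delta_L$ by pulling this single isomorphism back along two sections $G\to G\times_S G$. Along $g\mapsto(x,g)$ it gives $\mu_x^*L\simeq L\otimes f^*(x^*L\otimes e^*L^{-1})$, while along $g\mapsto(x,y+g)$ it gives $\mu_{x+y}^*L\simeq \mu_y^*L\otimes f^*(x^*L\otimes e^*L^{-1})$; eliminating the common factor $f^*(x^*L\otimes e^*L^{-1})$ yields the sought isomorphism $\mu_{x+y}^*L\otimes L\xrightarrow{\ \sim\ }\mu_x^*L\otimes\mu_y^*L$. The functoriality of the collection $(\delta_L)_L$ then follows from that of the assignment $L\mapsto\Lambda(L)$, once the isomorphism $\Lambda(L)\simeq F^*e^*L^{-1}$ is normalized by its restriction along $e$. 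The main obstacle is exactly this passage from the vanishing of a class in $\Pic_{G/S}$ to an honest, functorial trivialization of $\Lambda(L)$; it is overcome by the Yoneda reformulation together with the computation of the kernel of $\Pic(G\times_S G)\to\Pic_{G/S}(G)$, which is where the hypothesis that $\Oc_S\to f_*\Oc_G$ be universally an isomorphism is indispensable.
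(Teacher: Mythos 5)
Your argument is correct, and it rests on the same mechanism as the paper's proof: the vanishing $\varphi_L=0$ from~\ref{thm_phi_L_nul}, combined with the fact that (thanks to $\Oc_S\to f_*\Oc_G$ being universally an isomorphism and the presence of the neutral section) the kernel of $\Pic(\cdot)\to\Pic_{-/S}(\cdot)$ consists exactly of pullbacks from the base, which upgrades a vanishing of classes to an honest trivialization. Where you diverge is in the implementation of~(2). The paper fixes $x$ and trivializes only $\mu_x^*L\otimes L^{-1}\simeq\pi^*N$ over $S$; the isomorphism $\delta_L$ is then built from the canonical identification $\mu_y^*\pi^*N\simeq\pi^*N$, and its independence of the choice of $(N,\psi)$ is checked via the functoriality of that identification. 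You instead trivialize the full Mumford bundle, showing $\Lambda(L)\simeq F^*e^*L^{-1}$ on $G\times_S G$, and obtain $\delta_L$ by restricting along two sections. Your route proves a uniform statement in which $x$ and $y$ vary simultaneously (a ``universal'' theorem of the square), which is strictly more information; the price is that you must identify $\Pic_{G/S}(G)$ with $\Pic(G\times_S G)/p_1^*\Pic(G)$ over the larger base $G$ rather than over $S$, and you must normalize the isomorphism along the unit section to kill the residual $\Gamma(S,\Oc_S^\times)$-ambiguity before functoriality in $L$ can be asserted — a point you correctly flag. Both steps are legitimate under the standing hypotheses (the section $g\mapsto(g,e)$ splits $p_1$, and restriction along it is bijective on global units), so the proof goes through; the paper's version is simply the more economical pointwise variant.
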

\begin{proof}
 (1) is a reformulation of~\ref{thm_phi_L_nul}. Let us prove~(2). Let $x,y\in G(S)$ and let~$L$ be a line bundle on~$G$ such that~$\varphi_L=0$. Then~$[\mu_x^*L\otimes L^{-1}]=0$ hence there is a line bundle~$N$ on~$S$ with an isomorphism $\psi : \mu_x^*L\otimes L^{-1} \to \pi^*N$, where $\pi : G\to S$ is the structural morphism. Since~$\pi\mu_y=\pi$, there is a canonical isomorphism
$$c(N) : \mu_y^*\pi^*N \lto \pi^*N\, .$$
We deduce an isomorphism~$\gamma(L) := 
\psi^{-1}c(N)\mu_y^*(\psi)$ from~$\mu_y^*(\mu_x^*L\otimes 
L^{-1})$ to~$\mu_x^*L\otimes L^{-1}$. This isomorphism does 
not depend on the choice of~$N$ or~$\psi$ since~$c(N)$ is 
functorial. Moreover it is functorial in~$L$. \emph{Via} the 
canonical isomorphism~$\mu_y^*\mu_x^*\simeq \mu_{x+y}^*$, it 
induces the desired~$\delta_L$. 
\end{proof}

\section{The Albanese torsor}
\label{section_Albanese_torsor}

Let $X$ be an algebraic stack over a base scheme $S$ and denote by $f : X\to S$ 
its structural morphism. Assume that $\Oc_S \to f_*\Oc_X$ is universally an 
isomorphism, i.e. for any morphism $T\to S$, the morphism $\Oc_T\to 
(f_T)_*\Oc_{X_T}$ is an isomorphism, where $f_T : X_T\to T$ is the morphism 
obtained from $f$ after the base change $T\to S$. Assume also that $f$ 
locally has sections in the \emph{fppf} topology.
\details{This latter assumption might be replaced by the assumption that the \emph{fppf} sheaf $\shExt^2(\Pic_{X/S}, \gm)$ is zero. We will not use this variant in the sequel.}%
Then we have an exact sequence of group stacks (see~\cite[2.3]{Brochard_Picard}):
$$\xymatrix{0 \ar[r] & B\gm \ar[r]& \champic(X/S) \ar[r] & \Pic_{X/S} \ar[r]& 0\, .}$$
Since $f$ locally has sections, so does 
$D(f^*) : D(\champic(X/S))\to D(B\gm)$, hence $D(f^*)$ is 
an \emph{fppf} epimorphism.
By~\ref{exactness_of_the_dual_sequence} a), the dual sequence is exact: 
$$\xymatrix{0 \ar[r] & D(\Pic_{X/S})\ar[r]^-j  & D(\champic(X/S)) \ar[r]^-{\pi}& \Z \ar[r]& 0}.$$
By~\ref{equiv_extensions_torseurs} this sequence corresponds to a~$D(\Pic_{X/S})$-torsor over~$S$.

\begin{defi}
\label{def_torseur_canonique}
\begin{itemize}
 \item[(i)] The Albanese stack of~$X$ is the commutative group stack
$$ A^0(X):=D(\Pic_{X/S})\, .$$
\item[(ii)] The Albanese torsor of~$X$ is the $A^0(X)$-torsor corresponding to 
the above sequence via Proposition~\ref{equiv_extensions_torseurs}. It is 
denoted by~$A^1(X)$.
\end{itemize}
\end{defi}

If $x$ is an object of $X(U)$ for some $S$-scheme $U$, we 
still denote by $x$ the induced section $U\to X_U:=X\times_S 
U$. Then the pullback $x^*$ defines a morphism of commutative group 
stacks from $\champic(X_U/U)$ to $\champic(U/U)\simeq 
(B\gm)_U$. This is functorial, hence this defines a 
natural morphism of stacks~$\varphi : X \to 
D(\champic(X/S)).$ Let us compute the composition $\pi \circ 
\varphi : X\to \Z$.
Let $x : S\to X$ be an $S$-point of $X$. If we identify $\champic(X/S)$ 
with the stack $Mor(X,B\gm)$ of morphisms from $X$ to $B\gm$ (and $B\gm$ 
with $Mor(S,B\gm)$), then $\varphi(x)$ is the ``evaluation at $x$'' morphism 
$ev_x : Mor(X,B\gm) \to B\gm$ defined functorially by $\tau \mapsto \tau\circ 
x$. Under the identification of $\Hom(B\gm,B\gm)$ with $\Z$, by definition 
$\pi$ is the dual of the canonical morphism $f^* : B\gm \to \champic(X/S)$, 
hence $\pi(\varphi(x))$ is the section of $\Z$ that corresponds to the morphism 
$ev_x\circ f^* : B\gm\to B\gm$. The latter morphism maps an $S$-point $t : S\to 
B\gm$ of $B\gm$ to the composite $t\circ f\circ x$. But $f\circ x=\id_S$, so 
that $ev_x\circ f^*$ is the identity of $B\gm$, which corresponds to the 
section 
$1\in \Z$. Hence $\pi\circ\varphi$ is constant equal to 1. This means that 
$\varphi$ factorizes through the 
open and closed substack $A^1(X)$ of $D(\champic(X/S))$.

\begin{defi}
 \label{def_morphisme_canonique_vers_le_torseur_canonique}
The induced morphism from~$X$ to~$A^1(X)$ is called the Albanese morphism of~$X$ and is denoted by
$$a_X : X \lto A^1(X)\, .$$
\end{defi}

\begin{remarque} \rm
 \label{fonctorialite_Albanese}
Note that the Albanese morphism is functorial. Let $g : X \to Y$ be a morphism between algebraic stacks satisfying the above assumptions. Let us denote by $A^0(g)$ the dual of $g^* : \Pic_{Y/S} \to \Pic_{X/S}$. Then the morphism $D(g^*)$ from $D(\champic(X/S))$ to $D(\champic(Y/S))$ induces an $A^0(g)$-equivariant morphism of torsors $A^1(g) : A^1(X) \to A^1(Y)$. Moreover, there is a canonical 2-isomorphism making the natural square
$$\xymatrix@R=1pc{
X \ar[r]^-{a_X} \ar[d]_{g} &
A^1(X) \ar[d]^{A^1(g)} \\
Y \ar[r]_-{a_Y} & A^1(Y)
}$$
2-commutative.
\details{Le 2-isomorphisme est donné par la collection des 2-isomorphismes $(g\circ x)^* \simeq x^*\circ g^*$ pour tout $U$ et tout $x\in X(U)$.}
\end{remarque}

% There is a commutative diagram, where the rows are exact:
% $$\xymatrix{
% 0 \ar[r] & B\gm \ar[r]\ar@{=}[d]& \champic^0(X/S) \ar[r]\ar[d] &
%  \Pic^0_{X/S}\ar[d] \ar[r]& 0 \\
% 0 \ar[r] & B\gm \ar[r]\ar@{=}[d]& \champic^{\tau}(X/S) \ar[r]\ar[d] &
%  \Pic^{\tau}_{X/S} \ar[r]\ar[d]& 0\\
% 0 \ar[r] & B\gm \ar[r]& \champic(X/S) \ar[r] &
%  \Pic_{X/S} \ar[r]& 0}$$
% Dualizing, we get:
% $$\xymatrix{0 \ar[r] & D(\Pic_{X/S})\ar[r]^-j \ar[d] & D(\champic(X/S)) \ar[r]^-{\pi}\ar[d]& \Z \ar[r]\ar@{=}[d]& 0\\
% 0 \ar[r] & D(\Pic^{\tau}_{X/S})\ar[r]^-{j^{\tau}} \ar[d] & D(\champic^{\tau}(X/S)) \ar[r]^-{\pi^{\tau}}\ar[d]& \Z \ar[r]\ar@{=}[d]& 0\\
% 0 \ar[r] & D(\Pic^0_{X/S})\ar[r]^-{j^0}  & D(\champic^0(X/S)) \ar[r]^-{\pi^0}& \Z \ar[r]& 0}$$
% 
% Let $A_0^0$ and $A_0^{\tau}$ respectively denote the group stacks $D(\Pic^0_{X/S})$ and $D(\Pic^{\tau}_{X/S})$. Let $A_1^0$ and $A_1^{\tau}$ respectively denote the $A_0^0$-torsor and $A_0^{\tau}$-torsor defined by the third and second exact rows in the above diagram. Then this diagram induces morphisms of group stacks
% $$A_0 \lto A_0^{\tau} \lto A_0^0$$
% and equivariant morphisms of torsors $A_1 \lto A_1^{\tau} \lto A_1^0.$ Hence we get our three candidates for the Albanese torsor:
% $$\xymatrix{X \ar[r]^{\varphi} & A_1 \ar[r] & A_1^{\tau} \ar[r] & A_1^0.}$$

\begin{remarque}\rm
\label{lien_champetre_aG_eG}
 Note that, for any commutative group stack~$G$, the diagram
$$\xymatrix@R=1pc@C=0.5pc{
G \ar[rr]^-{a_G} \ar[dr]_{e_G} &&
D(\champic(G/S)) \ar[dl]^{D(\omega)} \\
&DD(G)
}$$
is 2-commutative. Indeed, if $g$ is an $S$-point of~$G$, then 
$D(\omega)(a_G(g))=a_G(g)\circ \omega=g^*\circ\omega$ maps an element~${\psi\in 
D(G)}$ to the $S$-point $\xymatrix@C=1pc{g^*(\psi) : S \ar[r]^-g & 
G\ar[r]^-{\psi} & \bgm}$ of $\bgm$. The latter is equal to $\psi(g)$, hence 
$D(\omega)\circ a_G=e_G$. In the above diagram, be careful that $a_G$ is not a 
homomorphism of commutative group stacks (it does not even map 0 to 0).
\end{remarque}

\begin{remarque}\rm
% The reader might think that in general, the morphism~$a_X$
% should not deserve the name of ``Albanese morphism'' 
% because \emph{a priori} it does not satisfy any universal 
% property worth this title (compare with~\ref{PU_albanese}).
%Actually, 
For any morphism of commutative group stacks~$A\to \Pic_{X/S}$, we can 
extend the scalars along the dual morphism $A^0(X)\to D(A)$  
and get a morphism $X\to 
A^1(X)\wedge^{A^0(X)}D(A)$ from~$X$ to a~$D(A)$-torsor. In 
particular, we will denote as follows the duals of the 
neutral and torsion components:
$$A_0^0(X):=D(\Pic^0_{X/S}), \\
A_{\tau}^0(X):=D(\picto_{X/S})$$
and the resulting torsors will be denoted by~$A^1_0(X)$ and~$A^1_{\tau}(X)$ and will also be called \emph{Albanese torsor} if no confusion can arise. We have natural morphisms
$$X\lto A^1(X) \lto A^1_{\tau}(X)\lto A^1_0(X).$$
Note that the torsor $A^1_0(X)$ (and similarly for $A^1_{\tau}(X)$) corresponds \emph{via} \ref{equiv_extensions_torseurs} to the exact sequence
$$\xymatrix{
0 \ar[r] & D(\Pic^0_{X/S})\ar[r]  & D(\champic^0(X/S)) \ar[r]& \Z \ar[r]& 0\, .
}$$
\end{remarque}

We prove below (\ref{torseur_canonique_dun_torseur}) that if~$G$ is an abelian stack, or the classifying stack of a multiplicative group, then any $G$-torsor is the Albanese torsor of some stack (actually, it is the Albanese torsor of itself).

\begin{prop}
\label{Picard_dun_torseur}
 Let $G$ be a commutative group stack and let $T$ be a $G$-torsor.
\begin{itemize}
 \item[(a)] If~$G$ is an abelian stack, then there is a canonical isomorphism
$$\xymatrix{\picto_{T/S} \ar[r]^{\sim} & \picto_{G/S}}.$$
\item[(b)] If $G$ is the classifying stack of a commutative group scheme, then 
there is a canonical isomorphism
$$\xymatrix{\Pic_{T/S} \ar[r]^{\sim} & \Pic_{G/S}}.$$
\item[(c)] The isomorphisms of~(a) and~(b) are functorial in the following sense. Let $c_0 : G\to G'$ be a morphism of abelian stacks and let $c_1 : T\to T'$ be a~$c_0$-equivariant morphism of torsors. Then the diagram
$$\xymatrix{\picto_{T'/S} \ar[r]^{(a)} \ar[d]_{c_1^*} &
    \picto_{G'/S} \ar[d]^{c_0^*} \\
\picto_{T/S} \ar[r]_{(a)} &
\picto_{G/S}
}$$
commutes (and analogue statement for~(b)).
\end{itemize}
\end{prop}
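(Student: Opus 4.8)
The plan is to exploit the fact that a $G$-torsor becomes trivial after an \emph{fppf} base change, and that the Picard functor in question is insensitive to the difference between $T$ and $G$ precisely because (by \ref{thm_du_carre}) translations act trivially. First I would treat part (a). Since $T$ is a $G$-torsor, there is an \emph{fppf} cover $S'\to S$ over which $T$ becomes $G$-equivariantly isomorphic to $G$ itself. Over $S'$ we then have a canonical identification $\picto_{T'/S'}\simeq \picto_{G'/S'}$ (writing primes for base change to $S'$), and the task is to show that this descends to $S$. The obstruction to descent is measured by the two pullbacks to $S''=S'\times_S S'$: the two trivializations of $T_{S''}$ differ by translation by a section of $G(S'')$, so the two candidate isomorphisms $\picto_{T_{S''}/S''}\simeq \picto_{G_{S''}/S''}$ differ by the automorphism of $\picto_{G_{S''}/S''}$ induced by that translation. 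By \ref{thm_du_carre}~(1), translations induce the \emph{identity} on $\picto_{G/S}$; hence the two pullbacks agree, the cocycle condition is automatic, and the local isomorphism glues to a canonical isomorphism $\picto_{T/S}\simeq\picto_{G/S}$ over $S$.

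For part (b) the argument is formally identical, replacing $\picto$ by the full $\Pic$ and invoking the classifying-stack case of \ref{thm_du_carre}~(1), which asserts that for $G=BF$ every translation induces the identity on the \emph{whole} Picard functor $\Pic_{G/S}$ (this is why (b) can use $\Pic$ rather than only $\picto$). The reason the two cases are packaged separately is exactly this: for an abelian stack one only controls the torsion component, whereas for $BF$ one controls everything.

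For the functoriality statement (c), I would unwind the construction: a $c_0$-equivariant morphism $c_1 : T\to T'$ is, \emph{fppf}-locally, compatible with the chosen trivializations of $T$ and $T'$ up to a translation, and the induced square on Picard functors commutes locally by the very definition of the gluing isomorphisms. Since the vertical pullback maps $c_1^*$ and $c_0^*$ are defined before any gluing and the horizontal isomorphisms were obtained by descent, commutativity over $S$ follows from commutativity over the cover $S'$ together with the uniqueness of descent; no new input beyond \ref{thm_du_carre} is needed.

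The main obstacle is the descent step in part (a): one must check carefully that the discrepancy between the two trivializations over $S''$ really is a \emph{translation} (as opposed to a more general $G$-equivariant automorphism), so that \ref{thm_du_carre}~(1) applies verbatim. This is where the precise definition of a $G$-torsor via \ref{conditions_torseur}~(ii) — namely that $(\mu,p_2):G\times_S T\to T\times_S T$ is an equivalence — is used: it guarantees that any two local sections of $T$ differ by a unique section of $G$, so the change of trivialization is implemented by translation by that section. Once this is pinned down, everything else is a routine, if slightly tedious, verification of cocycle and naturality conditions, which I would not spell out in full.
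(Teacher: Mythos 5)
Your proposal is correct and follows essentially the same route as the paper: trivialize the torsor over an \emph{fppf} cover, observe that two trivializations differ by a translation, invoke Corollary~\ref{thm_du_carre}~(1) to see that the induced isomorphism on $\picto$ (resp.\ $\Pic$ in case (b)) is independent of the choice, and conclude by descent; part (c) likewise reduces to the equivariance identity $c_1\circ\varphi_{t_0}=\varphi_{c_1(t_0)}\circ c_0$ after localizing. The only cosmetic difference is that you phrase the descent datum explicitly via the two pullbacks to $S'\times_S S'$, whereas the paper packages the same point as ``$\varphi_{t_0}^*$ does not depend on $t_0$.''
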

\begin{proof}
 Let $t_0\in T(S')$ be an $S'$-point of $T$ where $S'\to S$ is an \emph{fppf} cover. Such a point gives rise to an isomorphism of stacks $\varphi_{t_0} : G_{S'} \to T_{S'}$ mapping $g$ to $g.t_0$, which in turn induces an isomorphism $\varphi_{t_0}^* : \Pic^{\tau}_{T_{S'}/S'} \to \Pic^{\tau}_{G_{S'}/S'}$. By Corollary~\ref{lemme_translation_triviale}, the translation $\mu_x : G \to G$ by an~$S$-point $x$ of $G$ induces the identity on $\Pic^{\tau}_{G/S}$. This proves that $\varphi_{t_0}^*$ does not depend on the choice of $t_0$. By descent this yields the isomorphism~(a). Similarly we get~(b). Let us prove~(c). The statement is \emph{fppf}-local on~$S$, so we may assume that~$T$ has an~$S$-point~$t_0$. Then we have to prove that $c_0^*\circ(\varphi_{c_1(t_0)}^*)=\varphi_{t_0}^*\circ c_1^*$. But $c_1$ is equivariant hence $c_1\circ \varphi_{t_0}=\varphi_{c_1(t_0)} \circ c_0$ and the result follows.
\end{proof}

\begin{prop}
\label{torseur_canonique_dun_torseur}
 Let $G$ be a commutative group stack over~$S$ and let $T$ be a $G$-torsor.
\begin{itemize}
 \item[(a)] Assume that~$G$ is an abelian stack. Let us 
denote by $f_0$ the following composition of isomorphisms:
$$\xymatrix{f_0 : G\ar[r]^{e_G}&
DD(G) \ar[r]^{D(\ref{comp_dual_picto})^{-1}}&
 D(\picto_{G/S})
\ar[r]^-{D(\ref{Picard_dun_torseur})} &
D(\Pic_{T/S}^{\tau})=A^0_{\tau}(T)}.$$
Then the canonical morphism
$$a_T : T \lto A^1_{\tau}(T)$$
is an $f_0$-equivariant isomorphism of torsors.
\item[(b)] Assume that~$G$ is the classifying stack of a Cartier group scheme. Let us denote by $f_0$ the following composition of isomorphisms:
$$\xymatrix{f_0 : G\ar[r]^{e_G}&
DD(G) \ar[r]^{D(\ref{comp_dual_pic})^{-1}} &
 D(\Pic_{G/S}) 
\ar[r]^-{D(\ref{Picard_dun_torseur})} &
D(\Pic_{T/S})=A^0(T)}.$$
Then the canonical morphism
$$a_T : T \lto A^1(T)$$
is an $f_0$-equivariant isomorphism of torsors.
\end{itemize}
\end{prop}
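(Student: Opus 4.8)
The plan is to handle (a) and (b) in parallel, to reduce by fppf descent to the case of the trivial torsor $T=G$, and there to read off everything from the comparison \ref{lien_champetre_aG_eG} between the Albanese morphism $a_G$ and the evaluation map $e_G$, together with the dualizability of $G$.

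First I would record that $f_0$ is an isomorphism: the map $e_G$ is an isomorphism because $G$ is dualizable (abelian stacks are dualizable by the example following \ref{devissage_dualisabilite} in case (a), and $BF$ with $F$ Cartier is dualizable by \ref{les_Cartier_sont_dualisables} in case (b)), while the two remaining arrows are the isomorphisms \ref{comp_dual_picto} (resp. \ref{comp_dual_pic}) and \ref{Picard_dun_torseur}. Consequently, once $a_T$ is equipped with an $f_0$-equivariant structure it is automatically an isomorphism of torsors: an $f_0$-equivariant morphism of torsors, with $f_0$ an isomorphism of group stacks, becomes $f_0$ itself in the trivializations induced by a local section, hence is an isomorphism. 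Both the existence of an equivariant structure and the isomorphism property may be tested after an fppf base change, so I would choose fppf-locally a section $t_0\in T(S)$, giving a $G$-equivariant trivialization $\varphi_{t_0}:G\longrightarrow T$. By the functoriality of the Albanese construction \ref{fonctorialite_Albanese} (in its $\picto$-variant, legitimate since $\varphi_{t_0}^*:\picto_{T/S}\simeq\picto_{G/S}$ is an isomorphism) and of the comparison \ref{Picard_dun_torseur}(c), the pair $(a_T,f_0)$ is carried to the corresponding pair for the trivial torsor; so I may assume $T=G$ and $t_0=e$, in which case \ref{Picard_dun_torseur} is the identity and $f_0=D(\ref{comp_dual_picto})^{-1}\circ e_G$ (resp. $D(\ref{comp_dual_pic})^{-1}\circ e_G$).

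Next I would exploit \ref{lien_champetre_aG_eG}, which gives $D(\omega)\circ a_G=e_G$, where $\omega:D(G)\longrightarrow\champic(G/S)$ is the forgetful morphism and $D(\omega):D(\champic(G/S))\longrightarrow DD(G)$. Since $D(\omega)$ is a homomorphism, it carries the kernel $A^0(G)=D(\Pic_{G/S})$ of $\pi:D(\champic(G/S))\longrightarrow\Z$ into $DD(G)$ by the map dual to $\omega_0:D(G)\longrightarrow\Pic_{G/S}$ (the projection of the structural sequence composed with $\omega$). In case (a), under the identification $DD(G)\simeq A^0_\tau(G)$ furnished by \ref{comp_dual_picto}, this map is exactly the scalar-extension map $A^0(G)\longrightarrow A^0_\tau(G)$ dual to $\picto_{G/S}\hookrightarrow\Pic_{G/S}$; in case (b) it is the isomorphism $D(\omega_0)$ with $\omega_0$ the isomorphism of \ref{comp_dual_pic}. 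Restricting $D(\omega)$ to $A^1(G)=\pi^{-1}(1)$ therefore gives a morphism equivariant along $A^0(G)\longrightarrow A^0_\tau(G)$ (resp. along the isomorphism $A^0(G)\longrightarrow A^0(G)$), which by the universal property of the contracted product descends to a morphism $q:A^1_\tau(G)\longrightarrow DD(G)$ (resp. $q:A^1(G)\longrightarrow DD(G)$) that is $A^0_\tau(G)$-equivariant (resp. $A^0(G)$-equivariant) for the translation action of $DD(G)\simeq A^0_\tau(G)$ on itself. An equivariant morphism from a torsor to the trivial torsor under the same group is an isomorphism, so $q$ is an isomorphism of torsors.

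Finally, $q\circ a_G=D(\omega)\circ a_G=e_G$ after passing to $A^1_\tau(G)$ (resp. $A^1(G)$), so $a_G=q^{-1}\circ e_G$. Read in the trivialization of the target provided by $q$, the morphism $a_G$ becomes $e_G=f_0$, which is a homomorphism and hence $f_0$-equivariant; as $q$ and $e_G$ are isomorphisms, $a_G$ is an $f_0$-equivariant isomorphism of torsors, and the same argument applies verbatim in case (b) with $\picto$ replaced by $\Pic$. The part I expect to require the most care is the bookkeeping in the third paragraph: identifying $D(\omega)|_{A^0(G)}$ with the scalar-extension map $A^0(G)\longrightarrow A^0_\tau(G)$ (so that $D(\omega)$ genuinely descends to $A^1_\tau(G)$), and checking that $\varphi_{t_0}$ transports the canonical $f_0$ to the $f_0$ of the trivial torsor. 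Both are diagram chases through the functorialities in \ref{comp_dual_picto}, \ref{Picard_dun_torseur} and \ref{lien_champetre_aG_eG}, and they are where the bulk of the verification lives; everything else is formal.
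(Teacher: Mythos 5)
Your argument is correct, but it takes a genuinely different route from the paper's. The paper also reduces fppf-locally to the trivial torsor $T=G$ and observes that a morphism of torsors is automatically an isomorphism, but it then proves equivariance by a direct computation: using \ref{aG_isom} it identifies $f_0(x).a_T(y)$ with the functor $\Lc\mapsto (x^*\Lc)\otimes(e^*\Lc)^{-1}\otimes(y^*\Lc)$ and $a_T(x+y)$ with $\Lc\mapsto(x+y)^*\Lc$, so that the required functorial 2-isomorphism is supplied exactly by the theorem of the square, Corollary~\ref{thm_du_carre}~(2) --- that corollary is the engine of the proof. You instead factor $a_{G}$ as $q^{-1}\circ e_G$, where $q$ is the equivariant isomorphism that $D(\omega)$ induces on $A^1_{\tau}(G)$ (via \ref{lien_champetre_aG_eG}) and $e_G$ is a homomorphism, so equivariance becomes the tautological additivity of $e_G$ transported along $q$; the content of the theorem of the square is then absorbed entirely into \ref{comp_dual_picto} (equivalently, into the fact that $\omega$ splits $\champic^{\tau}(G/S)\to\picto_{G/S}$ as a homomorphism of group stacks), which both arguments need anyway since $f_0$ is defined through it. Your route buys two things: it turns \ref{thm_du_carre}~(2) into a \emph{consequence} of \ref{comp_dual_picto} rather than an input (unwinding your equivariance 2-isomorphism on a line bundle $\Lc$ recovers $(x+y)^*\Lc\simeq(x^*\Lc)\otimes(e^*\Lc)^{-1}\otimes(y^*\Lc)$), and it makes the coherence conditions on the equivariance data automatic, since they are inherited from the homomorphism structure of $e_G$ and the equivariance of $q$. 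The paper's route buys explicitness: the 2-isomorphism is the canonical $\delta_{\Lc}$, so the gluing of the local data over an fppf cover is immediate, whereas in your version that gluing (and the transport of $f_0$ along $\varphi_{t_0}$) is exactly the diagram-chasing you flag at the end. Note finally that you cannot dispense with section~\ref{section_thm_square} altogether: \ref{Picard_dun_torseur}, which enters both your $f_0$ and your reduction to $T=G$, still rests on \ref{thm_du_carre}~(1).
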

\begin{remarque}\rm
\label{aG_isom}
 In the case $T=G$, it follows 
from~\ref{lien_champetre_aG_eG} that the isomorphism~$f_0$ 
in (a) (resp.~(b)) coincides with the composition 
$\xymatrix@C=1pc{G\ar[r]^-{a_G} & A^1_{\tau}(G) 
\ar[r]^-{t_{-e^*}}& A^0_{\tau}(G)}$ (resp.  with 
$\xymatrix@C=1pc{G\ar[r]^-{a_G} & A^1(G) \ar[r]^-{t_{-e^*}}& 
A^0(G)}$) where~$e$ is a neutral object of~$G$ and 
$t_{-e^*}$ maps a point $\lambda$ to $\lambda-e^*$.
\end{remarque}
\begin{proof} We prove (a) only, (b) is very similar. Since 
a morphism of torsors is always an isomorphism, it suffices 
to prove that $a_T$ is $f_0$-equivariant. Let us first do it 
locally on $S$. Then we can assume that $T$ is the group $G$ 
with its action by translations. We have to find a 
2-isomorphism $\alpha$ making the diagram
$$\xymatrix@C=0,1pc@R=0,1pc{G\times_S G \ar[rr]^-{\mu} \ar[dd]_{f_0\times a_T} &&
G \ar[dd]^{a_T} \\
& ^\alpha\FlecheNE& \\
A^0_{\tau}(G)\times_S A^1_{\tau}(G) \ar[rr] &&A^1_{\tau}(G)}$$
2-commutative. For any two objects $x,y$ of $G(S)$, we need 
a functorial isomorphism between $a_T(x+y)$ and 
$f_0(x).a_T(y)$. Both are objects of 
$\shHom(\champic^{\tau}(G/S), \bgm)$. Let us describe them. 
Identifying $\bgm$ with the Picard stack of $S$ over itself, 
we recall that, by definition, $a_T(y)$ is the pullback 
functor $y^*$, that is, the morphism from 
$\champic^{\tau}(G/S)$ to $\bgm$ that maps a line bundle 
$\Lc$ to $y^*\Lc$. Using~\ref{aG_isom}, we see that the 
morphism $f_0(x).a_T(y)$ is 2-isomorphic to the morphism 
that maps a line bundle $\Lc$ to 
$(x^*\Lc)\otimes(e^*\Lc)^{-1}\otimes(y^*\Lc)$. Hence, to get 
the expected~$\alpha$, we need a functorial isomorphism 
between $(x+y)^*\Lc$ and 
$(x^*\Lc)\otimes(e^*\Lc)^{-1}\otimes(y^*\Lc)$, for any line 
bundle $\Lc$ in $\champic^{\tau}(G/S)$. This is provided by 
corollary~\ref{thm_du_carre}~(2). Since the local 
2-isomorphisms~$\alpha$ are canonical, they glue together 
and yield a global~$\alpha$ over~$S$.
\end{proof}

For further use, we record here a lemma that ensures the compatibility of different isomorphisms introduced so far.

\begin{lem}
 \label{compat_ev_Pic}
Let~$X$ be an algebraic stack over a base scheme~$S$. Assume that $f : X\to S$ locally has sections in the \emph{fppf} topology and that $\Oc_S\to f_*\Oc_X$ is universally an isomorphism.
\begin{itemize}
 \item[(a)] Assume that $P:=\picto_{X/S}$ is a duabelian 
group. Then the composite morphism
$$\xymatrix{P \ar[r]^-{e_P} & DD(P) \ar[r]^-{(\ref{comp_dual_picto})} &
\picto_{A_{\tau}^0(X)/S} \ar[r]^-{(\ref{Picard_dun_torseur}a)^{-1}} &
\picto_{A_{\tau}^1(X)/S} \ar[r]^-{a_{X,\tau}^*} &P
}$$
is the identity of $P$. In particular $a_{X,\tau}^*$ is an isomorphism.
\item[(b)] Assume that $P:=\Pic_{X/S}$ is Cartier (hence $A^0(X)=BP^D$). Then the morphism 
$$\xymatrix{P \ar[r]^-{e_P} & DD(P) \ar[r]^-{(\ref{comp_dual_pic})} &
\Pic_{A^0(X)/S} \ar[r]^-{(\ref{Picard_dun_torseur}b)^{-1}} &
\Pic_{A^1(X)/S} \ar[r]^-{a_X^*} &P
}$$
is the identity of $P$. In particular $a_X^*$ is an isomorphism.
\end{itemize}
\end{lem}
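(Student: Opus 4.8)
The plan is to prove both statements by reducing to the case where $X$ admits a section and then tracing the canonical isomorphisms through explicitly. I will treat part (a) in detail; part (b) is entirely parallel, replacing $\picto$ by $\Pic$, abelian stacks by classifying stacks, \ref{comp_dual_picto} by \ref{comp_dual_pic}, and using that $A^0(X)=BP^D$. Write $c_1$ for the isomorphism of \ref{comp_dual_picto} (induced by the forgetful morphism $\omega$) and $c_2$ for that of \ref{Picard_dun_torseur}~(a), so that the composite in question is $a_{X,\tau}^*\circ c_2^{-1}\circ c_1\circ e_P$. First I would observe that all four arrows are natural in $S$ and compatible with base change (the evaluation map by \ref{fonctorialite_evaluation_map}~(d), the isomorphisms $c_1,c_2$ by their functoriality, and $a_{X,\tau}^*$ trivially), and that the hypotheses on $X$ are stable under base change. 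Since checking that an endomorphism of the fppf sheaf $P$ is the identity is fppf-local on $S$, and since $f$ locally has sections, I may assume $X(S)\neq\emptyset$; fix $x_0\in X(S)$.

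The section $x_0$ produces the point $a_X(x_0)=x_0^*$ of $A^1(X)(S)$ and, after projection to the torsion component, a point $t_0$ of $T:=A^1_\tau(X)$. Here $G:=A^0_\tau(X)=D(P)$ is an abelian stack, since $P$ is duabelian (\ref{thm_representabilite}~(4)). As in the proof of \ref{Picard_dun_torseur}, the point $t_0$ trivializes the torsor through $\varphi_{t_0}:G\lto T$, $g\mapsto g\cdot t_0$, and $c_2$ is exactly $\varphi_{t_0}^*$; hence $c_2^{-1}=(\varphi_{t_0}^{-1})^*$ and
$$a_{X,\tau}^*\circ c_2^{-1}=(\varphi_{t_0}^{-1}\circ a_{X,\tau})^*.$$
Unwinding the definitions of $a_{X,\tau}$ and of the trivialization, the morphism $\psi:=\varphi_{t_0}^{-1}\circ a_{X,\tau}:X\lto G=D(P)$ sends a point $x$ to the homomorphism $P\to B\gm$, $[\Lc]\mapsto x^*\Lc\otimes(x_0^*\Lc)^{-1}$; that is, $\psi$ is the evaluation map shifted by the base point $x_0$.

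The heart of the argument is then to show that for $p\in P(S)$ the line bundle $\psi^*\omega(e_P(p))$ on $X$ equals $p$ in $\picto_{X/S}$, where $\omega(e_P(p))=c_1(e_P(p))$ is the line bundle on $D(P)$ underlying the evaluation homomorphism $e_P(p):D(P)\to B\gm$. Computing the pullback fibrewise, $\psi^*\omega(e_P(p))$ is the line bundle whose value at $x$ is $\psi(x)(p)=x^*p\otimes(x_0^*p)^{-1}$. Two observations finish the computation. First, the correction factor $(x_0^*p)^{-1}$ is pulled back from $S$ along $\pi:X\to S$, hence trivial in the relative Picard functor $\picto_{X/S}$ (this is exactly the reason, via \ref{thm_du_carre}~(1), that $c_2$ is independent of the choice of $t_0$), so it may be discarded. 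Second, what remains is the tautological compatibility between the evaluation map $e_P$ and the forgetful morphism $\omega$, encoded for group stacks by the identity $D(\omega)\circ a_G=e_G$ of \ref{lien_champetre_aG_eG} and by \ref{rem_dual_classifiant_via_Picard}: the line bundle $\omega(e_P(p))$ on $D(P)$ is, by construction of $e_P$ and of $c_1$, precisely the one whose pullback along $x\mapsto(\text{evaluation at }x)$ is computed by the universal (Poincaré) bundle on $X\times_S P$ restricted via $p$, namely $p$ itself.

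The main obstacle is this last bookkeeping step: one must keep careful track simultaneously of the evaluation map, of the isomorphism $c_1$ of \ref{comp_dual_picto}, of the trivialization $\varphi_{t_0}$, and of the Poincaré bundle, in order to conclude that the composite is \emph{genuinely the identity} of $P$ and not merely some automorphism. Once this is settled for part (a), part (b) follows by the same chase with $\Pic$ in place of $\picto$, the triviality of the base-point correction now coming from case (b) of \ref{thm_du_carre}, and the final assertions that $a_{X,\tau}^*$ (resp. $a_X^*$) is an isomorphism being immediate, since the other three arrows in the composite are isomorphisms.
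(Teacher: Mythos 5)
Your proposal is correct and follows essentially the same route as the paper's proof: reduce to $S$-points, localize fppf to get a section $x_0$ of $X$ with the class represented by an actual line bundle, identify the composite $a_X^*\circ(\ref{Picard_dun_torseur})^{-1}$ with pullback along the map $x\mapsto x^*-x_0^*$, and observe that the resulting sheaf $x^*\Lc\otimes(x_0^*\Lc)^{-1}$ differs from $\Lc$ only by a factor pulled back from $S$, hence has class $\lambda$ in the relative Picard functor. The only cosmetic difference is that you work out case (a) and defer (b), whereas the paper does the reverse.
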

\begin{proof}
 Let us prove (b). It suffices to prove the statement for $S$-points of $P$ (base change). Let~$\lambda\in P(S)$. Since the statement is \emph{fppf} local, we can assume that $X$ has an $S$-point $x_0$ and that~$\lambda$ is induced by an invertible sheaf $\Lc$ on $X$. Then $a_X^*\circ (\ref{Picard_dun_torseur}b)^{-1}$ is induced by the pullback of invertible sheaves along the morphism $\varphi_{x_0^*}^{-1} \circ a_X : X \to A^0(X)\subset D(\champic(X/S))$ that maps a point $x\in X(U)$ to $x^*-x_0^*$. Hence $a_X^*\circ (\ref{Picard_dun_torseur}b)^{-1}(\omega(e_P(\lambda)))$ is the class in $P(S)$ of the invertible sheaf corresponding to the morphism
 $$\xymatrix{X\ar[r]^-{\varphi_{x_0^*}^{-1}\circ a_X}& A^0(X) \ar[r]^-{e_P(\lambda)}& \bgm}.$$
The latter morphism maps a point $x\in X(U)$ to 
$e_P(\lambda)(x^*-x_0^*)=(x^*-x_0^*)(\lambda)=(x^*\Lc)
\otimes(x_0^*\Lc)^{-1}$. Hence it corresponds to the 
invertible sheaf $\Lc\otimes (f^*x_0^*\Lc)^{-1}$ on $X$, 
whose class in $P(S)$ is equal to~$\lambda$. The last 
assertion is obvious since $e_P$ and $(\ref{comp_dual_pic})$ 
are isomorphisms. The proof of (a) is very similar and left 
to the reader. Note by the way that both (a) and (b) are 
related to the following fact. Let us denote by $a_X$ the 
canonical morphism $X\to D(\Pc)$ where $\Pc=\champic(X/S)$. 
Then the composition
$$\xymatrix{\Pc\ar[r]^-{e_P} & DD(\Pc) \ar[r]^-{\omega}& \champic(D(\Pc)) \ar[r]^-{a_X^*} & \Pc}$$
is the identity of $\Pc$ (without any assumption on $f$). 
\end{proof}

\section{Universal properties}
\label{section_PU}

The following theorem generalizes~FGA~VI, théorème~3.3~(iii)~\cite[exp. 236]{FGA}.
Note that an explicit comparison is made in 
Corollary~\ref{lien_avec_variete_Albanese}.
\begin{thm}
 \label{PU_albanese}
Let~$X$ be an algebraic stack over a base scheme~$S$. Assume 
that the structural morphism $f : X \to S$ locally has 
sections in the \emph{fppf} topology, that $\Oc_S \to 
f_*\Oc_X$ is universally an isomorphism, and that the Picard 
functor $\picto_{X/S}$ is a duabelian group 
(see~\ref{def_duabelian_groups}). Then the Albanese morphism
$$a_X : X \lto A^1_{\tau}(X)$$
is initial among maps to torsors under abelian stacks (\ref{def_abelian_stacks}), in the following sense. For any triple~$(B,T,b)$ where $B$ is an abelian stack, $T$ is a~$B$-torsor and~$b : X \to T$ is a morphism of algebraic stacks, there is a triple~$(c_0,c_1,\gamma)$ where $c_0 : A^0_{\tau}(X) \to B$ is a homomorphism of commutative group stacks, $c_1 : A^1_{\tau}(X) \to T$ is a $c_0$-equivariant morphism, and $\gamma$ is a 2-isomorphism $c_1\circ a_X \Rightarrow b$. Such a triple~$(c_0,c_1,\gamma)$ is unique up to a unique isomorphism. 
\end{thm}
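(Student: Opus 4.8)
The plan is to exploit the fact, established in~\ref{torseur_canonique_dun_torseur}~(a), that a torsor $T$ under an abelian stack is canonically its own Albanese torsor, and then to feed the morphism $b$ into the functoriality of the Albanese construction. First I would check that $T$ is an admissible target for the Albanese machinery: its structural morphism $g : T \to S$ is an \emph{fppf} epimorphism (being a torsor it locally has sections), and since \emph{fppf}-locally $T \simeq B$ with $\Oc_S \to f_*\Oc_B$ universally an isomorphism (\ref{prop_abelian_stacks}), descent gives that $\Oc_S \to g_*\Oc_T$ is universally an isomorphism; moreover $\picto_{T/S} \simeq \picto_{B/S} \simeq D(B)$ by~\ref{Picard_dun_torseur}~(a) and~\ref{comp_dual_picto}, which is duabelian by~\ref{thm_representabilite}~(6). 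So $A^1_\tau(T)$ is defined, and by~\ref{torseur_canonique_dun_torseur}~(a) the canonical morphism $a_T : T \to A^1_\tau(T)$ is an $f_0$-equivariant \emph{isomorphism} of torsors, with $f_0 : B \xrightarrow{\sim} A^0_\tau(T)$.

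For existence, I would apply the functoriality of the Albanese construction (\ref{fonctorialite_Albanese}), in its torsion version, to $b : X \to T$, producing an $A^0_\tau(b)$-equivariant morphism $A^1_\tau(b) : A^1_\tau(X) \to A^1_\tau(T)$ and a $2$-isomorphism $A^1_\tau(b)\circ a_X \Rightarrow a_T\circ b$. Setting $c_0 := f_0^{-1}\circ A^0_\tau(b)$ and $c_1 := a_T^{-1}\circ A^1_\tau(b)$, one checks immediately that $c_1$ is $c_0$-equivariant and that the given $2$-isomorphism provides $\gamma : c_1\circ a_X \Rightarrow b$. This is the required triple.

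For uniqueness, the idea is that any competing triple induces the same map on Picard functors and is therefore forced by dualizability. Given $(c_0, c_1, \gamma)$, the $c_0$-equivariance of $c_1$ and~\ref{Picard_dun_torseur}~(c) give a commutative square relating $c_1^*$ and $c_0^*$ through the isomorphisms $\picto_{T/S}\simeq\picto_{B/S}$ and $\picto_{A^1_\tau(X)/S}\simeq\picto_{A^0_\tau(X)/S}$ of~\ref{Picard_dun_torseur}~(a); the $2$-isomorphism $\gamma$ forces $b^* = a_{X,\tau}^*\circ c_1^*$. Since $a_{X,\tau}^*$ is an isomorphism with explicit inverse by~\ref{compat_ev_Pic}~(a), this determines $c_1^*$, hence $c_0^*$ by the square. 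Because both $\picto_{X/S}$ (duabelian, hence dualizable by~\ref{thm_dualizability2}~(i)) and $B$ (dualizable, as an abelian stack, by~\ref{devissage_dualisabilite}) are dualizable, the $2$-antiequivalence $D(.)$ of~\ref{antiequivalence} shows that $c_0 : D(\picto_{X/S}) \to B$ is determined, up to a unique $2$-isomorphism, by $D(c_0)$, and hence by $c_0^*$ under the natural identifications of~\ref{comp_dual_picto}. Finally, with $c_0$ fixed, $c_1$ is a $c_0$-equivariant morphism of $B$-torsors — automatically an isomorphism — so its compatibility with $\gamma$ pins it down up to a unique $2$-isomorphism.

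The main obstacle I anticipate is the uniqueness, specifically the $2$-categorical bookkeeping: one must verify not merely that $c_0$ and $c_1$ are determined as $1$-morphisms but that the connecting $2$-isomorphisms are \emph{unique}. The rigidity should come from two sources — the full faithfulness built into the antiequivalence~\ref{antiequivalence} (for the $c_0$ part) and the rigidity of morphisms of torsors together with the compatibility constraint imposed by $\gamma$ (for the $c_1$ part) — but carefully threading the various canonical identifications (\ref{comp_dual_picto}, \ref{Picard_dun_torseur}, \ref{compat_ev_Pic}) through the equivariance data is where the real work lies.
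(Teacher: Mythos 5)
Your proposal follows essentially the same route as the paper's: the paper packages the argument as an anti-equivalence $\Phi$ between the category of maps from $X$ to torsors under abelian stacks and the category of duabelian groups over $\picto_{X/S}$ (Theorem~\ref{phi_equivalence}, which is slightly stronger than the stated universal property), but the substance is the same. Your existence argument (functoriality of the Albanese construction applied to $b$, combined with \ref{torseur_canonique_dun_torseur}~(a) to identify $A^1_{\tau}(T)$ with $T$) is exactly the paper's fullness argument, and your determination of $c_0$ by reading off $D(c_0)$ on Picard functors via \ref{compat_ev_Pic}, \ref{Picard_dun_torseur} and then invoking the anti-equivalence \ref{antiequivalence} is exactly how the paper pins down the $1$-morphism level. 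All the preliminary verifications you list for $T$ (sections, $\Oc_S\to g_*\Oc_T$, duabelian Picard functor) are needed and correct.

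The one place where the sketch understates a real difficulty is the final claim that, with $c_0$ fixed, ``compatibility with $\gamma$ pins $c_1$ down up to a unique $2$-isomorphism.'' Uniqueness of the $2$-isomorphism is indeed the easy half: compatibility with $\gamma$ forces $\eps_1^{b(x)}=\id$ for every object $x$ of $X$, and equivariance together with the torsor property then forces $\eps_1^t=\id$ for every object $t$ of $T$ (this is Lemma~\ref{T_est_discrete}). But the \emph{existence} of a $2$-isomorphism $\eps_1 : c_1\Rightarrow c_1'$ that is simultaneously compatible with the equivariance data and with $\gamma,\gamma'$ is not automatic. One must first justify localizing so that $X$ has a point $x_0$ (legitimate only because the relevant Hom-presheaves are sheaves/stacks), then build $\eps_1$ out of $(\gamma'_{x_0})^{-1}\circ\gamma_{x_0}$ and equivariance, check it is independent of the auxiliary choices and functorial, and finally verify compatibility with $\gamma,\gamma'$ at \emph{every} point $x$, not just at $x_0$. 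That last step is where the paper invokes Lemma~\ref{morphisme_constant}: the discrepancy $x\mapsto \gamma'_x\circ\eps_1^{b(x)}\circ\gamma_x^{-1}$ defines a morphism from $X$ to the finite group scheme $H^{-1}(B)$, hence is constant, hence trivial because it is trivial at $x_0$. Without some such rigidity input the argument is circular — you use compatibility at one point to construct $\eps_1$ and then need it everywhere — so this is the concrete missing ingredient you would have to supply to complete the uniqueness part.
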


The proof of this theorem occupies most of this section. 
Actually we will prove a slightly more precise statement 
(Theorem \ref{phi_equivalence} below).
Keeping the assumptions of~\ref{PU_albanese}, let us first 
define the 2-category $\Tc$ of maps from~$X$ to torsors 
under abelian stacks. An object of $\Tc$ is a triple 
$(B,T,b)$ as in~\ref{PU_albanese}. A morphism from 
$(B,T,b)$ to another object~$(B', T', b')$ is a 
triple~$(c_0,c_1,\gamma)$ where $c_0 : B\to B'$ is a 
homomorphism of abelian stacks, $c_1 : T \to T'$ is a 
$c_0$-equivariant morphism, and $\gamma$ is a 2-isomorphism 
$c_1\circ b \Rightarrow b'$. A 2-morphism from~$(c_0, c_1, 
\gamma)$ to~$(d_0,d_1,\delta)$ is a pair~$(\eps_0, \eps_1)$ 
where $\eps_0 : c_0 \Rightarrow d_0$ is a 2-isomorphism of 
additive morphisms, and $\eps_1 : c_1 \Rightarrow d_1$ is a 
2-isomorphism of \emph{equivariant} morphisms, \emph{i.e.} 
we require that $\eps_1$ is compatible to the 2-isomorphisms 
that make~$c_1$ and $d_1$ equivariant. We also require 
$\eps_1$ to be compatible with $\gamma$ and~$\delta$, 
\emph{i.e.} for any object~$x$ of $X$, 
$\eps_1^{b(x)}=\delta_x^{-1}\circ \gamma_x$.

\begin{lem}
 \label{T_est_discrete}
The automorphism groups of 1-morphisms in $\Tc$ are trivial. Hence, we may 
regard the 2-category~$\Tc$ as an ordinary category (that we still denote 
by~$\Tc$).
\end{lem}
\begin{proof}
 Let $(B,T,b)$ and $(B',T',b')$ be two objects of~$\Tc$ and 
let~$(c_0,c_1,\gamma)$ be a morphism from the one to the 
other. We have to prove that any automorphism~$(\eps_0, 
\eps_1)$ of~$(c_0,c_1,\gamma)$ is trivial. Since~$D(B)$ 
and~$D(B')$ are sheaves (\ref{comp_dual_picto}), any 
automorphism of~$D(c_0)$ is trivial and since $D(.)$ is a 
2-antiequivalence, $\eps_0=\id_{c_0}$. Now let us prove that 
for an object~$t$ of~$T$, $\eps_1^t$ is the identity 
of~$c_1(t)$. The question is local on~$S$ so we may assume 
that $X(S)$ is nonempty. Let $x\in X(S)$. Since $T$ is 
a~$B$-torsor, there exist an object $g$ of $B$ and an 
isomorphism~$\varphi : g.b(x) \to t$. Now using the various 
compatibility conditions on $\eps_1$ and its functoriality, 
we check successively that $\eps_1^{b(x)}$, 
$\eps_1^{g.b(x)}$ and~$\eps_1^{t}$ are trivial.
\end{proof}

Now let~$\Gc$ be the category of pairs~$(A,a)$ where~$A$ is a duabelian group and $a : A\to \picto_{X/S}$ is a morphism of group algebraic spaces. A morphism in~$\Gc$ from~$(A,a)$ to $(A',a')$ is a homomorphism $d : A \to A'$ of group schemes such that $a'd=a$. There is a natural functor~$\Phi : \Tc \to \Gc$ defined as follows. For an object~$(B,T,b)$ of~$\Tc$, $\Phi(B,T,b)$ is the pair~$(A,a)$ where $A=D(B)$ and the homomorphism~$a$ is the composition:
$$\xymatrix{a : D(B) \ar[r]^{(\ref{comp_dual_picto})} &
\picto_{B/S} \ar[r]^{(\ref{Picard_dun_torseur}a)^{-1}}&
\picto_{T/S} \ar[r]^{b^*}&
\picto_{X/S}\, .
}$$
The functor $\Phi$ maps a morphism $(c_0, c_1, \gamma)$ in~$\Tc$  to~$D(c_0)$. This is indeed a morphism in~$\Gc$ using the functoriality of the isomorphisms~\ref{comp_dual_picto} and~\ref{Picard_dun_torseur}.

Theorem~\ref{PU_albanese} exactly means that the 
object~$(A^0_{\tau}(X), A^1_{\tau}(X), a_X)$ is initial in 
the category~$\Tc$. Using~\ref{compat_ev_Pic}~(a), we see 
that $\Phi$ maps this object 
to~$(DD(\picto_{X/S}),e_{\picto_{X/S}}^{-1})$.  Since the 
latter is final in~$\Gc$, Theorem~\ref{PU_albanese} is a 
consequence of~\ref{phi_equivalence} below.

\begin{thm}
\label{phi_equivalence}
 The functor~$\Phi: \Tc \to \Gc$ is an anti-equivalence of categories.
\end{thm}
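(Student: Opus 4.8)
The plan is to construct a quasi-inverse $\Psi:\Gc\to\Tc$ and to check that the two composites are naturally isomorphic to the identity functors; by~\ref{T_est_discrete} we may treat $\Tc$ as an ordinary category, so no higher-categorical bookkeeping is needed. Given $(A,a)$, set $B:=D(A)$, which is an abelian stack by~\ref{thm_representabilite}~(4), and recall that the evaluation map $e_A:A\to DD(A)=D(B)$ is an isomorphism since duabelian groups are dualizable (\ref{thm_dualizability2}~(i)). Dualizing $a$ yields a homomorphism $D(a):A^0_\tau(X)=D(\picto_{X/S})\to D(A)=B$; I then set $T:=A^1_\tau(X)\wedge^{A^0_\tau(X),\,D(a)}B$, which is a $B$-torsor by~\ref{prop_produit_contracte}~b), and let $b:X\to T$ be the composite of $a_X:X\to A^1_\tau(X)$ with the canonical map to the contracted product. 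On morphisms $\Psi$ sends $d:(A,a)\to(A',a')$ to $D(d)$ together with the induced map of contracted products; this is well defined because $a'd=a$.

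For $\Phi\circ\Psi\simeq\id_{\Gc}$ I would unwind $\Phi(\Psi(A,a))=(D(B),a')$, where $D(B)\simeq A$ via $e_A$ and $a'$ is the composite $D(B)\to\picto_{B/S}\to\picto_{T/S}\to\picto_{X/S}$ of $(\ref{comp_dual_picto})$, $(\ref{Picard_dun_torseur}a)^{-1}$ and $b^{*}$. Because $T$ and $b$ are built from $A^1_\tau(X)$ and $a_X$ by extension of scalars along $D(a)$, the pullback $b^{*}$ factors through $a_X^{*}$, and the compatibility~\ref{compat_ev_Pic}~(a) together with the functoriality of the isomorphisms~\ref{comp_dual_picto} and~\ref{Picard_dun_torseur} identifies $a'$ with $a$ under $e_A$. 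These identifications are manifestly natural in $(A,a)$.

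The substance of the proof is the converse composite $\Psi\circ\Phi\simeq\id_{\Tc}$. Fix $(B,T,b)$ and put $(A,a)=\Phi(B,T,b)$, so that $A=D(B)$ and $a=b^{*}\circ(\ref{Picard_dun_torseur}a)^{-1}\circ(\ref{comp_dual_picto})$. By functoriality of the Albanese construction (\ref{fonctorialite_Albanese}), $b$ induces an $A^0_\tau(b)$-equivariant morphism $A^1_\tau(b):A^1_\tau(X)\to A^1_\tau(T)$ sitting in a $2$-commutative square with $a_X$ and $a_T$; moreover $A^0_\tau(b)=D(b^{*})$ coincides with $D(a)$ up to the canonical isomorphisms $f_0:B\to A^0_\tau(T)$ of~\ref{torseur_canonique_dun_torseur}~(a) and $e_B:B\to DD(B)$. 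Since $a_T:T\to A^1_\tau(T)$ is an $f_0$-equivariant isomorphism (\ref{torseur_canonique_dun_torseur}~(a)), the morphism $a_T^{-1}\circ A^1_\tau(b):A^1_\tau(X)\to T$ is equivariant over $D(a)$, so the universal property of the contracted product promotes it to an isomorphism of $B$-torsors $A^1_\tau(X)\wedge^{A^0_\tau(X),\,D(a)}B\to T$; the $2$-commutative square then identifies $b$ with the composite $X\to A^1_\tau(X)\to T$. This is exactly the pair produced by $\Psi(A,a)$, whence $\Psi\Phi\simeq\id_{\Tc}$.

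I expect the genuine difficulty to be concentrated in the previous paragraph: verifying that the equivariance datum of $A^1_\tau(b)$, the identification $A^0_\tau(b)\simeq D(a)$ through $f_0$ and $e_B$, and the contracted-product factorization are mutually compatible, so that the reconstructed isomorphism $A^1_\tau(X)\wedge B\simeq T$ is natural in $(B,T,b)$ and carries the canonical map to $b$. Once the naturality of both composites is established, $\Phi$ is an anti-equivalence. (Equivalently one could show $\Phi$ fully faithful directly: the assignment $c_0\mapsto D(c_0)$ is bijective by the antiequivalence~\ref{antiequivalence}, and for fixed $c_0$ the constraint $a\circ D(c_0)=a'$ forces a unique equivariant $c_1$ and $2$-isomorphism $\gamma$ by the same contracted-product argument---this is logically the same computation as above.)
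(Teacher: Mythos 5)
Your construction of $\Psi$ and the verification that $\Phi\circ\Psi\simeq\id_{\Gc}$ reproduce the paper's essential-surjectivity argument, and your description of $\eta_\beta:\Psi\Phi(\beta)\to\beta$ via \ref{fonctorialite_Albanese}, \ref{torseur_canonique_dun_torseur}~(a) and the universal property of the contracted product is exactly the factorization the paper uses to prove fullness; the identification $f_0^{-1}\circ D(b^*)=e_B^{-1}\circ D(a)$ does check out. So the skeleton is sound and matches the paper's proof up to reorganization.

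The gap is the step you defer. Deducing full faithfulness from a quasi-inverse requires $\eta$ to be \emph{natural in $\Tc$}, and a commuting naturality square in $\Tc$ is not just an equality of equivariant torsor maps: a morphism of $\Tc$ is a triple $(c_0,c_1,\gamma)$, and two triples are identified only if there is a 2-isomorphism $\eps_1:c_1\Rightarrow d_1$ of \emph{equivariant} morphisms satisfying $\eps_1^{b(x)}=\delta_x^{-1}\circ\gamma_x$ for \emph{every} object $x$ of $X$. This is precisely the faithfulness statement, and it is not ``logically the same computation'' as the contracted-product factorization: the latter controls the underlying 1-morphisms, not the 2-isomorphisms down to $X$. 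In the paper this is where most of the work lies: one first reduces to the case $X(S)\neq\emptyset$ (using that $U\mapsto\Hom_{\Tc_U}$ and $U\mapsto\Hom_{\Gc_U}$ are \emph{fppf} sheaves), builds $\eps_1^g$ by hand from a chosen isomorphism $g.e\to g$, checks independence of the choice and compatibility with the equivariance data, and then — crucially — uses Lemma~\ref{morphisme_constant} to show that $x\mapsto\delta_x\circ\eps_1^{b(x)}\circ\gamma_x^{-1}$, a morphism $X\to H^{-1}(B')$, is constant, so that the compatibility verified at the base point $x_0$ propagates to all of $X$. This is where the hypothesis that $\Oc_S\to f_*\Oc_X$ is universally an isomorphism enters, and nothing in your argument substitutes for it. Relatedly, the closing parenthetical overreaches: bijectivity of $c_0\mapsto D(c_0)$ (from \ref{antiequivalence}) controls only the first component of a morphism of $\Tc$; the uniqueness of $(c_1,\gamma)$ up to 2-isomorphism is the content that still has to be proved.
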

\begin{proof}
 Let us first prove that~$\Phi$ is essentially surjective. 
Let $(A,a)$ be an object of~$\Gc$. Let $B:=D(A)$. The dual 
of~$a$ is a homomorphism from $D(\picto_{X/S})$ to $B$. 
Let~$T$ be the~$B$-torsor $A^1_{\tau}(X) 
\wedge^{A^0_{\tau}(X),D(a)} B$ obtained from $A^1_{\tau}(X)$ 
by extension of scalars along~$D(a)$ 
(see~\ref{prop_produit_contracte}). There is a natural 
$D(a)$-equivariant morphism $c_1 : A^1_{\tau}(X) \to T$. Let 
$b=c_1\circ a_X$. The triple $(B,T,b)$ is an object 
of~$\Tc$. Let us denote by $(DD(A), \alpha)$ its image 
by~$\Phi$. Using various functorialities 
(\ref{comp_dual_picto}, \ref{Picard_dun_torseur} and 
\ref{fonctorialite_evaluation_map}) and~\ref{compat_ev_Pic}, 
we check that the diagram
$$\xymatrix@!C=0.1pc@R=1.1pc{A\ar[rr]^-{e_A} \ar[rd]_-{a} &&
DD(A) \ar[dl]^-{\alpha}\\
&\picto_{X/S}
}$$
commutes. Hence~$e_A$ is an isomorphism in~$\Gc$ from~$(A,a)$ to~$\Phi(B,T,b)$.

Now let us prove that $\Phi$ is fully faithful.
Let~$\beta=(B,T,b)$ and~$\beta'=(B',T',b')$ be two objects 
of~$\Tc$.
For any~$S$-scheme~$U$,
let $X_U$ denote $X\times_S U$ and let $\Tc_U$ (resp. 
$\Gc_U$) denote the analog of $\Tc$ (resp. $\Gc$) over the 
base scheme $U$. In other words $\Tc_U$ is the category of 
maps from $X_U$ to torsors under abelian stacks (over $U$), 
and similarly for $\Gc_U$.
Let $H_{\Tc}$ and 
$H_{\Gc}$ be the presheaves defined 
by~$H_{\Tc}(U)=\Hom_{\Tc_{U}}(\beta_U, \beta'_U)$ 
and~$H_{\Gc}(U)=\Hom_{\Gc_{U}}(\Phi(\beta'_U),
\Phi(\beta_U))$.  The functor~$\Phi$ extends to a morphism 
of presheaves $H_{\Phi} : H_{\Tc} \to H_{\Gc}$ and saying 
that $\Phi$ is fully faithful precisely means that 
$H_{\Phi}(S)$ is bijective. We will prove that~$H_{\Phi}$ is 
an isomorphism. 
Since $U\mapsto \Hom(D(B')_U, D(B)_U)$ is a sheaf (resp. 
$U\mapsto \Hom(B_U, B'_U)$ is a stack), the presheaf
$H_{\Gc}$ (resp. $H_{\Tc}$)
is actually an \emph{fppf sheaf}, so the 
question is local on~$S$ in the \emph{fppf} topology. We can 
then assume that $X(S)$ is nonempty and we are back to prove 
that~$\Phi$ is fully faithful. Let us fix an object $x_0 \in 
X(S)$. We can now asssume that $T=B$ and that the image $e$ 
of $x_0$ by $b$ is a neutral element of~$B$. 

For the faithfulness, let~$(c_0, c_1, \gamma)$ and~$(d_0,d_1,\delta)$ be two morphisms from $\beta$ to $\beta'$ such that $D(c_0)=D(d_0)$. We want to find a 2-isomorphism $(\eps_0,\eps_1)$ from $(c_0, c_1, \gamma)$ to~$(d_0,d_1,\delta)$. Since $D(.)$ is a 2-antiequivalence, we already know that there is a unique 2-isomorphism of additive morphisms $\eps_0 : c_0 \Rightarrow d_0$ such that $D(\eps_0)$ is the identity of~$D(c_0)$. It remains to construct~$\eps_1 : c_1\Rightarrow d_1$. Let $g$ be an object of $B$. We choose an isomorphism $\varphi : g.e \to g$ and we define $\eps_1^g$ by the commutative diagram:
\begin{equation}
\xymatrix@C=3pc{c_1(g.e) \ar[d]_{c_1(\varphi)} &
c_0(g).c_1(e)\ar[r]^-{\eps_0^g.(\delta_{x_0}^{-1}\circ \gamma_{x_0})}
   \ar[l]_-{\sigma_g^{e}} &
d_0(g).d_1(e) \ar[r]^-{\tau_g^{e}}
   & 
d_1(g.e) \ar[d]^{d_1(\varphi)} \\
c_1(g) \ar[rrr]^{\eps_1^g\  (\textrm{def}^o)}&&& 
d_1(g)
}  \tag{*}
\end{equation}

% $$\xymatrix{c_0(g).c_1(e) \ar[r]^-{\sigma_g^{e}}
%     \ar[d]_{\eps_0^g.(\delta_{x_0}^{-1}\circ \gamma_{x_0})} &
% c_1(g.e) \ar[r]^-{c_1(\varphi)} &
% c_1(g) \ar[d]^{\eps_1^g\  (\textrm{def}^o)} \\
% d_0(g).d_1(e) \ar[r]_-{\tau_g^{e}} &
% d_1(g.e) \ar[r]_-{d_1(\varphi)} &
% d_1(g)
% }$$
where $\sigma$ and $\tau$ are the 2-isomorphisms that make~$c_1$ and~$d_1$ equivariant (see~\ref{def_action}~(ii)). 
We claim that~$\eps_1^g$ does not depend on the choice of~$\varphi$. To see this, it suffices to prove that for any automorphism $\psi$ of $g.e$ the following diagram of solid arrows commutes:
$$\xymatrix@C=3pc{c_1(g.e) \ar[d]_{c_1(\psi)} &
c_0(g).c_1(e)\ar[r]^-{\eps_0^g.(\delta_{x_0}^{-1}\circ \gamma_{x_0})}
   \ar[l]_-{\sigma_g^{e}} \ar@{.>}[d]|{c_0(\lambda).\id_{c_1(e)}} &
d_0(g).d_1(e) \ar[r]^-{\tau_g^{e}}
   \ar@{.>}[d]|{d_0(\lambda).\id_{d_1(e)}}& 
d_1(g.e) \ar[d]^{d_1(\psi)} \\
c_1(g.e) &
c_0(g).c_1(e)\ar[r]_-{\eps_0^g.(\delta_{x_0}^{-1}\circ \gamma_{x_0})}
   \ar[l]^-{\sigma_g^{e}} &
d_0(g).d_1(e) \ar[r]_-{\tau_g^{e}}& 
d_1(g.e)
}$$
There is a unique automorphism $\lambda$ of~$g$ such that 
$\lambda.\id_{e}=\psi$. Then the commutativity of the above 
diagram follows from the functoriality of $\sigma$, $\eps_0$ 
and $\tau$. Since $\eps_1^g$ does not depend on 
the choice of~$\varphi$, we see that the collection of 
the~$\eps_1^g$ for all objects~$g$ of~$B$ is functorial, 
\emph{i.e.} it defines a 2-isomorphism $\eps_1 : 
c_1\Rightarrow d_1$. Moreover, for all objects~$t$ and~$g$ 
of~$B$, the following diagram commutes:
$$\xymatrix{c_0(g).c_1(t) \ar[r]^{\sigma_g^t}
\ar[d]_{\eps_0^g.\eps_1^t} &
c_1(g.t) \ar[d]^{\eps_1^{g.t}} \\
d_0(g).d_1(t) \ar[r]_{\tau_g^t} & d_1(g.t)\, .
}$$
(Choose an isomorphism $\varphi : t.e \to t$ to define 
$\eps_1^t$ with the diagram (*) above. Then choose the 
isomorphism $(g.t).e\stackrel{can.}{\to}g.(t.e) 
\stackrel{\id_g.\varphi}{\to} g.t$ to define 
$\eps_1^{g.t}$. To see that the resulting diagram is 
commutative, use among other facts the functoriality of
$\sigma, \tau, \eps_0$ and the hexagon 
condition~\ref{def_action}~(ii).)
This means that $\eps_1$ is a 2-isomorphism of 
\emph{equivariant} morphisms. It remains to prove that for 
any object $x$ of~$X$, $\eps_1^{b(x)}=\delta_x^{-1}\circ 
\gamma_x$. Using the above diagram with $g=t=e$, we see 
that $\eps_0^e.(\delta_{x_0}^{-1}\circ 
\gamma_{x_0})=\eps_0^e.\eps_1^e$ from which we deduce that 
$\eps_1^{b(x_0)}=\delta_{x_0}^{-1}\circ \gamma_{x_0}$. Now, 
the map $x\mapsto \delta_x\circ \eps_1^{b(x)} 
\circ\gamma_x^{-1}$ is functorial and defines a morphism of 
algebraic stacks from~$X$ to $H^{-1}(B')$. By 
Lemma~\ref{morphisme_constant}, this morphism must be 
constant and this proves the desired equality for all~$x$. 
This finishes the proof of the fact that~$\Phi$ is faithful.

To prove that~$\Phi$ is full, we keep the same notations for the objects $\beta$ and~$\beta'$. Let $(D(B),a)$ and $(D(B'), a')$ be their images in~$\Gc$, and let $d : D(B') \to D(B)$ be a morphism in~$\Gc$. In particular $ad=a'$. Since $D(.)$ is an antiequivalence, we know that there is a homomorphism $c_0 : B\to B'$ such that $D(c_0)=d$. We take $c_1=c_0 : B\to B'$ which is clearly $c_0$-equivariant, and to conclude the proof it suffices to check that the morphisms $b'$ and $c_0\circ b$ from $X$ to $B'$ are isomorphic. By~\ref{fonctorialite_Albanese} there is a commutative diagram
$$\xymatrix@R=1pc{X \ar[r] \ar[d]_b &
A^1_{\tau}(X) \ar[d]^{A^1_{\tau}(b)} \\
B \ar[r] &A^1_{\tau}(B).
}$$
With the notations of~\ref{torseur_canonique_dun_torseur} the bottom horizontal arrow is an $f_0$-equivariant isomorphism of torsors. Hence we deduce a morphism of torsors $A^1_{\tau}(X) \to B$ which is equivariant under the homomorphism
$$\xymatrix{
f_0^{-1}\circ D(b^*) : D(\picto_{X/S}) \ar[r]^-{D(b^*)}&
D(\picto_{B/S}) \ar[r]^-{D(\ref{Picard_dun_torseur})^{-1}}&
D(\picto_{B/S}) \ar[r]^-{D(\ref{comp_dual_picto})}&
DD(B) \ar[r]^-{e_G^{-1}} &B
}$$
and through which $b$ factorizes (here, since $T=B$, the 
isomorphism~(\ref{Picard_dun_torseur}) is actually the identity). Choosing a 
trivialization of the torsor 
$A^1_{\tau}(X)$, we find a morphism $u : X \to D(\picto_{X/S})$ such that~$b$ 
factorizes as $f_0^{-1}\circ D(b^*)\circ u$. Similarly~$b'$ factorizes through 
the same $u$ as $f_0'^{-1}\circ D(b'^*)\circ u$. It now suffices to prove that 
the homomorphisms of commutative group stacks $c_0f_0^{-1}D(b^*)$ and 
$f_0'^{-1}D(b'^*)$ are isomorphic. Applying~$D(.)$, this is equivalent to 
$$D(f_0^{-1}D(b^*)) \circ d =D(f_0'^{-1}\circ D(b'^*)).$$ 
Using~\ref{fonctorialite_evaluation_map}, we see that $D(f_0^{-1}D(b^*))$ is 
equal to $e_{\picto_{X/S}}\circ a$ and 
$D(f_0'^{-1}D(b'^*))$ to~$e_{\picto_{X/S}}\circ a'$. Hence the desired equality follows from the assumption~$ad=a'$.
\end{proof}

\begin{cor}
 \label{lien_avec_variete_Albanese}
Under the assumptions of~\ref{PU_albanese}, assume that $\Pic_{X/S}$ has an abelian subscheme~$A$, the underlying subset of which is $\Pic^0_{X/S}$. Let us denote by $\Alb^1(X)$ the torsor obtained from $A^1(X)$ by extension of scalars along $D(\Pic_{X/S})\to D(A)$, and by $u$ the  composed morphism: 
$$\xymatrix{
u : X\ar[r]^{a} & A^1(X) \ar[r] &\Alb^1(X)\, .
}$$
Then:
\begin{itemize}
 \item[(i)] $u$ is initial among morphisms from $X$ to torsors under abelian schemes. In particular this proves that $u$ coincides with the classical Albanese morphism of~FGA~VI, théorème~3.3~(iii) \cite[exp. 236]{FGA}.
\item[(ii)] \emph{Via} the canonical morphism $A^1_{\tau}(X) \to \Alb^1(X)$, the classical Albanese torsor  $\Alb^1(X)$ is the coarse moduli space of the Albanese stack~$A^1_{\tau}(X)$. If $\Pic_{X/S}/\picto_{X/S}$ is a twisted lattice, then $\Alb^1(X)$ is also the coarse moduli space of $A^1(X)$.
\end{itemize}
\end{cor}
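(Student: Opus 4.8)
The plan is to deduce both assertions from the universal property~\ref{PU_albanese} together with the duality of Section~\ref{section_dual}. The first observation is that $\Alb^1(X)$ is nothing but the torsor $A^1_0(X)$ introduced earlier: by~\ref{Cartier_groups} we have $D(A)=A^t=D(\Pic^0_{X/S})=A^0_0(X)$, an abelian scheme, and $\Alb^1(X)=A^1(X)\wedge^{A^0(X)}A^t=A^1_0(X)$, so that $u$ is the natural map $X\to A^1_0(X)$. Recall also that $\picto_{X/S}$ is dualizable (\ref{thm_dualizability2}~(i)), hence so is $A^0_\tau(X)=D(\picto_{X/S})$ (\ref{fonctorialite_evaluation_map}~(c)), and that $A^0_\tau(X)$ is an abelian stack (\ref{thm_representabilite}~(4)) while $B$ and $A^t$ are dualizable abelian schemes (\ref{prop_dualisabilite_Cartier_et_abeliens}).

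For~(i), since an abelian scheme is in particular an abelian stack, any triple $(B,T,b)$ with $B$ an abelian scheme, $T$ a $B$-torsor and $b:X\to T$ is an object of the category $\Tc$ of~\ref{PU_albanese}, so there is a unique triple $(c_0,c_1,\gamma)$ with $c_0:A^0_\tau(X)\to B$, $c_1:A^1_\tau(X)\to T$ a $c_0$-equivariant morphism, and $\gamma:c_1\circ a_X\Rightarrow b$. The crucial step is to show that $c_0$ factors uniquely through the projection $p:A^0_\tau(X)\to A^t$ dual to the inclusion $A=\Pic^0_{X/S}\hookrightarrow\picto_{X/S}$. I would argue by dualizing: the antiequivalence $D(.)$ (\ref{antiequivalence}) identifies homomorphisms $A^0_\tau(X)\to B$ with homomorphisms $B^t\to DD(\picto_{X/S})=\picto_{X/S}$; as $B^t$ is a connected abelian scheme, any such homomorphism has image in the identity component $\Pic^0_{X/S}=A$, hence factors uniquely as $B^t\to A\hookrightarrow\picto_{X/S}$ (the inclusion being a monomorphism). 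Dualizing back yields $c_0=c_0'\circ p$ with $c_0':A^t\to B$ unique.

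Next I would descend the torsor map. Because $c_0=c_0'\circ p$, transitivity of the contracted product (\ref{prop_produit_contracte}) gives $A^1_\tau(X)\wedge^{A^0_\tau(X),c_0}B=\Alb^1(X)\wedge^{A^t,c_0'}B$, and the universal property of the contracted product shows that the $c_0$-equivariant map $c_1$ factors uniquely through $u':A^1_\tau(X)\to\Alb^1(X)$ as a $c_0'$-equivariant morphism $\bar c_1:\Alb^1(X)\to T$. Since $u=u'\circ a_X$, the triple $(c_0',\bar c_1,\gamma)$ factors $b$ through $u$, and its uniqueness follows from that in~\ref{PU_albanese} together with the uniqueness of the two factorizations above. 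Thus $u$ is initial among morphisms to torsors under abelian schemes; as the classical Albanese morphism of FGA is characterized by this very universal property, uniqueness of initial objects forces $u$ to coincide with it.

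For~(ii), I would first compute the coarse moduli sheaf of $A^0_\tau(X)=D(\picto_{X/S})$. By~\ref{description_dual} one has $H^0(A^0_\tau(X))=E^1(\picto_{X/S})$, and the argument in the proof of~\ref{repres_dual_Cartier_et_ext} identifies $E^1(\picto_{X/S})$ with $E^1(A)=A^t$; moreover the structural sequence~\ref{structural_exact_sequence} and the dual of $0\to A\to\picto_{X/S}\to F\to 0$ both read $0\to BF^D\to A^0_\tau(X)\to A^t\to 0$, so the coarse moduli map $A^0_\tau(X)\to H^0(A^0_\tau(X))=A^t$ is precisely $p$. Consequently the natural map $A^1_\tau(X)\to A^1_0(X)=\Alb^1(X)$ is, fppf-locally, the $F^D$-gerbe $A^0_\tau(X)\to A^t$, hence an $F^D$-gerbe over the algebraic space $\Alb^1(X)$, which therefore is the coarse moduli space of $A^1_\tau(X)$. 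When $L:=\Pic_{X/S}/\picto_{X/S}$ is a twisted lattice it is Cartier with $E^1(L)=E^2(L)=0$ (Section~\ref{section_Ext}), so the long $\shExt$-sequence of $0\to\picto_{X/S}\to\Pic_{X/S}\to L\to 0$ yields $E^1(\Pic_{X/S})\simeq E^1(\picto_{X/S})=A^t$; thus $H^0(A^0(X))=A^t$ as well and the same gerbe argument exhibits $\Alb^1(X)$ as the coarse moduli space of $A^1(X)$. The main obstacle throughout is the factorization in~(i): proving cleanly that $c_0$ descends along $p$ (the duality and identity-component argument) and that the equivariant map $c_1$ then descends to $\Alb^1(X)$ by transitivity of contracted products.
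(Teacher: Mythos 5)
Your overall strategy for (i) is the same as the paper's (invoke \ref{PU_albanese}, dualize $c_0$, factor the resulting map $B^t\to\picto_{X/S}$ through $A$, dualize back and descend the torsor map), and your gerbe picture in (ii) is also the intended one. But the crux of the corollary is precisely the step you dispatch in one clause, and as written it has a gap.

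You assert that a homomorphism $B^t\to\picto_{X/S}$ from a connected abelian scheme ``has image in the identity component $\Pic^0_{X/S}=A$, hence factors uniquely through $A$.'' This conflates $A$ with $\Pic^0_{X/S}$: the hypothesis only says that $A$ is an abelian subscheme whose \emph{underlying set} is $\Pic^0_{X/S}$ (the relevant case being a non-reduced Picard scheme in positive characteristic, where the two differ as group schemes). A set-theoretic containment of the image does not give a scheme-theoretic factorization through the closed subgroup $A$ over a general base $S$. The paper's proof handles this by forming the quotient $Q=\picto_{X/S}/A$, checking the composite $B^t\to Q$ vanishes fiberwise (where one can use reducedness of $B^t$ and the universal property of $(\Pic^0)_{\red}$), and then invoking Mumford's rigidity lemma to conclude it is constant, hence zero, over the whole of $S$. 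Without some such argument your factorization $c_0=c_0'\circ p$ is not established, and the same issue infects your opening identification $D(A)=D(\Pic^0_{X/S})$, which can fail when $\Pic^0_{X/S}$ is non-reduced.

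A second, related omission concerns (ii): to dualize $0\to A\to\picto_{X/S}\to Q\to 0$ via \ref{exactness_of_the_dual_sequence} and obtain the gerbe $0\to BQ^D\to A^0_{\tau}(X)\to A^t\to 0$, you need to know that $Q$ is \emph{finite} and flat. You silently identify $Q$ with the finite flat $F$ coming from the duabelian structure of $\picto_{X/S}$, but the abelian scheme in that devissage is not a priori equal to $A$; the paper instead deduces finiteness of $Q$ from the surjection $F\to Q$ obtained once the factorization of step (i) is in place. Both gaps are fixable by the rigidity argument, but they are exactly where the content of the proof lies.
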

\begin{proof}
Let $Q$ denote the quotient sheaf $\picto_{X/S}/A$. By 
Artin's representability theorem \cite[10.4]{LMB} it is an algebraic space. 
It is proper and flat because so is $\picto_{X/S}$. We 
first prove that any morphism~$B\to \picto_{X/S}$ where $B$ 
is an abelian scheme factorizes through $A$. If $S$ is the 
spectrum of a field then $A$ is equal to 
$(\Pic^0_{X/S})_{\textrm{réd}}$ and the claim follows from 
the universal property of the reduced subscheme. In the 
general case it suffices to prove that the 
composition~$\lambda : B\to Q$ is zero. But this morphism is 
constant on the fibers (by the case where~$S$ is a field) 
hence it is constant by~\cite[6.1]{Mumford_GIT}. Since it 
maps 0 to 0 it is then the trivial morphism.

Let us now prove that $Q$ is finite. It only remains to prove that it is 
quasi-finite, so for this question we may assume that $S$ is the spectrum of a 
field. By assumption there is an exact sequence $0\to B\to \picto_{X/S}\to F \to 
0$ where $F$ is a finite flat commutative group scheme and $B$ is an abelian 
scheme. By the above $B\to \picto_{X/S}$ factorizes through $A$, and we get a 
morphism $F\to Q$. This morphism is proper, smooth and surjective because so 
are the morphisms $\picto_{X/S}\to F$ and $\picto_{X/S}\to Q$, whence 
the assertion.

Let us prove (i). Let $b : X\to T$ be a morphism from $X$ to 
a $B$-torsor, where $B$ is an abelian scheme. 
By~\ref{PU_albanese}, there exists a homomorphism $c_0 : 
A_{\tau}^0(X)\to B$ and a $c_0$-equivariant morphism of 
torsors $c_1 : A_{\tau}^1(X)\to T$ such that $c_1\circ 
a_{X,\tau}=b$. The dual $D(c_0) : B^t \to \picto_{X/S}$ 
factorizes (uniquely) through $A$, by the above. Dualizing, 
we get a morphism $\ov{c_0} : D(A) \to B$ through which 
$c_0$ factorizes. Then there is a unique 
$\ov{c_0}$-equivariant morphism $\ov{c_1} : \Alb^1(X) \to B$ 
such that $\ov{c_1}u=b$
\details{(La question est locale... on peut donc supposer que $X$ a un $S$-point et alors les torseurs sont triviaux donc c'est évident.)}
and this concludes the proof.

Now let us prove (ii). The assertion is equivalent to saying that $A^1_{\tau}(X) \to \Alb^1(X)$ is an \emph{fppf}-gerbe. This question is \emph{fppf}-local on $S$ so we may assume that $X$ has an $S$-point. Then the torsors are trivial and we have to prove that $A^0_{\tau}(X)$ is a gerbe over $A^t$. But, since $Q$ is finite and flat, the exact sequence $0\to A\to \picto_{X/S}\to Q\to 0$ induces by~\ref{exactness_of_the_dual_sequence} an exact sequence $0\to BQ^D\to A^0_{\tau}(X) \to A^t\to 0$. If $M:=\Pic_{X/S}/\picto_{X/S}$ is a twisted lattice, then we see similarly that $A^0(X)$ is an $M^D$-gerbe over $A^0_{\tau}(X)$ and the last assertion follows.
\end{proof}

\begin{exemple}\rm
 Let $k$ be a field and let $C$ be a geometrically integral smooth projective curve over~$k$. Then $\Pic_{C/k}^{\tau}=\Pic^0_{C/k}$ and it is an abelian variety.
\details{(Voir par exemple~\cite{FGA_Explained}~9.5.4, 9.5.19 et~9.6.21.)}%
Hence by~\ref{lien_avec_variete_Albanese} the Albanese stack~$A^1_{\tau}(X)$ is isomorphic to the classical Albanese torsor~$\Alb^1(C/k)$. 
\end{exemple}

\begin{exemple}\rm
\label{exemple:root_stack}
 Let $f : X\to S$ be a proper and flat morphism of schemes, 
with $f_*\Oc_X$ universally isomorphic to $\Oc_S$ and $S$ 
noetherian. Let $\Lc$ be an invertible sheaf on $X$ and $r$ 
a positive integer. Let us denote by~$\Xc=\sqrt[r]{\Lc}$ the 
stack that classifies $r$-th roots of $\Lc$ 
(see~\cite[2.2.6]{Cadman_USTITCOC} 
or~\cite[5.3]{Brochard_Picard} for the precise definition). 
Let us describe the Albanese morphism of $\Xc$. The 
stack $\Xc$ is a $\mu_r$-gerbe over $X$. By 
\cite[5.3]{Brochard_Picard}, there is an exact sequence of 
sheaves
$0\to \Pic_{X/S} \to \Pic_{\Xc/S}\to \Z/r\Z \to 0.$
By~\ref{exactness_of_the_dual_sequence} the dual sequence
$$0\lto B\mu_r \lto A^0(\Xc)\lto A^0(X)\lto 0$$
is exact hence $A^0(\Xc)$ is a $\mu_r$-gerbe over $A^0(X)$. Let us now assume that $\Pic^{\tau}_{X/S}$ is an abelian scheme over $S$ (hence equal to $\Pic^0_{X/S}$).
% and let us describe $A^0_{\tau}(\Xc)$ and the canonical morphism from $\Xc$ to its Albanese torsor $A^1_{\tau}(\Xc)$.
Let $Q$ be the image \emph{fppf} sheaf of 
$\Pic^{\tau}_{\Xc/S}$ in $\Z/r\Z$. Assume that $Q$ is an 
open and closed group subscheme of $\Z/r\Z$. 
By~\cite[3.3.2]{Brochard_finiteness} there is an exact 
sequence
$0\to \picto_{X/S} \to \picto_{\Xc/S}\to Q \to 0.$ Then by~\ref{exactness_of_the_dual_sequence}, the dual sequence 
 $$0\to D(Q) \to A^0_{\tau}(\Xc)\to A^0_{\tau}(X)\to 0$$
is exact hence $A^0_{\tau}(\Xc)$ is a $Q^D$-gerbe over $A^0_{\tau}(X)$. To describe the morphism $\Xc \to A^1_{\tau}(\Xc)$, let us assume that $\Xc$ has an $S$-point $x_0$ (this is true \emph{fppf}-locally on $S$). Then using the point $a_{\Xc}(x_0)=x_0^*$, the torsor trivializes and we will describe the resulting morphism $\Xc\to A^0_{\tau}(\Xc)$. There is a commutative diagram
$$\xymatrix@R=1pc@C=1pc{\Xc
\ar[r]^-{a_{\Xc}} \ar[d]_-{\pi} & A^0_{\tau}(\Xc)
\ar[d]^-{A^0_{\tau}(\pi)} \\
X \ar[r]_-{a_X} &A^0_{\tau}(X)
}$$
in which, by~\ref{lien_avec_variete_Albanese}, the bottom 
map $a_X$ is the classical Albanese morphism, mapping 
$\pi(x_0)$ to $0$. Let $\Xc_0$ denote the fiber 
$\pi^{-1}(\pi(x_0))$ and $\Ac_0\simeq D(Q)$ the kernel of 
$A^0_{\tau}(\pi)$, and let us compute the induced morphism 
$\Xc_0\to \Ac_0$. The commutative diagram
$$\xymatrix@R=1pc@C=1pc{\Xc_0
\ar[r]^-{a_{\Xc_0}} \ar[d]_-{i} & A^0_{\tau}(\Xc_0)
\ar[d]^-{A^0_{\tau}(i)} \\
\Xc \ar[r]_-{a_{\Xc}} &A^0_{\tau}(\Xc)
}$$
induces a factorization of the morphism $\Xc_0\to \Ac_0$ 
through $a_{\Xc_0}$. But the stack $\Xc_0$ is a trivial 
$\mu_r$-gerbe over $S$. Hence $a_{\Xc_0}$ is an isomorphism 
(see~\ref{lien_champetre_aG_eG} and~\ref{aG_isom}). In the 
end, through the above-mentioned identifications, the 
morphism $\Xc_0\to \Ac_0$ is the dual of the inclusion of 
$Q$ into $\Z/r\Z$.

To conclude this example, let us discuss a little bit this assumption that $Q$ 
is an open and closed group subscheme of~$\Z/r\Z$. If $[\Lc]\in 
\picto_{X/S}$ or if $\Lc$ has an $r$-th root on $X$, then 
$Q=\Z/r\Z$ so the condition holds. If $X$ is 
smooth and projective over $S$, then $Q$ is automatically a 
closed subscheme of $\Z/r\Z$. However, even with that 
latter assumption, it seems that $Q$ does not need to be 
open in $\Z/r\Z$. Its openness is equivalent to the flatness 
of $\picto_{\Xc/S}$ over $S$. (Indeed, since the morphism $Q\to \Z/r\Z$ is a 
monomorphism of finite type, it is an open immersion if and only if it is 
flat, i.e. if and only if $Q$ is flat over $S$. Then the claim follows from 
the fact that flatness is local on the source, and that $\picto_{\Xc/S}\to Q$ 
is an \emph{fppf} epimorphism.) In particular $Q$ is open if
$\Pic_{X/S}$ is flat over $S$.
\details{
Plus de détails (beaucoup plus...) dans C2 p.25 et suivantes.
}
\end{exemple}

\begin{exemple}\rm
\label{exemple:twisted_curves}
 Very similarly, let us consider the case of a smooth twisted curve $\Cc \to C$ as defined by Abramovich and Vistoli~\cite{Abramovich_Vistoli_CSSM}. By Cadman~\cite[2.2.4 and 4.1]{Cadman_USTITCOC}, $\Cc$ can be described as a root stack $\sqrt[r_1]{(\Lc_1,s_1)}\times_S\dots \times_S \sqrt[r_n]{(\Lc_n, s_n)}$, where $\Lc_i$ is an invertible sheaf on $C$ and $s_i$ is a global section of $\Lc_i$. Then by~\cite[5.4]{Brochard_Picard}
 $\Pic_{\Cc/S}$ is an extension of $\Z/r_1\Z\times_S \dots \times_S \Z/r_n\Z$ by $\Pic_{C/S}$. Considering the dual sequence, which is exact by~\ref{exactness_of_the_dual_sequence}, we see as above that $A^0(\Cc)$ is a gerbe over $A^0(C)$, banded by $\mu_{r_1}\times\dots \times \mu_{r_n}$. Let $Q$ denote the image of $\picto_{\Cc/S}$ in $\prod \Z/r_i\Z$. It is an open and closed group subscheme (because $C$ is a smooth and projective family of curves). The group $\picto_{C/S}$ is an abelian scheme, hence the exact sequence $0\to \picto_{C/S}\to \picto_{\Cc/S}\to Q\to 0$ shows that $A^0_{\tau}(\Cc)$ is an abelian stack (it is a $Q^D$-gerbe over the abelian scheme $(\Pic^0_{C/S})^t$). 
\end{exemple}

\begin{thm}
 \label{PU_cas_des_groupes_de_tm}
Let~$X$ be an algebraic stack over a base scheme~$S$. Assume that the structural morphism $f : X \to S$ locally has sections in the \emph{fppf} topology, that $\Oc_S \to f_*\Oc_X$ is universally an isomorphism, and that the Picard functor $\Pic_{X/S}$ is a Cartier group scheme (see~\ref{Cartier_groups}). Then the Albanese morphism of~$X$ 
$$a_X : X \lto A^1(X)$$
is initial among maps to torsors under classifying stacks of Cartier group schemes (that is, maps to gerbes banded by Cartier group schemes). 
\end{thm}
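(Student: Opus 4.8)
The plan is to repeat, \emph{mutatis mutandis}, the proof of~\ref{PU_albanese} through its refinement~\ref{phi_equivalence}, replacing each $\picto$-ingredient by its ``classifying stack'' counterpart. First I would record that the hypotheses make $A^1(X)$ land in the right class: writing $P:=\Pic_{X/S}$, the assumption that $P$ is Cartier gives $E^1(P)=0$ and $P^D$ Cartier (\ref{Cartier_groups}), so by~\ref{particular_case_of_a_sheaf} the Albanese stack $A^0(X)=D(P)$ is canonically $BP^D$, the classifying stack of the Cartier group $P^D$, and $A^1(X)$ is a gerbe banded by $P^D$.

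Next I would set up the two categories as before. Let $\Tc$ be the $2$-category of triples $(B,T,b)$ with $B=BF$ the classifying stack of a Cartier group scheme $F$, $T$ a $B$-torsor, and $b:X\longrightarrow T$ a morphism, with morphisms and $2$-morphisms defined exactly as in section~\ref{section_PU}. The analog of~\ref{T_est_discrete} holds because $D(B)=D(BF)\simeq F^D$ (\ref{particular_case_of_a_classifying_stack}) is a \emph{sheaf}: every automorphism of $D(c_0)$ is trivial, so the $\eps_0$-part of any automorphism is the identity, and the $\eps_1$-part is killed by the local $B$-torsor argument. Let $\Gc$ be the category of pairs $(A,a)$ where $A$ is a Cartier group scheme and $a:A\longrightarrow P$ is a homomorphism, and define $\Phi:\Tc\longrightarrow\Gc$ on objects by $\Phi(B,T,b)=(D(B),a)$ with $a$ the composite
$$\xymatrix{D(B)\ar[r]^-{(\ref{comp_dual_pic})}& \Pic_{B/S}\ar[r]^-{(\ref{Picard_dun_torseur}b)^{-1}}& \Pic_{T/S}\ar[r]^-{b^*}& P,}$$
using the full Picard functor throughout. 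Here $D(B)=D(BF)\simeq F^D$ is Cartier, so $\Phi$ indeed takes values in $\Gc$.

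The core is the exact analog of~\ref{phi_equivalence}: $\Phi$ is an anti-equivalence. Essential surjectivity uses the contracted product (\ref{prop_produit_contracte}): given $(A,a)$, put $B:=D(A)=BA^D$ and $T:=A^1(X)\wedge^{A^0(X),D(a)}B$, with $b:=c_1\circ a_X$; then~\ref{compat_ev_Pic}~(b) identifies $\Phi(B,T,b)$ and shows that the natural map $e_A$ is an isomorphism $(A,a)\xrightarrow{\sim}\Phi(B,T,b)$, $e_A$ being an isomorphism since $A$ is Cartier (\ref{les_Cartier_sont_dualisables}). Faithfulness and fullness are the two diagram chases of~\ref{phi_equivalence}, read with~\ref{comp_dual_pic}, \ref{Picard_dun_torseur}~(b), \ref{torseur_canonique_dun_torseur}~(b) and the theorem of the square~\ref{thm_du_carre}~(b) in place of their $\picto$-versions, and with the $2$-antiequivalence of~\ref{antiequivalence} (valid here because Cartier groups and their classifying stacks are dualizable, \ref{les_Cartier_sont_dualisables}) playing the role of ``$D(.)$ is an antiequivalence''. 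Finally~\ref{compat_ev_Pic}~(b) shows $\Phi$ sends the Albanese object $(A^0(X),A^1(X),a_X)$ to $(DD(P),e_P^{-1})$, which is \emph{final} in $\Gc$; since $\Phi$ is an anti-equivalence, this object is \emph{initial} in $\Tc$, which is the assertion.

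I expect the main obstacle to be the same delicate point as in the duabelian case, the fullness step, where one trivializes $A^1(X)$ and factors both $b$ and $b'$ through a common auxiliary morphism $u:X\longrightarrow D(P)$ before comparing after applying $D(.)$. One genuinely new check is needed in the faithfulness step, where one builds a morphism $X\longrightarrow H^{-1}(B')=F'$ and must see it is constant: for the diagonalizable and finite-locally-free constituents of $F'$ this is~\ref{morphisme_constant} (those are affine over $S$), while for a finitely generated constant constituent, where affineness may fail, the conclusion that the morphism factors through $f$ still holds because $\Oc_S\longrightarrow f_*\Oc_X$ being universally an isomorphism forces the geometric fibers of $X$ to be connected, so any morphism to a separated étale $S$-group is constant. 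A dévissage along the defining filtration of the Cartier group $F'$ then reduces the general case to these constituents.
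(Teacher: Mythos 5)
Your proposal is correct and follows essentially the same route as the paper, which proves the theorem by running the proof of~\ref{PU_albanese} \emph{mutatis mutandis} with exactly the substitutions you list ($\Pic^{\tau}\rightsquigarrow\Pic$, duabelian $\rightsquigarrow$ Cartier, \ref{comp_dual_picto}$\rightsquigarrow$\ref{comp_dual_pic}, \ref{Picard_dun_torseur}~(a)$\rightsquigarrow$(b), \ref{compat_ev_Pic}~(a)$\rightsquigarrow$(b)). You also correctly isolate the one genuinely new ingredient, the variant of Lemma~\ref{morphisme_constant} with a Cartier group scheme as target, and handle it by the same dévissage (affine constituents via~\ref{morphisme_constant} itself, twisted-lattice constituents via connectedness of the geometric fibers) that the paper uses.
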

\begin{proof}
 The proof of~\ref{PU_albanese} holds verbatim, 
\emph{mutatis mutandis}. More precisely, replace everywhere 
$\Pic^{\tau}$ with $\Pic$, duabelian with Cartier, 
$A^i_{\tau}$ with $A^i$, ``abelian stack'' with ``classiying 
stack of a Cartier group scheme'', \ref{comp_dual_picto} 
with~\ref{comp_dual_pic}, \ref{Picard_dun_torseur} (a) with 
\ref{Picard_dun_torseur} (b), and~\ref{compat_ev_Pic} (a) 
with \ref{compat_ev_Pic} (b). You also need a variant 
of~Lemma~\ref{morphisme_constant} where the affine target 
$Y$ is replaced with a Cartier group scheme. To prove it, 
we proceed as follows. If $Y$ is a finite flat group 
scheme, or of multiplicative type, 
then~\ref{morphisme_constant} itself applies. If $Y$ is a 
twisted lattice then any $S$-morphism $X\to Y$ must be 
constant because the geometric fibers of $X$ are connected.  
For an arbitrary Cartier group scheme, the result 
follows by induction on the number of extensions. 
Notice in particular that the analogs in this context 
of~\ref{T_est_discrete} and~\ref{phi_equivalence} hold.
\details{Ici il faut aussi vérifier que le lemme~\ref{morphisme_constant} est encore valable lorsque $Y$ n'est plus un schéma affine sur $S$ mais est un groupe de Cartier. Pour les groupes finis plat ou les groupes de type multiplicatif on peut utiliser~\ref{morphisme_constant} tel quel. Pour les groupes constants tordus ce n'est pas très difficile. Pour un groupe de Cartier quelconque on le fait par récurrence sur le nombre d'extensions nécessaires pour dévisser le groupe de Cartier.}
\end{proof}

\section{Application to rational varieties}
\label{section_rational_varieties}

The ``universal torsors'' and the so-called ``elementary obstruction'' were introduced by Colliot-Thélène and Sansuc in a series of Notes (\cite{Colliot-Thelene_Sansuc_Note1, Colliot-Thelene_Sansuc_Note2, Colliot-Thelene_Sansuc_Note3}) and in the foundational article~\cite{Colliot-Thelene_Sansuc_La_Descente_Duke}. These are the key ingredients of a general method which proved to be very useful in the study of rational points of some algebraic varieties. One of the main tools is the following fundamental exact sequence. Let $f : X\to S$ be a morphism of schemes. Throughout this section we assume that $\Oc_S\to f_*\Oc_X$ is universally an isomorphism and that $f$ locally has sections in the \emph{fppf} topology.

\begin{lem}[{\cite[prop. 1]{Colliot-Thelene_Sansuc_Note2}, \cite[2.0]{Colliot-Thelene_Sansuc_La_Descente_Duke}}]
\label{suite_exacte_fondamentale_Colliot}
Let $G$ be an $S$-group scheme of multiplicative type and of finite type. Then there is a functorial exact sequence:
$$\xymatrix@C=1.42pc{0\ar[r] & H^1(S,G) \ar[r]^-{i_1}&
H^1(X,G) \ar[r]^-{\chi} &
\Hom_{S-gp}(G^D, \Pic_{X/S}) \ar[r]^-{\partial} &
H^2(S,G) \ar[r]^-{i_2}&
H^2(X,G)
}$$
\end{lem}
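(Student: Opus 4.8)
The plan is to read the sequence off the Leray spectral sequence of $f$ for the sheaf $G$ in the \emph{fppf} topology,
\[
E_2^{p,q}=H^p(S,R^qf_*G)\ \Longrightarrow\ H^{p+q}(X,G),
\]
whose five-term exact sequence of low degree reads
\[
0\to H^1(S,f_*G)\to H^1(X,G)\to H^0(S,R^1f_*G)\xrightarrow{\,d_2\,} H^2(S,f_*G)\to H^2(X,G).
\]
The two outer arrows are the edge morphisms, which are precisely the pullbacks $i_1=f^*$ and $i_2=f^*$; exactness is asserted up to and at $H^2(S,f_*G)$, consistently with the five-term sequence (there is no claim at $H^2(X,G)$). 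So everything reduces to identifying $f_*G$ with $G$ and $R^1f_*G$ with $\shHom(G^D,\Pic_{X/S})$, after which $\chi$ is the third arrow, $\partial=d_2$, and the asserted exactness is exactly that of the five-term sequence. Functoriality of the whole sequence will then follow from the naturality of the Leray spectral sequence.

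For $f_*G=G$ I would argue as follows. Since $\Oc_S\to f_*\Oc_X$ is an isomorphism, passing to units gives $f_*\gm=\gm$; writing $G=\shHom(G^D,\gm)$ by Cartier duality for a group of multiplicative type and of finite type, one gets $f_*G=\shHom(G^D,f_*\gm)=G$. For $R^1f_*G$ I would proceed by dévissage. In the case $G=\gm$ one has $R^1f_*\gm=\Pic_{X/S}=\shHom(\Z,\Pic_{X/S})$ by the very definition of the Picard functor; in the case $G=\mu_n$ the Kummer sequence $1\to\mu_n\to\gm\xrightarrow{n}\gm\to1$, together with $f_*\gm=\gm$ and the surjectivity of $n$ on $\gm$, gives $R^1f_*\mu_n=\Pic_{X/S}[n]=\shHom(\Z/n\Z,\Pic_{X/S})$. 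In general $G^D$ is a Cartier group (\ref{Cartier_groups}), so étale-locally on $S$ it fits in a resolution $0\to L_1\to L_0\to G^D\to 0$ by twisted lattices; since $E^1(L_i)=E^1(G^D)=0$ (\ref{vanishing_ext_finite_or_multiplicative}, \ref{vanishing_ext_constant}), dualizing yields an exact sequence $0\to G\to T_0\to T_1\to 0$ with $T_i=L_i^D$ tori. Applying $Rf_*$ and using $f_*T_i=T_i$ together with $R^1f_*T_i=\shHom(T_i^D,\Pic_{X/S})$ from the torus case, and noting that $f_*T_0\to f_*T_1$ is already surjective, the long exact sequence yields $R^1f_*G=\ker\!\big(\shHom(L_0,\Pic_{X/S})\to\shHom(L_1,\Pic_{X/S})\big)=\shHom(G^D,\Pic_{X/S})$. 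Taking $H^0(S,-)$ gives $\Hom_{S\text{-}gp}(G^D,\Pic_{X/S})$.

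The main obstacle will be to make the identification of $R^1f_*G$ canonical and functorial in $G$: the torus resolution above is only available, and only non-canonically, after an étale localization, so I must check that the resulting isomorphism $R^1f_*G\simeq\shHom(G^D,\Pic_{X/S})$ is independent of choices and glues over $S$, and that it transports $\chi$ and $\partial$ to the intended maps. I would handle this by producing the comparison morphism intrinsically---either from the Cartier pairing $G^D\times G\to\gm$ or as the base-change/projection morphism $\shHom(G^D,R^1f_*\gm)\to R^1f_*\shHom(G^D,\gm)$---and then verifying that it is an isomorphism on the generating cases $\gm$ and $\mu_n$, the compatibility with the edge maps being forced by functoriality of Leray. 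Finally, the hypothesis that $\Oc_S\to f_*\Oc_X$ is \emph{universally} an isomorphism is exactly what keeps $f_*\gm=\gm$ after base change, hence guarantees the functoriality of the sequence; the standing assumption that $f$ locally has \emph{fppf} sections keeps us within the framework of this section but is not otherwise needed for the five-term argument.
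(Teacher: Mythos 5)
The paper offers no proof of this lemma: it is quoted from Colliot-Thélène and Sansuc (the references given in the bracketed citation), so there is no internal argument to compare yours against. Your reconstruction via the five-term exact sequence of the \emph{fppf} Leray spectral sequence for $f$ is the standard derivation and is essentially the one in the cited sources. The two identifications it rests on are handled correctly: $f_*\gm=\gm$ (from $\Oc_S\to f_*\Oc_X$ being universally an isomorphism) gives $f_*G=\shHom_S(G^D,f_*\gm)=G^{DD}=G$ by adjunction and Cartier biduality, and the dévissage $0\to G\to T_0\to T_1\to 0$ obtained by dualizing an étale-local lattice resolution of $G^D$ (using $E^1(G^D)=0$, via~\ref{vanishing_ext_constant} and~\ref{vanishing_ext_finite_or_multiplicative}) reduces $R^1f_*G\simeq\shHom(G^D,\Pic_{X/S})$ to the case of $\gm$, where it is the definition of the Picard functor; the edge maps are indeed $i_1=f^*$ and $i_2=f^*$, and the asserted exactness is exactly that of the five-term sequence.

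Two points genuinely require the care you already flag, and neither is a wrong step. First, the isomorphism $R^1f_*G\simeq\shHom(G^D,\Pic_{X/S})$ must be defined globally and functorially in $G$, not through a choice of étale-local resolution; producing the comparison map intrinsically from the evaluation pairing $G^D\times G\to\gm$ and checking it is an isomorphism on the generating cases $\gm$ and $\mu_n$ is the right fix. Second, the lemma as stated only asserts the existence of a functorial exact sequence, but the paper later (in the proof of~\ref{prop_description_morphismes_CTS}) uses the explicit Colliot-Thélène--Sansuc description of $\chi$, sending a torsor $T$ to the homomorphism $\varphi\mapsto[T\wedge^{G,\varphi}\gm]$; for your proof to feed into that argument you must also verify that the Leray edge map coincides with this type map. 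That is a standard but nonvacuous verification, and you have correctly identified it as the remaining work.
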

\details{
If $X$ has a $k$-rational point, it defines retractions for $i_1$ and $i_2$. In particular, the above sequence for $G_0$ induces a short exact sequence:
$$\xymatrix{0\ar[r] & H^1(k,G_0) \ar[r]^-{i_1}&
H^1(X,G_0) \ar[r]^-{\chi} &
\Hom_{k-gp}(G_0^D, \Pic_{X/k}) \ar[r] & 0
}$$
}

\begin{remarque}\rm
 In this sequence, $i_1$ and $i_2$ are given by pullback along~$f$, and the cohomology groups are computed with respect to the \emph{fppf} topology. Note that if $G$ is smooth they coincide with the étale ones.
\end{remarque}

Assume moreover that $\Pic_{X/S}$ is representable by a twisted lattice\footnote{This is the case if $S$ is the spectrum of a field $k$, and $X$ is proper and smooth over $k$, and $\ov{k}$-rational.} (\emph{i.e.} étale-locally $\Pic_{X/S}$ is the constant sheaf associated with an ordinary free abelian group of finite rank). Its Cartier dual~$G_0:=\Pic_{X/S}^D$ is of multiplicative type and of finite type. According to~\cite{Colliot-Thelene_Sansuc_Note2}, a \emph{universal torsor} is by definition a $G_0$-torsor $T$ over $X$ (or, by a slight abuse of notation, its class in $H^1(X,G_0)$) such that $\chi(T)$ is the canonical isomorphism $\lambda_0 : \Pic_{X/S}^{DD} \to \Pic_{X/S}$. 
For each $x\in X(S)$, the unique universal torsor $T_x\in H^1(X,G_0)$  whose pullback along $x$ is trivial is called the \emph{universal torsor associated with $x$}. Note that, since $x^*T_x$ is trivial, the $S$-point $x$ is in the image of $T_x(S)\to X(S)$. The \emph{elementary obstruction} is the class $\partial(\lambda_0)$ in the group $H^2(S, G_0)$. This is an obstruction to the existence of universal torsors. If the elementary obstruction vanishes, then there is a universal torsor, and some natural questions about the $S$-points of $X$ reduce to the same questions on $T$, which in general is arithmetically simpler than $X$.

In this section, we will relate these constructions to the Albanese torsor of section~\ref{section_Albanese_torsor}, giving by the way a \emph{geometric} description of the elementary obstruction (in terms of a gerbe). Let $a : X \to A^1(X)$ be the canonical morphism of~\ref{def_morphisme_canonique_vers_le_torseur_canonique}. 
Note that by~\ref{particular_case_of_a_sheaf}, $A^1(X)$ is a $G_0$-gerbe.

Let $G$ be an $S$-group of multiplicative type and of finite type. We will 
define maps $\chi'$ from $H^1(X,G)$ to $\Hom_{S-gp}(G^D, \Pic_{X/S})$ and 
$\partial'$ from $\Hom_{S-gp}(G^D, \Pic_{X/S})$ to $H^2(S,G)$ and compare them 
to the $\chi$ and $\partial$ of~\ref{suite_exacte_fondamentale_Colliot}. Let $T$ 
be a $G$-torsor over $X$. Then $T$ corresponds to a morphism $\varphi_T : X\to 
BG$. By \ref{PU_cas_des_groupes_de_tm} there is up to isomorphism a unique 
triple $(c_0, c_1, \gamma)$ such that $c_0 : D(\Pic_{X/S}) \to BG$ is a morphism 
of commutative group stacks, $c_1 : A^1(X) \to BG$ is a $c_0$-equivariant 
morphism, and $\gamma$ makes the diagram
$$\xymatrix{X \ar[r]^-a \ar[rd]_{\varphi_T} &A^1(X)\ar[d]^{c_1}\\
& BG}$$
2-commutative. We define $\chi'(T)$ to be the composition of the dual $D(c_0)$ with the canonical isomorphisms as follows:
$$\xymatrix{\chi'(T) : G^D \ar[r]^-{(\ref{particular_case_of_a_classifying_stack})^{-1}}&
D(BG) \ar[r]^-{D(c_0)} &
DD(\Pic_{X/S}) \ar[r]^-{e_{\Pic_{X/S}}^{-1}} &
\Pic_{X/S}.
}$$
Note that since the commutative group stacks involved here are sheaves, 
$\chi'(T)$ does not depend on the choice of~$c_0$ into its isomorphism class.
Now let $\lambda\in \Hom_{S-gp}(G^D, \Pic_{X/S})$. We define $\partial'(\lambda)$ to be the class in $H^2(S, G)$ of the $G$-gerbe (that is, the $BG$-torsor), obtained from the $D(\Pic_{X/S})$-torsor $A^1(X)$ by extension of scalars (see~\ref{prop_produit_contracte} b)) along the composed morphism
$$\xymatrix{D(\Pic_{X/S})  \ar[r]^-{D(\lambda)} &
 D(G^D)\ar[r]^-{D(\ref{particular_case_of_a_classifying_stack})} &
DD(BG) \ar[r]^-{e_{BG}^{-1}} &
BG.
}$$

\begin{prop}
\label{prop_description_morphismes_CTS} \ 
 \begin{itemize}
  \item[(i)] $\chi'$ and $\partial'$ are homomorphisms.
\item[(ii)] $\chi'=\chi$.
\item[(iii)] The sequence
 $$\xymatrix@C=1.42pc{0\ar[r] & H^1(S,G) \ar[r]^-{i_1}&
H^1(X,G) \ar[r]^-{\chi'} &
\Hom_{S-gp}(G^D, \Pic_{X/S}) \ar[r]^-{\partial'} &
H^2(S,G) \ar[r]^-{i_2}&
H^2(X,G)
}$$
 is exact.
\item[(iv)] The elementary obstruction $\partial(\lambda_0)$ vanishes if and only if the gerbe~$A^1(X)$ is trivial (equivalently, if and only if $\partial'(\lambda_0)=0$).
 \end{itemize}
\end{prop}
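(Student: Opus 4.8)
The plan is to handle the four assertions in order, the real work being concentrated in (ii); once $\chi'=\chi$ is known and $\partial'$ is recognized as the obstruction to factoring a gerbe through the Albanese morphism, parts (i), (iii) and (iv) follow formally. Before anything I would record the dictionary forced by the hypothesis that $\Pic_{X/S}$ is a twisted lattice: then $E^1(\Pic_{X/S})=0$, so by \ref{particular_case_of_a_sheaf} we have $A^0(X)=D(\Pic_{X/S})\simeq B G_0$ with $G_0=\Pic_{X/S}^D$, and $A^1(X)$ is a $BG_0$-torsor whose class lives in $H^1(S,BG_0)=H^2(S,G_0)$. I would also unwind \ref{PU_cas_des_groupes_de_tm}: a $G$-torsor $T$ is a morphism $\varphi_T:X\to BG$, and its attached triple $(c_0,c_1,\gamma)$ satisfies $\varphi_T\simeq c_1\circ a$ with $c_0:A^0(X)\to BG$ a homomorphism and $c_1:A^1(X)\to BG$ a $c_0$-equivariant morphism. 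For (i), contracted product of torsors corresponds to the sum of the maps into the commutative group stack $BG$, and by the uniqueness in \ref{PU_cas_des_groupes_de_tm} the factorization through $a$ respects this sum, so $T\mapsto c_0$ is additive; since $D(.)$ is additive on maps (\ref{dual_dun_produit_de_morphismes}) and the remaining arrows defining $\chi'$ are isomorphisms of group objects, $\chi'$ is a homomorphism. For $\partial'$, the composite $\lambda\mapsto\psi_\lambda$ is additive by the same additivity of $D(.)$, and then \ref{prop_extension_scalaires_multiplicative_en_phi} together with \ref{prop_produit_contracte} a) identifies the extension of scalars of the fixed torsor $A^1(X)$ along a sum with the contracted product, giving that $\partial'$ is a homomorphism.

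The main obstacle is (ii). I would compute $\chi(T)(\sigma)$ for $\sigma\in G^D$ geometrically: pushing $T$ forward along the character $\sigma$ is the line bundle classified by $B\sigma\circ\varphi_T:X\to B\gm$, so $\chi(T)(\sigma)=[B\sigma\circ\varphi_T]$. Using $\varphi_T\simeq c_1\circ a$ this equals $a_X^*[B\sigma\circ c_1]$, where $B\sigma\circ c_1:A^1(X)\to B\gm$ is $(B\sigma\circ c_0)$-equivariant. By the functoriality clause \ref{Picard_dun_torseur} c) the class $[B\sigma\circ c_1]$ corresponds, under the isomorphism $\Pic_{A^1(X)/S}\simeq\Pic_{A^0(X)/S}$, to $[B\sigma\circ c_0]$; and by \ref{comp_dual_pic}, compatibly with \ref{particular_case_of_a_classifying_stack} through \ref{rem_dual_classifiant_via_Picard}, this class is $\omega(B\sigma\circ c_0)=\omega\bigl(D(c_0)(\sigma)\bigr)$. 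Feeding this through \ref{compat_ev_Pic} b), which says exactly that $a_X^*\circ(\ref{Picard_dun_torseur}\,b)^{-1}\circ(\ref{comp_dual_pic})=e_{\Pic_{X/S}}^{-1}$, I recover the defining composite of $\chi'(T)(\sigma)$, whence $\chi=\chi'$. The delicate points are the bookkeeping of the several canonical isomorphisms and the equivariance of $B\sigma\circ c_1$, which is precisely what makes \ref{Picard_dun_torseur} b) and \ref{compat_ev_Pic} b) applicable.

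For (iii) I would first identify $\psi_\lambda$ with the homomorphism $c_0$ attached to $\lambda$ under the antiequivalence $D(.)$ (a diagram chase with \ref{fonctorialite_evaluation_map} and \ref{compatibility_of_can_isoms_with_ev_maps}), so that $\partial'(\lambda)=[A^1(X)\wedge^{A^0(X),c_0}BG]$. Then $\partial'(\lambda)=0$ means this $BG$-torsor is trivial, i.e. admits a $c_0$-equivariant section $c_1:A^1(X)\to BG$; such a $c_1$ yields a torsor $T$ with $\varphi_T=c_1\circ a$ and $\chi'(T)=\lambda$, and conversely, giving exactness at $\Hom_{S-gp}(G^D,\Pic_{X/S})$. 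For exactness at $H^2(S,G)$, the graph $(a,\id):X\to f^*A^1(X)$ trivializes the pullback of $\partial'(\lambda)$, so $i_2\circ\partial'=0$; conversely a class $\beta\in\ker i_2$ is a $G$-gerbe $\Gc$ over $S$ whose pullback to $X$ is trivial, hence carries a morphism $X\to\Gc$, and applying \ref{PU_cas_des_groupes_de_tm} to this torsor under the classifying stack $BG$ factors it through $a$, exhibiting $\Gc\simeq A^1(X)\wedge^{A^0(X),c_0}BG$, so $\beta\in\im\partial'$. Exactness at $H^1(X,G)$ and injectivity of $i_1$ then follow from (ii) and \ref{suite_exacte_fondamentale_Colliot}.

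Finally (iv) is immediate. Taking $G=G_0$ and $\lambda=\lambda_0$ the canonical isomorphism, the attached $c_0$ is the canonical identification $A^0(X)\simeq BG_0$, so $\partial'(\lambda_0)=[A^1(X)]$ in $H^2(S,G_0)$, which vanishes exactly when $A^1(X)$ is trivial. Since the exactness of both the $\partial$-sequence \ref{suite_exacte_fondamentale_Colliot} and the $\partial'$-sequence gives $\ker\partial=\im\chi=\im\chi'=\ker\partial'$, we conclude $\partial(\lambda_0)=0$ if and only if $\partial'(\lambda_0)=0$ if and only if $A^1(X)$ is trivial.
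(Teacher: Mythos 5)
Your proposal is correct and follows essentially the same route as the paper: (i) via the additivity of $D(.)$ and the uniqueness in the universal property, (ii) by identifying $\chi(T)(\sigma)$ with the pushforward $T\wedge^{G,\sigma}\gm$ and matching it against the defining composite of $\chi'$ through the compatibility lemmas, (iii) by reading $\partial'\circ\chi'=0$ and its converse off the equivariant factorization $\varphi_T\simeq c_1\circ a_X$, and (iv) by observing that the scalar-extension map for $\lambda_0$ is an isomorphism. The only cosmetic difference is in (ii), where you invoke \ref{Picard_dun_torseur}~c) and \ref{compat_ev_Pic}~b) in place of the paper's direct appeal to the identity $e^{-1}_{\Pic_{X/S}}\circ D(c_0)=\varphi_T^*\circ (\ref{Picard_dun_torseur})^{-1}\circ(\ref{comp_dual_pic})$ extracted from the proof of \ref{PU_cas_des_groupes_de_tm}; these amount to the same bookkeeping.
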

\begin{proof}
The map $\partial'$ is a homomorphism due to~\ref{dual_dun_produit_de_morphismes} and~\ref{prop_extension_scalaires_multiplicative_en_phi}. 
To prove that $\chi'$ is a homomorphism, we can invoke (ii), 
or proceed directly as follows. Let $T$ and $T'$ be two 
$G$-torsors over $X$ and $(c_0, c_1), (c_0', c_1')$ the 
associated pairs of morphisms as above. The 
product map $c_1.c_1'$ from~$A^1(X)$ to~$BG$ is 
equivariant under $c_0.c_0'$ and maps the object $a\in 
A^1(X)(X)$ to $T\wedge^G T'$. Looking at the definition of 
$\chi'$ and using~\ref{dual_dun_produit_de_morphismes}, we 
deduce the expected relation $\chi'(T\wedge^G 
T')=\chi'(T).\chi'(T')$.

Let us prove (ii). Let $T$ be a $G$-torsor over~$X$. Let us prove that the morphisms $\chi(T)$ and $\chi'(T)$ are equal. By \cite[1.5.2 (ii)]{Colliot-Thelene_Sansuc_La_Descente_Duke}, the morphism~$\chi(T) : G^D \to \Pic_{X/S}$ is the morphism that maps a character $\varphi : G \to \gm$ to the point of $\Pic_{X/S}$ corresponding to the $\gm$-torsor $T\wedge^{G,\varphi}\gm$. By construction, the morphism~$c_0$ is such that $e^{-1}_{\Pic_{X/S}}\circ D(c_0)=\varphi_T^*\circ (\ref{Picard_dun_torseur})^{-1}\circ (\ref{comp_dual_pic})$ (see the proof of~\ref{PU_cas_des_groupes_de_tm}). Here (\ref{Picard_dun_torseur}) is the identity, hence $\chi'(T)=\varphi_T^*\circ(\ref{comp_dual_pic})\circ (\ref{particular_case_of_a_classifying_stack})^{-1}$ and it suffices to prove that the following diagram commutes:
% $$\xymatrix{
% &&G^D \ar[rrd]^{\chi(T)} \ar[d]^{(\ref{Picard_classifiant})^{-1}}\\
% D(BG)\ar[rr]_{(\ref{comp_dual_pic})}
%    \ar[urr]^{(\ref{particular_case_of_a_classifying_stack})} &&
% \Pic_{BG/S} \ar[d]^{e_{\Pic_{BG/S}}} \ar[rr]_{\varphi_T^*}&&
% \Pic_{X/S} \ar[d]^{e_{\Pic_{X/S}}} \\DDD(BG) \ar[rr]_{DD(\ref{comp_dual_pic})} \ar[u]_{D(e_{BG})} &&
% DD(\Pic_{BG/S}) \ar[rr]_{DD(\varphi_T^*)} &&
% DD(\Pic_{X/S})
% }$$
$$\xymatrix{
&&G^D \ar[rrd]^{\chi(T)} \ar[d]^{(\ref{Picard_classifiant})^{-1}}\\
D(BG)\ar[rr]_{(\ref{comp_dual_pic})}
   \ar[urr]^{(\ref{particular_case_of_a_classifying_stack})} &&
\Pic_{BG/S} \ar[rr]_{\varphi_T^*}&&
\Pic_{X/S}
}$$
The left triangle commutes because of~\ref{rem_dual_classifiant_via_Picard}. It remains to prove that the right triangle commutes. Since all the constructions commute with base change it suffices to do it on $S$-points. Let $\varphi : G\to \gm$ be a character of $G$. By construction of the morphism~(\ref{Picard_classifiant})$^{-1}$, it maps $\varphi$ to the class of an invertible sheaf $\Lc(\varphi)$ such that $\varphi_T^*\Lc(\varphi)$ is the class of the $\gm$-torsor $T\wedge^{G,\varphi}\gm$ in~$\Pic_{X/S}(S)$, as desired.

 Now let us prove the exactness of the sequence~(iii) in 
$\Hom_{S-gp}(G^D, \Pic_{X/S})$. Let $T$ be a $G$-torsor over 
$X$. The morphism $c_1 : A^1(X)\to BG$ in the above 
construction of $\chi'(T)$ is equivariant under $c_0$. But 
using~\ref{fonctorialite_evaluation_map} we see that~$c_0$ 
is equal to the morphism 
$\xymatrix@C=1pc{D(\Pic_{X/S})  \ar[r]&  BG}$ along which we 
extend the gerbe $A^1(X)$ to define $\partial'(\chi'(T))$. 
This proves that the gerbe we get by extension of scalars is 
trivial, hence $\partial'(\chi'(T))=0$. Conversely, if 
$\partial'(\lambda)=0$ for some morphism $\lambda : G^D \to 
\Pic_{X/S}$, this means that the gerbe defining this class 
is trivial. We have a morphism $A^1(X) \to BG$ that is 
equivariant under $e_{BG}^{-1}\circ 
D(\ref{particular_case_of_a_classifying_stack}) \circ 
D(\lambda)$. Composing with $a : X \to A^1(X)$, we get a 
morphism $X \to BG$ which corresponds to a $G$-torsor $T$ 
and we check that $\chi'(T)=\lambda$. Similarly, the 
exactness in~$H^2(S, G)$ is a consequence of the 
universal property~\ref{PU_cas_des_groupes_de_tm}.

Obviously $\partial(\lambda_0)=0$ if and only if $\partial'(\lambda_0)=0$, by~(iii). But the map along which we extend the scalars to construct the gerbe defining~$\partial'(\lambda_0)$ is an isomorphism.
\details{Par~\ref{compatibility_of_can_isoms_with_ev_maps} et \ref{fonctorialite_evaluation_map} c'est l'isomorphisme canonique de~\ref{particular_case_of_a_sheaf}.}%
Hence $\partial'(\lambda_0)=0$ if and only if the gerbe~$A^1(X)$ is trivial.
\end{proof}

\begin{remarque}\rm
 As one of the referees points out, it is certainly possible, but perhaps 
painful and not worth the effort, to determine whether $\partial$ 
and $\partial'$ are actually equal or not (they might be opposite). Since both 
maps are functorial (covariant) in $G$, this is equivalent to 
$\partial(\lambda_0)=\partial'(\lambda_0)$. In other words, $\partial=\partial'$ 
if and only if the elementary obstruction $\partial(\lambda_0)$ is the class in 
$H^2(S, G_0)$ of the gerbe $A^1(X)$.
\end{remarque}

Now let us describe the universal torsors $T_x$ in terms of 
the Albanese morphism $a : X\! \to\! A^1(X)$. Let $\eta_0 : 
BG_0\to A^0(X)$ be the canonical isomorphism 
of~\ref{particular_case_of_a_sheaf}. If $x\in X(S)$ is an 
$S$-point of $X$, its image $a(x)$ induces an 
$\eta_0$-equivariant isomorphism of torsors (that is, a 
trivialization of the $G_0$-gerbe $A^1(X)$)
$\xymatrix@C=1pc{BG_0 \ar[r]^-{\sim} & A^1(X)}$
which maps the trivial $G_0$-torsor to $a(x)$. Let $T'_x$ denote the torsor over $X$ corresponding to the composition
$$\xymatrix{X\ar[r]^-a & A^1(X)& BG_0 \ar[l]_-{\sim}}.$$

\begin{prop}
 The torsor $T'_x$ is (isomorphic to) the universal torsor $T_x$ associated with $x$.
\end{prop}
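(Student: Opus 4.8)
The plan is to verify that the candidate $T'_x$ satisfies the two defining properties of the universal torsor $T_x$: that $T'_x$ is universal, \emph{i.e.} $\chi(T'_x)=\lambda_0$, and that its pullback $x^*T'_x$ is trivial. The uniqueness of the universal torsor with trivial pullback along $x$ will then force $T'_x\simeq T_x$.

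First I would unwind the triple $(c_0,c_1,\gamma)$ attached to $T'_x$ by the universal property~\ref{PU_cas_des_groupes_de_tm}. By construction the classifying morphism $\varphi_{T'_x} : X\lto BG_0$ is the composition of $a : X\lto A^1(X)$ with the inverse of the trivialization $\theta : BG_0\lto A^1(X)$. Since $\theta$ is $\eta_0$-equivariant, where $\eta_0 : BG_0\lto A^0(X)=D(\Pic_{X/S})$ is the canonical isomorphism of~\ref{particular_case_of_a_sheaf}, one may take $c_1=\theta^{-1}$ (which is $\eta_0^{-1}$-equivariant), $c_0=\eta_0^{-1}$, and $\gamma=\id$. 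By the uniqueness in~\ref{PU_cas_des_groupes_de_tm}, this is the triple that computes $\chi'(T'_x)$.

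Next I would compute $\chi'(T'_x)$ directly from its definition
$$\xymatrix{\chi'(T'_x) : G_0^D \ar[r]^-{(\ref{particular_case_of_a_classifying_stack})^{-1}}& D(BG_0) \ar[r]^-{D(c_0)} & DD(\Pic_{X/S}) \ar[r]^-{e_{\Pic_{X/S}}^{-1}} & \Pic_{X/S}.}$$
With $c_0=\eta_0^{-1}$ and $\eta_0=(\ref{particular_case_of_a_sheaf})$, this equals $e_{\Pic_{X/S}}^{-1}\circ D(\eta_0)^{-1}\circ (\ref{particular_case_of_a_classifying_stack})^{-1}$. The commutative square of~\ref{compatibility_of_can_isoms_with_ev_maps}, applied to the sheaf $G=\Pic_{X/S}$, says precisely that $(\ref{particular_case_of_a_classifying_stack})\circ D(\eta_0)\circ e_{\Pic_{X/S}}$ equals the Cartier evaluation $c=\lambda_0^{-1}$. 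Inverting yields $\chi'(T'_x)=\lambda_0$, whence $\chi(T'_x)=\lambda_0$ by~\ref{prop_description_morphismes_CTS}~(ii); thus $T'_x$ is a universal torsor.

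Finally, the triviality of $x^*T'_x$ is immediate from the construction: since $\theta$ carries the neutral point (the trivial $G_0$-torsor) to $a(x)$, the composite $\varphi_{T'_x}\circ x=\theta^{-1}\circ a(x)$ is the neutral point of $BG_0$, so $x^*T'_x$ is trivial. Combined with the previous step, $T'_x$ is the unique universal torsor trivialized by $x$, that is $T'_x\simeq T_x$. The one delicate point is the bookkeeping of the canonical identifications in the computation of $\chi'$: the whole argument hinges on matching the group-stack evaluation $e_{\Pic_{X/S}}$ with the Cartier evaluation through~\ref{compatibility_of_can_isoms_with_ev_maps}, which is exactly where the compatibility of the two double-duality maps is used.
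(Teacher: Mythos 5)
Your proof is correct and follows essentially the same route as the paper's: reduce to showing $\chi(T'_x)=\lambda_0$ via the uniqueness of the universal torsor trivialized by $x$, identify $\chi(T'_x)$ with $\chi'(T'_x)$ by~\ref{prop_description_morphismes_CTS}~(ii), unwind the triple $(c_0,c_1,\gamma)$ to get $c_0=\eta_0^{-1}$, and conclude with the compatibility of the two double-duality evaluations in~\ref{compatibility_of_can_isoms_with_ev_maps}. You merely make explicit the identification $D(c_0)=D(\eta_0)^{-1}$ and the relation $c=\lambda_0^{-1}$, which the paper leaves implicit.
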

\begin{proof}
 By construction $T'_x$ is a $G_0$-torsor over $X$ whose 
pullback along~$x$ is trivial. Hence is suffices to prove 
that $\chi(T'_x)\in \Hom_{S-gp}(G_0^D, \Pic_{X/S})$ is equal 
to $\lambda_0$. Owing 
to~\ref{prop_description_morphismes_CTS} (ii), $\chi(T'_x)$ 
is equal to the composition \vskip-8mm
$$\xymatrix{\Pic_{X/S}^{DD}=G_0^D \ar[r]^-{(\ref{particular_case_of_a_classifying_stack})^{-1}} &
D(BG_0) \ar[r]^-{D(\eta_0)^{-1}} &
DD(\Pic_{X/S}) \ar[r]^-{e_{\Pic_{X/S}}^{-1}} &
\Pic_{X/S}
}$$
which is equal to $\lambda_0$ due to Lemma~\ref{compatibility_of_can_isoms_with_ev_maps}.
\end{proof}

\section{Applications to Grothendieck's section conjecture}
\label{section_CS}

Let $X$ be an algebraic stack over a field~$k$. Borne and 
Vistoli define a Nori fundamental gerbe for $X$ as 
follows (see~\cite[5.1]{Borne_Vistoli_Fundamental_Gerbe}). 
It is a gerbe $\Pi_{X/k}$ with a $k$-morphism $X\to 
\Pi_{X/k}$ that is universal for morphisms to finite stacks 
(a finite stack is an algebraic stack $\Gamma$ over $k$ 
with finite diagonal, which admits a flat surjective map 
$U\to \Gamma$, where $U$ is a finite scheme over $k$). It is unique 
if it exists \cite[5.2]{Borne_Vistoli_Fundamental_Gerbe}. 
By~\cite[5.7]{Borne_Vistoli_Fundamental_Gerbe} an algebraic 
stack $X$ has a fundamental gerbe if and only if it is 
inflexible. (See~\cite[5.3]{
Borne_Vistoli_Fundamental_Gerbe} for the definition of 
inflexible stacks. 
By~\cite[5.5]{Borne_Vistoli_Fundamental_Gerbe}, if $X$ is a 
geometrically connected and geometrically reduced algebraic 
stack of finite type over~$k$, then it is inflexible.)
This formalism allows to reformulate Grothendieck's section 
conjecture as follows: the traditional ``section map'' is 
bijective if and only if the natural morphism $X\to 
\Pi_{X/k}$ induces a bijection on isomorphism classes of 
$k$-rational points.

\begin{prop}
\label{exemples_CS}
 Let $P$ be a Severi-Brauer variety over a field $k$. Let 
$r$ be its exponent. Then $\Pic(P)$ is generated by an 
invertible sheaf of degree $r$, which we call $\Oc_P(r)$. 
Let $f : X\to P$ be a morphism, where $X$ is an inflexible 
algebraic stack. Assume that there exists a prime $p$ 
dividing $r$ and an invertible sheaf $\Lambda$ on $X$, such 
that $\Lambda^{\otimes p}\simeq f^*\Oc_P(r)$. Then 
$\Pi_{X/k}(k)=\emptyset$.
\end{prop}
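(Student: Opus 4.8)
The plan is to argue by contradiction: assuming $\Pi_{X/k}(k)\neq\emptyset$, I will produce a $k$-morphism from $X$ to a \emph{finite} gerbe $\Gamma$ over $k$ that has \emph{no} $k$-rational point, and then invoke the universal property of the fundamental gerbe. Indeed, if $\xi:\Spec k\to\Pi_{X/k}$ is a rational point and $X\to\Gamma$ is any morphism to a finite stack, then by \cite[5.1]{Borne_Vistoli_Fundamental_Gerbe} it factors as $X\to\Pi_{X/k}\xrightarrow{\rho}\Gamma$, and $\rho\circ\xi$ is a $k$-point of $\Gamma$; so it suffices to exhibit such a $\Gamma$ with $\Gamma(k)=\emptyset$ together with a $k$-morphism $X\to\Gamma$.

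First I would build the gerbe. Write $\beta_Y:\Pic(Y)\to H^2(Y,\mu_p)$ for the Bockstein of the Kummer sequence $1\to\mu_p\to\gm\xrightarrow{p}\gm\to 1$ (in the fppf topology, to allow $p=\mathrm{char}\,k$); its kernel is $p\,\Pic(Y)$ and it is functorial in $Y$. Since $\Pic(P)=\Z\cdot\Oc_P(r)$ with $\Oc_P(r)$ a generator and $p>1$, the class $\beta_P(\Oc_P(r))\in H^2(P,\mu_p)$ is nonzero. Over $\ov{k}$ we have $\ov{P}\simeq\mathbb{P}^n_{\ov k}$ and $\Oc_{\ov P}(r)=\Oc_{\ov P}(r/p)^{\otimes p}$ (here $p\mid r$), so $\beta_{\ov P}(\Oc_{\ov P}(r))=0$; thus $\beta_P(\Oc_P(r))$ dies on the geometric fibre. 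Because $H^1(\mathbb{P}^n_{\ov k},\mu_p)=0$ (the Kummer sequence gives $\ov k^{\times}/(\ov k^{\times})^p\to H^1(\mu_p)\to\Pic(\mathbb{P}^n)[p]$, both ends vanishing), the Leray spectral sequence for $\pi:P\to\Spec k$ has $R^1\pi_*\mu_p=0$, whence there is an exact sequence $0\to H^2(k,\mu_p)\to H^2(P,\mu_p)\to H^0(k,R^2\pi_*\mu_p)$. Consequently $\pi^*$ is injective on $H^2(\cdot,\mu_p)$ and its image contains every class that restricts to $0$ on $\ov P$, so $\beta_P(\Oc_P(r))=\pi^*c$ for a unique $c\in H^2(k,\mu_p)$, and $c\neq 0$ by injectivity. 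Let $\Gamma$ be the $\mu_p$-gerbe over $\Spec k$ with class $c$; it is a finite stack (a form of $B\mu_p$), and since $c\neq 0$ we have $\Gamma(k)=\emptyset$.

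Next I would produce the morphism $X\to\Gamma$, and this is the only place the hypothesis on $\Lambda$ enters. Letting $\pi_X:X\to\Spec k$ be the structural map, so that $\pi_X=\pi\circ f$, pulling $c$ back gives
$$\pi_X^*c=f^*\beta_P(\Oc_P(r))=\beta_X(f^*\Oc_P(r))=\beta_X(\Lambda^{\otimes p})=\beta_X(p\,[\Lambda])=0$$
in $H^2(X,\mu_p)$, the last equality because $\ker\beta_X=p\,\Pic(X)$. Vanishing of $\pi_X^*c$ means precisely that the pulled-back gerbe $\Gamma\times_k X\to X$ is neutral, i.e. admits a section, i.e. there is a $k$-morphism $X\to\Gamma$. (Equivalently, one may phrase this through root stacks: $\sqrt[p]{\Oc_P(r)}$ is the $\mu_p$-gerbe over $P$ of class $\beta_P(\Oc_P(r))$, hence isomorphic to $\Gamma\times_k P$; the pair $(\Lambda,\Lambda^{\otimes p}\simeq f^*\Oc_P(r))$ defines a lift $X\to\sqrt[p]{\Oc_P(r)}$ of $f$, and composing with the projection to $\Gamma$ yields the desired map.) Feeding this morphism into the universal property as in the first paragraph then contradicts $\Pi_{X/k}(k)\neq\emptyset$ and finishes the proof.

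The main obstacle is the second paragraph: showing that the Bockstein class $\beta_P(\Oc_P(r))$, which \emph{a priori} lives on $P$, descends to a \emph{nonzero} class $c$ on $\Spec k$. This rests on the vanishing $H^1(\mathbb{P}^n_{\ov k},\mu_p)=0$, hence $R^1\pi_*\mu_p=0$, which simultaneously forces the injectivity of $\pi^*$ on $H^2(\cdot,\mu_p)$ and the descent of any class trivial on $\ov P$; one must run this in the fppf topology so that it remains valid when $p=\mathrm{char}\,k$, and record the standard facts that $\mu_p$-gerbes over a field are finite stacks and are neutral exactly when their class in $H^2(k,\mu_p)$ vanishes. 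The remaining inputs — that $\Oc_P(r)$ generates $\Pic(P)$, that $\ker\beta=p\,\Pic$, and that $\Lambda^{\otimes p}\simeq f^*\Oc_P(r)$ forces $\pi_X^*c=0$ — are immediate.
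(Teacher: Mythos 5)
Your proof is correct, but it takes a genuinely different route from the paper's. The paper forms the root gerbe $Y=\sqrt[p]{\Oc_P(r)}$ over $P$, lifts $f$ to $X\to Y$ via $\Lambda$, and then invokes its own Albanese machinery: $\picto_{Y/k}\simeq \Z/p\Z$, so $A^1_{\tau}(Y)$ is a finite $\mu_p$-gerbe receiving a map from $\Pi_{Y/k}$, and a $k$-point of $A^1_{\tau}(Y)$ would split the sequence $0\to\bgm\to\champic^{\tau}(Y/k)\to\picto_{Y/k}\to 0$, forcing the generator of $\picto_{Y/k}$ to come from an actual line bundle and hence $\Oc_P(r/p)$ to be defined over $k$ --- contradicting that $\Oc_P(r)$ generates $\Pic(P)$. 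You instead construct the obstructing finite gerbe directly over $\Spec k$, as the $\mu_p$-gerbe of class $c$ with $\pi^*c=\beta_P(\Oc_P(r))$, proving $c\neq 0$ by injectivity of $\pi^*$ on $H^2(\cdot,\mu_p)$ and producing $X\to\Gamma$ from $\beta_X(\Lambda^{\otimes p})=0$; your $\Gamma$ is of course the same gerbe as the paper's $A^1_{\tau}(Y)$ (equivalently the pushforward of the root gerbe), but you never need the duality/Albanese formalism, Theorem~\ref{PU_albanese}, or the inflexibility of $Y$, which makes the argument more elementary and self-contained. What the paper's route buys is precisely the identification of this gerbe as the Albanese torsor (and, per Remark~\ref{lien_avec_gerbe_fondamentale}, as $\Pi^{\textrm{ab}}_{Y/k}$), which is the point being illustrated. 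One step of yours is stated too quickly: $R^1\pi_*\mu_p=0$ does not follow formally from $H^1(\P^n_{\ov{k}},\mu_p)=0$, since fppf higher direct images are not computed by geometric fibres; the clean argument is that the Kummer sequence and $\pi_*\gm=\gm$ give $R^1\pi_*\mu_p\hookrightarrow{}_p\Pic_{P/k}={}_p\Z=0$. Likewise ``image of $\pi^*$ contains every class killed on $\ov{P}$'' needs the standard limit argument passing from $\ov{k}$ to a finite subextension, which is an fppf cover of $\Spec k$. Both are routine repairs and do not affect correctness.
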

\begin{proof}
 Let $Y$ denote the stack of $p$-th roots of $\Oc_P(r)$. For 
any $k$-scheme $S$, $Y(S)$ is the category of triples $(x, 
M, \varphi)$ where $x\in P(S)$, $M$ is an invertible sheaf 
on $S$ and $\varphi$ is an isomorphism from~$M^{\otimes p}$ 
to $x^*\Oc_P(r)$. There is a canonical forgetful morphism $g : Y\to P$ and 
there is on $Y$ a canonical $p$-th 
root $\Omega$ of $g^*\Oc_P(r)$, defined by $u^*\Omega=M$ if $u : S\to Y$ 
is the morphism corresponding to some object $(x,M,\varphi)$ of $Y(S)$. The 
sheaf $\Lambda$ defines a 
morphism from $X$ to $Y$, hence a morphism from~$\Pi_{X/k}$ 
to~$\Pi_{Y/k}$ and it suffices to prove that $\Pi_{Y/k}$ 
does not have any $k$-point.

The stack $Y$ is proper, smooth and geometrically integral 
over $k$. In particular it has an Albanese torsor 
$A^1_{\tau}(Y)$ together with a morphism $a_Y : Y\to 
A^1_{\tau}(Y)$ (see section~\ref{section_Albanese_torsor}), 
where $A^1_{\tau}(Y)$ is a torsor under 
$A^0_{\tau}(Y)=D(\picto_{Y/k})$.
 The invertible sheaf $\Omega\otimes g^*\Oc_P(\frac{r}{p})^{-1}$ (which 
is defined after some extension of $k$ that trivializes $P$) defines a section 
of $\picto_{Y/k}$ that is mapped to the section $1\in 
\Z/p\Z$ under the canonical morphism 
$\Pic_{Y/k}\to \Z/p\Z$. This proves that the induced morphism $\picto_{Y/k}\to 
\Z/p\Z$ is surjective. Since $\Pic_{P/k}$ has a trivial torsion 
component it is also injective and we get an isomorphism 
$\picto_{Y/k}\simeq\Z/p\Z$. Hence $A^0_{\tau}(Y)\simeq 
B\mu_p$ and $A^1_{\tau}(Y)$ is a $\mu_p$-gerbe. In 
particular it is a finite gerbe, so by definition of 
$\Pi_{Y/k}$ there is a morphism $\Pi_{Y/k}\to A^1_{\tau}(Y)$ 
and to conclude the proof it suffices to prove that 
$A^1_{\tau}(Y)$ has no $k$-points.

Assume that $A^1_{\tau}(Y)$ has a $k$-point. Now we claim that any $k$-point of 
$\picto_{Y/k}$ is induced by a genuine invertible sheaf on $Y$. This yields a 
contradiction because then the invertible sheaf $g^*\Oc_P(\frac{r}p)\otimes 
\Omega^{-1}$ is defined over $k$, hence so is $\Oc_P(\frac{r}p)$. To prove the 
claim, notice that a $k$-point of~$A^1_{\tau}(Y)$ is a retraction of the natural 
morphism $\bgm \to \champic^{\tau}(Y/S)$. Then the natural sequence
$$0\lto \bgm \lto \champic^{\tau}(Y/k)\lto \picto_{Y/k}\lto 0$$
splits so the projection $\champic^{\tau}(Y/k)\to \picto_{Y/k}$ has a section and this yields the assertion.
\end{proof}

\begin{remarque}\rm\ 
\begin{enumerate}
\item Borne and Vistoli give an elegant independent proof of 
Proposition~\ref{exemples_CS}, using the dual Severi-Brauer 
variety (see~\cite[13.2]{Borne_Vistoli_Fundamental_Gerbe}).
 \item Proposition~\ref{exemples_CS} provides a way to 
generate examples of smooth projective geometrically 
connected curves that satisfy Grothendieck's section 
conjecture, over any field with non-trivial Brauer group 
(see~\cite[section 13]{Borne_Vistoli_Fundamental_Gerbe} for 
more details).
\end{enumerate}
\end{remarque}

\begin{remarque}\rm
 \label{lien_avec_gerbe_fondamentale}
For any inflexible algebraic stack~$X$ over a field~$k$, one can define the 
abelianization~$\Pi_{X/k}^{\textrm{ab}}$ through a rigidification process. It 
comes with a natural morphism~$\Pi_{X/k} \to \Pi_{X/k}^{\textrm{ab}}$ and the 
composition~$X\to \Pi_{X/k}^{\textrm{ab}}$ is universal for morphisms from~$X$ 
to a gerbe banded by a finite abelian group 
(see~\cite[App. A]{Biswas_Borne_Tamely_ramified_torsors} or 
\cite{Borne_Vistoli_Fundamental_Gerbe_2} for more details about the abelianized 
fundamental gerbe).
The role played by the Albanese torsor~$A^1_{\tau}(Y)$ in 
the proof of~\ref{exemples_CS} is not surprising. Indeed, it 
turns out that~$A^1_{\tau}(Y)$ is isomorphic 
to~$\Pi_{Y/k}^{\textrm{ab}}$: it is a finite gerbe (hence profinite), and it 
follows from~\ref{PU_albanese} that the Albanese 
morphism~$Y\to A^1_{\tau}(Y)$ is universal for maps from~$Y$ 
to a gerbe banded by a finite abelian group. Hence it 
satisfies the same universal property as $Y\to 
\Pi_{Y/k}^{\textrm{ab}}$ and in particular 
$A^1_{\tau}(Y)\simeq \Pi_{Y/k}^{\textrm{ab}}$.
\end{remarque}

\section{Some vanishing results for Ext sheaves}
\label{section_Ext}

In this section, we recall some vanishing or 
representability theorems for sheaves of the form 
$\shExt^i(G, \gm)$. In the following results, $S$ is a base 
scheme, and $G$ is a commutative group scheme over $S$. The 
sheaves $\shExt^i$ are computed as derived functors in the 
abelian category of \emph{fppf} sheaves of commutative 
groups. Note that if $G$ and $H$ are such sheaves, then 
$\shExt^i(G,H)$ is also the $\emph{fppf}$ sheaf associated 
with the presheaf $T\mapsto \Ext^i_T(G\times_S T, H\times_S 
T)$. With this description, it is clear that forming 
$\shExt^i(G,H)$ commutes with any base change $T\to S$. As 
mentioned on page~\pageref{notations}, $\shExt^i(G, \gm)$ 
will be denoted by~$E^i(G)$.

\begin{thm}[{\cite[VIII~prop.~3.3.1]{SGA7}}]
\label{vanishing_ext_finite_or_multiplicative}
 Assume that $G$ is finite and locally free over $S$, or that it is of finite type and of multiplicative type. Then $E^1(G)=0$.
\end{thm}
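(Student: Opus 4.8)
The plan is to reduce to a few elementary cases and then treat the two clauses separately, exploiting two formal properties of $E^1(\cdot)=\shExt^1(\cdot,\gm)$ recalled at the beginning of this section: it commutes with arbitrary base change, and it is additive, $E^1(G_1\times G_2)\simeq E^1(G_1)\times E^1(G_2)$. To prove that the \emph{fppf} sheaf $E^1(G)$ vanishes it is enough to show that every local section dies on some \emph{fppf} cover; since the formation of $E^1$ commutes with base change, I may always replace $S$ by a local scheme. Recall also from~\ref{particular_case_of_a_sheaf} that $E^1(G)\simeq H^0(D(G))$ and that $D(G)$ is the stack of extensions of $G$ by $\gm$; hence the vanishing to be proved says exactly that every extension $0\to \gm\to E\to G\to 0$ splits \emph{fppf}-locally on $S$.

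For $G$ of multiplicative type and of finite type I would argue without ever leaving the multiplicative world. Given an extension $0\to\gm\to E\to G\to 0$, the group $E$ is a $\gm$-torsor over $G$, hence affine and flat of finite presentation over $G$, and therefore over $S$; its geometric fibres are extensions of a group of multiplicative type by a torus, and such an extension has no unipotent part (any unipotent subgroup would inject into the multiplicative group $G$), so the fibres are again of multiplicative type. By the rigidity of groups of multiplicative type (SGA~3~\cite[X]{SGA3}), $E$ is itself of multiplicative type. Applying the exact contravariant equivalence $H\mapsto X^*(H)$ between groups of multiplicative type of finite type and \'etale-locally constant finitely generated abelian sheaves, the extension becomes a short exact sequence of character sheaves $0\to X^*(G)\to X^*(E)\to X^*(\gm)\to 0$ with $X^*(\gm)=\Z$. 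Since $\Z$ is free of rank one, this sequence splits \'etale-locally, and transporting a splitting back through the equivalence splits $E$. Thus $E^1(G)=0$ in this case.

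For $G$ finite and locally free no torus structure is available, and this is the harder clause. I would first pass to a local base $S=\Spec R$, so that $G=\Spec A$ with $A$ a finite $R$-algebra, hence semilocal; consequently $\Pic(G)=0$ and the underlying $\gm$-torsor of any extension $E$ is trivial. A choice of trivialization identifies $E$, as a scheme, with $\gm\times_S G$, the group law being twisted by a normalized symmetric $2$-cocycle $f\colon G\times_S G\to\gm$ (a symmetric Hochschild $2$-cocycle). So over such a base $E^1(G)$ is computed by the symmetric cohomology $H^2_{\textrm{sym}}(G,\gm)$, and it remains to show that $f$ becomes a coboundary after an \emph{fppf} cover of $R$. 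Here one uses that $G$ is killed by its order $n$ (by Deligne's theorem, so that $E^1(G)$ is $n$-torsion) together with the fact that $[n]\colon\gm\to\gm$ is an \emph{fppf} epimorphism: after extracting suitable $n$-th roots on an \emph{fppf} cover one produces a cochain $c\colon G\to\gm$ with $f=\delta c$. This last cocycle manipulation is the main obstacle; it is essentially equivalent to the exactness of Cartier duality on finite locally free group schemes, and rather than reprove it one may simply invoke SGA~7~\cite[VIII~3.3.1]{SGA7}.
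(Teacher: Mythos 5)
First, a point of comparison: the paper does not prove this statement at all. It sits in the appendix (Section~\ref{section_Ext}), which explicitly only ``recollects\dots some known results'', and its entire proof is the citation to SGA~7, VIII, 3.3.1; so you are attempting something the paper deliberately treats as a black box. Your treatment of the multiplicative-type clause is essentially the standard argument and looks complete modulo standard SGA~3 inputs: the middle term of an extension by $\gm$ is affine, flat and of finite presentation with geometric fibres of multiplicative type, hence is itself of multiplicative type, and the exact anti-equivalence with character groups reduces the splitting to lifting the identity of $\Z$ through a surjection of finitely generated abelian groups, which can be done \'etale-locally. That half I would accept.

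The finite locally free clause, however, has a genuine gap, and your closing sentence makes the argument circular: you end by invoking SGA~7, VIII, 3.3.1, which is precisely the statement to be proved. The cocycle step preceding it does not work as written. Knowing that $E^1(G)$ is killed by the order $n$ of $G$ only tells you that $f^n=\delta c$ for some unit $c$ on $G$; to ``divide by $n$'' you would need an $n$-th root of $c$ \emph{as a unit on $G$}, which requires an fppf cover of $G$ rather than of $S$, so the resulting cochain does not split the base-changed extension over any cover of $S$. Even granting such a root $d$ over a cover of $S$, you would only learn that $f\cdot(\delta d)^{-1}$ is a symmetric $2$-cocycle with values in $\mu_n$, i.e.\ you would be reduced to showing that the image of $H^2_{\textrm{sym}}(G,\mu_n)$ in $H^2_{\textrm{sym}}(G,\gm)$ vanishes --- not easier than the original problem. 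The underlying principle ``the Ext sheaf is $n$-torsion and $n$-th roots of units exist fppf-locally, hence it vanishes'' is simply false: it would equally ``prove'' $E^2(\alpha_2)=0$ in characteristic $2$, which contradicts the remark following Theorem~\ref{vanishing_ext23_finite}. A self-contained proof of the finite flat case needs a real input (the bialgebra/unit-group analysis of SGA~7, VIII, or Oort's book); as it stands, your proof of this clause is the same as the paper's, namely a citation.
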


\begin{thm}[\cite{Breen_Theoreme_annulation}, main theorem and Remark~3]
\label{vanishing_ext23_finite}
 Assume that $G$ is finite and flat over $S$, and that 2 is invertible in $S$. Then $E^2(G)=E^3(G)=0.$
\end{thm}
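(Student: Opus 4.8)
The plan is to sketch the strategy behind this (deep) result of Breen, which I would organize as a reduction of the vanishing of the \emph{fppf} sheaves $E^2(G)=\shExt^2(G,\gm)$ and $E^3(G)=\shExt^3(G,\gm)$ to the computation of a handful of \emph{universal} $\shExt$ sheaves, and then as an identification of the hypothesis $2\in\Oc_S^\times$ with the vanishing of a $2$-primary contribution in degrees $2$ and $3$.

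First I would localise the base. Since the formation of $\shExt^i(G,\gm)$ commutes with base change and the vanishing of an \emph{fppf} sheaf can be tested on stalks, I may assume $S=\Spec R$ with $R$ strictly henselian local; the decisive model case is that of a separably (hence algebraically) closed field $k$. Over such a $k$ a finite commutative group scheme sits in the connected--étale sequence, and each factor decomposes further, by a Jordan--Hölder dévissage in the artinian category of finite commutative group schemes, into the simple group schemes $\Z/\ell\Z$ (all primes $\ell$), $\mu_p$ and $\alpha_p$. Applying $R\shHom(-,\gm)$ to a short exact sequence of finite flat groups yields a long exact sequence of $\shExt$ sheaves; combining this with the already known vanishing $E^1=0$ (\ref{vanishing_ext_finite_or_multiplicative}) lets the dévissage reduce the whole statement to proving $E^2=E^3=0$ for each simple factor.

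Next I would resolve each simple factor by groups whose $\shExt$ into $\gm$ is accessible. For an étale factor the free presentation $0\longrightarrow\Z\xrightarrow{\ \ell\ }\Z\longrightarrow\Z/\ell\Z\longrightarrow 0$, together with $\shExt^{>0}(\Z,\gm)=0$ (as $\Z$ is the unit for $\otimes$, so that $R\shHom(\Z,-)=\mathrm{id}$), gives immediately $\shExt^{\geq 2}(\Z/\ell\Z,\gm)=0$. For $\mu_p$ the Kummer sequence $0\longrightarrow\mu_p\longrightarrow\gm\xrightarrow{\ p\ }\gm\longrightarrow 0$ converts the question into knowledge of $\shExt^i(\gm,\gm)$ in low degrees, while for $\alpha_p$ (which occurs only in positive characteristic $p$, necessarily odd here) the analogous sequence $0\longrightarrow\alpha_p\longrightarrow\Ga\xrightarrow{\ F\ }\Ga\longrightarrow 0$ reduces it to $\shExt^i(\Ga,\gm)$.

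The heart of the matter --- and what I expect to be the main obstacle --- is the explicit determination of the universal sheaves $\shExt^i(\gm,\gm)$ and $\shExt^i(\Ga,\gm)$ in degrees $i\leq 4$. This is precisely Breen's hard computation, carried out through the cohomology of the relevant Eilenberg--MacLane objects; the remainder is formal dévissage. It is also where the hypothesis enters: the computation throws up $2$-primary torsion classes in degrees $2$ and $3$, traceable to the symmetric-versus-alternating discrepancy of biextensions and to the special behaviour of finite group schemes in characteristic $2$, and inverting $2$ on the base is exactly what forces these surviving classes to vanish, yielding $E^2(G)=E^3(G)=0$. For the complete argument I refer to \cite{Breen_Theoreme_annulation}.
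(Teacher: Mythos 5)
The paper gives no proof of this statement at all: it is imported verbatim from Breen's article, so the only thing to compare your text against is the citation itself. Judged as a free-standing outline of Breen's argument, your sketch has two genuine gaps. First, the reduction to a field does not work as written: even granting the passage to a strictly henselian local base $\Spec R$ (which already needs more than ``test on stalks'', since \emph{fppf} sheaves are not controlled by geometric points in the naive sense), the vanishing of $\shExt^i(G_k,\gm)$ over the residue field $k$ does not imply the vanishing of $\shExt^i(G,\gm)$ over $R$ --- there is no rigidity or specialization statement available here --- and your Jordan--H\"older d\'evissage into $\Z/\ell\Z$, $\mu_p$, $\alpha_p$ only makes sense over a (perfect) field in the first place. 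So the ``decisive model case'' is not actually decisive.

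Second, and more seriously, the reduction of the simple factors to the universal sheaves $\shExt^i(\gm,\gm)$ and $\shExt^i(\Ga,\gm)$ for $i\leq 4$ replaces the problem by one that is at least as hard and is \emph{not} what Breen computes. His proof works directly with the finite flat group scheme $G$: one resolves $G$ (up to controlled corrections) by the free abelian sheaves $\Z[G^{n}]$ coming from the MacLane--Eilenberg construction, so that $\shExt^*(G,\gm)$ is governed in low degrees by the flat cohomology $H^*(G^{n},\gm)$ of powers of $G$ (manageable over a strictly henselian base) together with correction terms given by the stable homology of Eilenberg--MacLane spaces; those corrections are $2$-primary in the relevant degrees, and this --- not a symmetric-versus-alternating discrepancy of biextensions --- is exactly what the hypothesis $2\in\Oc_S^{\times}$ kills. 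You have correctly located \emph{where} the hypothesis on $2$ enters, but the skeleton you propose (connected--\'etale sequence, then Kummer and Frobenius resolutions) is neither Breen's route nor one that closes: the required control of $\shExt^{i}(\gm,\gm)$ and $\shExt^{i}(\Ga,\gm)$ in degrees $2$ through $4$ is not established in the cited reference or elsewhere in the generality needed, and in positive characteristic these sheaves are themselves the subject of separate and notoriously difficult computations.
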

\begin{remarque}\rm
 If $S$ is the spectrum of a separably closed field of 
characteristic~2, then it is known that $E^2(\alpha_2)\neq 
0$ (see~\cite{Breen_Nontrivial_higher_extension}).
\end{remarque}

\begin{thm}
\label{dual_and_ext1_abelian}
 If $G$ is an abelian scheme over $S$, then~$E^1(G)$ is 
representable by the dual abelian scheme $G^t$ 
\cite[17.6]{Oort_Commutative_Group_Schemes}. On the other 
hand, the Cartier dual $G^D$ is zero (obvious, \emph{e.g.} 
use~\ref{morphisme_constant}).
\end{thm}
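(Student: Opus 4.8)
The first assertion is exactly the content of~\cite[17.6]{Oort_Commutative_Group_Schemes}, so nothing needs to be done there beyond invoking that reference; the induced isomorphism $E^1(G)\simeq G^t$ is the one already fixed in the notations. Thus the only point requiring an argument is the vanishing of the Cartier dual $G^D=\Hom_{S-\textrm{gp}}(G,\gm)$, and the plan is to deduce it directly from~\ref{morphisme_constant}.

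First I would check that $G$ satisfies the hypotheses of~\ref{morphisme_constant}. An abelian scheme $f : G\to S$ is proper, flat and finitely presented with geometrically integral fibers, so $\Oc_S\to f_*\Oc_G$ is universally an isomorphism; moreover the unit section shows $G(S)\neq\emptyset$. Since $\gm$ is affine over $S$, every morphism of schemes $\varphi : G\to \gm$ is then constant, \emph{i.e.} it factors as $\varphi=(\varphi\circ e)\circ f$, where $e : S\to G$ denotes the unit section.

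Now if $\varphi$ is moreover a group homomorphism, evaluating at the unit gives $\varphi\circ e = 1$, the unit section of $\gm$; hence $\varphi=1\circ f$ is the trivial homomorphism. This already shows $G^D(S)=0$. The same argument applies after any base change $T\to S$, because $G_T$ is again an abelian scheme over $T$ and the formation of $G^D$ commutes with base change; therefore $G^D(T)=0$ for every $S$-scheme $T$, that is, $G^D=0$.

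Since both ingredients are either a citation or an immediate application of~\ref{morphisme_constant}, there is no genuine obstacle here; the only mild point of care is that the vanishing must be checked as a \emph{sheaf} (over all $T$) rather than merely on $S$-points, which is precisely why the argument is run after an arbitrary base change.
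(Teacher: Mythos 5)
Your proof is correct and follows exactly the route the paper indicates: the first assertion is a citation of Oort, and the vanishing of $G^D$ is obtained by applying Lemma~\ref{morphisme_constant} (an abelian scheme has $\Oc_S\to f_*\Oc_G$ universally an isomorphism and a section, $\gm$ is affine, so every homomorphism $G\to\gm$ is constant hence trivial), checked after arbitrary base change to get vanishing as a sheaf. The paper gives no further detail than ``obvious, use~\ref{morphisme_constant}'', so your write-up is simply the expanded version of the intended argument.
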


\begin{cor}
% [\cite{Breen_Theoreme_annulation} remark~6 p.~340]
\label{vanishing_ext23_abelian}
Let $A$ be an abelian scheme and $G$ a 
duabelian group scheme over $S$. Then:
\begin{itemize}
 \item[(i)] For any $n\in \N^*$, the multiplication by $n$ 
in $E^2(A)$ is a monomorphism. If $2\in \Oc_S^{\times}$, the 
multiplication by $n$ in $E^i(A)$ is an isomorphism for 
$i=2, 3$ and a monomorphism for $i=4$.
\item[(ii)] If $F$ is a finite and locally free commutative 
group scheme, then any morphism from $F$ to~$E^2(A)$ is 
trivial. If $2\in \Oc_S^{\times}$, this is also true for 
$E^3(A)$, $E^4(A)$ and $E^2(G)$.
\item[(iii)] If $S$ is regular and $2\in \Oc_S^{\times}$, 
then $E^2(A)=E^3(A)=0$.
\end{itemize}
\end{cor}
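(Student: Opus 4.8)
The three statements will all be extracted from the long exact sequence attached to the multiplication-by-$n$ isogeny. Since $[n]:A\to A$ is an isogeny whose kernel ${}_nA$ is finite locally free, we have a short exact sequence $0\to {}_nA\to A\xrightarrow{[n]}A\to 0$, and applying the contravariant functors $E^i(-)=\shExt^i(-,\gm)$ produces
\[
\cdots \to E^{i-1}({}_nA)\to E^i(A)\xrightarrow{[n]^*}E^i(A)\to E^i({}_nA)\to E^{i+1}(A)\to\cdots .
\]
The plan is to deduce (i) directly from this sequence, to obtain (ii) formally from (i), and to reach (iii) by combining (i) with a torsion argument.

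For (i), the key observation is that, because $E^i(-)$ is additive and $[n]=n\cdot\id_A$ in the group $\Hom(A,A)$, the map $[n]^*$ on $E^i(A)$ is exactly multiplication by $n$. Since ${}_nA$ is finite locally free, $E^1({}_nA)=0$ by~\ref{vanishing_ext_finite_or_multiplicative}, so the segment $E^1({}_nA)\to E^2(A)\xrightarrow{\cdot n}E^2(A)$ shows that $\cdot n$ is a monomorphism on $E^2(A)$ with no hypothesis on $S$. If moreover $2\in\Oc_S^\times$, then $E^2({}_nA)=E^3({}_nA)=0$ by~\ref{vanishing_ext23_finite}; reading the sequence around $E^2(A)$ and around $E^3(A)$ then gives that $\cdot n$ is an isomorphism on $E^2(A)$ and on $E^3(A)$, while the segment $E^3({}_nA)\to E^4(A)\xrightarrow{\cdot n}E^4(A)$ gives that it is a monomorphism on $E^4(A)$.

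For (ii), I will use that a finite locally free commutative group scheme $F$ of order $n$ is killed by $n$ (Deligne). Then for any homomorphism $\varphi:F\to E^j(A)$ one has $n\varphi=\varphi\circ[n]_F=0$, while also $n\varphi=[n]_{E^j(A)}\circ\varphi$; since multiplication by $n$ is a monomorphism on $E^2(A)$ (and, when $2\in\Oc_S^\times$, on $E^3(A)$ and $E^4(A)$) by~(i), this forces $\varphi=0$. For the remaining case $E^2(G)$ with $G$ duabelian, I would write the defining extension $0\to A'\to G\to F'\to 0$ with $A'$ an abelian scheme and $F'$ finite locally free (\ref{prop_duabelian_groups}); since $2\in\Oc_S^\times$ we have $E^2(F')=E^3(F')=0$ by~\ref{vanishing_ext23_finite}, so the long exact sequence yields $E^2(G)\simeq E^2(A')$ and the previous case applies.

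The real work is in (iii). By~(i), for every $n$ multiplication by $n$ is an isomorphism on $E^2(A)$ and on $E^3(A)$, so these two sheaves are uniquely divisible, hence torsion-free sheaves of $\Q$-vector spaces. It therefore suffices to prove that, over a regular base with $2$ invertible, they are \emph{torsion}: a torsion-free sheaf that is also torsion vanishes. The plan is to verify this torsion-ness fibrewise: since $E^i(-)$ commutes with base change, the fibre of $E^i(A)$ at a point $s\in S$ is $E^i(A_s)$ over the residue field, and over a field the sheaves $\shExt^i(A,\gm)$ for $i\ge 2$ are torsion; regularity of $S$ is then what allows one to pass from this generic-point information back to the sheaf itself, via a purity/specialization argument for $\gm$. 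I expect the field-case torsion statement together with this globalization step to be the main obstacle, all the lower-degree and finite-group input being already packaged in~\ref{vanishing_ext_finite_or_multiplicative} and~\ref{vanishing_ext23_finite}.
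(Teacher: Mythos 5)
Your proofs of (i) and (ii) are correct and are essentially identical to the paper's: the long exact sequence attached to $0\to {}_nA\to A\xrightarrow{[n]}A\to 0$, the vanishing of $E^1({}_nA)$, $E^2({}_nA)$, $E^3({}_nA)$ from~\ref{vanishing_ext_finite_or_multiplicative} and~\ref{vanishing_ext23_finite}, the Deligne/Oort--Tate fact that $F$ is killed by its order, and the reduction of $E^2(G)$ to $E^2(A')$ via~\ref{prop_duabelian_groups} are exactly the ingredients used there. (One small point you elide in (ii): the order of $F$ is only locally constant, so one first localizes on $S$ to assume $F$ free of order $n$; this is harmless.)

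The genuine gap is in (iii). You correctly reduce the statement to showing that $E^2(A)$ and $E^3(A)$ are \emph{torsion} over a regular base (since by (i) they are uniquely divisible, torsion plus torsion-free forces vanishing) --- this is the same reduction the paper makes. But the torsion claim is precisely the hard input, and your proposed route does not deliver it. Knowing that each fibre $E^i(A_s)$ over the residue field is torsion does not imply that the fppf sheaf $E^i(A)$ on $S$ is torsion: a section of $E^i(A)$ over some $S$-scheme $T$ need not be killed by any single integer even if its image in every fibre is torsion, and the ``purity/specialization argument for $\gm$'' you invoke to globalize is not specified and is not a routine step. Moreover the field case itself for $i=2,3$ is nontrivial. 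The paper closes this gap by citing Breen's theorem (\emph{Extensions of abelian sheaves}, \S 7), which proves directly that $E^2(A)$ and $E^3(A)$ are torsion sheaves when $S$ is regular; without that result (or a complete substitute for it), your argument for (iii) is incomplete.
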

\begin{proof}
The exact sequence of \emph{fppf} sheaves
$$\xymatrix{0\ar[r]& _nA \ar[r]& A \ar[r]^n &A \ar[r]& 0}$$
induces an exact sequence:
  $$\xymatrix{E^{i-1}(_nA) \ar[r]& E^{i}(A) \ar[r]^n &E^{i}(A)\ar[r]&E^{i}(_nA)}$$
But the scheme $_nA$ is finite and locally free over $S$, 
hence $E^1(_nA)=0$ by~%
\ref{vanishing_ext_finite_or_multiplicative}. If $2\in 
\Oc_S^{\times}$, by~\ref{vanishing_ext23_finite} the 
groups $E^{2}(_nA)$ and $E^3(_nA)$ also vanish. This 
gives (i). If $S$ is regular 
then $E^2(A)$ and $E^3(A)$ are torsion 
by~\cite[\S~7]{Breen_Extensions_abelian}, whence (iii). In 
(ii), the question is local on $S$ so 
we may assume that $F$ is free of order $n$. But then it is 
killed by $n$ (\cite[\S 1]{Oort_Tate_Group_Schemes}) and 
the result follows from (i) for the morphisms to $E^2(A)$, 
$E^3(A)$ or $E^4(A)$. Since $G$ is duabelian, 
by~\ref{prop_duabelian_groups} 
and~\ref{vanishing_ext23_finite} the sheaf $E^2(G)$ is 
isomorphic to $E^2(A')$ where $A'$ is an abelian scheme. 
Hence any morphism from $F$ to $E^2(G)$ is trivial by the 
previous case.
\end{proof}

\begin{thm}
 \label{vanishing_ext_constant}
Let $G$ be a finitely generated twisted constant group over~$S$ 
(see~\ref{Cartier_groups}). Then $E^i(G)=0$ for all $i>0$.
\end{thm}
\begin{proof}
The question is local on~$S$ so we may assume that $G$ is constant, associated 
to an ordinary abelian group~$M$ of finite type. It suffices to consider the 
cases $M=\Z$ and $M=\Z/n\Z$. If $M=\Z$, the functor $\shHom(G, .)$ is the 
identity hence $E^i(G)=0$ for all $i>0$.
If $M=\Z/n\Z$, the result follows by considering the long exact sequence associated with the short exact sequence $0\to \Z \to \Z \to \Z/n\Z\to 0$ (for $i=1$ the statement also follows from~\ref{vanishing_ext_finite_or_multiplicative}).   
\end{proof}

\bibliographystyle{plain}
\bibliography{../../../../texmf/tex/latex/monlatex/mabiblio}

\newcommand{\noopsort}[1]{} \newcommand{\singleletter}[1]{#1}
\begin{thebibliography}{10}

\bibitem{SGA3_new}
{\em Sch\'emas en groupes ({SGA} 3, new edition). {T}omes {I, III}.}
\newblock Documents Math\'ematiques 7, 8.
\newblock S{\'e}minaire de G{\'e}om{\'e}trie Alg{\'e}brique du Bois Marie
  1962--64. A seminar directed by M. Demazure and A. Grothendieck with the
  collaboration of M. Artin, J.-E. Bertin, P. Gabriel, M. Raynaud and J-P.
  Serre. Revised and annotated editions of the 1970 French original.

\bibitem{SGA3_2}
{\em Sch\'emas en groupes (SGA 3). {Tome II}: {G}roupes de type multiplicatif,
  et structure des sch\'emas en groupes g\'en\'eraux}.
\newblock S\'eminaire de G\'eom\'etrie Alg\'ebrique du Bois Marie 1962/64
  (SGA~3). Dirig\'e par M. Demazure et A. Grothendieck. Lecture Notes in
  Mathematics, Vol. 152. Springer-Verlag, Berlin, 1962/1964.

\bibitem{SGA4}
{\em Th\'eorie des topos et cohomologie \'etale des sch\'emas. {T}omes 1, 2,
  3}.
\newblock Springer-Verlag, Berlin, 1972-1973.
\newblock S\'eminaire de G\'eom\'etrie Alg\'ebrique du Bois-Marie 1963--1964
  (SGA 4), Dirig\'e par M. Artin, A. Grothendieck, et J. L. Verdier. Avec la
  collaboration de N. Bourbaki, P. Deligne et B. Saint-Donat, Lecture Notes in
  Mathematics, Vol. 269, 270, 305.

\bibitem{SGA7}
{\em Groupes de monodromie en géométrie algébrique}.
\newblock Springer-Verlag, Berlin, 1972-73.
\newblock Séminaire de Géométrie Algébrique du Bois Marie - 1967-69 - (SGA
  7), Lecture Notes in Mathematics, Vol. 288, 340.

\bibitem{SGA1}
{\em Rev\^etements \'etales et groupe fondamental ({SGA} 1)}.
\newblock Documents Math\'ematiques (Paris) , 3. Soci\'et\'e Math\'ematique de
  France, Paris, 2003.
\newblock S\'eminaire de g\'eom\'etrie alg\'ebrique du Bois Marie 1960--61.
  Dirig\'e par A. Grothendieck, augment\'e de deux expos\'es de Mme M. Raynaud.
  \'Edition recompos\'ee et annot\'ee du volume 224 des Lectures Notes in
  Mathematics publi\'ee en 1971 par Springer-Verlag.

\bibitem{Abramovich_Vistoli_CSSM}
Dan Abramovich and Angelo Vistoli.
\newblock Compactifying the space of stable maps.
\newblock {\em J. Amer. Math. Soc.}, 15(1):27--75 (electronic), 2002.

\bibitem{Stacks_Project}
The Stacks~Project Authors.
\newblock {S}tacks {P}roject.

\bibitem{Barbieri-Viale_Srinivas}
Luca Barbieri-Viale and Vasudevan Srinivas.
\newblock Albanese and {P}icard 1-motives.
\newblock {\em M\'em. Soc. Math. Fr. (N.S.)}, (87):vi+104, 2001.

\bibitem{Biswas_Borne_Tamely_ramified_torsors}
Indranil Biswas and Niels Borne.
\newblock Tamely ramified torsors and parabolic bundles, 2017.

\bibitem{Borne_Vistoli_Fundamental_Gerbe}
Niels Borne and Angelo Vistoli.
\newblock The {N}ori fundamental gerbe of a fibered category.
\newblock {\em J. Algebraic Geom.}, 24(2):311--353, 2015.

\bibitem{Borne_Vistoli_Fundamental_Gerbe_2}
Niels Borne and Angelo Vistoli.
\newblock Fundamental gerbes.
\newblock {\em Algebra Number Theory}, 13(3):531--576, 2019.

\bibitem{Braverman_Bezrukavnikov_Geometric_Langlands_Correspondance}
Alexander Braverman and Roman Bezrukavnikov.
\newblock Geometric {L}anglands correspondence for {$\mathcal D$}-modules in
  prime characteristic: the {${\rm GL}(n)$} case.
\newblock {\em Pure Appl. Math. Q.}, 3(1, Special Issue: In honor of Robert D.
  MacPherson. Part 3):153--179, 2007.

\bibitem{Breen_Extensions_abelian}
Lawrence Breen.
\newblock Extensions of abelian sheaves and {E}ilenberg-{M}ac{L}ane algebras.
\newblock {\em Invent. Math.}, 9:15--44, 1969/1970.

\bibitem{Breen_Theoreme_annulation}
Lawrence Breen.
\newblock Un th\'eor\`eme d'annulation pour certains {$E{\rm xt}^{i}$} de
  faisceaux ab\'eliens.
\newblock {\em Ann. Sci. \'Ecole Norm. Sup. (4)}, 8(3):339--352, 1975.

\bibitem{Breen_Bitorseurs}
Lawrence Breen.
\newblock Bitorseurs et cohomologie non ab\'elienne.
\newblock In {\em The {G}rothendieck {F}estschrift, {V}ol.\ {I}}, volume~86 of
  {\em Progr. Math.}, pages 401--476. Birkh\"auser Boston, Boston, MA, 1990.

\bibitem{Breen_Nontrivial_higher_extension}
Lawrence~S. Breen.
\newblock On a nontrivial higher extension of representable abelian sheaves.
\newblock {\em Bull. Amer. Math. Soc.}, 75:1249--1253, 1969.

\bibitem{Brochard_Picard}
Sylvain Brochard.
\newblock Foncteur de {P}icard d'un champ alg\'ebrique.
\newblock {\em Math. Ann.}, 343:541--602, 2009.

\bibitem{Brochard_finiteness}
Sylvain Brochard.
\newblock Finiteness theorems for the {P}icard objects of an algebraic stack.
\newblock {\em Adv. Math.}, 229:1555--1585, 2012.

\bibitem{Cadman_USTITCOC}
Charles Cadman.
\newblock Using stacks to impose tangency conditions on curves.
\newblock {\em Amer. J. Math.}, 129(2):405--427, 2007.

\bibitem{Chen_Zhu_Geometric_Langlands_Prime}
Tsao-Hsien Chen and Xinwen Zhu.
\newblock Geometric {L}anglands in prime characteristic.
\newblock {\em Comp. Math.}, (153):395--452, 2017.

\bibitem{Colliot-Thelene_Sansuc_Note1}
Jean-Louis Colliot-Th{\'e}l{\`e}ne and Jean-Jacques Sansuc.
\newblock Torseurs sous des groupes de type multiplicatif; applications \`a
  l'\'etude des points rationnels de certaines vari\'et\'es alg\'ebriques.
\newblock {\em C. R. Acad. Sci. Paris S\'er. A-B}, 282(18):Aii, A1113--A1116,
  1976.

\bibitem{Colliot-Thelene_Sansuc_Note2}
Jean-Louis Colliot-Th{\'e}l{\`e}ne and Jean-Jacques Sansuc.
\newblock Vari\'et\'es de premi\`ere descente attach\'ees aux vari\'et\'es
  rationnelles.
\newblock {\em C. R. Acad. Sci. Paris S\'er. A-B}, 284(16):A967--A970,
  {\noopsort{1977a}}1977.

\bibitem{Colliot-Thelene_Sansuc_Note3}
Jean-Louis Colliot-Th{\'e}l{\`e}ne and Jean-Jacques Sansuc.
\newblock La descente sur une vari\'et\'e rationnelle d\'efinie sur un corps de
  nombres.
\newblock {\em C. R. Acad. Sci. Paris S\'er. A-B}, 284(19):A1215--A1218,
  {\noopsort{1977b}}1977.

\bibitem{Colliot-Thelene_Sansuc_La_Descente_Duke}
Jean-Louis Colliot-Th{\'e}l{\`e}ne and Jean-Jacques Sansuc.
\newblock La descente sur les vari\'et\'es rationnelles. {II}.
\newblock {\em Duke Math. J.}, 54(2):375--492, 1987.

\bibitem{Deligne_Theorie_Hodge_3}
Pierre Deligne.
\newblock Th\'eorie de {H}odge. {III}.
\newblock {\em Inst. Hautes \'Etudes Sci. Publ. Math.}, (44):5--77, 1974.

\bibitem{Donagi_Pantev_Torus_Fibrations_Gerbes}
Ron Donagi and Tony Pantev.
\newblock Torus fibrations, gerbes, and duality.
\newblock {\em Mem. Amer. Math. Soc.}, 193(901):vi+90, 2008.
\newblock With an appendix by D. Arinkin.

\bibitem{Donagi_Pantev_Langlands_Duality_Hitchin}
Ron Donagi and Tony Pantev.
\newblock Langlands duality for {H}itchin systems.
\newblock {\em Invent. Math.}, 189(3):653--735, 2012.

\bibitem{Esnault_Srinivas_Viehweg}
H{\'e}l{\`e}ne Esnault, V.~Srinivas, and Eckart Viehweg.
\newblock The universal regular quotient of the {C}how group of points on
  projective varieties.
\newblock {\em Invent. Math.}, 135(3):595--664, 1999.

\bibitem{Chai_Faltings_Degeneration}
Gerd Faltings and Ching-Li Chai.
\newblock {\em Degeneration of abelian varieties}, volume~22 of {\em Ergebnisse
  der Mathematik und ihrer Grenzgebiete (3) [Results in Mathematics and Related
  Areas (3)]}.
\newblock Springer-Verlag, Berlin, 1990.
\newblock With an appendix by David Mumford.

\bibitem{FGA_Explained}
Barbara Fantechi, Lothar G{\"o}ttsche, Luc Illusie, Steven~L. Kleiman, Nitin
  Nitsure, and Angelo Vistoli.
\newblock {\em Fundamental algebraic geometry}, volume 123 of {\em Mathematical
  Surveys and Monographs}.
\newblock American Mathematical Society, Providence, RI, 2005.
\newblock Grothendieck's FGA explained.

\bibitem{Giraud}
Jean Giraud.
\newblock {\em Cohomologie non ab\'elienne}.
\newblock Springer-Verlag, Berlin, 1971.
\newblock Die Grundlehren der mathematischen Wissenschaften, Band 179.

\bibitem{EGA}
Alexander Grothendieck.
\newblock \'{E}l\'ements de g\'eom\'etrie alg\'ebrique.
\newblock {\em Inst. Hautes \'Etudes Sci. Publ. Math.}, (4, 8, 11, 17, 20, 24,
  28, 32), 1960-1967.

\bibitem{FGA}
Alexander Grothendieck.
\newblock {\em Fondements de la g\'eom\'etrie alg\'ebrique. [{E}xtraits du
  {S}\'eminaire {B}ourbaki, 1957--1962.]}.
\newblock Secr\'etariat math\'ematique, Paris, 1962.

\bibitem{EGA1}
Alexander Grothendieck.
\newblock {\em \'{E}l\'ements de g\'eom\'etrie alg\'ebrique. {I}. {L}e langage
  des sch\'emas}, volume 166 of {\em Grundlehren Math. Wiss.}
\newblock Springer-Verlag, 1971.

\bibitem{Hall_Cohomology_and_base}
Jack Hall.
\newblock Cohomology and base change for algebraic stacks.
\newblock {\em Math. Z.}, 278(1-2):401--429, 2014.

\bibitem{Illusie_CCD2}
Luc Illusie.
\newblock {\em Complexe cotangent et d\'eformations. {II}}.
\newblock Lecture Notes in Mathematics, Vol. 283. Springer-Verlag, Berlin,
  1972.

\bibitem{Jossen_On_The_Arithmetic}
Peter Jossen.
\newblock On the arithmetic of 1-motives.
\newblock PhD Thesis. Webpage of the author, 2009.

\bibitem{Laumon_Transformation_de_Fourier}
G{\'e}rard Laumon.
\newblock Transformation de {F}ourier g\'en\'eralis\'ee.
\newblock arXiv:9603004, 1996.

\bibitem{LMB}
G{\'e}rard Laumon and Laurent Moret-Bailly.
\newblock {\em Champs alg\'ebriques}, volume~39 of {\em Ergebnisse der
  Mathematik und ihrer Grenzgebiete. 3. Folge. A Series of Modern Surveys in
  Mathematics [Results in Mathematics and Related Areas. 3rd Series. A Series
  of Modern Surveys in Mathematics]}.
\newblock Springer-Verlag, Berlin, 2000.

\bibitem{MacLane_Categories_For_The}
Saunders Mac~Lane.
\newblock {\em Categories for the working mathematician}, volume~5 of {\em
  Graduate Texts in Mathematics}.
\newblock Springer-Verlag, New York, second edition, 1998.

\bibitem{LMB_Pinceaux_varietes_abeliennes}
Laurent Moret-Bailly.
\newblock Pinceaux de vari\'et\'es ab\'eliennes.
\newblock {\em Ast\'erisque}, (129):266, 1985.

\bibitem{Mumford_GIT}
D.~Mumford, J.~Fogarty, and F.~Kirwan.
\newblock {\em Geometric invariant theory}, volume~34 of {\em Ergebnisse der
  Mathematik und ihrer Grenzgebiete (2) [Results in Mathematics and Related
  Areas (2)]}.
\newblock Springer-Verlag, Berlin, third edition, 1994.

\bibitem{Olsson_Algebraic_spaces_and}
Martin Olsson.
\newblock {\em Algebraic spaces and stacks}, volume~62 of {\em American
  Mathematical Society Colloquium Publications}.
\newblock American Mathematical Society, Providence, RI, 2016.

\bibitem{Oort_Commutative_Group_Schemes}
F.~Oort.
\newblock {\em Commutative group schemes}, volume~15 of {\em Lecture Notes in
  Mathematics}.
\newblock Springer-Verlag, Berlin-New York, 1966.

\bibitem{Ramachandran_Duality_Of_Albanese}
Niranjan Ramachandran.
\newblock Duality of {A}lbanese and {P}icard 1-motives.
\newblock {\em $K$-Theory}, 22(3):271--301, 2001.

\bibitem{Raynaud_Faisceaux_amples_sur}
Michel Raynaud.
\newblock {\em Faisceaux amples sur les sch\'emas en groupes et les espaces
  homog\`enes}.
\newblock Lecture Notes in Mathematics, Vol. 119. Springer-Verlag, Berlin,
  1970.

\bibitem{Russell_Generalized_Albanese_And}
Henrik Russell.
\newblock Generalized {A}lbanese and its dual.
\newblock {\em J. Math. Kyoto Univ.}, 48(4):907--949, 2008.

\bibitem{Russell_Albanese_Varieties_With}
Henrik Russell.
\newblock Albanese varieties with modulus over a perfect field.
\newblock {\em Algebra Number Theory}, 7(4):853--892, 2013.

\bibitem{Shatz_Group_schemes}
Stephen~S. Shatz.
\newblock Group schemes, formal groups, and {$p$}-divisible groups.
\newblock In {\em Arithmetic geometry ({S}torrs, {C}onn., 1984)}, pages 29--78.
  Springer, New York, 1986.

\bibitem{Oort_Tate_Group_Schemes}
John Tate and Frans Oort.
\newblock Group schemes of prime order.
\newblock {\em Ann. Sci. \'Ecole Norm. Sup. (4)}, 3:1--21, 1970.

\end{thebibliography}
\end{document}